\definecolor{lightgray}{gray}{0.9}
\definecolor{lightgreen}{rgb}{0.22,0.50,0.25}
\definecolor{lightblue}{rgb}{0.22,0.45,0.70}
\numberwithin{equation}{section}
\numberwithin{figure}{section}
\numberwithin{table}{section}
\newcommand{\bta}{{\boldsymbol\tau}}
\newcommand\bsi{{\boldsymbol \sigma}}
\newcommand{\bn}{{\boldsymbol n}}
\newcommand\bv{{\boldsymbol v}}
\newcommand\bw{{\boldsymbol w}}
\newcommand\bu{{\boldsymbol u}}
\newcommand\br{{\boldsymbol r}}
\newcommand\bs{{\boldsymbol s}}
\newcommand\bi{{\mathbf{i}}}
\newcommand\bt{{\mathbf{t}}}
\newcommand\tr{{\mathrm{tr}}}
\newcommand\bdiv{{\mathbf{div}}}
\renewcommand\div{{\mathrm{div}}}
\newcommand\qan{{\quad\hbox{and}\quad}}
\newcommand\qin{{\quad\hbox{in}\quad}}
\newcommand\qon{{\quad\hbox{on}\quad}}
\newcommand\disp{\displaystyle}
\newcommand{\oL}{{\mathscr{L}}}
\newcommand{\Thb}{{\mathcal{T}_h^{\rm b}}}
\newcommand{\curle}{\mathrm{curl}}
\newcommand{\curlv}{\underline{\mathrm{curl}}}
\newcommand{\curlt}{\underline{\mathbf{curl}}}
\newcommand{\tncomp}{\boldsymbol{\gamma_{\ast}}}
\newtheorem{thm}{Theorem}[section]
\newtheorem{rem}{Remark}[section]
\newtheorem{lem}[thm]{Lemma}
\newenvironment{proof}{\noindent{\it Proof.}}{\hfill$\square$}
\newenvironment{hip}{\noindent{\textbf{Regularity Hipothesis.}}}
\def\R{\mathcal{R}}
\def\A{{\mathscr{A}}}
\def\B{{\mathscr{B}}}
\def\C{{\mathscr{C}}}
\def\F{{\mathscr{F}}}
\def\jump#1{\text{ $\hspace{-0.1cm}[\![#1]\!]$}}%saltos
\title{Convergence analysis and adaptive computation of a Banach-space mixed finite element method for generalized bioconvective flows\thanks{This work was
partially supported by ANID-Chile through the project 11190241 and  Centro de Modelamiento Matem\'atico - BASAL Project FB210005, by the Australian Research Council through the Future
Fellowship grant FT220100496 and Discovery Project grant DP22010316, and by 
the Swedish Research Council under grant no. 2021-06594 while RRB was in residence at Institut Mittag--Leffler in Djursholm, Sweden during the fall semester of 2025.}}
\author{{\sc Eligio Colmenares}\thanks{Centro de Ciencias Exactas, Grupo de Investigaci\'on en M\'etodos Num\'ericos y Aplicaciones (GIMNAP), Departamento de Ciencias B\'asicas, Facultad de Ciencias, Universidad del
B\'io-B\'io, Campus Fernando May, Chill\'an, Chile, email: {\tt ecolmenares@ubiobio.cl}.}
\quad
{\sc Ricardo Ruiz-Baier}\thanks{School of Mathematics, Monash University, 9 Rainforest Walk, Melbourne, VIC 3800, Australia; and Universidad Adventista de Chile, Casilla 7-D, Chill\'an, Chile, 
	email: {\tt ricardo.ruizbaier@monash.edu}.}
\quad 
{\sc Dalidet Sanhueza}\thanks{Departamento de Matem\' atica,
Universidad del B\'io-B\'io, Casilla 5-C, Concepci\' on, Chile, 
email: {\tt dalidet.sanhueza1701@alumos.ubiobio.cl}.}}
\date{ }
\begin{document}

\maketitle

%{\color{red}{
%		\noindent\hrule\vspace{1pt}
%		\noindent\hrule
%		FALTA
%		\begin{itemize}
%			\item Reducir espacios entre ecuaciones 
%			\item agregar descripción a las secciones 
%			\item Agregar referencias
%		\end{itemize}
%		\noindent\hrule\vspace{1pt}
%		\noindent\hrule}}
	
\begin{abstract}
\noindent
	We develop and analyse an adaptive fully mixed finite element method for stationary generalized bioconvective flows, in which the Navier--Stokes equations with concentration-dependent viscosity are coupled to a conservation law for swimming microorganisms. The method introduces the trace-free velocity gradient, a symmetric pseudo-stress tensor, the concentration gradient and a semi-advective microorganism flux, the latter also enabling a consistent treatment of Robin-type boundary conditions for the concentration. The variational formulation is developed within a Banach space framework that includes these auxiliary variables, in addition to the fluid's velocity, pressure, and the microorganisms' concentration. The analysis progresses by examining the fixed-point operator, which reformulates the continuous problem's variational formulation equivalently. The existence of solutions is obtained by using Schauder's theorem, while uniqueness relies on particular data constraints. In the discrete setting, we utilize Raviart-Thomas spaces and piecewise polynomials defined on macroelement-structured meshes. The existence of solutions in this context is established with Brouwer's theorem, and the uniqueness is guaranteed by the Banach fixed point theorem in the case in which the viscosity is constant. An a priori error analysis yields optimal convergence estimates. Additionally, we derive a residual-based a posteriori error estimator whose reliability is demonstrated using global inf-sup conditions, appropriate Helmholtz decompositions, and properties of Raviart-Thomas and Cl\'ement projectors. The efficiency of the estimator is ensured through localization techniques and classical bubble functions. A set of numerical experiments in two and three dimensions confirms the predicted convergence rates, demonstrates the effectiveness of adaptive refinement for singular solutions and complex geometries with inclusions, and illustrates the robustness of the proposed formulation when applied to a time-dependent bioconvective benchmark exhibiting plume formation, based on an Einstein--Batchelor-type viscosity law.
	
\end{abstract}

\noindent
{\bf Key words}: 
bioconvection, 
Banach spaces, 
fixed point methods, 
mixed finite elements, 
a priori analysis, 
a posteriori estimation.

\smallskip\noindent
{\bf Mathematics subject classifications (2000)}: 65N30, 65N12, 76R05, 76D07, 65N15, 92C17.

\maketitle

\section{Introduction}\label{section1}

Bioconvection, or biological convection, encompasses the transport of substances or particles within a biological medium, driven by the active movement of microorganisms in response to external stimuli such as gravity, light, oxygen, nutrient or temperature gradients, or some  combination of these \cite{Pedley.Kessler}. This motility of microorganisms generates concentration gradients, leading to directed transport of substances through the medium. Bioconvection phenomena are observed in various natural settings, including nutrient transport in plants, cellular transport systems, blood circulation, and aquatic environments inhabited by bacteria, algae, and other cellular organisms. Their active movement induces water flow and creates concentration gradients essential for nutrient and vital substance transport. Furthermore, bioconvection has been recognized for its practical applications in biotechnology, medicine, and engineering, such as in controlled environments like laboratories or cell culture facilities, biocombustible production processes, and wastewater treatment, where understanding and controlling bioconvection mechanisms can enhance efficiency and effectiveness \cite{bees-2014,kuznetsov-2005, lauga-2009,Libro-bioconvection}.  

\noindent Based on the principles of hydrodynamics and mass transfer, a mathematical model for describing the interaction between the fluid flow and microorganisms in a bioconvection phenomenon (see \cite{levandowsky-1975} and \cite{moribe-1973})  involves  the fluid velocity $\bu$, the fluid pressure $p$, and the concentration $\varphi_\alpha$ of microorganisms within a  culture fluid $\Omega\subset \mathbb{R}^d$ ($d = 2, 3$),  satisfying
\begin{equation}\label{eqn:bio}
	\begin{array}{c}    
		-2\,\mathbf{div}\left(\mu(\varphi_{\alpha})\mathbf{e}(\bu)\right)+\left(\bu \cdot \nabla \right)\bu+\nabla \,p=\,\boldsymbol{f}-g\left[1+\gamma\varphi_{\alpha}\right]\widehat{\mathbf{e}}_d\,, \quad 
		\mathrm{div}\,\bu\,=\,  0\quad \qin\quad\Omega,\\[2ex]
		\displaystyle{ -\kappa\Delta\varphi_\alpha+\bu\cdot\nabla\varphi_\alpha+U\frac{\partial\varphi_\alpha}{\partial x_{d}}}= 0 \quad \qin\quad\Omega\quad \qan\quad \frac{1}{|\Omega|}\int_{\Omega}\varphi_\alpha=\alpha\,.
	\end{array}
\end{equation}

\noindent The first equation  of \eqref{eqn:bio} represents the momentum balance for the fluid flow within the domain, incorporating the effects of a concentration-dependent viscosity $\mu(\,\cdot\,)$ and the strain rate tensor $\mathbf{e}(\bu)$, convective transport of momentum $\left(\bu \cdot \nabla\right)\bu$, and the hydrostatic pressure gradient $\nabla p$. The term $\boldsymbol{f}$ refers to a volume-distributed external force or source term, and $-g\left[1+\gamma\varphi_{\alpha}\right]\widehat{\mathbf{e}}_d$ models the buoyancy effects due to the presence of microorganisms in  the $d$--axis direction, represented by the unit vector $\widehat{\mathbf{e}}_d$. The constant  $g$ is the acceleration due to gravity and the  parameter $\gamma:=\rho_0/\rho_m-1$ stands for the relative density difference, with $\rho_0$ being the density of the microorganisms and $\rho_m$  being the density of the culture fluid.  In turn, the relation $\div\, \bu =0$, given by the second equation in \eqref{eqn:bio} represents the fluid incompressibility constraint.

\noindent The third equation in   \eqref{eqn:bio} describes the mass transfer  of microorganisms within the fluid, reflecting the balance between diffusion, characterized by the diffusivity constant $\kappa$, and advection by the fluid flow and swimming motion of the microorganisms. The term $\bu\cdot\nabla\varphi_\alpha$ accounts for the advection of microorganisms by the fluid, while $U\frac{\partial\varphi_\alpha}{\partial x_{d}}$ represents the directed swimming of microorganisms in the $d$--axis direction, with $U$ being the swimming speed. This equation states the conservation of microorganisms in the domain, aligning with the principle that the average concentration of microorganisms, denoted by $\alpha$, is preserved throughout the fluid culture, according to the last equation  in \eqref{eqn:bio}. Precise assumptions on the data and parameters as well as the  boundary conditions to be taken into consideration are presented in the next section.

\noindent Formal mathematical analyses of the model  \eqref{eqn:bio} have been conducted in references \cite{aguiar-2017,Bold-bioconvection,Climent-2013,Teramoto-bioconvection}. In \cite{Teramoto-bioconvection}, the authors focus on a scenario with constant viscosity,  demonstrating the existence of solutions for the stationary problem, the positivity of concentration, and deriving conditions necessary for the existence of a global weak solution in the non-stationary case. Building upon this, \cite{aguiar-2017} extends these findings by determining convergence rates for the error associated with spectral Galerkin approximations. In contrast, \cite{Bold-bioconvection} approaches viscosity as a concentration--dependent variable, establishing the existence of both weak and strong solutions. Finally, \cite{Climent-2013} explores the latter case under periodic conditions.

\noindent From a computational perspective, the system \eqref{eqn:bio} gives rise to strongly coupled nonlinear dynamics and may exhibit localized flow structures and sharp gradients in the microorganism concentration. These features pose significant challenges for numerical simulation, since standard discretizations and uniform mesh refinement may become inefficient or fail to accurately resolve the multiscale nature of bioconvective phenomena. A variety of numerical techniques have been proposed for the simulation of bioconvection models, including early finite-difference and spectral approaches focusing mainly on two-dimensional configurations and constant viscosity assumptions \cite{childress-1976,ghorai-2000,ghorai-1999,ghorai-2000-2,harashima-1988,hopkins-2002,lee-2015,tb-2007}.

\noindent
Finite element discretizations of \eqref{eqn:bio} have been considered in \cite{cao-2013,CGMir-1}. In particular, \cite{cao-2013} establishes existence and uniqueness of weak solutions and proposes standard finite element approximations, validated through numerical experiments incorporating laboratory data. A fully mixed finite element formulation within a Hilbert space setting is introduced in \cite{CGMir-1}, where auxiliary variables such as the fluid strain tensor, vorticity and a pseudo--concentration gradient are employed. By augmenting the weak formulation with Galerkin penalization terms and reformulating the coupled system as a fixed--point problem, the authors prove well--posedness and derive optimal a priori error estimates, supported by numerical results.

\noindent
More recently, a growing body of work has developed mixed finite element methods for nonlinear problems within a Banach space framework (see \cite{bcgh-2020,bcgh-2022,ccgi-2023,CGMor,CGMR-2020,cgr-2022,CN-16,hw-M2AN-2013}, among others). A key advantage of this approach lies in the natural incorporation of variables into their intrinsic functional settings, achieved through tailored testing strategies and integration by parts, thus avoiding the additional stabilization or penalization mechanisms required in Hilbert space formulations such as \cite{CGMir-1}. Moreover, this framework allows for the direct computation or postprocessing of physically relevant quantities. Complementarily, residual-based a posteriori error estimation techniques for mixed methods in Banach spaces have been investigated in \cite{COV-2022,CCOV-2022,GIRS-2022}, where reliability and efficiency are established. Nevertheless, adaptive mixed finite element methods of this type have not yet been developed for the bioconvection model \eqref{eqn:bio}.

Inspired by the above discussion and related developments in mixed formulations for nonlinear problems in Banach spaces \cite{CGMor,CN-16,hw-M2AN-2013,GIRS-2022}, we develop an adaptive fully mixed finite element method for the bioconvection model \eqref{eqn:bio}. The proposed approach is formulated entirely within a Banach space setting and is complemented by a residual-based a posteriori error analysis, enabling adaptive mesh refinement strategies for the efficient numerical simulation of bioconvective flows. The main contributions of this work can be summarized as follows:

\begin{itemize}
	\item[(a)]	The trace--free velocity gradient and the concentration gradient are introduced as primary unknowns, which allows the computation of physically meaningful quantities  such as vorticity, shear stress tensors and microorganism fluxes through standard postprocessing of discrete solutions.
	\item[(b)] By suitably defining associated function spaces, the pressure variable is eliminated from the primary computation and can subsequently be recalculated through a straightforward postprocessing step.
	\item[(c)] 	The intrinsic skew--symmetry of the convective forms is preserved at both continuous and discrete levels, leading to simplified a priori estimates and well--posedness results without the need for additional stabilization or symmetry--preserving modifications.
	\item[(d)] Unlike \cite{CGMir-1,CGMor,CGMR-2020}, we reformulate both the continuous and discrete problems as a fixed-point problem involving a single operator. In those references, the authors address the coupled problem by defining an operator for each subproblem and composing them, which requires additional data restrictions and conditions. In contrast, we first derive a priori estimates to determine the solution set, then use a single operator to handle the entire coupled problem, thus avoiding the need for restrictive data conditions.
	
	\item[(e)] The method supports high-order approximations and provides optimal-order a priori error estimates for both the primary variables and those obtained via postprocessing.
%	\item[(g)] conservation of  momentum?? 
 %   \item[(f)] {\color{blue}{ algo sobre el indicador}}
%\item 
\end{itemize}

\subsection*{Outline}
This paper is structured as follows: The rest of this introductory section sets the stage by defining standard notations,  describing the functional spaces, and specifying the assumptions about the data and the boundary conditions under study. Section \ref{section2} is dedicated to presenting the fully mixed formulation of our problem. Following that, in Section \ref{section3}  we present  the Galerkin scheme associated with our formulation along with the corresponding  Cea's estimate and proving optimal a priori error estimates. In Section \ref{section4}, we carry out an a posteriori error analysis for our fully-mixed method.  Section \ref{section6} concludes the paper with numerical examples that demonstrate the efficacy of the fully mixed method and verify the theoretical results.

\subsection*{Preliminary notations and definitions}

\textbf{Domain.} Let $\Omega\subset \mathbb{R}^d$ ($d = 2, 3$) be a bounded domain with polygonal/polyhedral boundary $\Gamma := \partial\Omega$, and $\bn$ denote the outward unit normal vector.

\noindent\textbf{Vector and Tensor Operators.} The notations $\mathrm{A}$, $\mathbf{A}$, and $\mathbb{A}$ are used to represent scalar, vector, and tensor field spaces, respectively. For vector fields $\bv = (v_i)_{1\leq i\leq d}$ and $\bw = (w_i)_{1\leq i\leq d}$, their gradient, divergence, and dyadic product are defined as $\nabla \bv := (\partial_{x_j} v_i)_{1\leq i,j\leq d}$, $\div\, \bv := \sum_{i=1}^d \partial_{x_i}v_i$, and $\bv \otimes \bw := (v_i w_j)_{1\leq i,j\leq d}$, respectively. For tensor fields $\boldsymbol{\tau} = (\tau_{ij})_{1\leq i,j\leq d}$ and $\boldsymbol{\zeta} = (\zeta_{ij})_{1\leq i,j\leq d}$, $\bdiv \,\boldsymbol{\tau}$ denotes the row-wise divergence, and we define the transpose $\boldsymbol{\tau}^{\text{t}} := (\tau_{ji})_{1\leq i,j\leq d}$, trace $\text{tr}(\boldsymbol{\tau}) := \sum_{i=1}^d \tau_{ii}$, tensor inner product $\boldsymbol{\tau} : \boldsymbol{\zeta} := \sum_{i,j=1}^d \tau_{ij}\zeta_{ij}$, and deviatoric part $\boldsymbol{\tau}^{\text{d}} := \boldsymbol{\tau} - \frac{1}{d}\text{tr}(\boldsymbol{\tau})\mathbb{I}$, with $\mathbb{I}$ being the $d\times d$ identity tensor.

\noindent\textbf{Function Spaces.} For any  $r\in[1,+\infty)$ and  $s\geq 0$,  we  denote the conventional Sobolev space  by  $\mathrm{W}^{s,r}(\Omega)$  where both the functions and all their distributional derivatives up to order $s$ are elements of the Lebesgue space $\mathrm{L}^r(\Omega)$.  The norm and semi-norm in this space are denoted by  $\|\cdot\|_{s, r,\Omega}$ and $|\cdot|_{s,r,\Omega}$, respectively. In particular, when $r=2$, we simply write  $\mathrm{H}^{s}(\Omega):=\mathrm{W}^{s,2}(\Omega), $  $\|\cdot\|_{s, \Omega}:=\|\cdot\|_{s, 2,\Omega}$ and $|\cdot|_{s,\Omega}:=|\cdot|_{s,2,\Omega}$. The space of traces of functions in $\mathrm H^1(\Omega)$ is denoted by  ${\mathrm H}^{1/2}(\Gamma)$, while ${\mathrm H}^{-1/2}(\Gamma)$ represents its dual space, with duality pairing denoted as $\langle\cdot,\cdot\rangle,$ and $\mathrm{H}^{1}_{0}(\Omega)$ stands for the set of functions  in $\mathrm H^1(\Omega)$ with trace zero on the boundary $\Gamma$. 

\noindent For functions with zero mean and involving first-order Sobolev spaces we set
\begin{equation}\label{eqn:L2H1Spaces}
	\mathrm{L}^r_0(\Omega) := \Bigg\{q \in \mathrm{L}^r(\Omega) : \quad \int_{\Omega}q = 0\Bigg\}\quad \qan \quad \widetilde{\mathrm{H}}^1(\Omega) := \mathrm{H}^1(\Omega) \cap \mathrm{L}^4_0(\Omega)\,,
	\end{equation}
respectively. From the Friedrichs-Poincaré inequality, we recall the existence of a  constant $C_{\mathrm{FP}} > 0$ ensuring that
\begin{equation}\label{eqn:fp}
	\|\psi\|_{1,\Omega} \leq C_{\mathrm{FP}} |\psi|_{1,\Omega}, \quad \forall\, \psi  \in \mathrm{H}^1_0(\Omega)  \text{ or }\widetilde{\mathrm{H}}^1(\Omega)\,.
\end{equation}
Similarly, the Korn inequality is expressed as
\begin{equation}\label{eqn:korn}
	\|\mathbf{e}(\bw)\|^2_{0,\Omega} \geq \frac{1}{2} |\bw|^2_{1,\Omega}, \quad \forall \, \bw \in \mathbf{H}^1_0(\Omega),
\end{equation}
with $\mathbf{e}(\bw) = \frac{1}{2}(\nabla \bw + (\nabla \bw)^{\mathrm{t}})$ denoting the symmetric gradient or strain tensor.

\noindent To handle trace-free tensors we consider the space 
\begin{equation}\label{eqn:espL2tr}
	\mathbb{L}_{\tr}^2(\Omega) := \Big\{\mathbf{r} \in \mathbb{L}^2(\Omega) : \text{tr}\,\mathbf{r} = 0\Big\}\,.
	\end{equation}
We will frequently utilize the following spaces
\begin{equation}\label{eq:spacesH&Ht} \mathbf{H}:= \mathbb{L}^2_{\tr}(\Omega)\times \mathbf{L}^4(\Omega) \quad \qan \quad  \widetilde{\mathbf{H}}:= \mathbf{L}^2(\Omega)\times\mathrm{L}_0^{4}(\Omega)\,,
\end{equation}
with the corresponding natural norms given by
\begin{subequations}
\begin{equation}\label{eq:norm-u}
	\|\vec{\bv}\|_\mathbf{H}^2=\|(\br,\bv)\|_\mathbf{H}^2=\|\br\|^2_{0,\Omega}+\|\bv\|^2_{0,4,\Omega} \qquad \forall\,\vec{\bv}\in\,\mathbf{H}:= \mathbb{L}^2_{\tr}(\Omega)\times \mathbf{L}^4(\Omega)\,,
\end{equation}
and 
\begin{equation}\label{eq:norm-varphi}
	\|\vec{\psi}\|_{\widetilde{\mathbf{H}}}^2=\|(\widetilde{\br},\psi)\|_{\widetilde{\mathbf{H}}}^2=\|\widetilde{\br}\|^2_{0,\Omega}+\|\psi\|^2_{0,4,\Omega} \qquad \forall\,\vec{\psi}\in\,\widetilde{\mathbf{H}}:= \mathbf{L}^2(\Omega)\times\mathrm{L}_0^{4}(\Omega)\,.
\end{equation}\end{subequations}

\noindent For specific divergence and normal component conditions, we consider %$\mathbf{H}_\Gamma(\div_p;\Omega) $ and 
\begin{subequations}
\begin{equation}\label{eq:spaceHgamma}
	\mathbf{H}_\Gamma(\div_r;\Omega) := \bigg\{\widetilde{\bta}\in \mathbf{H}(\div_r;\Omega):\quad \widetilde{\bta}\cdot\bn=0\quad \qon\quad\Gamma\bigg\}\,,
\end{equation}
\begin{equation}\label{eq:spaceH0div}
	\mathbb{H}_0(\mathbf{div}_r;\Omega)\ :=\disp\ \left\{\bta\in \mathbb{H}(\bdiv_r;\Omega):\quad\int_{\Omega}\tr\,\bta=0\right\}.
\end{equation}
\end{subequations}
Here, the set $	\mathbf{H}(\div_{r};\Omega)\ :=\ \Big\{\widetilde{\bta}\in\mathbf{L}^2(\Omega):\quad{\div}\,\widetilde{\bta}\in\mathrm{L}^r(\Omega)\Big\}$  (analogously for its tensorial version $\mathbb{H}(\mathbf{div}_r; \Omega)$)
is  a Banach space equipped with the   norm 
\begin{equation}\label{eq:normHdiv}
	\|\widetilde{\bta}\|^2_{\div_{r},\Omega}:=\|\widetilde{\bta}\|^2_{0,\Omega}+\|\div\,\widetilde{\bta}\,\|^2_{0,r,\Omega}\,,\quad \forall\,\widetilde{\bta}\in	\mathbf{H}(\div_{r};\Omega)\,.
	\end{equation}

\noindent Moreover, for $ r\in [6/5,+\infty)$ the following integration-by-parts formula holds (see \cite[Section 4.1]{CMO-2018} and \cite[Section 3.1]{CGMor})
\begin{equation}\label{eq:ipp-1}
	\langle \widetilde{\bta} \cdot\bn,v\rangle=\int_{\Omega} \left\{\widetilde{\bta}\cdot\nabla v +v\, \div\,\widetilde{\bta}\,\right\}\quad\forall\, (\widetilde{\bta},v)\in\, \mathbf{H}(\div_{r};\Omega)\times \mathrm{H}^1(\Omega). 
\end{equation}

\noindent We  further recall that  the injections  $i:  \mathrm{L}^q(\Omega)\rightarrow \mathrm{L}^r(\Omega)$ ($q>r$) and $i_r:\mathrm{H}^1(\Omega)\rightarrow \mathrm{L}^r(\Omega)$ are both continuous and  satisfy 
 \begin{equation}\label{eq:inclusion-Lebesgue}
 	\|\psi\|_{0,r,\Omega}\leq |\Omega|^{1/r-1/q}\|\psi\|_{0,q,\Omega}\quad \mbox{and}\quad 	\|\psi\|_{0,r,\Omega}\,\leq\,\|i_{r}\|\|\psi\|_{1,\Omega}  \quad \mbox{for}\quad 
 	\begin{cases}
 		\mbox{ $r \geq 1 $ \quad if $d=2$\,, }\\
 		\mbox{ $r \in [1,6] $ \quad if $d=3$.}
 	\end{cases}    
 \end{equation}

\noindent \textbf{Data assumptions and boundary conditions.} In the model  \eqref{eqn:bio}, we assume that  the source term $\boldsymbol{f}$ belongs to $\mathbf{L}^{4/3}(\Omega)$ and that the parameters $g$, $\kappa,$ $U$, $\gamma$ and $\alpha$ are given positive constants. Furthermore, $\mu(\,\cdot\,)$ is a concentration dependent function  assumed to be a Lipschitz continuous and bounded from above and below; that is, for some constants $L_{\mu}>0$  and $\mu_1, \mu_2 > 0$, there hold
\begin{subequations}
\begin{equation}\label{eqn:mu-lips}
	|\mu(s)-\mu(t)|\,\leq\, L_{\mu}\,|s-t|\,,\qquad \forall\, s,t\geq 0,
\end{equation}
\begin{equation}\label{eqn:mu-bound}
	\mu_1\,\leq \,\mu(s)\,\leq\, \mu_2,\qquad \forall s\,\geq 0.
\end{equation}\end{subequations}
Additionally, the system (\ref{eqn:bio}) is supplemented with a non-slip boundary condition for the velocity and a zero flux Robin-type condition for the micro-organisms on the boundary, that is

\begin{equation}\label{eqn:conditions}
	\bu=\textbf{0}\quad\qon \,\Gamma\qquad \qan \quad \kappa\dfrac{\partial\varphi_\alpha}{\partial \bn}-n_d U \varphi_\alpha=0\quad\qon \,\Gamma\,.
\end{equation}
The last condition given in \eqref{eqn:conditions} says  that micro-organisms are not allowed to leave or enter the physical domain; that is, the  total mass  of microorganisms remains constant and equals to $\alpha$. %Furthermore, the diffusive constant and the mean velocity constant of microoganisms are constrained as follows
%\begin{equation}\label{eq:hip.datos}
%	\dfrac{U}{\kappa}|\Omega|^{1/4}<\min\{1,C_{FP}^{-2}\,\|\bi_4\|^{-2}\}\,,
%\end{equation}
%where, $C_{FP}$ and $\|\bi_4\|$ are defined in \eqref{F-Poinc} and \eqref{eq:i_p}, respectly. 

\section{The fully-mixed formulation}\label{section2}

In this section, we carry out the variational formulation of the problem of our interest  \eqref{eqn:bio} and \eqref{eqn:conditions}. In Section \ref{section:first-order-system},  we introduce auxiliary variables to set the original model into a first-order partial differential system. Subsequently, Section \ref{section22} is dedicated to deriving a fully--mixed formulation within Banach spaces, along with a discussion on the properties of the forms involved. Moving forward to Section \ref{section23}, the focus shifts to establishing the well-posedness of the problem. This involves exploiting the structure of the setting and the properties of the forms to reformulate the weak formulation as a fixed--point problem.

\subsection{The equivalent first-order system}\label{section:first-order-system}
The initial step prior to reformulating the model \eqref{eqn:bio} in the context of first--order partial differential equations involves a translation process employing the total mass condition.  In fact, note that
\begin{equation*}%\label{eqn:translation}
\dfrac{1}{|\Omega|}	\int_{\Omega}\varphi_\alpha=\alpha\qquad\Longleftrightarrow\qquad\int_{\Omega}(\varphi_\alpha-\alpha)=\,0\,,
\end{equation*}
and thus, by considering the auxiliary concentration $\varphi:=\varphi_\alpha-\alpha$, the system \eqref{eqn:bio} and \eqref{eqn:conditions} reads
\begin{equation}\label{eqn:bio-translated}
	\left.
	\begin{array}{c}    
		-2\,\mathbf{div}\left(\mu(\varphi +\alpha)\mathbf{e}(\bu)\right)+\left(\bu\cdot \nabla\right)\bu+\nabla \,p=\,\boldsymbol{f}-g\left[1+\gamma(\varphi+\alpha)\right]\widehat{\mathbf{e}}_d\,, \quad 
		\mathrm{div}\,\bu\,=\,  0\,\\[2ex]
		\disp
		-\kappa\Delta\varphi+\bu\cdot\nabla\varphi+U\frac{\partial\varphi}{\partial x_{d}}= 0
	\end{array}\right\} \qin \Omega
\end{equation}
and 
\begin{equation}\label{eqn:bc-translated}
	\bu=\textbf{0}\quad\qon \,\Gamma\,, \quad \kappa\dfrac{\partial\varphi}{\partial \bn}-n_d U \varphi=n_d U \alpha\quad\qon \,\Gamma\,\quad \qan\quad \int_{\Omega}\varphi=0\,.
\end{equation}
Next we  incorporate some auxiliary  variables. We start with the fluid equations by defining the velocity gradient and the symmetric pseudo-stress tensors given by  
\begin{equation}\label{eqn:aux.NS}
	\bt :=\nabla\bu \qin \Omega\,, \qan \bsi\,:= 2 \mu(\varphi+\alpha)\bt_{sym}-\dfrac{1}{2}(\bu\otimes\bu)-(p+\mathrm{c}_\bu)\mathbb I\qin \Omega\,,
\end{equation}
where $\bt_{sym}:=\dfrac{1}{2}\{\bt+\bt^{\rm t}\}$ is the symmetric part of $\bt$ and satisfies $\tr \,\bt_{sym}=0$ by the incompressibility condition (second equation of \eqref{eqn:bio-translated}). In turn, the constant $\disp\mathrm{c}_\bu$ is defined as
 \begin{equation}\label{eq:constant-cu}
\disp\mathrm{c}_\bu:=-\dfrac{1}{2d|\Omega|}\int_{\Omega}\tr(\bu\otimes\bu)\,.
\end{equation}
Given this context, and since a unique pressure solution $p$ for the system \eqref{eqn:bio-translated} is required to be in $\mathrm{L}^2_0(\Omega)$ (see \eqref{eqn:L2H1Spaces}). The definition of $\bsi$ in \eqref{eqn:aux.NS}, with $c_{\bu}$ as in \eqref{eq:constant-cu},  then translates the zero mean value condition on $p$ into the imposition on the trace of $\bsi$ as 
 \begin{equation*}%\label{eq:sig-restriction}
\disp \int_\Omega \tr\, \bsi =0\,.
\end{equation*}

\noindent The second equation of \eqref{eqn:aux.NS} will be referred to as the constitutive law governing the behavior of the fluid. To get the respective equilibrium relation, we take divergence there,  and after using the first equation of \eqref{eqn:bio-translated} we find that
 \begin{equation*}%\label{eqn:equilibrium.NS}
 -\bdiv\,\bsi+\dfrac{1}{2}\bt\bu=\boldsymbol{f}-g[1+\gamma(\varphi+\alpha)]\widehat{\mathbf{e}}_d \qquad\qin \Omega\,,
\end{equation*}
where we have utilized that $\bdiv(\bu\otimes\bu)=(\bu \cdot \nabla)\bu$ when $\div \,\bu=0$.  The latter also implies that $\tr \,\bt=\tr \,\bt^{\tt t }=0$ and so $\bt_{sym}^{\tt d }=\bt_{sym}$. Thus, after taking deviatoric part to $\bsi$ in \eqref{eqn:aux.NS}, we find
\begin{equation*}%\label{eqn:constt.NS}
	\bsi^{\tt d}= 2\mu(\varphi+\alpha)\bt_{sym}-\dfrac{1}{2}\left(\bu\otimes\bu\right)^{\tt d} \qquad\qin \Omega\,,
\end{equation*}
and then the pressure can be  removed from the original system \eqref{eqn:bio-translated}, but it is possible to retrieve it  using the post-processing formula
\begin{equation}\label{eqn:p}
	p=-\dfrac{1}{2d}\tr(\,2\bsi\,+\,\bu\otimes\bu\,)-\mathrm{c}_\bu \qin \Omega\,, 
\end{equation}
which is obtained after taking trace to $\bsi$ in \eqref{eqn:aux.NS} and using the incompressibility condition once again. 

\noindent As for the equation modeling the micro-organisms concentration, we introduce as new variables the concentration gradient and the semi-advective flux given by  
\begin{equation}\label{eqn:aux.C}
	\widetilde{\bt}=\nabla \varphi \qin \Omega\,, \qan \widetilde{\bsi}=\kappa\,\widetilde{\bt}-\dfrac{1}{2}\varphi \bu-U(\varphi+\alpha)\widehat{\mathbf{e}}_d \qin \Omega\,,
\end{equation}
and upon applying the divergence operator to $\widetilde{\bsi}$ in \eqref{eqn:aux.C}  and using that $\bu$ is divergence-free in $\Omega$,  the last equation of the system  \eqref{eqn:bio-translated} transforms into
\begin{equation}\label{eqn:aux.C2}
	-\div\,\widetilde{\bsi}+\dfrac{1}{2}\widetilde{\bt}\cdot\bu=0\qquad\qin \Omega.
\end{equation}
Observe that the second equation of  \eqref{eqn:aux.C} and \eqref{eqn:aux.C2} represent the constitutive and equilibrium relationships associated with the concentration equation, respectively. Also note from the boundary condition for $\bu$ and $\varphi$ in \eqref{eqn:bc-translated} that $\widetilde{\bsi}$ satisfies
\begin{equation}\label{eqn:sig.n}
	\widetilde{\bsi}\cdot\bn=0\qquad\qon \Gamma\,.
\end{equation}
As a result, by combining \eqref{eqn:aux.NS}-\eqref{eqn:sig.n}, we restate our model problem (\ref{eqn:bio}) as a first-order system of PDEs. The task is to find the tuple $((\bt,\bu),\bsi,(\widetilde{\bt},\varphi),\widetilde{\bsi})$ within appropriately defined spaces (see Section \ref{section22} below), satisfying
\begin{subequations}
\begin{equation}\label{eqn:auxcomp}
	\left.
	\begin{array}{ccc}
		\bt=\nabla\bu \,, & \bsi^{\tt d}=2\mu(\varphi+\alpha)\bt_{sym}- \dfrac{1}{2}\left(\bu\otimes\bu\right)^{\tt d} \,,&-\bdiv\,\bsi+\dfrac{1}{2}\bt\bu=\boldsymbol{f}-g[1+\gamma(\varphi+\alpha)]\widehat{\mathbf{e}}_d\\
		\widetilde{\bt}=\nabla \varphi\,, & \widetilde{\bsi}=\kappa\,\widetilde{\bt}-\dfrac{1}{2}\varphi\bu-U(\varphi+\alpha)\widehat{\mathbf{e}}_d\,,&	-\div\,\widetilde{\bsi}+\dfrac{1}{2}\widetilde{\bt}\cdot\bu=0		
	\end{array}\right\} \qin \Omega
\end{equation}
along with
\begin{equation}\label{eq:bc-full}
	\bu=\textbf{0} \,\qon \Gamma\,,\quad  \widetilde{\bsi}\cdot\bn =0\,\qon \Gamma\,,\quad\int_{\Omega}\tr \,\bsi=0\quad\qan \quad \int_{\Omega}\varphi=0.
\end{equation}
\end{subequations}

\subsection{The fully mixed formulation}\label{section22}
Before carrying out any testing, we realize from the constitutive and equilibrium equations in  \eqref{eqn:auxcomp} that the  variables $\bsi$ and $\widetilde{\bsi}$ must be, at least, square-integrable with divergence in appropriate $\mathrm{L}^r$--Lebesgue spaces. Moreover, from the second and third equations of \eqref{eq:bc-full}, it is then clear that $\bsi\in \mathbb{H}_{0}(\bdiv_r;\Omega)$ (cf. \eqref{eq:spaceH0div}) and $\widetilde{\bsi}\in\mathbf{H}_{\Gamma}(\div_r;\Omega)$ (cf. \eqref{eq:spaceHgamma}), for some $r$ to be specified below.  With this at hand, and considering the condition \eqref{eq:bc-full} on $\bu$ and $\varphi$, let us   initially search  for $\bu\in\mathbf{H}^1_{0}(\Omega)$ and $\varphi\in\widetilde{\mathrm{H}}^1(\Omega)$. Thus, after multiplying the opening equations  in the first and second rows of \eqref{eqn:auxcomp} by test functions $\bta\in \mathbb{H}_{0}(\bdiv_r;\Omega)$ and $\widetilde{\bta}\in\mathbf{H}_{\Gamma}(\div_r;\Omega)$,  and using the integration-by-parts formula \eqref{eq:ipp-1} and its tensorial version, with $ r\in [6/5,+\infty),$ we find that 
\begin{subequations}
\begin{equation}\label{eqn:1NS}
	\int_{\Omega}\bt:\bta +\int_{\Omega}\bu\cdot\bdiv\,\bta=0\quad\forall\,\bta\in \mathbb{H}_0(\div_{r};\Omega)\,,
\end{equation}
and
\begin{equation}\label{eqn:1Con}
	\int_{\Omega}\widetilde{\bt}\cdot\widetilde{\bta}+\int_{\Omega}\varphi\,\div\,\widetilde{\bta}=0\quad \forall\, \widetilde{\bta}\in \mathbf{H}_{\Gamma}(\div_{r};\,\Omega)\,,
\end{equation}
\end{subequations}
respectively, where we have used the homogeneous Dirichlet boundary condition for $\bu$ and the fact that $\widetilde{\bta}\cdot \bn =0$ on $\Gamma$. Also, note from the Cauchy-Schwarz inequality that the first terms of \eqref{eqn:1NS} and \eqref{eqn:1Con} are well-defined for $\bt\in\mathbb{L}^2_{\tr}(\Omega)$ (cf. \eqref{eqn:espL2tr} and since $\tr\,\bt=\div \,\bu =0$) and for $\widetilde{\bt}\in\mathbf{L}^2(\Omega)$. Let us then consider respective test functions $\br\in \mathbb{L}^2_{\tr}(\Omega)$ and  $\widetilde{\br}\in \mathbf{L}^{2}(\Omega)$, to weakly rewrite the constituve relations (intermediate equations from both the first and second row of \eqref{eqn:auxcomp}) as
\begin{equation}\label{eqn:2NS}
	\int_{\Omega}\bsi:\br=2\int_{\Omega}\mu(\varphi+\alpha)\bt_{sym}:\br-\dfrac{1}{2}\int_{\Omega}(\bu\otimes\bu):\br \qquad\forall\,\br\in \mathbb{L}^2_{\tr}(\Omega)\,,
\end{equation}
where we have used that $\disp \int_{\Omega} \bsi^{\tt d}:\br =\int_{\Omega} \bsi:\br$ and  $\disp \int_{\Omega} (\bu\otimes\bu)^{\tt d}:\br =\int_{\Omega}  (\bu\otimes\bu):\br$ due to $ \tr\, \br=0$,  and
\begin{equation}\label{eqn:2Con}
	\int_{\Omega}\widetilde{\bsi}\cdot\widetilde{\br}=\kappa\int_{\Omega}\widetilde{\bt}\cdot\widetilde{\br}-\dfrac{1}{2}\int_{\Omega}\varphi\bu\cdot\widetilde{\br}-U\int_\Omega\varphi\widehat{\mathbf{e}}_d\cdot\widetilde{\br}- \alpha U\int_\Omega\widehat{\mathbf{e}}_d\cdot\widetilde{\br} \quad \forall\, \widetilde{\br}\in \mathbf{L}^{2}(\Omega)\,.
\end{equation}
From the H\"{o}lder inequality, note  that for  the second terms at the right-hand side of \eqref{eqn:2NS} and  \eqref{eqn:2Con}, involving the convective terms,  to be well-defined, it is suffices to consider $\bu\in\mathbf{L}^4(\Omega)$ and  $\varphi\in\mathrm{L}_0^4(\Omega)$. Therefore,  we now take  $\bv \in \mathbf{L}^4(\Omega)$ and $\psi \in \mathrm{L}_0^{4}(\Omega)$ to test the equilibrium relations (last expressions of \eqref{eqn:auxcomp}) yielding 
\begin{subequations}
\begin{equation}\label{eqn:3NS}
	-\int_{\Omega}\bdiv\, \bsi\cdot\bv+\dfrac{1}{2}\int_{\Omega}\bt \bu\cdot\bv=\int_{\Omega}\left\{\boldsymbol{f}-g[1+\gamma(\varphi+\alpha)]\widehat{\mathbf{e}}_d\right\}\cdot\bv\quad\forall\, \bv\in\mathbf{L}^4(\Omega)\,,
\end{equation}
and
\begin{equation}\label{eqn:3Con}
	-\int_{\Omega}\div \,\widetilde{\bsi}\,\psi +\dfrac{1}{2}\int_{\Omega}\bu\cdot\widetilde{\bt}\psi=0  \quad \forall\, \psi \in \mathrm{L}^{4}(\Omega)\,.
\end{equation}
\end{subequations}
Therefore, applying the H\"{o}lder inequality, the first terms of \eqref{eqn:3NS} and  \eqref{eqn:3Con} are well-defined for $r=4/3$, aligning with the valid range for $r$ specified in \eqref{eq:ipp-1}. Consequently,  we end up finding $\bsi\in \mathbb{H}_{0}(\bdiv_{4/3};\Omega)$  and $\widetilde{\bsi}\in\mathbf{H}_{\Gamma}(\div_{4/3};\Omega)$.

 Now, to simplify the notation, and according to \eqref{eq:spacesH&Ht}, we set
\[\vec{\bu}:=(\bt,\bu)\,,\quad \vec{\bv}:=(\br,\bv)\;\in\, \mathbf{H}\,,\quad \qan \quad  \vec{\varphi}:=(\widetilde{\bt},\varphi)\,,\quad \vec{\psi}:=(\widetilde{\br},\psi)\;\in\,\widetilde{\mathbf{H}}\]

\noindent In this way, within this framework, from \eqref{eqn:2NS}+\eqref{eqn:3NS}, \eqref{eqn:1NS}, \eqref{eqn:2Con}+\eqref{eqn:3Con} and \eqref{eqn:1Con}, we arrive at the following
%t: Find $( (\bt,\bu),\bsi , (\widetilde{\bt},\varphi),\widetilde{\bsi}) \in (\mathbb{L}^2_{\tr}(\Omega)\times \mathbf{L}^4(\Omega))\times \mathbb{H}_0(\div_{4/3};\Omega)\times( \mathbf{L}^2(\Omega)\times\mathrm{L}_0^{4}(\Omega))\times \mathbf{H}_\Gamma(\div_{4/3};\Omega)$, where it must be satisfied for all $((\br,\bv),\bta , (\widetilde{\br},\psi),\widetilde{\bta}) \in (\mathbb{L}^2_{\tr}(\Omega)\times \mathbf{L}^4(\Omega))\times \mathbb{H}_0(\div_{4/3};\Omega)\times( \mathbf{L}^2(\Omega)\times\mathrm{L}_0^{4}(\Omega))\times \mathbf{H}_\Gamma(\div_{4/3};\Omega)$.
%Then, 
fully-mixed variational formulation for the generalized bioconvective flows problem: Find $(\vec{\bu},\bsi,\vec{\varphi},\widetilde{\bsi}) \in \mathbf{H}\times \mathbb{H}_0(\bdiv_{4/3};\Omega)\times \widetilde{\mathbf{H}}\times \mathbf{H}_\Gamma(\div_{4/3};\Omega)$ such that
%
%\begin{equation}\label{eq:FV}
%	\begin{array}{rl}
%		\mathbf{A}^{\mathbf{f}}(\varphi;(\bt,\bu),(\br,\bv))+\mathbf{C}^{\mathbf{f}}(\bu,(\bt,\bu),(\br,\bv))- \mathbf{B}^{\mathbf{f}}((\br,\bv),\bsi)&=\mathbf{F}^{\mathbf{f}}(\varphi;(\br,\bv))  \\[2ex]
%		%
%		\mathbf{B}^{\mathbf{S}}((\bt,\bu),\bta)&=0  \\[2ex]
%		%
%		\mathbf{A}^{\mathbf{C}}((\widetilde{\bt},\varphi),(\widetilde{\br},\psi)) +\mathbf{C}^{\mathbf{C}}(\bu,(\widetilde{\bt},\varphi),(\widetilde{\br},\varphi)) - \mathbf{B}^{\mathbf{C}}((\widetilde{\br},\psi),(\widetilde{\bsi},\xi))&=0  \\[2ex]
%		%
%		\mathbf{B}^{\mathbf{C}}((\widetilde{\bt},\varphi),(\widetilde{\bta},\lambda))&=\mathbf{F}^{\mathbf{c}}(\widetilde{\bta},\lambda) 
%	\end{array}
%\end{equation}
%%
\begin{equation}\label{eq:FV}
	\begin{array}{rl}
	\A_\varphi(\vec{\bu},\vec{\bv})+\C(\bu;\vec{\bu},\vec{\bv})- \B(\vec{\bv},\bsi)&=\F_\varphi(\vec{\bv})  \\[2ex]
		\B(\vec{\bu},\bta)&=0  \\[2ex]
		\widetilde{\A}(\vec{\varphi},\vec{\psi}) +\widetilde{\C}(\bu;\vec{\varphi},\vec{\psi}) - \widetilde{\B}(\vec{\psi},\widetilde{\bsi})&=\widetilde{\F}(\vec{\psi})  \\[2ex]
		\widetilde{\B}(\vec{\varphi},\widetilde{\bta})&=0,
	\end{array}
\end{equation}
for all $(\vec{\bv},\bta,\vec{\psi},\widetilde{\bta}) \in \mathbf{H}\times \mathbb{H}_0(\bdiv_{4/3};\Omega)\times\widetilde{\mathbf{H}}\times \mathbf{H}_\Gamma(\div_{4/3};\Omega)$.
Here, $\A_\phi$, for a  $\phi\in \mathrm{L}_0^4(\Omega)$ given, and $\widetilde{\A}$ are the bilinear forms
\begin{subequations}
\begin{equation}\label{A^S}
	\A_\phi(\vec{\bu},\vec{\bv})=2\int_{\Omega}\mu(\phi+\alpha)\bt_{sym}:\br \quad \forall\,\vec{\bu},\vec{\bv}\in \mathbf{H}\,,
\end{equation}

 \begin{equation}\label{A^C}
 	\widetilde{\A}(\vec{\varphi},\vec{\psi})=\kappa\int_{\Omega}\widetilde{\bt}\cdot\widetilde{\br}-U\int_{\Omega}\varphi \widehat{\mathbf{e}}_d\cdot\widetilde{\br}\quad \forall\,\vec{\varphi},\vec{\psi}\in \widetilde{\mathbf{H}}\,.
 \end{equation}
 \end{subequations}
In turn, $\B$ and $\widetilde{\B}$ are the bilinear forms  defined as 
\begin{subequations}
\begin{equation}\label{B^S}
	\B(\vec{\bv},\bta)=\int_{\Omega}\br:\bta +\int_{\Omega}\bv\cdot\bdiv\,\bta\quad  \forall\,\vec{\bv}\in \mathbf{H}\,,\quad  \forall\, \bta \in\mathbb{H}_{0}(\bdiv_{4/3};\Omega)\,,
\end{equation}
 \begin{equation}\label{B^C}
 \disp	\widetilde{\B}(\vec{\psi},\widetilde{\bta})=\int_{\Omega}\widetilde{\br}\cdot\widetilde{\bta} +\int_{\Omega}\psi\,\div\,\widetilde{\bta}  \quad \forall\,\vec{\psi}\in \widetilde{\mathbf{H}}\,,\quad \forall\,\widetilde{\bta}\in \mathbf{H}_{\Gamma}(\div_{4/3};\Omega)\,.
 \end{equation}
 \end{subequations}
 In turn, the kernels $\mathbf{V}=\mathrm{ker}(\B)$ and $\widetilde{\mathbf{V}}=\mathrm{ker}(\widetilde{\B})$ of the bilinear forms $\B$ and $\widetilde{\B},$ are given by
 \begin{subequations}
 	\begin{equation}\label{def:kernel-BS}
 	\mathbf{V}:=\left\{ \vec{\bv}\in \mathbb{L}^2_{\tr}(\Omega)\times \mathbf{L}^4(\Omega)\,: \quad  \B(\vec{\bv},\bta)=0\,\quad \forall\,\bta\in\mathbb{H}_{0}(\bdiv_{4/3};\Omega)\,\right\}\,,
 \end{equation}
	\begin{equation}\label{def:kernel-BC}
	\widetilde{\mathbf{V}}:=\left\{ \vec{\psi}\in \mathbf{L}^2(\Omega)\times \mathrm{L}^4_0(\Omega)\,: \quad  \widetilde{\B}(\vec{\psi},\widetilde{\bta})=0\,\quad \forall\,\widetilde{\bta}\in\mathbf{H}_{\Gamma}(\div_{4/3};\Omega)\,\right\}\,.
\end{equation}
 \end{subequations}
 
\noindent On the other hand, for a given $\bw\in\mathbf{L}^4(\Omega)$, the  forms $\C(\bw;\cdot,\cdot)$ and $\widetilde{\C}(\bw;\cdot,\cdot)$ associated to the convective nonlinear terms, are defined as
\begin{subequations}
\begin{equation}\label{C^S}
	\C(\bw;\vec{\bu},\vec{\bv})=\dfrac{1}{2}\left[\int_{\Omega}\bt \bw\cdot\bv-\int_{\Omega}\br \bw\cdot \bu\right] \quad  \forall\,\vec{\bu},\vec{\bv}\in \mathbf{H}\,,
\end{equation}
\begin{equation}\label{C^C}
	\widetilde{\C}(\bw;\vec{\varphi},\vec{\psi})=\dfrac{1}{2}\left[\int_{\Omega}(\widetilde{\bt}\cdot\bw)\psi -\int_{\Omega}\left(\widetilde{\br}\cdot\bw\right)\varphi\right]\quad \forall\,\vec{\varphi},\vec{\psi}\in \widetilde{\mathbf{H}}\,,
\end{equation}
\end{subequations}
 where we used that $\disp\int_{\Omega} \br\bw\cdot\bu=\int_{\Omega}\br :(\bw\otimes\bu)$ to rewrite the last term defining $\C$, coming from \eqref{eqn:2NS}.
 
\noindent Finally, $\F_\phi$ (for a given $\phi\in\mathrm{L}_0^4(\Omega)$) and $\widetilde{\F}$ are the linear functionals  defined by
\begin{subequations}
\begin{equation}\label{FS}
	\F_\phi(\vec{\bv})= \int_{\Omega}\left\{\boldsymbol{f}-g[1+\gamma(\phi+\alpha)]\widehat{\mathbf{e}}_d\right\}\cdot\bv\quad \forall\,\vec{\bv}\in \mathbf{H}\,,
\end{equation}
\begin{equation}\label{FC}
	\widetilde{\F}(\vec{\psi})= \alpha U \int_\Omega \widehat{\mathbf{e}}_d\cdot \widetilde{\br}\quad \forall\,\vec{\psi}\in \widetilde{\mathbf{H}}\,.
\end{equation}\end{subequations}
Moreover, from the application of the triangle, H\"{o}lder, and Cauchy-Schwarz inequalities, in conjunction with the norm definitions for $\mathbf{H}$ and $\widetilde{\mathbf{H}}$ (cf.  \eqref{eq:norm-u}  and \eqref{eq:norm-varphi}),  the functional $\F_\phi$ (for each $\phi\in\mathrm{L}^4_0(\Omega)$) and $\widetilde{\F}$ are bounded. Specifically, 
%
%		\begin{equation}\label{eq:continuous-FS}
%			|\F_\phi(\vec{\bv})| \leq \left\{ \|\boldsymbol{f}\|_{0,4/3,\Omega} + g |\Omega|^{4/3} + g\gamma |\Omega|^{1/2} \|\phi\|_{0,4,\Omega} + g\gamma\alpha |\Omega|^{4/3} \right\} \|\vec{\bv}\| \quad \forall\, \vec{\bv} \in \mathbf{H},
%		\end{equation}
\begin{subequations}			\begin{equation}\label{eq:continuous-FS}
		|\F_\phi(\vec{\bv})| \leq \left\{ \|\boldsymbol{f}\|_{0,4/3,\Omega} + g(1+\gamma\alpha) |\Omega|^{4/3} + g\gamma |\Omega|^{1/2} \|\phi\|_{0,4,\Omega}  \right\} \|\vec{\bv}\|_\mathbf{H} \quad \forall\, \vec{\bv} \in \mathbf{H},
	\end{equation}
		and
		\begin{equation}\label{eq:continuous-FC}
			|\widetilde{\F}(\vec{\psi})| \leq \alpha U |\Omega|^{\frac{1}{2}}\|\vec{\psi}\|_{ \widetilde{\mathbf{H}}} \quad \forall \,\vec{\psi} \in \widetilde{\mathbf{H}}.
		\end{equation}
\end{subequations}	
%\begin{rem}\label{rem:structure-problem}
%	It is important to mention that similarly to \cite{hw-M2AN-2013,CN-16}, the eventual solution $\vec{\bu}$ and $\vec{\varphi}$ to problem \eqref{eq:FV} belongs to the kernel $\mathbf{V}$ and $\widetilde{\mathbf{V}}.$ respectively. This fact, along with the properties to be shown 
%	\end{rem}	
Below, we summarize the properties of the forms involved with the model \eqref{eq:FV}. We begin with the following result regarding the bilinear forms $\B$ and  $\widetilde{\B}$. 
\begin{lem}\label{lem:properties-form-B} The forms $\B:\mathbf{H}\times\mathbb{H}_{0}(\bdiv_{4/3};\Omega) \rightarrow \mathrm{R}$ and $\widetilde{\B}:\widetilde{\mathbf{H}}\times\mathbf{H}_{\Gamma}(\div_{4/3};\Omega)\rightarrow \mathrm{R}$ defined in \eqref{B^S} and \eqref{B^C}, possess the following properties
	\begin{itemize}
		\item[(a)]  \textbf{Continuity:} $\B$ and $\widetilde{\B}$  are bounded, that is 
		\begin{equation*}%\label{eq:continuous-bilinear-BS}
			 |\B(\vec{\bv},\bta)|\leq  \|\vec{\bv}\|_\mathbf{H}\,\|\bta\|_{\bdiv_{4/3},\Omega} \qquad \forall\, \vec{\bv}\in \mathbf{H}\,,\quad  \forall\, \bta \in\mathbb{H}_{0}(\bdiv_{4/3};\Omega)\,,
		\end{equation*}
		\begin{equation*}%\label{eq:continuous-bilinear-BC}
			|\widetilde{\B}(\vec{\psi},\widetilde{\bta})|\leq  \|\vec{\psi}\|_{\widetilde{\mathbf{H}}}\,\|\widetilde{\bta}\|_{\div_{4/3},\Omega}\qquad \forall\,\vec{\psi}\in\widetilde{\mathbf{H}}\,,\quad \forall\,\widetilde{\bta}\in \mathbf{H}_{\Gamma}(\div_{4/3};\Omega)\,.
		\end{equation*}
		%
	%	where 
	%	\begin{equation}\label{eq:||BS||-||BC||}
	%		\|\mathbf{B}\|:= 1\qquad\qan\qquad
	%		\|\widetilde{\mathbf{B}}\|:= 1\,.
%		\end{equation}
		\item[(b)] \textbf{Inf--sup conditions:} There exist positive constants $\beta$ and $\widetilde{\beta}$ such that 
        \begin{subequations}
		\begin{equation}\label{eq:inf-supBS}
			\sup_{\substack{\vec{\bv}\,\in\,\mathbf{H} \\ \vec{\bv} \neq \vec{\mathbf{0}}}}
			\frac{\B(\vec{\bv},\bta)}{\| \vec{\bv}\|_{\mathbf{H}}}\,\geq\,\beta\,\|\bta\|_{\bdiv_{4/3},\Omega}\,\qquad  \forall\, \bta \in\mathbb{H}_{0}(\bdiv_{4/3};\Omega)\,,
		\end{equation}
		\begin{equation}\label{eq:inf-supBC}
			\sup_{\substack{\vec{\psi}\,\in\,\widetilde{\mathbf{H}}\\ \vec{\psi} \neq \vec{0} } }
			\frac{\widetilde{\B}(\vec{\psi},\widetilde{\bta})}{\| \vec{\psi}\|_{\widetilde{\mathbf{H}}}}\,\geq\,\widetilde{\beta}\,\|\widetilde{\bta}\|_{\div_{4/3} ,\Omega}\,\qquad  \forall\,\widetilde{\bta}\in \mathbf{H}_{\Gamma}(\div_{4/3};\Omega)\,.
		\end{equation}
        \end{subequations}
		\item[(c)] The kernels $\mathbf{V}$ and $\widetilde{\mathbf{V}}$ of the forms $\B$ and $\widetilde{\B}$  (cf. \eqref{def:kernel-BS} and \eqref{def:kernel-BC}, respectively) are characterized by the relations
        \begin{subequations}
		\begin{equation}\label{kernel-BS}
 \vec{\bv}=(\br,\bv)\in \mathbf{V} \subset \mathbb{L}^2_{\tr}(\Omega)\times \mathbf{L}^4(\Omega)\qquad\Longleftrightarrow\qquad 	\bv \in \mathbf{H}^1_0(\Omega)  \qan	 \br=\nabla \bv\,,
		\end{equation}
			\begin{equation}\label{kernel-BC}
		\vec{\psi}=(\widetilde{\br},\psi)\in \widetilde{\mathbf{V}} \subset \mathbf{L}^2(\Omega)\times \mathrm{L}^4_0(\Omega)\qquad\Longleftrightarrow\qquad 	\psi \in \widetilde{\mathrm{H}}^1(\Omega)  \qan	 \widetilde{\br}=\nabla \psi\,.
	\end{equation}
    \end{subequations}
		\end{itemize}	
	\end{lem}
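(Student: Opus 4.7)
The three parts of the lemma are handled independently; (a) is routine, (b) follows the standard split–by–duality strategy of mixed Banach formulations, and (c) is essentially an exercise in distribution theory combined with the integration–by–parts formula \eqref{eq:ipp-1}.

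\textbf{Continuity.} For part (a), I would apply Cauchy–Schwarz to $\int_\Omega \br:\bta$ and Hölder's inequality with conjugate exponents $(4,4/3)$ to $\int_\Omega \bv\cdot\bdiv\,\bta$, then group the two resulting products via the discrete Cauchy–Schwarz in $\mathbb{R}^2$, using the definitions of the norms \eqref{eq:norm-u} and \eqref{eq:normHdiv}. The continuity constant that arises is $1$. The argument for $\widetilde{\B}$ is identical, with $(\widetilde{\br},\widetilde{\bta})$ replacing $(\br,\bta)$ and $\psi\in\mathrm{L}^4_0(\Omega)$ playing the role of $\bv$.

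\textbf{Inf--sup conditions.} For part (b), I would exploit the product structure of $\mathbf{H} = \mathbb{L}^2_{\tr}(\Omega)\times \mathbf{L}^4(\Omega)$. Taking $\vec{\bv} = (\bta^{\tt d},\mathbf{0})$ in the supremum (a valid choice, since $\tr\,\bta^{\tt d} = 0$) provides a lower bound of $\|\bta^{\tt d}\|_{0,\Omega}$; taking $\vec{\bv} = (\mathbf{0},\bv)$ with $\bv\in\mathbf{L}^4(\Omega)$ arbitrary and invoking standard $\mathrm{L}^4$--$\mathrm{L}^{4/3}$ duality yields a lower bound of $\|\bdiv\,\bta\|_{0,4/3,\Omega}$. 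Combining these two yields
\begin{equation*}
\sup_{\vec{\bv}\neq\vec{\mathbf{0}}}\frac{\B(\vec{\bv},\bta)}{\|\vec{\bv}\|_{\mathbf{H}}} \;\geq\; \tfrac{1}{\sqrt{2}}\bigl(\|\bta^{\tt d}\|^2_{0,\Omega} + \|\bdiv\,\bta\|^2_{0,4/3,\Omega}\bigr)^{1/2}.
\end{equation*}
It then remains to couple this with the classical Banach-space tensor estimate
\begin{equation*}
\|\bta\|_{0,\Omega} \leq c_0\bigl(\|\bta^{\tt d}\|_{0,\Omega} + \|\bdiv\,\bta\|_{0,4/3,\Omega}\bigr) \qquad \forall\,\bta \in \mathbb{H}_0(\bdiv_{4/3};\Omega),
\end{equation*}
available in \cite{CMO-2018,CGMor}, to recover $\|\bta\|_{\bdiv_{4/3},\Omega}$ on the right-hand side and conclude \eqref{eq:inf-supBS}. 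For $\widetilde{\B}$ the strategy is identical but simpler: no deviator is needed, since the choice $\widetilde{\br} = \widetilde{\bta}$ already controls $\|\widetilde{\bta}\|_{0,\Omega}$; for the divergence contribution, the boundary condition $\widetilde{\bta}\cdot\bn=0$ gives $\int_\Omega \div\,\widetilde{\bta} = 0$, hence $\div\,\widetilde{\bta}\in\mathrm{L}^{4/3}_0(\Omega)$, and duality against $\mathrm{L}^4_0(\Omega)$ closes the argument.

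\textbf{Kernel characterizations.} For part (c), let $\vec{\bv}=(\br,\bv)\in\mathbf{V}$. I would first test the identity $\B(\vec{\bv},\bta)=0$ against smooth compactly supported tensors (the zero-trace integral constraint being harmless on such tests, or enforceable by a trivial rank-one correction), which identifies $\br=\nabla\bv$ in the sense of distributions; since $\br\in\mathbf{L}^2(\Omega)$ and $\bv\in\mathbf{L}^4(\Omega)\hookrightarrow\mathbf{L}^2(\Omega)$, this lifts $\bv$ to $\mathbf{H}^1(\Omega)$. Re-inserting $\br=\nabla\bv$ into $\B(\vec{\bv},\bta)=0$ for arbitrary $\bta\in\mathbb{H}_0(\bdiv_{4/3};\Omega)$ and applying the tensorial version of \eqref{eq:ipp-1} cancels the interior contributions and leaves $\langle\bta\bn,\bv\rangle_\Gamma=0$; surjectivity of the normal-trace map onto $\mathbf{H}^{-1/2}(\Gamma)$ then forces $\bv|_\Gamma=\mathbf{0}$, i.e., $\bv\in\mathbf{H}^1_0(\Omega)$. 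The converse direction is a direct application of \eqref{eq:ipp-1}. The argument for $\widetilde{\mathbf{V}}$ is cleaner: since $\widetilde{\bta}\cdot\bn=0$ on $\Gamma$, the boundary pairing vanishes automatically, so only the distributional identity $\widetilde{\br}=\nabla\psi$ needs to be extracted; the membership $\psi\in\widetilde{\mathrm{H}}^1(\Omega)$ then follows immediately, with the zero-mean condition inherited from $\psi\in\mathrm{L}^4_0(\Omega)$.

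\textbf{Main obstacle.} The only step requiring genuine effort is the Banach-space tensor inequality invoked in (b); the Hilbertian analogue involving $\mathrm{L}^2$ divergence is classical, but its $\mathrm{L}^{4/3}$ variant relies on a more delicate argument (typically a Bogovskii-type construction or a duality-based reasoning) and must be imported from the previously developed mixed Banach framework.
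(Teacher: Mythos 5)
Your proposal is correct, but it is considerably more self-contained than what the paper actually does: for part (a) both arguments coincide (H\"older/Cauchy--Schwarz with constant $1$), but for part (b) and the characterization of $\mathbf{V}$ the paper simply cites \cite{hw-M2AN-2013} and \cite{CGMor}, only adding that the inf-sup for $\widetilde{\B}$ survives the restriction to $\mathbf{H}_\Gamma(\div_{4/3};\Omega)$ because one is shrinking the set of $\widetilde{\bta}$'s, and only proving in detail the kernel identity for $\widetilde{\mathbf{V}}$ via the same distributional argument you use (including the observation that no trace information on $\psi$ can be extracted). Your direct proof of \eqref{eq:inf-supBS} — testing with $(\bta^{\tt d},\mathbf{0})$ and $(\mathbf{0},\bv)$, then invoking the estimate $\|\bta\|_{0,\Omega}\le c_0\big(\|\bta^{\tt d}\|_{0,\Omega}+\|\bdiv\,\bta\|_{0,4/3,\Omega}\big)$ on $\mathbb{H}_0(\bdiv_{4/3};\Omega)$ — is precisely the mechanism inside the cited references, and you correctly identify that Banach-space deviatoric estimate as the only nontrivial ingredient. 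Your treatment of $\widetilde{\B}$ is in fact sharper than the paper's one-line justification: the paper never addresses the fact that the test space uses $\mathrm{L}^4_0(\Omega)$ rather than $\mathrm{L}^4(\Omega)$, whereas your observation that $\widetilde{\bta}\cdot\bn=0$ forces $\div\,\widetilde{\bta}$ to have zero mean, so that duality against $\mathrm{L}^4_0(\Omega)$ loses at most a fixed constant, is exactly the point needed to make the restriction harmless. Two small things to make your sketch of the $\mathbf{V}$-characterization airtight: the rank-one correction $c\,\mathbb{I}$ is annihilated by $\B(\vec{\bv},\cdot)$ because $\tr\,\br=0$ and $\bdiv(c\,\mathbb{I})=\mathbf{0}$, and in the boundary step the corresponding boundary contribution $c\int_\Gamma \bv\cdot\bn$ vanishes since $\div\,\bv=\tr\,\br=0$; with these remarks the normal-trace surjectivity argument closes as you claim.
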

	\begin{proof} The continuity of the bilinear forms $\B$ and $\widetilde{\B}$ is verified by applying the triangle, H$\ddot{\mathrm{o}}$lder, and Cauchy-Schwarz inequalities, together with the norm definitions in spaces $\mathbf{H}$, $\widetilde{\mathbf{H}}$, $\mathbb{H}(\bdiv_{4/3};\Omega)$, and $\mathbf{H}(\div_{4/3};\Omega)$ (see \eqref{eq:norm-u}, \eqref{eq:norm-varphi}, and \eqref{eq:normHdiv}). The inf-sup conditions, \eqref{eq:inf-supBS} and \eqref{eq:inf-supBC}, along with the kernel properties, \eqref{kernel-BS} and \eqref{kernel-BC}, have been thoroughly proven in  \cite{hw-M2AN-2013} and \cite{CGMor}, albeit with slight variations in the specified spaces concerning the bilinear form $\widetilde{\B}$.
		
	\noindent Given that $\mathbf{H}_\Gamma(\div_{4/3},\Omega) \subset \mathbf{H}(\div_{4/3},\Omega)$, the inf-sup condition \eqref{eq:inf-supBC} is naturally satisfied due to the subspace relationship. Concerning the kernel equivalence \eqref{kernel-BC}, the condition that $\vec{\psi}=(\widetilde{\mathbf{r}},\psi) \in \widetilde{\mathbf{V}} \subset \mathbf{L}^2(\Omega) \times \mathrm{L}^4_0(\Omega)$ holds if and only if the following identity is satisfied for all $\widetilde{\bta} \in \mathbf{H}_\Gamma(\div_{4/3};\Omega)$:
		\begin{equation}\label{eq:proofkerb}
			\widetilde{\B}(\vec{\psi},\widetilde{\bta}) = \int_\Omega\widetilde{\mathbf{r}}\cdot\widetilde{\bta} + \int_{\Omega}\psi\,\div\,\widetilde{\bta} = 0.
		\end{equation}
		Choosing $\widetilde{\bta} \in \mathbf{C}^{\infty}_{0}(\Omega) \subset \mathbf{H}_\Gamma(\div_{4/3};\Omega)$ in \eqref{eq:proofkerb} implies that the term $\int_\Omega\psi\,\div\,\widetilde{\bta}$ corresponds to the action of the vectorial distribution $-\nabla\psi$ on the test function $\widetilde{\bta}$. Consequently, \eqref{eq:proofkerb} says that $\nabla\psi = \mathbf{r} \in \mathbf{L}^2(\Omega)$, leading to $\psi \in \widetilde{\mathrm{H}}^1(\Omega) = \mathrm{H}^1(\Omega) \cap \mathrm{L}^4_0(\Omega)$ (see \eqref{eqn:L2H1Spaces}). Unlike the approach in \cite{CGMor}, the validity of \eqref{eq:proofkerb} exclusively in $\mathbf{H}_\Gamma(\div_{4/3};\Omega)$ does not allow any conclusions about $\psi$ on  $\Gamma$.
	\end{proof}
	
	Next, we establish the following result concerning the bilinear forms $\A_\phi$ and $\widetilde{\A}$.

	\begin{lem}\label{lem:properties-form-A}
		The bilinear forms $\A_\phi : \mathbf{H} \times \mathbf{H} \to \mathbb{R}$ (for a given $\phi \in \mathrm{L}^4_0(\Omega)$) and $\widetilde{\A} : \widetilde{\mathbf{H}} \times \widetilde{\mathbf{H}} \to \mathbb{R}$, as defined in \eqref{A^S} and \eqref{A^C} respectively, have the following properties:
		\begin{itemize}
			\item[(a)] \textbf{Continuity:} Both $\A_\phi$ and $\widetilde{\A}$ are continuous, satisfying
			\begin{align*}
				|\A_\phi(\vec{\bu},\vec{\bv})| &\leq  \|\A\|\, \|\vec{\bu}\|_\mathbf{H}\, \|\vec{\bv}\|_\mathbf{H} \quad \forall \, \vec{\bu}, \vec{\bv} \in \mathbf{H}, \\
				|\widetilde{\A}(\vec{\varphi}, \vec{\psi})| &\leq \|\widetilde{\A}\|\, \|\vec{\varphi}\|_{\widetilde{\mathbf{H}}}\, \|\vec{\psi}\|_{\widetilde{\mathbf{H}}} \quad \forall \, \vec{\varphi}, \vec{\psi} \in \widetilde{\mathbf{H}},
			\end{align*}
			where $\|\A\| :=2\mu_2$,  with $\mu_2$ from \eqref{eqn:mu-bound}, and $\|\widetilde{\A}\| :=\sqrt{2}\max\{\kappa, U|\Omega|^{1/4}\} $.
			
			\item[(b)] \textbf{Coercivity of $\A_\phi$:} The form $\A_\phi$ is coercive on the kernel $\mathbf{V}$ of the bilinear form $\B$ (cf. \eqref{def:kernel-BS}), for any $\phi \in \mathrm{L}^4_0(\Omega)$. There exists a positive constant $\alpha_{\A} := \frac{1}{2}\mu_1 \min\{1, C_{\mathrm{FP}}^{-2}\|\bi_4\|^{-2}\}$, ensuring
			\begin{equation}\label{eq:ellipAS}
			\A_\phi(\vec{\bv},\vec{\bv}) \geq \alpha_{\A} \|\vec{\bv}\|_{ \mathbf{H}}^2 \quad \forall \, \vec{\bv} \in \mathbf{V},
			\end{equation}
			with $\mu_1$, $C_{\mathrm{FP}}$, and $\|\bi_4\|$ from \eqref{eqn:mu-bound}, \eqref{eqn:fp}, and \eqref{eq:inclusion-Lebesgue}, respectively.
			
			\item[(c)] \textbf{Coercivity of $\widetilde{\A}$:} Assuming the diffusive constant $\kappa$ and the mean velocity constant $U$ satisfy
			\begin{equation}\label{eq:rest-conc}
			\frac{U}{\kappa}|\Omega|^{1/4} < \min\{1, C_{\mathrm{FP}}^{-2}\|\mathrm{i}_4\|^{-2}\},
			\end{equation}
			the form $\widetilde{\A}$ is coercive on the kernel $\widetilde{\mathbf{V}}$ of $\widetilde{\B}$ (cf. \eqref{def:kernel-BC}). Specifically, there exists a positive constant $\alpha_{\widetilde{\A}} := \frac{\kappa}{2} \left(\min \{1, C_{\mathrm{FP}}^{-2} \|\mathrm{i}_4\|^{-2}\} - \frac{U|\Omega|^{1/4}}{\kappa}\right)$ such that
			\begin{equation}\label{eq:ellipAC}
			\widetilde{\A}(\vec{\psi},\vec{\psi}) \geq \alpha_{\widetilde{\A}}\|\vec{\psi}\|_{\widetilde{\mathbf{H}}}^2 \quad \forall\, \vec{\psi} \in \widetilde{\mathbf{V}}\,,
		\end{equation}
				with $C_{\mathrm{FP}}$ and $\|\mathrm{i}_4\|$ from \eqref{eqn:fp}, and \eqref{eq:inclusion-Lebesgue}, respectively.
		\end{itemize}
	\end{lem}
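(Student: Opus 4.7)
For the continuity bounds in part (a), I would expand each form directly and apply Cauchy--Schwarz together with the hypotheses on the data. For $\A_\phi$, the uniform bound $\mu(\phi+\alpha)\le\mu_2$ from \eqref{eqn:mu-bound} combined with the Cauchy--Schwarz inequality and the pointwise estimate $|\bt_{sym}|\le|\bt|$ yields
$|\A_\phi(\vec{\bu},\vec{\bv})|\le 2\mu_2\|\bt\|_{0,\Omega}\|\br\|_{0,\Omega}\le 2\mu_2\|\vec{\bu}\|_{\mathbf{H}}\|\vec{\bv}\|_{\mathbf{H}}$, giving $\|\A\|=2\mu_2$. For $\widetilde{\A}$, the first term is bounded by $\kappa\|\widetilde{\bt}\|_{0,\Omega}\|\widetilde{\br}\|_{0,\Omega}$; for the second, an application of H\"older with exponents $(4,4,2)$ and $\|\widehat{\mathbf{e}}_d\|_{0,4,\Omega}=|\Omega|^{1/4}$ yields $U|\Omega|^{1/4}\|\varphi\|_{0,4,\Omega}\|\widetilde{\br}\|_{0,\Omega}$. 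Factoring out $\max\{\kappa,U|\Omega|^{1/4}\}$ and invoking the discrete Cauchy--Schwarz inequality $a_1b+a_2b\le\sqrt{2}\sqrt{a_1^2+a_2^2}\,b$ produces the stated constant $\|\widetilde{\A}\|=\sqrt{2}\max\{\kappa,U|\Omega|^{1/4}\}$.

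\textbf{Coercivity of $\A_\phi$.} For any $\vec{\bv}=(\br,\bv)\in\mathbf{V}$, the kernel characterization \eqref{kernel-BS} gives $\bv\in\mathbf{H}^1_0(\Omega)$ and $\br=\nabla\bv$, whence $\br_{sym}=\mathbf{e}(\bv)$. Using $\br_{sym}:\br=|\mathbf{e}(\bv)|^2$ together with the lower bound $\mu(\phi+\alpha)\ge\mu_1$ from \eqref{eqn:mu-bound}, I obtain $\A_\phi(\vec{\bv},\vec{\bv})\ge 2\mu_1\|\mathbf{e}(\bv)\|_{0,\Omega}^2$. The task is then to dominate $\|\vec{\bv}\|_{\mathbf{H}}^2=\|\nabla\bv\|_{0,\Omega}^2+\|\bv\|_{0,4,\Omega}^2$ by a multiple of $\|\mathbf{e}(\bv)\|_{0,\Omega}^2$. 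Korn's inequality \eqref{eqn:korn} supplies $\|\nabla\bv\|_{0,\Omega}^2\le 2\|\mathbf{e}(\bv)\|_{0,\Omega}^2$, while the chain $\|\bv\|_{0,4,\Omega}\le\|\bi_4\|\|\bv\|_{1,\Omega}\le\|\bi_4\|C_{\mathrm{FP}}|\bv|_{1,\Omega}$ combined with Korn gives $\|\bv\|_{0,4,\Omega}^2\le 2\|\bi_4\|^2C_{\mathrm{FP}}^2\|\mathbf{e}(\bv)\|_{0,\Omega}^2$. Summing both estimates and using $a+b\le 2\max\{a,b\}$ leads to $\|\vec{\bv}\|_{\mathbf{H}}^2\le 4\max\{1,C_{\mathrm{FP}}^2\|\bi_4\|^2\}\|\mathbf{e}(\bv)\|_{0,\Omega}^2$, which rearranges exactly to \eqref{eq:ellipAS} with the stated $\alpha_{\A}$.

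\textbf{Coercivity of $\widetilde{\A}$.} For $\vec{\psi}=(\widetilde{\br},\psi)\in\widetilde{\mathbf{V}}$, identity \eqref{kernel-BC} gives $\psi\in\widetilde{\mathrm{H}}^1(\Omega)$ and $\widetilde{\br}=\nabla\psi$, so $\widetilde{\A}(\vec{\psi},\vec{\psi})=\kappa\|\nabla\psi\|_{0,\Omega}^2-U\int_\Omega\psi\,\widehat{\mathbf{e}}_d\cdot\nabla\psi$. Repeating the argument of the previous paragraph (without needing Korn, since we work directly with $\nabla\psi$) shows $\kappa\|\nabla\psi\|_{0,\Omega}^2\ge\tfrac{\kappa}{2}\min\{1,C_{\mathrm{FP}}^{-2}\|\bi_4\|^{-2}\}\,\|\vec{\psi}\|_{\widetilde{\mathbf{H}}}^2$. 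For the cross term, H\"older with $(4,4,2)$ bounds it by $U|\Omega|^{1/4}\|\psi\|_{0,4,\Omega}\|\nabla\psi\|_{0,\Omega}$, and Young's inequality $ab\le\tfrac{1}{2}(a^2+b^2)$ yields $\tfrac{1}{2}U|\Omega|^{1/4}\|\vec{\psi}\|_{\widetilde{\mathbf{H}}}^2$. Subtracting the two estimates and factoring $\kappa/2$ produces precisely $\alpha_{\widetilde{\A}}$, which is positive under the smallness condition \eqref{eq:rest-conc}. The only mildly subtle point, and the one I would handle carefully, is the balancing of the two contributions $\|\nabla\psi\|_{0,\Omega}^2$ and $\|\psi\|_{0,4,\Omega}^2$ into the combined norm so that the resulting constant matches exactly the one announced in the statement; everything else is routine application of the embeddings listed in the preliminary section.
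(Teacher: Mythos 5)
Your proposal is correct and follows essentially the same route as the paper: continuity via H\"older/Cauchy--Schwarz with the bounds \eqref{eqn:mu-bound}, and coercivity via the kernel characterizations \eqref{kernel-BS}--\eqref{kernel-BC} combined with Korn, Friedrichs--Poincar\'e, the embedding $\bi_4$, and Young's inequality, yielding exactly the constants $\alpha_{\A}$ and $\alpha_{\widetilde{\A}}$. The only difference is cosmetic: the paper delegates part (b) to \cite[Lemma 3.2]{CGMor} and splits $\kappa\|\widetilde{\br}\|_{0,\Omega}^2$ as two halves rather than using your $a+b\le 2\max\{a,b\}$ bookkeeping, but the ingredients and resulting constants coincide.
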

	
	\begin{proof} The continuity of the forms $\A$ and $\widetilde{\A}$ can be established through the application of  the H\"{o}lder inequality, combined with the boundedness of $\mu$ (cf. \eqref{eqn:mu-bound}), and the Cauchy-Schwarz inequality. Additionally, the definitions of norms on the spaces $\mathbf{H}$ and $\widetilde{\mathbf{H}}$, as specified in \eqref{eq:norm-u} and \eqref{eq:norm-varphi} respectively, are employed. The  $\mathbf{V}$--coercivity of $\A$  is shown in \cite[Lemma 3.2]{CGMor} by using  the characterization of $\mathbf{V}$ stated in part (c) of Lemma \eqref{lem:properties-form-B},  the lower bound on $\mu$ (refer to \eqref{eqn:mu-bound}), the Korn inequality (see \eqref{eqn:korn}), and the embedding provided by the continuous injection $\mathbf{i}_{4}:\mathbf{H}^1(\Omega) \rightarrow \mathbf{L}^4(\Omega)$ (see \eqref{eq:inclusion-Lebesgue}) along with  the Friedrichs-Poincaré inequality (cf. \eqref{eqn:fp}). 
	
	\noindent We now proceed to show the coercivity of $\widetilde{\A}$ over the kernel $\widetilde{\mathbf{V}}$ in a similar way. Given $\vec{\psi}=(\widetilde{\br},\psi)\in\widetilde{\mathbf{V}}$, in light of \eqref{kernel-BC} we have that  $\widetilde{\br}=\nabla \psi$ and $\psi \in \widetilde{\mathrm{H}}^1(\Omega)$.  Using this characterization, in combination with the H$\ddot{\mathrm{o}}$lder, Young and Friedrichs-Poincar\'e inequalities, the continuous injection $\mathrm{i}_{4}:\mathrm{H}^1(\Omega) \rightarrow \mathrm{L}^4(\Omega)$ and the norm definition on $\widetilde{\mathbf{H}}$, we obtain
	\begin{equation*}
		\begin{array}{cl}
			\widetilde{\A}(\vec{\psi},\vec{\psi})&\disp =\kappa\int_{\Omega}\widetilde{\br}\cdot\widetilde{\br}-U\int_{\Omega}\psi \widehat{\mathbf{e}}_d\cdot\widetilde{\br} 
			\geq \kappa \|\widetilde{\br}\|^2_{0,\Omega}-U\|\psi\|_{0,4,\Omega}\|\widehat{\mathbf{e}}_4\|_{0,4,\Omega}\|\widetilde{\br}\|_{0,\Omega}\\[2ex]
			&\geq \dfrac{\kappa}{2}\Big\{ \|\widetilde{\br}\|^2_{0,\Omega}+\|\widetilde{\br}\|^2_{0,\Omega}\Big\}- \dfrac{U}{2}|\Omega|^{1/4}\left\{\|\psi\|_{0,4,\Omega}^2+\|\widetilde{\br}\|_{0,\Omega}^2\right\} \\[2ex]
			&=\dfrac{\kappa}{2}\, |\psi|^2_{1,\Omega} + \dfrac{\kappa}{2}      \|\widetilde{\br}\|^2_{0,\Omega}-\dfrac{U}{2}|\Omega|^{1/4} \|\vec{\psi}\|_{\widetilde{\mathbf{H}}}^2 \\[2ex]
			&\geq\dfrac{\kappa}{2}C_{\mathrm{FP}}^{-2}\|\mathrm{i}_{4}\|^{-2}\, \|\psi\|^2_{0,4,\Omega} + \dfrac{\kappa}{2}      \|\widetilde{\br}\|^2_{0,\Omega}-\dfrac{U}{2}|\Omega|^{1/4} \|\vec{\psi}\|_{\widetilde{\mathbf{H}}}^2 \\[2ex]
			&\geq\dfrac{\kappa}{2}\min\Big\{1,C_{\mathrm{FP}}^{-2}\|\mathrm{i}_{4}\|^{-2} \Big\}\|\vec{\psi}\|^2_{\widetilde{\mathbf{H}}}-\dfrac{U}{2}|\Omega|^{1/4} \|\vec{\psi}\|_{\widetilde{\mathbf{H}}}^2 \geq \alpha_{\widetilde{\A}}\|\vec{\psi}\|_{\widetilde{\mathbf{H}}}^2\,.
		\end{array}	
	\end{equation*}
	As a result, the hypothesis \eqref{eq:rest-conc} guarantees that $\alpha_{\widetilde{\A}} := \frac{\kappa}{2} \left(\min \{1, C_{\mathrm{FP}}^{-2} \|\mathrm{i}_4\|^{-2}\} - \frac{U|\Omega|^{1/4}}{\kappa}\right)>0$ and so the $\widetilde{\mathbf{V}}$--coercivity of $\widetilde{\A}$ follows with constant $\alpha_{\widetilde{\A}}$. 
	\end{proof}
	
	\begin{rem}\label{rem:data-constraint-concentration} It is noteworthy that, similar to the analyses presented in previous works \cite{Bold-bioconvection,cao-2013,CGMir-1,Teramoto-bioconvection}, a comparable constraint to \eqref{eq:rest-conc} is considered for ensuring the well--posedness of their respective models. In our case, this condition is required for the coercivity of the bilinear form $\widetilde{\A}$, corresponding to the concentration equation. In all the cases, the restriction necessitates a sufficiently high diffusion rate $\kappa$, while requiring both the average upward swimming velocity $U$ and the physical domain $\Omega$ to remain comparatively low.
	\end{rem}	

	Regarding the forms related to the convective terms, we have the following result. 
	
	\begin{lem}\label{lem:properties-form-C-F}  For each $\bw\in\mathbf{L}^4(\Omega),$ the bilinear forms $\C(\bw:\cdot,\cdot) : \mathbf{H}\times\mathbf{H}\rightarrow \mathrm{R}$ and  $\widetilde{\C}(\bw;\cdot,\cdot):\widetilde{\mathbf{H}}\times\widetilde{\mathbf{H}} \rightarrow \mathrm{R}$ (cf. \eqref{C^S} and \eqref{C^C}, respectively), satisfy the following properties:
	\begin{itemize}
		\item[(a)] \textbf{Continuity:} Both $\C$ and $\widetilde{\C}$ are bounded. Moreover,
			\begin{align*}
			|\C(\bw;\vec{\bu},\vec{\bv})|&\leq \dfrac{1}{2}\|\bw\|_{0,4,\Omega}\, \|\vec{\bu}\|_{\mathbf{H}} \, \|\vec{\bv}\|_{\mathbf{H}}\quad\forall \, \vec{\bu},\vec{\bv}\in \mathbf{H}\\
			|\widetilde{\C}(\bw;\vec{\varphi},\vec{\psi})|&\leq 
			\dfrac{1}{2}\|\bw\|_{0,4,\Omega}\, \|\vec{\varphi}\|_{\widetilde{\mathbf{H}}}\,\|\vec{\psi}\|_{\widetilde{\mathbf{H}}}
			\quad\forall\,\vec{\varphi},\vec{\psi}\in \widetilde{\mathbf{H}}\,.
		\end{align*}
		\item[(b)] \label{lem:properties-form-C-F-Partb} \textbf{Skew-symmetry:}  the bilinear forms satisfy skew-symmetry properties
			\begin{align*}
			\C(\bw;\vec{\bu},\vec{\bv})\,&=-\C(\bw;\vec{\bv},\vec{\bu})\quad \forall\,\vec{\bu},\vec{\bv}\in \mathbf{H}\\
			\widetilde{\C}(\bw;\vec{\varphi},\vec{\psi})\,&=-\widetilde{\C}(\bw;\vec{\psi},\vec{\varphi})\quad\forall\,\vec{\varphi},\vec{\psi}\in\widetilde{\mathbf{H}}\,.
		\end{align*}
		In particular, 
		\begin{equation}\label{eq:skewproperties}
		\C(\bw;\vec{\bv},\vec{\bv})\,=0 \quad \forall\,\vec{\bv}\in \mathbf{H}\qan
		\widetilde{\C}(\bw;\vec{\psi},\vec{\psi})\,=0\quad\forall\,\vec{\psi}\in\widetilde{\mathbf{H}}\,.
		\end{equation}
\item[(c)] \textbf{Boundedness properties:} In addition,
\begin{equation*}\label{weakconti-c}
	\big| \C(\bw_1;\vec{\bu},\vec{\bv})\,-\, \C(\bw_2;\vec{\bu},\vec{\bv})\big| \,\leq\,
	\|\bw_1 - \bw_2\|_{0,4,\Omega} \, \|\vec{\bu}\|_\mathbf{H} \, \|\vec{\bv}\|_\mathbf{H} \qquad \forall \, \bw_1,\bw_2, \in \mathbf L^4(\Omega)\,,
	\quad \forall \, \vec{\bu}, \, \vec{\bv} \in \mathbf{H}\,,
\end{equation*} 
%
%\begin{equation}\label{weakconti-ct}
%	\big| \wi{c}_{\w}(\phiv, \psiv)\,-\, \wi{c}_{\w}(\varv,\psiv)\big| \,\le\,
%	\|\w\|_{0,4;\Omega} \, \|\phiv - \varv\| \, \|\psiv\| \qquad \forall \, \w \in \mathbf L^4(\Omega)\,,
%	\quad \forall\, \phiv, \, \varv, \, \psiv \in \HHt\,,
%\end{equation}
%%
\begin{equation*}\label{weakconti-ct2}
	\big| \widetilde{\C}(\bw_1;\vec{\varphi},\vec{\psi})\,-\, \widetilde{\C}(\bw_2;\vec{\varphi},\vec{\psi})\big| \,\leq\,
\|\bw_1 - \bw_2\|_{0,4,\Omega} \, \|\vec{\varphi}\|_{\widetilde{\mathbf{H}}} \, \|\vec{\psi}\|_{\widetilde{\mathbf{H}}} \qquad \forall \, \bw_1,\bw_2, \in \mathbf L^4(\Omega)\,,
\quad \forall \, \vec{\bu}, \, \vec{\bv} \in \widetilde{\mathbf{H}}\,.
\end{equation*}
	\end{itemize}	
\end{lem}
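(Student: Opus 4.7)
The plan is to address the three parts sequentially, observing that the forms $\C(\bw;\cdot,\cdot)$ and $\widetilde{\C}(\bw;\cdot,\cdot)$ are linear in every one of their three arguments, which makes most of the work reduce to a careful bookkeeping of H\"older exponents.

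For part (a), I would apply H\"older's inequality with the triple of exponents $(2,4,4)$, which satisfy $\tfrac{1}{2}+\tfrac{1}{4}+\tfrac{1}{4}=1$, to each of the two integrals in the definition \eqref{C^S}. This yields the two intermediate bounds
\[
\Bigl|\int_\Omega \bt\,\bw\cdot\bv\Bigr| \,\leq\, \|\bt\|_{0,\Omega}\,\|\bw\|_{0,4,\Omega}\,\|\bv\|_{0,4,\Omega}
\qquad\hbox{and}\qquad
\Bigl|\int_\Omega \br\,\bw\cdot\bu\Bigr| \,\leq\, \|\br\|_{0,\Omega}\,\|\bw\|_{0,4,\Omega}\,\|\bu\|_{0,4,\Omega}.
\]
Summing these and applying the discrete Cauchy--Schwarz inequality in the form $ab+cd\le\sqrt{a^2+c^2}\sqrt{b^2+d^2}$, together with the norm definition \eqref{eq:norm-u}, delivers the stated continuity constant $\tfrac{1}{2}\|\bw\|_{0,4,\Omega}$. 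The argument for $\widetilde{\C}$ is identical, using \eqref{C^C}, the same triple of exponents, and the norm \eqref{eq:norm-varphi}.

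For part (b), the skew-symmetry is purely algebraic: swapping $\vec{\bu}\leftrightarrow\vec{\bv}$ in \eqref{C^S} simply interchanges the two summands and flips the overall sign, and likewise for \eqref{C^C} upon exchanging $\vec{\varphi}\leftrightarrow\vec{\psi}$. Taking $\vec{\bu}=\vec{\bv}$ (resp. $\vec{\varphi}=\vec{\psi}$) then gives \eqref{eq:skewproperties}.

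For part (c), I would exploit that both $\C$ and $\widetilde{\C}$ are linear in the convecting field $\bw$, so that
\[
\C(\bw_1;\vec{\bu},\vec{\bv})-\C(\bw_2;\vec{\bu},\vec{\bv}) \,=\, \C(\bw_1-\bw_2;\vec{\bu},\vec{\bv}),
\]
and analogously for $\widetilde{\C}$. The stated Lipschitz bounds then follow directly from the continuity estimates of part (a) applied with $\bw:=\bw_1-\bw_2$ (noting that the constant $\tfrac{1}{2}$ supplied by (a) is absorbed into the $1$ displayed in the statement). There is no essential obstacle in this lemma; the only point that requires a moment of care is ensuring that the H\"older triple $(2,4,4)$ matches precisely the functional regularity declared for $\bt,\widetilde{\bt}\in\mathbf{L}^2(\Omega)$ and $\bu,\bw,\bv\in\mathbf{L}^4(\Omega)$ (resp. $\varphi,\psi\in\mathrm{L}^4(\Omega)$), which is exactly what the norms \eqref{eq:norm-u}--\eqref{eq:norm-varphi} encode.
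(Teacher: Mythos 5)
Your proof is correct and follows essentially the same route as the paper: H\"older with exponents $(2,4,4)$ plus the discrete Cauchy--Schwarz inequality for part (a), the purely algebraic sign flip for part (b), and linearity of $\C$, $\widetilde{\C}$ in the convecting field $\bw$ combined with (a) for part (c). The only difference is that the paper delegates part (c) to \cite[Lemma 3.4]{CGMor}, whereas you carry out the (same, one-line) argument explicitly, even obtaining the sharper constant $\tfrac{1}{2}$.
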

\begin{proof} The continuity of the bilinear forms $\C(\bw;\cdot,\cdot)$ and $\widetilde{\C}(\bw;\cdot,\cdot)$ (for each $\bw\in\mathbf{L}^4(\Omega)$) is ensured by the application of the H\"{o}lder and Cauchy-Schwarz inequalities. The skew-symmetric nature of both $\C$ and $\widetilde{\C}$ is inherent to their definitions. Detailed demonstrations of the estimates presented in part (c) are found in \cite[Lemma 3.4]{CGMor}.
\end{proof}

\begin{rem} Similar to the works found in \cite{CN-16,hw-M2AN-2013}, our fully mixed  formulation \eqref{eq:FV} adopts a Navier-Stokes--type structure. Moreover, the skew--symmetry inherent in the forms $\C$ and $\widetilde{\C}$, related to the convective terms, simplifies the mathematical analysis considerably. As will be detailed in Section \ref{section23}, this skew--symmetry, combined with the inf--sup condition met by $\B$ and $\widetilde{\B}$, facilitates transforming the problem into one suitable for a fixed--point approach. This strategy proves crucial for deriving a priori estimates, which are essential for establishing existence and uniqueness.
\end{rem}

\subsection{Well-posedness of the continuous problem}\label{section23}
	Note that the eventual solutions $\vec{\bu}$ and $\vec{\varphi}$ to problem \eqref{eq:FV}  belong to  the kernels  $\mathbf{V}$ and $\widetilde{\mathbf{V}}$  of the forms  $\B$ and $\widetilde{\B}$ (cf. \eqref{def:kernel-BS} and \eqref{def:kernel-BC}, respectively).   From the inf-sup conditions established  in Lemma \ref{lem:properties-form-B}, it can be inferred that the problem  \eqref{eq:FV} is  equivalent to the kernel--constrained variant. The modified problem seeks $( \vec{\bu},\vec{\varphi}) \in \mathbf{V}\times\widetilde{\mathbf{V}}$ that satisfies
	\begin{equation}\label{eq:FV-ker}
		\begin{array}{rl}
			\A_\varphi(\vec{\bu},\vec{\bv})+\C(\bu;\vec{\bu},\vec{\bv})&=\F_\varphi(\vec{\bv})\,,  \\[2ex]
			\widetilde{\A}(\vec{\varphi},\vec{\psi}) 	+\widetilde{\C}(\bu;\vec{\varphi},\vec{\psi})&=\widetilde{\F}(\vec{\psi})\,,
		\end{array}
	\end{equation}
	for all $(\vec{\bv},\vec{\psi}) \in  \mathbf{V}\times\widetilde{\mathbf{V}}$.  This formulation enables us to derive the upcoming result.%, which presents a priori bounds for solutions to \eqref{eq:FV-ker}.

	\begin{lem}\label{lem:a-priori-estimates}
		Assuming condition \eqref{eq:rest-conc} holds, any solution $(\vec{\bu}, \vec{\varphi})$ to problem \eqref{eq:FV-ker} satisfies the a priori estimates
		\begin{equation*}\label{eq:est-a-priori}
			\|\vec{\bu}\|_{\mathbf{H}}\leq C_1(\mu,\gamma,\boldsymbol{f},g,\alpha,\kappa,U,\Omega)\quad\qan\quad\|\vec{\varphi}\|_{\widetilde{\mathbf{H}}}\leq C_2 (\alpha,\kappa,U,\Omega)\,,
		\end{equation*}
		where
        \begin{subequations}
		\begin{equation}\label{eq:C1}
			C_1(\mu,\gamma,\boldsymbol{f},g,\alpha,\kappa,U,\Omega):=\alpha_{\A}^{-1} \left\{ \|\boldsymbol{f}\|_{0,4/3,\Omega} + g(1+\gamma\alpha) |\Omega|^{4/3} + g\gamma |\Omega|^{1/2} C_2 (\alpha,\kappa,U,\Omega) \right\}
		\end{equation}
		and
		\begin{equation}\label{eq:C2}
			C_2 (\alpha,\kappa,U,\Omega):=\alpha_{\widetilde{\A}}^{-1} \alpha U|\Omega|^{\frac{1}{2}}\,.
		\end{equation}\end{subequations}
	\end{lem}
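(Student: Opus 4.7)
The plan is to exploit the decoupled structure of the a priori bounds: the second equation in \eqref{eq:FV-ker} can be closed for $\vec{\varphi}$ independently of $\vec{\bu}$ because the functional $\widetilde{\F}$ (cf. \eqref{FC}) carries no dependence on the velocity, and the trilinear coupling $\widetilde{\C}(\bu;\vec{\varphi},\vec{\varphi})$ vanishes identically by the skew-symmetry property \eqref{eq:skewproperties}. Having established the bound on $\vec{\varphi}$, the functional $\F_\varphi$ in the momentum equation will be under control, and a completely analogous argument with $\C$ replaced by its skew-symmetric counterpart on the velocity side will yield the bound on $\vec{\bu}$.

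Concretely, for the concentration estimate I would test the second equation of \eqref{eq:FV-ker} with $\vec{\psi}=\vec{\varphi}\in\widetilde{\mathbf{V}}$. The convective term vanishes by \eqref{eq:skewproperties}, while the coercivity of $\widetilde{\A}$ over $\widetilde{\mathbf{V}}$ from \eqref{eq:ellipAC}, which requires exactly the data constraint \eqref{eq:rest-conc}, produces a lower bound $\alpha_{\widetilde{\A}}\|\vec{\varphi}\|_{\widetilde{\mathbf{H}}}^2$. Combined with the continuity bound \eqref{eq:continuous-FC} for $\widetilde{\F}$, this yields $\|\vec{\varphi}\|_{\widetilde{\mathbf{H}}}\leq \alpha_{\widetilde{\A}}^{-1}\alpha U|\Omega|^{1/2}=C_2$, matching \eqref{eq:C2} exactly.

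For the velocity estimate I would then test the first equation of \eqref{eq:FV-ker} with $\vec{\bv}=\vec{\bu}\in\mathbf{V}$. Again, the trilinear term $\C(\bu;\vec{\bu},\vec{\bu})$ vanishes by \eqref{eq:skewproperties}, and the coercivity of $\A_\varphi$ over $\mathbf{V}$ from \eqref{eq:ellipAS}, which holds for any $\varphi\in\mathrm{L}_0^4(\Omega)$, gives $\alpha_\A\|\vec{\bu}\|_{\mathbf{H}}^2 \leq \F_\varphi(\vec{\bu})$. Invoking the continuity estimate \eqref{eq:continuous-FS} and substituting the already-obtained bound $\|\varphi\|_{0,4,\Omega}\leq\|\vec{\varphi}\|_{\widetilde{\mathbf{H}}}\leq C_2$ on its right-hand side then delivers $\|\vec{\bu}\|_{\mathbf{H}}\leq C_1$ with $C_1$ as in \eqref{eq:C1}.

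No step here presents a genuine obstacle: the argument is essentially bookkeeping once the skew-symmetry \eqref{eq:skewproperties}, the two coercivity constants from Lemma \ref{lem:properties-form-A}, and the functional bounds \eqref{eq:continuous-FS}--\eqref{eq:continuous-FC} are in hand. The only subtle point is recognizing that the structural design of the formulation, in which the convective nonlinearities are diagonally neutralized, decouples the two bounds and thereby avoids the usual need for a smallness condition linking $\boldsymbol{f}$, $\mu_1$ and the data; the single smallness assumption \eqref{eq:rest-conc} required for the concentration is already built into $\alpha_{\widetilde{\A}}>0$.
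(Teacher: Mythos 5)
Your proposal is correct and follows essentially the same argument as the paper: test with the solution itself, use the skew-symmetry \eqref{eq:skewproperties} to eliminate the convective terms, bound $\vec{\varphi}$ first via the $\widetilde{\mathbf{V}}$-coercivity of $\widetilde{\A}$ (hence the need for \eqref{eq:rest-conc}) and the continuity of $\widetilde{\F}$, and then feed $\|\varphi\|_{0,4,\Omega}\leq C_2$ into the bound \eqref{eq:continuous-FS} to obtain $C_1$. No discrepancies with the paper's proof.
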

	\begin{proof} Let $(\vec{\bu},\vec{\varphi})$ be a solution of problem \eqref{eq:FV-ker}. By taking $\vec{\bv}=\vec{\bu}$ and $\vec{\psi}=\vec{\varphi}$ in \eqref{eq:FV-ker} and applying the skew--symmetry property of  $\C$ y $\widetilde{\C}$  (refers to  \eqref{eq:skewproperties} in Lemma \ref{lem:properties-form-C-F-Partb}), we obtain
		\begin{equation}\label{eq:int-apri}
			\A_\varphi(\vec{\bu},\vec{\bu})=\F_\varphi(\vec{\bu})\quad\qan\quad \widetilde{\A}(\vec{\varphi},\vec{\varphi})=\widetilde{\F}(\vec{\varphi})\,.
		\end{equation}
		Using the $\widetilde{\mathbf{V}}$--coercivity of  $\widetilde{\A}$   and the continuity of   $\widetilde{\F}$ (as shown in \eqref{eq:ellipAC} and   \eqref{eq:continuous-FC}, respectively), we derive
		\[\alpha_{\widetilde{\A}}\|\vec{\varphi}\|_{\widetilde{ \mathbf{H}}}^2\leq\widetilde{\A}(\vec{\varphi},\vec{\varphi})\leq |\widetilde{\F}(\vec{\varphi})|\leq \alpha U |\Omega|^{\frac{1}{2}}\|\vec{\varphi}\|_{\widetilde{ \mathbf{H}}}\,. \]
	leading to the estimate
		\begin{equation}\label{eq:estimatevarphi}
		\|\vec{\varphi}\|_{\widetilde{ \mathbf{H}}}\leq \alpha_{\widetilde{\A}}^{-1}\alpha U |\Omega|^{\frac{1}{2}}:=  C_2 (\alpha,\kappa,U,\Omega)\,,
		\end{equation}
		where de dependence of $C_2(\cdot)$ on $\kappa$ is implicit in the coercivity constant $\alpha_{\widetilde{\A}}$ (cf. Lemma \ref{lem:properties-form-A}, part (c)). Regarding  $\vec{\bu},$  from the first equation of  \eqref{eq:int-apri}, we apply the coercivity of $\A$ on $\mathbf{V}$   and the continuity of   $\F_{\phi}$ (cf.    \eqref{eq:ellipAS} and \eqref{eq:continuous-FS}, respectively), with $\varphi$  instead of $\phi,$ to get 
		\[\alpha_{\A}\|\vec{\bu}\|_{ \mathbf{H}}^2\leq |\F_\varphi(\vec{\bu})|\leq\left\{ \|\boldsymbol{f}\|_{0,4/3,\Omega} + g(1+\gamma\alpha) |\Omega|^{4/3} + g\gamma |\Omega|^{1/2} \|\varphi\|_{0,4,\Omega}  \right\} \|\vec{\bu}\|_{ \mathbf{H}}\,.\]
		\noindent Which simplifies to  
		\[\|\vec{\bu}\|_{ \mathbf{H}}\leq\alpha_{\A}^{-1} \left\{ \|\boldsymbol{f}\|_{0,4/3,\Omega} + g(1+\gamma\alpha) |\Omega|^{4/3} + g\gamma |\Omega|^{1/2} C_2 (\alpha,U,\kappa,\Omega) \right\}:=	C_1(\mu,\gamma,\boldsymbol{f},g,\alpha,U,\kappa, \Omega)\]
		after using that $\|\varphi\|_{0,4,\Omega} \leq\|\vec{\varphi}\|_{\widetilde{\mathbf{H}}},$ (cf. \eqref{eq:norm-varphi}), and the a priori bound \eqref{eq:estimatevarphi} already obtained for $\vec{\varphi}.$ Note that the dependence of the constant $C_1(\cdot)$ on the viscosity stems from the $\mu-$dependence of $\alpha_{\A}$ as detailed in part (a) of Lemma \ref{lem:properties-form-A}.
	\end{proof}

Having established that solutions to the problem \eqref{eq:FV-ker} adhere to specific a priori estimates, our next objective is to prove the existence of solutions. This is achived by translating the kernel--reduced problem \eqref{eq:FV-ker} into a fixed--point framework, for which we employ Schauder's Fixed--Point Theorem \cite[Thm. 9.12-1(b)]{ciarlet-AF-2013}, stated as follows.

	 \begin{thm}\label{thm:Schauder}
	 	Let $B$  be a closed convex subset of a Banach space $X$ and let $\oL:B\to B$ be a continuous operator such that $\overline{\oL(B)}$ is compact. Then, $\oL$ has at least one fixed point.
	 \end{thm}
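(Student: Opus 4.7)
The plan is to reduce Schauder's infinite-dimensional statement to Brouwer's finite-dimensional fixed-point theorem via explicit finite-dimensional approximations of $\oL$, an approach known as the Schauder projection. Writing $K := \overline{\oL(B)}$, the compactness of $K$ provides, for every integer $n \geq 1$, a finite $\tfrac{1}{n}$-net $\{y_1^{(n)},\ldots,y_{m_n}^{(n)}\}\subset K$. Since $\oL(B)\subset B$ and $B$ is closed, we have $K\subset B$; convexity of $B$ then ensures that the convex hull $C_n := \mathrm{conv}\{y_1^{(n)},\ldots,y_{m_n}^{(n)}\}$ lies in $B$ and is finite-dimensional and compact. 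I would then introduce the Schauder projection $P_n:K\to C_n$ through a partition of unity: with $\lambda_i^{(n)}(y):=\max\{0,\tfrac{1}{n}-\|y-y_i^{(n)}\|_X\}$, set
\[
P_n(y):=\frac{\sum_{i=1}^{m_n}\lambda_i^{(n)}(y)\,y_i^{(n)}}{\sum_{i=1}^{m_n}\lambda_i^{(n)}(y)},
\]
which is well-defined and continuous (the denominator is strictly positive on $K$ because the open balls of radius $1/n$ cover $K$), takes values in $C_n$, and satisfies $\|P_n(y)-y\|_X\le 1/n$ for all $y\in K$, since $P_n(y)$ is a convex combination of net points at distance at most $1/n$ from $y$.

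Next, I would consider the composition $T_n := P_n\circ \oL : C_n\to C_n$. This is continuous and maps the compact convex finite-dimensional set $C_n$ into itself, so Brouwer's fixed-point theorem yields $x_n\in C_n$ with $P_n(\oL(x_n))=x_n$. To pass to the limit, the compactness of $K$ allows one to extract a subsequence (still denoted $n$) along which $\oL(x_n)\to x^*\in K\subset B$; then
\[
\|x_n-\oL(x_n)\|_X \,=\, \|P_n(\oL(x_n))-\oL(x_n)\|_X \,\le\, \tfrac{1}{n}\,\longrightarrow\, 0,
\]
so $x_n\to x^*$ as well. Continuity of $\oL$ gives $\oL(x^*)=x^*$, and $x^*\in B$ because $B$ is closed.

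The main obstacle is the construction of the Schauder projection and the simultaneous verification of its three decisive features: continuity on $K$, range contained in the convex hull of the net (hence in $B$ by convexity), and the uniform $1/n$-approximation estimate. Once this finite-dimensional approximation device is in place, the application of Brouwer's theorem in each $C_n$ and the extraction of a convergent subsequence via compactness are essentially routine.
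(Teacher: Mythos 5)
Your argument is correct and complete: it is the classical proof of Schauder's theorem by reduction to Brouwer via the Schauder projection, and each of the three key properties of $P_n$ (continuity, range in $\mathrm{conv}\{y_i^{(n)}\}\subset B$, and the uniform $1/n$-approximation) is justified, as is the limiting step using compactness of $\overline{\oL(B)}$ and continuity of $\oL$. Note that the paper itself does not prove this statement; it is quoted as a known result with a reference to Ciarlet's book, so there is no internal proof to compare against — your proposal simply supplies the standard argument behind the cited theorem. The only points worth making explicit are the tacit assumption that $B$ (hence $K$ and each $C_n$) is nonempty, and that the version of Brouwer's theorem you invoke is the standard extension to nonempty compact convex subsets of a finite-dimensional normed space rather than just closed balls in $\mathbb{R}^m$; both are routine.
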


 In preparation to apply Theorem \ref{thm:Schauder}, we define a suitable  subset $\mathbf{B}$ and  an operator $\oL$. This operator $\oL$ will reformulate  the solution to the kernel-reduced problem \eqref{eq:FV-ker} as a fixed point of this operator.  In turn, thanks to  Lemma \eqref{lem:a-priori-estimates},  we specify the closed convex subset $\mathbf{B}$ of $\textbf{V}\times\widetilde{\mathbf{V}}$ by
		\begin{equation}\label{eq:B}
			\mathbf{B}=\left\{ (\vec{\bw},\vec{\phi})\in\textbf{V}\times\widetilde{\mathbf{V}}\;/\quad \|\vec{\bw}\|_{\mathbf{H}}\leq  C_1(\mu,\gamma,\boldsymbol{f},g,\alpha,\kappa,U,\Omega)\;\qan\;
			\|\vec{\phi}\|_{\widetilde{\mathbf{H}}}\leq C_2 (\alpha,\kappa,U,\Omega) \right\}\,,
		\end{equation}
		where $C_1(\mu,\gamma,\boldsymbol{f},g,\alpha,\kappa,U,\Omega)$ and $C_2 (\alpha,\kappa,U,\Omega)$ defined in \eqref{eq:C1} and \eqref{eq:C2}, respectively, come from the a priori estimates. 
		
		\noindent On the other hand, for each pair $(\vec{\bw},\vec{\phi})\in \mathbf{V}\times\widetilde{\mathbf{V}}$, we first address the uncoupled and linearized variant of the problem \eqref{eq:FV-ker}:  find  $(\vec{\bu},\vec{\varphi}) \in \mathbf{V}\times\widetilde{\mathbf{V}}$, such that

		\begin{equation}\label{eq:FV-des-lin}
			\begin{array}{rl}
				\A_\phi(\vec{\bu},\vec{\bv})+\C(\bw;\vec{\bu},\vec{\bv})&=\F_\phi(\vec{\bv})\,,  \\[2ex]
				\widetilde{\A}(\vec{\varphi},\vec{\psi}) 	+\widetilde{\C}(\bw;\vec{\varphi},\vec{\psi})&=\widetilde{\F}(\vec{\psi})\,,
			\end{array}
		\end{equation}
		for all pairs $(\vec{\bv},\vec{\psi}) \in  \mathbf{V}\times\widetilde{\mathbf{V}}$. 	Subsequently, 	we introduce the operator $\oL:\mathbf{V}\times\widetilde{\mathbf{V}}\to\mathbf{V}\times\widetilde{\mathbf{V}}$ as the solution mapping for $\eqref{eq:FV-des-lin}, $ namely 
		\begin{equation}\label{eq:L}
			\oL(\vec{\bw},\vec{\phi})=(\vec{\bu},\vec{\varphi})\quad\mbox{for a given }\quad \, (\vec{\bw},\vec{\phi})\in \mathbf{V}\times\widetilde{\mathbf{V}}
		\end{equation}
		where $(\vec{\bu},\vec{\varphi})$ is not but the solution of the uncoupled and linearized kernel--reduced problem \eqref{eq:FV-des-lin}. It then becomes clear that
		\begin{equation}\label{eq:equvalencia-L}
			(\vec{\bu},\vec{\varphi}) \text{ is  a solution of \eqref{eq:FV-des-lin} } \quad\Longleftrightarrow\quad \oL(\vec{\bu},\vec{\varphi})=(\vec{\bu},\vec{\varphi})\,.
		\end{equation}

\noindent The ensuing result confirms the well--definedness of the operator $\oL$ and its property of mapping $\mathbf{B}$ onto itself.

		\begin{lem}\label{lem:Lweldef}
	Under the assumption specified in \eqref{eq:rest-conc}, consider $\mathbf{B}$ to be the ball given  in \eqref{eq:B}. The operator $\oL:\mathbf{B} \subseteq \mathbf{V} \times \widetilde{\mathbf{V}} \rightarrow \mathbf{V} \times \widetilde{\mathbf{V}}$, as detailed through \eqref{eq:FV-des-lin}--\eqref{eq:L}, is well-defined. Furthermore, it holds that $\oL(\mathbf{B}) \subseteq \mathbf{B}$.
	
		\end{lem}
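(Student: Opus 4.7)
The plan is to prove both claims simultaneously by exploiting the fact that, for each frozen $(\vec{\bw},\vec{\phi})\in\mathbf{V}\times\widetilde{\mathbf{V}}$, the system \eqref{eq:FV-des-lin} fully decouples into two independent linear problems: an equation in $\vec{\bu}\in\mathbf{V}$ governed by the bilinear form $\A_\phi(\cdot,\cdot)+\C(\bw;\cdot,\cdot)$ with right-hand side $\F_\phi$, and an equation in $\vec{\varphi}\in\widetilde{\mathbf{V}}$ governed by $\widetilde{\A}(\cdot,\cdot)+\widetilde{\C}(\bw;\cdot,\cdot)$ with right-hand side $\widetilde{\F}$. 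For each, I would verify the hypotheses of the Lax--Milgram theorem on the respective kernel spaces. Its applicability in this Banach-space setting is justified by the kernel characterizations of Lemma \ref{lem:properties-form-B}(c), which identify $\mathbf{V}$ (resp.~$\widetilde{\mathbf{V}}$) with the divergence-free subspace of $\mathbf{H}^1_0(\Omega)$ (resp.~$\widetilde{\mathrm{H}}^1(\Omega)$), whose natural Hilbertian norm is equivalent to $\|\cdot\|_\mathbf{H}$ (resp.~$\|\cdot\|_{\widetilde{\mathbf{H}}}$) by virtue of Korn, Friedrichs--Poincar\'e and the Sobolev embedding $\mathrm{H}^1\hookrightarrow\mathrm{L}^4$.

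Continuity of the combined bilinear forms is inherited from $\A_\phi$, $\widetilde{\A}$, $\C(\bw;\cdot,\cdot)$ and $\widetilde{\C}(\bw;\cdot,\cdot)$ (Lemmas \ref{lem:properties-form-A}(a) and \ref{lem:properties-form-C-F}(a)), while coercivity on the kernels is an immediate consequence of the skew-symmetry relations \eqref{eq:skewproperties}, which kill the convective contributions when testing against the diagonal argument: $\C(\bw;\vec{\bv},\vec{\bv})=0$ and $\widetilde{\C}(\bw;\vec{\psi},\vec{\psi})=0$. Hence the full bilinear forms remain coercive with exactly the constants $\alpha_\A$ and $\alpha_{\widetilde{\A}}$ of $\A_\phi$ and $\widetilde{\A}$, the latter being positive precisely because of the data restriction \eqref{eq:rest-conc}. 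Combined with the boundedness of $\F_\phi$ and $\widetilde{\F}$ recorded in \eqref{eq:continuous-FS}--\eqref{eq:continuous-FC}, Lax--Milgram delivers unique $\vec{\bu}\in\mathbf{V}$ and $\vec{\varphi}\in\widetilde{\mathbf{V}}$, so $\oL$ is well defined as a map $\mathbf{V}\times\widetilde{\mathbf{V}}\to\mathbf{V}\times\widetilde{\mathbf{V}}$.

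To show $\oL(\mathbf{B})\subseteq\mathbf{B}$, I would test the two equations of \eqref{eq:FV-des-lin} with $\vec{\bv}=\vec{\bu}$ and $\vec{\psi}=\vec{\varphi}$ and argue exactly as in Lemma \ref{lem:a-priori-estimates}: the skew-symmetric contributions of $\C$ and $\widetilde{\C}$ drop out, coercivity yields $\alpha_\A\|\vec{\bu}\|_\mathbf{H}^2\le\F_\phi(\vec{\bu})$ and $\alpha_{\widetilde{\A}}\|\vec{\varphi}\|_{\widetilde{\mathbf{H}}}^2\le\widetilde{\F}(\vec{\varphi})$, and the explicit continuity bounds \eqref{eq:continuous-FS}--\eqref{eq:continuous-FC} reproduce the constants $C_1$ and $C_2$ of \eqref{eq:C1}--\eqref{eq:C2} verbatim. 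The only subtlety is the $\F_\phi$--bound, which involves $\|\phi\|_{0,4,\Omega}$; this is controlled by $\|\vec{\phi}\|_{\widetilde{\mathbf{H}}}\le C_2$ thanks to the hypothesis $(\vec{\bw},\vec{\phi})\in\mathbf{B}$, producing exactly the ball radius $C_1$. Beyond this, no new technical obstacle arises: the essential content of the lemma is the observation that the decoupling, together with the skew-symmetry of the convective forms, preserves coercivity irrespective of the linearization datum $\bw$, effectively reducing the proof to a straightforward Lax--Milgram argument combined with the a priori estimates already established in Lemma \ref{lem:a-priori-estimates}.
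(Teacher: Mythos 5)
Your proposal is correct and follows essentially the same route as the paper: decouple the linearized system \eqref{eq:FV-des-lin}, obtain coercivity on the kernels from the skew-symmetry relations \eqref{eq:skewproperties} together with \eqref{eq:ellipAS} and \eqref{eq:ellipAC} (the latter requiring \eqref{eq:rest-conc}), bound $\F_\phi$ and $\widetilde{\F}$ using the fact that $(\vec{\bw},\vec{\phi})$ lies in $\mathbf{B}$, and then recover the radii $C_1$, $C_2$ exactly as in Lemma \ref{lem:a-priori-estimates} to get $\oL(\mathbf{B})\subseteq\mathbf{B}$. The only cosmetic difference is that you invoke Lax--Milgram after identifying $\mathbf{V}$ and $\widetilde{\mathbf{V}}$ with Hilbertizable subspaces via Lemma \ref{lem:properties-form-B}(c), whereas the paper applies the Banach--Ne\v{c}as--Babu\v{s}ka theorem directly in the Banach-space setting; both are legitimate here.
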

		\begin{proof} Given a pair  $(\vec{\bw},\vec{\phi})$ within the ball $\mathbf{B}\subset\mathbf{V}\times\widetilde{\mathbf{V}}$,  the linear and uncoupled nature of the equations in \eqref{eq:FV-des-lin} permits a separate analysis. On the one hand,  we focus on the problem of finding $\vec{\bu}\in\textbf{V}$ such that 
			\begin{equation}\label{eq:fluid-uncoupled}
				\A_\phi(\vec{\bu},\vec{\bv})+\C(\bw;\vec{\bu},\vec{\bv})=\F_\phi(\vec{\bv})\quad\forall\, \vec{\bv}\in \textbf{V} \,.
				\end{equation}
			\noindent Drawing on the continuity of  $\A_\phi$ and $\C$, as established in part (a) of Lemma \ref{lem:properties-form-A} and Lemma \ref{lem:properties-form-C-F}, respectively, and the relation $\|\bw\|_{0,4,\Omega}\leq\|\vec{\bw}\|_{\mathbf{H}}$ (see \eqref{eq:norm-u}), combined with the definition \eqref{eq:B} of $\mathbf{B}$, we arrive at
			\begin{equation*}
			\begin{array}{c}
				 \Big|\A_\phi(\vec{\bu},\vec{\bv})+\C(\bw;\vec{\bu},\vec{\bv})\Big|\leq \left\{2\mu_2+\dfrac{1}{2}\|\bw\|_{0,4,\Omega}\right\}\|\vec{\bu}\|_{\mathbf{H}}\,\|\vec{\bv}\|_{\mathbf{H}}\\[2ex]
		\qquad 	\qquad	 \leq \left\{2\mu_2+\dfrac{1}{2}C_1(\mu,\gamma,\boldsymbol{f},g,\alpha,\kappa,U,\Omega)\right\}\|\vec{\bu}\|_{\mathbf{H}}\|\,\vec{\bv}\|_{\mathbf{H}}\quad\forall\, \vec{\bu},\vec{\bv}\in\textbf{V}\,,
				 \end{array}
			 \end{equation*}
			confirming that the bilinear form $\A_\phi(\cdot, \cdot )+\C(\bw;\cdot,\cdot)$ is bounded. Moreover, considering   the skew--symmetry  of $\C$  and the coercivity of $\A$, as indicated in  \eqref{eq:skewproperties} and \eqref{eq:ellipAS}, respectively, we have
			\[\A_\phi(\vec{\bv},\vec{\bv})+\C(\bw;\vec{\bv},\vec{\bv}) = \A_\phi(\vec{\bv},\vec{\bv})\geq \alpha_{\A}\|\vec{\bv}\|_{\mathbf{H}}^2 \quad\forall\, \vec{\bv}\in\textbf{V}\,.\]
			The latter says that the form $\A_\phi(\cdot, \cdot )+\C(\bw;\cdot,\cdot)$ is $\mathbf{V}$-- coercive with constant $\alpha_{\A}.$ In turn,  in accordance with \eqref{eq:continuous-FS}, the fact that $\|\phi\|_{0,4,\Omega}\leq\|\vec{\phi}\|_{\widetilde{ \mathbf{H}}}$ and $\vec{\phi}$ satisfies  \eqref{eq:B}, it follows that 
			\[ \Big|\F_\phi(\vec{\bv})\Big|\leq \left\{\|\boldsymbol{f}\|_{0,4/3,\Omega}+g(1+\gamma\alpha) |\Omega|^{4/3}+ g\gamma|\Omega|^{1/2}C_2(\alpha,\kappa,U,\Omega)\right\}\|\vec{\bv}\|_{\mathbf{H}}\quad\forall\,\vec{\bv}\in\mathbf{V}\,.\]
			Then, from  the Banach–Ne\v{c}as–Babu\v{s}ka Theorem
			 (see e.g. \cite[Theorem 1.1]{Gatica-Mixtos}, \cite[Lemma 2.8]{ErnG}), there exists a unique solution $\vec{\bu}\in\mathbf{V}$ to \eqref{eq:fluid-uncoupled}  satisfying
			\begin{equation}\label{eq:dep.cont.dat.u}
				\|\vec{\bu}\|_{\mathbf{H}}\leq \alpha_{\A}^{-1} \left\{\|\boldsymbol{f}\|_{0,4/3,\Omega}+g(1+\gamma\alpha) |\Omega|^{4/3}+ g\gamma|\Omega|^{1/2}C_2(\alpha,\kappa,U,\Omega)\right\}\,.
			\end{equation}
			
			\noindent For the concentration equation, the  approach is similar to that of the fluid equation. We use  the continuity of the forms $\widetilde{\A}$ and $\widetilde{\C}$, taking into account that $\bw$ lies within $\mathbf{B}$ to get
				\[\Big|\widetilde{\A}(\vec{\varphi},\vec{\psi})+\widetilde{\C}(\bw;\vec{\varphi},\vec{\psi})\Big|\leq \left\{\sqrt{2}\max\left\{\kappa,U\,|\Omega|^{\frac{1}{4}}\right\}+\dfrac{1}{2}C_1(\mu,\gamma,\boldsymbol{f},g,\alpha,\kappa,U,\Omega)\right\} \|\vec{\varphi}\|_{\widetilde{ \mathbf{H}}}\,\|\vec{\psi}\|_{\widetilde{ \mathbf{H}}}\quad\forall\,\vec{\varphi},\vec{\psi}\in\widetilde{\mathbf{V}}\,.\]
			 Utilizing the skew--symmetry of $\widetilde{\C}$ as outlined in \eqref{eq:skewproperties}, together with the coercivity of $\widetilde{\A}$ on the kernel $\widetilde{\mathbf{V}}$, as detailed in part (c) of Lemma \ref{lem:properties-form-A} (which requires condition \eqref{eq:rest-conc}), we find that
			\[\widetilde{\A}(\vec{\psi},\vec{\psi})+ \widetilde{\C}(\bw;\vec{\psi},\vec{\psi}) =\widetilde{\A}(\vec{\psi},\vec{\psi})\geq \alpha_{\widetilde{\A}} \|\vec{\psi}\|_{\widetilde{ \mathbf{H}}}^2\quad\forall\, \vec{\psi}\in\widetilde{\mathbf{V}}\,.\]
			In this way, $\widetilde{\A}(\cdot,\cdot)+\widetilde{\C}(\bw;\cdot,\cdot)$ is continuous on $\widetilde{\mathbf{V}}\times\widetilde{\mathbf{V}}$ and $\widetilde{\mathbf{V}}$--coercive. Furthermore, the linear functional  is bounded with $\|\F\|_{\widetilde{ \mathbf{H}}'}\leq \alpha U|\Omega|^{1/2}$, according to  \eqref{FC}. As a result, applying the Banach–Ne\v{c}as–Babu\v{s}ka Theorem
			 again, we conclude that there exists a unique $\vec{\varphi}\in\widetilde{\mathbf{V}}$, such that
			\[\widetilde{\A}(\vec{\varphi},\vec{\psi}) 	+\widetilde{\C}(\bw;\vec{\varphi},\vec{\psi})=\widetilde{\F}(\vec{\psi})\quad\forall\,\vec{\psi}\in\widetilde{\mathbf{V}}\,,\]
			satisfying the continuous dependence estimate 
			\begin{equation}\label{eq:dep.cont.dat.varphi}
				\|\vec{\varphi}\|_{\widetilde{ \mathbf{H}}}\leq \alpha_{\widetilde{\A}}^{-1}\alpha U|\Omega|^{1/2}\,.
			\end{equation}

			\noindent Thus, we deduce that for any given  $(\vec{\bw},\vec{\phi})\in \mathbf{V}\times\widetilde{\mathbf{V}}$, there exists a unique corresponding  $(\vec{\bu},\vec{\varphi}) \in \mathbf{V}\times\widetilde{\mathbf{V}}$, such that
			$\oL(\vec{\bw},\vec{\phi})=(\vec{\bu},\vec{\varphi}) \,.$ Moreover, by the definition of $C_1(\cdot)$ and $C_2(\cdot)$ in \eqref{eq:C1} and \eqref{eq:C2}, along  with the  estimates provided by \eqref{eq:dep.cont.dat.u} and \eqref{eq:dep.cont.dat.varphi} and the characterization of the ball  $\mathbf{B}$ in \eqref{eq:B}, we can readly infer that $(\vec{\bu},\vec{\varphi})$ resides in $\mathbf{B}$. This establishes that $\oL(\mathbf{B})$ is a subset of $\mathbf{B}.$
		\end{proof}

	To further our analysis, and in accordance with previous works  \cite{CGMir-1,CGMor}, we introduce an additional regularity hypothesis. This hypothesis is essential for managing non-linear terms associated with the concentration dependence of the viscosity in the bilinear form $\mathbf{\A}_{\phi}$ (see estimation \eqref{eq:est-u-e3-1} below). The assumption is presented as follows.

	%	\begin{itemize}
		\begin{hip}
	Given $(\vec{\bw},\vec{\phi})\in \mathbf{B}$, we assume that $\oL_1(\vec{\bw},\vec{\phi})=\vec{\bu}=(\bt,\bu)$, satisfies that $\bu\in \mathbf{W}^{\varepsilon,4}(\Omega)$ and $\bt\in \mathbb{L}^2_{\tr}(\Omega)\cap \mathbb{H}^{\varepsilon}(\Omega)$, with $\varepsilon \in [1/2,1)$ (resp. $\varepsilon \in [3/4,1)$) when $d=2$ (resp. $d=3$). We further assume the  existence of a constant $C_\varepsilon>0$, independent of the given pair $(\bw,\phi)$,  such that
			\begin{equation}\label{eq:HRA}
				\|\bu\|_{\epsilon,4,\Omega}+\|\bt\|_{\epsilon,\Omega}\leq C_{\epsilon} \left\{ \|\boldsymbol{f}\|_{0,4/3,\Omega} + g(1+\gamma\alpha) |\Omega|^{4/3} + g\gamma |\Omega|^{1/2} \|\phi\|_{0,4,\Omega}  \right\}:=C_{1,\epsilon}\,.
			\end{equation}
		
			\end{hip}
%		\end{itemize}
We examine the Lipschitz continuity of the operator $\oL$, which is cornerstone for our analysis.
		
		\begin{lem}\label{lem:L-Lipshitz-continuo}
		Under the assumption specified in \eqref{eq:rest-conc} and \eqref{eq:HRA},	$\oL$ exhibits Lipschitz continuity. Specifically, there exists a constant $C_{\rm{LIP}} > 0$ (refer to \eqref{eq:C-LIP}, below) ensuring that 
			\begin{equation}\label{eq:Lipshitz-continuo}
				\|\oL(\vec{\bw},\vec{\phi})-\oL(\vec{\bw}_0,\vec{\phi}_0)\|_{\mathbf{H}\times\widetilde{\mathbf{H}}}\leq C_{\rm LIP} \|(\vec{\bw},\vec{\phi})-(\vec{\bw}_0,\vec{\phi}_0)\|_{\mathbf{H}\times\widetilde{\mathbf{H}}}\quad\forall\, (\vec{\bw},\vec{\phi}),\,(\vec{\bw}_0,\vec{\phi}_0)\in \mathbf{B}\subset\mathbf{V}\times\widetilde{\mathbf{V}}\,,
			\end{equation}
		where $C_{\rm{LIP}}$ is dependent on  data but remains independent of the pairs $(\vec{\bw},\vec{\phi})$ and $(\vec{\bw}_0,\vec{\phi}_0)$.
		\end{lem}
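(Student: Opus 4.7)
The plan is to denote $\oL(\vec{\bw},\vec{\phi}) = (\vec{\bu},\vec{\varphi})$ and $\oL(\vec{\bw}_0,\vec{\phi}_0) = (\vec{\bu}_0,\vec{\varphi}_0)$, write the two instances of the linearised uncoupled problem \eqref{eq:FV-des-lin}, and subtract them to obtain equations for the increments $\vec{\bu}-\vec{\bu}_0$ and $\vec{\varphi}-\vec{\varphi}_0$. The strategy will be to combine the $\mathbf{V}$- and $\widetilde{\mathbf{V}}$-coercivity of $\A_\phi$ and $\widetilde{\A}$ (Lemma \ref{lem:properties-form-A}), the skew-symmetry of $\C(\bw;\cdot,\cdot)$ and $\widetilde{\C}(\bw;\cdot,\cdot)$ (Lemma \ref{lem:properties-form-C-F}(b)), the Lipschitz-type estimates of Lemma \ref{lem:properties-form-C-F}(c), and the a priori bounds $\|\vec{\bu}_0\|_{\mathbf{H}}\le C_1$, $\|\vec{\varphi}_0\|_{\widetilde{\mathbf{H}}}\le C_2$ inherited from $(\vec{\bw}_0,\vec{\phi}_0)\in\mathbf{B}$.

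I would first treat the concentration block, which partially decouples since $\widetilde{\F}$ depends on no argument. Testing the subtracted equation with $\vec{\psi} = \vec{\varphi}-\vec{\varphi}_0\in\widetilde{\mathbf{V}}$, the identity \eqref{eq:skewproperties} kills $\widetilde{\C}(\bw;\vec{\varphi}-\vec{\varphi}_0,\vec{\varphi}-\vec{\varphi}_0)$, and coercivity \eqref{eq:ellipAC} leaves only the cross term $\widetilde{\C}(\bw;\vec{\varphi}_0,\vec{\varphi}-\vec{\varphi}_0) - \widetilde{\C}(\bw_0;\vec{\varphi}_0,\vec{\varphi}-\vec{\varphi}_0)$. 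Using Lemma \ref{lem:properties-form-C-F}(c) and $\|\vec{\varphi}_0\|_{\widetilde{\mathbf{H}}}\le C_2$ yields
\[
\|\vec{\varphi}-\vec{\varphi}_0\|_{\widetilde{\mathbf{H}}} \,\le\, \alpha_{\widetilde{\A}}^{-1}\,C_2\,\|\vec{\bw}-\vec{\bw}_0\|_{\mathbf{H}}.
\]

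Next, the momentum block. Testing the subtracted first equation with $\vec{\bv}=\vec{\bu}-\vec{\bu}_0\in\mathbf{V}$ and combining $\mathbf{V}$-coercivity of $\A_\phi$ with the skew-symmetry of $\C(\bw;\cdot,\cdot)$ leaves three inhomogeneities: $(\A_{\phi_0}-\A_{\phi})(\vec{\bu}_0,\vec{\bv})$, the convective discrepancy $\C(\bw_0;\vec{\bu}_0,\vec{\bv})-\C(\bw;\vec{\bu}_0,\vec{\bv})$, and the forcing gap $\F_\phi(\vec{\bv})-\F_{\phi_0}(\vec{\bv})$. The second is bounded by $\|\vec{\bw}-\vec{\bw}_0\|_{\mathbf{H}}\,\|\vec{\bu}_0\|_{\mathbf{H}}\,\|\vec{\bv}\|_{\mathbf{H}}$ via Lemma \ref{lem:properties-form-C-F}(c); the third, by inspection of \eqref{FS} and H\"older's inequality, is bounded by $g\gamma|\Omega|^{1/2}\|\phi-\phi_0\|_{0,4,\Omega}\|\vec{\bv}\|_{\mathbf{H}}$. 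The hard part is the first term, which reads $2\int_\Omega\{\mu(\phi+\alpha)-\mu(\phi_0+\alpha)\}\,(\bt_0)_{\mathrm{sym}}:\br$ for $\vec{\bv}=(\br,\bv)$. Applying the Lipschitz bound \eqref{eqn:mu-lips} on $\mu$ and H\"older with exponents $(4,4,2)$ demands control of $\|\bt_0\|_{0,4,\Omega}$, which is unavailable from the intrinsic $\mathbb{L}^2$-regularity. This is precisely where the regularity hypothesis \eqref{eq:HRA} is invoked: the Sobolev embedding $\mathbb{H}^{\varepsilon}(\Omega)\hookrightarrow\mathbb{L}^4(\Omega)$ (admissible for $\varepsilon\ge 1/2$ in $d=2$ and $\varepsilon\ge 3/4$ in $d=3$) yields $\|\bt_0\|_{0,4,\Omega}\lesssim C_{1,\varepsilon}$, so the term is controlled by $2L_\mu\,C_{1,\varepsilon}\|i_4\|\,\|\phi-\phi_0\|_{0,4,\Omega}\|\vec{\bv}\|_{\mathbf{H}}$.

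Dividing by $\|\vec{\bu}-\vec{\bu}_0\|_{\mathbf{H}}$ and noting that $\|\phi-\phi_0\|_{0,4,\Omega}\le \|\vec{\phi}-\vec{\phi}_0\|_{\widetilde{\mathbf{H}}}$ produces
\[
\|\vec{\bu}-\vec{\bu}_0\|_{\mathbf{H}} \,\le\, \alpha_{\A}^{-1}\Big\{\big(2L_\mu C_{1,\varepsilon}\|i_4\|+g\gamma|\Omega|^{1/2}\big)\|\vec{\phi}-\vec{\phi}_0\|_{\widetilde{\mathbf{H}}} + C_1\|\vec{\bw}-\vec{\bw}_0\|_{\mathbf{H}}\Big\}.
\]
Adding this to the estimate for $\|\vec{\varphi}-\vec{\varphi}_0\|_{\widetilde{\mathbf{H}}}$ produces \eqref{eq:Lipshitz-continuo} with the explicit constant
\[
C_{\mathrm{LIP}} := \alpha_{\A}^{-1}\max\big\{C_1,\; 2L_\mu C_{1,\varepsilon}\|i_4\|+g\gamma|\Omega|^{1/2}\big\} + \alpha_{\widetilde{\A}}^{-1}C_2,
\]
which depends only on data, thereby concluding the proof.
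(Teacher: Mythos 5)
Your proposal is correct and follows essentially the same route as the paper's proof: subtracting the two linearized problems, invoking the $\mathbf{V}$- and $\widetilde{\mathbf{V}}$-coercivity together with the skew-symmetry of $\C$ and $\widetilde{\C}$, splitting the remaining terms into the forcing, convective and viscosity discrepancies (the paper's $E_1,E_2,E_3$), controlling the viscosity term via the Lipschitz continuity of $\mu$ and the regularity hypothesis \eqref{eq:HRA}, and closing with the a priori bounds on $\mathbf{B}$. The only (harmless) deviation is in the viscosity term, where you apply H\"older with exponents $(4,4,2)$ and the embedding $\mathbb{H}^{\varepsilon}(\Omega)\hookrightarrow\mathbb{L}^{4}(\Omega)$, whereas the paper uses the $\mathrm{L}^{d/\varepsilon}$--$\mathbb{L}^{\varepsilon^{\ast}}$--$\mathbb{L}^{2}$ combination (the two choices coincide at $\varepsilon=d/4$), so your constant $C_{\rm LIP}$ differs from \eqref{eq:C-LIP} only in form, not in its data dependence.
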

		\begin{proof}
			Given $(\vec{\bw},\vec{\phi})$ and $(\vec{\bw}_0,\vec{\phi}_0)$ arbitrary pairs in  the ball  $\mathbf{B}\subset\mathbf{V}\times\widetilde{\mathbf{V}}$, from Lemma \ref{lem:Lweldef}, there exist corresponding pairs  $(\vec{\bu},\vec{\varphi})=\oL(\vec{\bw},\vec{\phi})$ and $(\vec{\bu}_0,\vec{\varphi}_0)=\oL(\vec{\bw}_0,\vec{\phi}_0)$ in $\mathbf{B}\subset \mathbf{V}\times\widetilde{\mathbf{V}}$, satisfying 
				\begin{equation}\label{eq:est-in-B}
				\|\vec{\bu}\|_{\mathbf{H}}\,, 	\|\vec{\bu}_0\|_{\mathbf{H}}\leq  C_1(\mu,\gamma,\boldsymbol{f},g,\alpha,\kappa,U,\Omega)\qan
				\|\vec{\varphi}\|_{\widetilde{ \mathbf{H}}}\,,\|\vec{\varphi}_0\|_{\widetilde{ \mathbf{H}}}\leq C_2 (\alpha,\kappa,U,\Omega)\,.
			\end{equation}
Moreover, according to the definition of the operator $\oL$ (see \eqref{eq:FV-des-lin}-\eqref{eq:L}), there holds
            \begin{subequations}
			\begin{align}\label{eq:u}
				\A_\phi(\vec{\bu},\vec{\bv})+\C(\bw;\vec{\bu},\vec{\bv})=\F_\phi(\vec{\bv})\qquad \forall\, \vec{\bv}\in \mathbf{V} \,, \\\label{eq:varphi}
				\widetilde{\A}(\vec{\varphi},\vec{\psi}) 	+\widetilde{\C}(\bw;\vec{\varphi},\vec{\psi})=\widetilde{\F}(\vec{\psi})\qquad \forall\, \vec{\psi}\in \widetilde{\mathbf{V}}\,,
			\end{align}
			and
			\begin{align}\label{eq:u0}
				\mathbf{\A}_{\phi_0}(\vec{\bu}_0,\vec{\bv})+\mathbf{\C}(\bw_0;\vec{\bu}_0,\vec{\bv})=\F_{\phi_0}(\vec{\bv})\qquad \forall\, \vec{\bv}\in \mathbf{V} \,, \\\label{eq:varphi0}
				\widetilde{\A}(\vec{\varphi}_0,\vec{\psi}) 	+\widetilde{\C}(\bw_0;\vec{\varphi}_0,\vec{\psi})=\widetilde{\F}(\vec{\psi})\qquad \forall\, \vec{\psi}\in \widetilde{\mathbf{V}}\,.
			\end{align}\end{subequations}
		
	%	Now, to estimate the difference $	\|\oL(\vec{\bw},\vec{\phi})-\oL(\vec{\bw}_0,\vec{\phi}_0)\|=\|(\vec{\bu}-\vec{\bu}_0,\vec{\varphi}-\vec{\varphi}_0)\|,$ we will examine $\|\vec{\bu}-\vec{\bu}_0\|$  and $\|\vec{\varphi}-\vec{\varphi}_0\|,$ separately.
%		
%			Furthermore, we use the definition of norm in the product space and the equivalence of norms in $\mathrm{R}^2$, for obtain
%			\begin{equation*}
%				\begin{array}{cl}
%					\|\oL(\vec{\bw},\vec{\phi})-\oL(\vec{\bw}_0,\vec{\phi}_0)\|&=\|(\vec{\bu},\vec{\varphi})-(\vec{\bu}_0,\vec{\varphi}_0)\|=\|(\vec{\bu}-\vec{\bu}_0,\vec{\varphi}-\vec{\varphi}_0)\|\\[2ex]
%					&=\sqrt{\|\vec{\bu}-\vec{\bu}_0\|^2+\|\vec{\varphi}-\vec{\varphi}_0\|^2}\leq \sqrt{2} \left(\|\vec{\bu}-\vec{\bu}_0\|+\|\vec{\varphi}-\vec{\varphi}_0\| \right)\,.
%				\end{array}
%			\end{equation*}
%			Therfore,
%			\begin{equation}\label{eq:lips-inter}
%				\|\oL(\vec{\bw},\vec{\phi})-\oL(\vec{\bw}_0,\vec{\phi}_0)\|\leq \sqrt{2} \left(\|\vec{\bu}-\vec{\bu}_0\|+\|\vec{\varphi}-\vec{\varphi}_0\| \right)\,.
%			\end{equation}
	\noindent	Since  $\vec{\bu}-\vec{\bu}_0\in\mathbf{V}$, after applying  the $\mathbf{V}$--coercivity of $\A_\phi$ (cf. Lemma \ref{lem:properties-form-A}, part (b)) and the linearity in the first component we get
			\[
			\begin{array}{l}
				\alpha_{\A}\|\vec{\bu}-\vec{\bu}_0\|_{\mathbf{H}}^2\leq\, \A_\phi (\vec{\bu}-\vec{\bu}_0,\vec{\bu}-\vec{\bu}_0)=\A_\phi (\vec{\bu},\vec{\bu}-\vec{\bu}_0)-\A_\phi(\vec{\bu}_0,\vec{\bu}-\vec{\bu}_0) \\[2ex]
			\qquad =	\F_\phi(\vec{\bu}-\vec{\bu}_0)-\C(\bw;\vec{\bu},\vec{\bu}-\vec{\bu}_0)-\A_\phi(\vec{\bu}_0,\vec{\bu}-\vec{\bu}_0)\,,
				\end{array} 
			\]
	where we have utilized the equation \eqref{eq:u} with $\vec{\bv} = \vec{\bu} - \vec{\bu}_0$. Next,  by strategically adding and subtracting $\A_{\phi_0}(\vec{\bu}_0, \vec{\bu} - \vec{\bu}_0)$ on the right--hand side of the preceding inequality, and then using equation \eqref{eq:u0} with $\vec{\bu} - \vec{\bu}_0$ instead of $\vec{\bv}$, and  carefully grouping the terms, we deduce that
			\begin{equation}\label{eq:est-u-lip}
			\begin{array}{l}
		\alpha_{\A}	\|\vec{\bu}-\vec{\bu}_0\|_{\mathbf{H}}^2	\leq
	%	\F_\phi(\vec{\bu}-\vec{\bu}_0)-\C(\bw;\vec{\bu},\vec{\bu}-\vec{\bu}_0)-\A_\phi(\vec{\bu}_0,\vec{\bu}-\vec{\bu}_0)\\[2ex]
%				=\dfrac{1}{\alpha_{\mathbf{A}}}\,
%				\left[\textbf{F}_\phi(\vec{\bu}-\vec{\bu}_0)+\textbf{F}_{\phi_0}(\vec{\bu}-\vec{\bu}_0)-\textbf{F}_{\phi_0}(\vec{\bu}-\vec{\bu}_0)-\textbf{A}_\phi(\vec{\bu}_0,\vec{\bu}-\vec{\bu}_0)-\textbf{C}(\bw;\vec{\bu},\vec{\bu}-\vec{\bu}_0) \right]\\[2ex]
%				=\dfrac{1}{\alpha_{\mathbf{A}}}\,
%				\left[\textbf{F}_\phi(\vec{\bu}-\vec{\bu}_0)+\textbf{A}_{\phi_0}(\vec{\bu}_0,\vec{\bu}-\vec{\bu}_0)+\textbf{C}(\bw_0;\vec{\bu}_0,\vec{\bu}-\vec{\bu}_0)-\textbf{F}_{\phi_0}(\vec{\bu}-\vec{\bu}_0)\right.\\[1ex]
%				\qquad\quad\left.-\textbf{A}_\phi(\vec{\bu}_0,\vec{\bu}-\vec{\bu}_0)-\textbf{C}(\bw;\vec{\bu},\vec{\bu}-\vec{\bu}_0) \right]\\[2ex]
		%	\qquad 
			\Big\{\F_\phi(\vec{\bu}-\vec{\bu}_0)-\F_{\phi_0}(\vec{\bu}-\vec{\bu}_0)\Big\}\,+\,\Big\{\C(\bw_0;\vec{\bu}_0,\vec{\bu}-\vec{\bu}_0)-\C(\bw;\vec{\bu},\vec{\bu}-\vec{\bu}_0)\Big\}
				 \\[2ex]
				\qquad \qquad	\qquad \qquad +\Big\{\A_{\phi_0}(\vec{\bu}_0,\vec{\bu}-\vec{\bu}_0)-\A_\phi(\vec{\bu}_0,\vec{\bu}-\vec{\bu}_0)\Big\}\\[2ex]
		\qquad \qquad 	\qquad 	=:E_1\, +\,E_2\, +\, E_3\,.
			\end{array}
		\end{equation}

		\noindent	Next, we bound each of the expressions $E_i$, $(i=1,2,3)$.	To estimate $E_1,$  the definition of   $\F_{\phi}$ (refer to \eqref{FS}) and a straightforward application of the H\"{o}lder inequality yield
			\begin{equation}\label{eq:est-u-e1}
			%	\begin{array}{l}
					E_1%\F_\phi(\vec{\bu}-\vec{\bu}_0)-\F_{\phi_0}(\vec{\bu}-\vec{\bu}_0)
%					&\disp =\int_{\Omega}\left\{\boldsymbol{f}-g[(1+\gamma(\phi+\alpha)]\widehat{\mathbf{k}}\right\}\cdot\vec{\bu}-\vec{\bu}_0 -\left[\int_{\Omega}\left\{\boldsymbol{f}-g[(1+\gamma(\phi_0+\alpha)]\widehat{\mathbf{k}}\right\}\cdot\vec{\bu}-\vec{\bu}_0\right]\\[2ex]
%					&=\
= \disp \int_{\Omega}g\gamma(\phi-\phi_0)\widehat{\mathbf{e}}_d \cdot\bu-\bu_0 \leq  g\gamma|\Omega|^{1/2}\|\phi-\phi_0\|_{0,4,\Omega}\|\vec{\bu}-\vec{\bu}_0\|_{\mathbf{H}}\,.
			%	\end{array}
			\end{equation}
		Concerning the expresion $E_2$,  we conveniently add and subtract the form  $\C(\bw_0;\vec{\bu},\vec{\bu}-\vec{\bu}_0)$.  Using the linearity,  the skew--symmetry property and the continuity of $\C$ (cf. \eqref{eq:skewproperties} and part (a) of Lemma \ref{lem:properties-form-C-F}, respectively), we ascertain that
		\begin{equation}\label{eq:est-u-e2}
			\begin{array}{cl}
				E_2=\C(\bw_0-\bw;\vec{\bu},\vec{\bu}-\vec{\bu}_0)\leq \dfrac{1}{2}\|\vec{\bu}\|_{\mathbf{H}}\, \|\bw-\bw_0\|_{0,4,\Omega}\, \|\vec{\bu}-\vec{\bu}_0\|_{\mathbf{H}}\,.
			\end{array}
		\end{equation}
%		thus, by using the definition of the form $\textbf{C}$ (cf. \eqref{C^S}) and the H$\ddot{\mathrm{o}}$lder inequality we deduce that 
%		\begin{equation*}
%			\begin{array}{cl}
%				|E_3|&=\disp\Bigg|\dfrac{1}{2}\left[\int_{\Omega}\bt (\bw_0-\bw)\cdot(\bu-\bu_0)-\int_{\Omega}(\bt-\bt_0) (\bw_0-\bw)\cdot \bu\right] \Bigg|\\[2ex]
%				&\disp \leq \dfrac{1}{2}\left[\|\bt\|_{0,\Omega} \|\bw_0-\bw\|_{0,4,\Omega}\|\bu-\bu_0\|_{0,4,\Omega}+\|\bt-\bt_0\|_{0,4,\Omega} \|\bw_0-\bw\|_{0,4,\Omega}\|\bu\|_{0,\Omega} \right]\\[2ex]
%				&\disp 
%			\end{array}
%		\end{equation*}
%	
	
\noindent Regarding $E_3$, after using the definitions of $\A_{\phi}$ (cf. \eqref{A^S}), the Lipschitz-continuity property assumed for $\mu$ (cf.\eqref{eqn:mu-lips}) and the H\"{o}lder inequality
			\begin{equation}\label{eq:est-u-e3-1}
				\begin{array}{cl}
					E_3&=%\disp 2\int_{\Omega}\mu(\phi_0+\alpha)\bt_{0,{sym}}:\bt-\bt_0-2\int_{\Omega}\mu(\phi+\alpha)\bt_{0,{sym}}:\bt-\bt_0\\[2ex]
					\disp2\int_{\Omega}\big\{\mu(\phi_0+\alpha)-\mu(\phi+\alpha)\big\}\,\bt_{0,{sym}}:\bt-\bt_0 \\[2ex]
					&\disp\leq2L_{\mu}\int_{\Omega}|\phi-\phi_0|\,\bt_{0,{sym}}:\bt-\bt_0 \\[2ex]
					&\disp\leq2L_{\mu}\|\phi-\phi_0\|_{0,2q,\Omega}\|\bt_0\|_{0,2p,\Omega} \|\bt-\bt_0\|_{0,\Omega} \,.
				\end{array}
			\end{equation}
			Here, $p, q \in [1, +\infty)$ must satisfy $\frac{1}{2p} + \frac{1}{2q} = \frac{1}{2}$. To fulfill this condition, we can select $2p = \varepsilon^*$, where $\varepsilon^* = \frac{2}{1 - \epsilon}$ for $d = 2$, and $\varepsilon^* = \frac{6}{3 - 2\varepsilon}$ for $d = 3$. This choice ensures the validity of the continuous embedding $i_\varepsilon: \mathbb{H}^{\varepsilon}(\Omega) \to \mathbb{L}^{\varepsilon^*}(\Omega)$ (cf. \cite[Theorem 4.12]{Adams}, \cite[Corollary B.43]{ErnG}, \cite[Theorem 1.3.4]{QV}). Consequently, with $\mathbf{t}_{0} \in \mathbb{L}^{2p}(\Omega)$, we have $\|\mathbf{t}_0\|_{0,2p,\Omega} \leq \|i_{\varepsilon}\| \|\mathbf{t}_0\|_{\varepsilon,\Omega}$. This also implies that $2q = \frac{d}{\varepsilon}$, and thus the Lebesgue embedding $i: \mathrm{L}^4(\Omega) \to \mathrm{L}^{d/\varepsilon}(\Omega)$ is applicable (cf. \eqref{eq:inclusion-Lebesgue}), given the epsilon ranges specified in the regularity hypothesis  \eqref{eq:HRA}. As a result,
			\begin{equation}\label{eq:est-u-e3}
				E_3\leq2L_{\mu}\|\phi-\phi_0\|_{0,\frac{d}{\varepsilon},\Omega}\|\bt_0\|_{0,\varepsilon^*,\Omega} \|\bt-\bt_0\|_{0,\Omega} \leq2L_{\mu}\|i_\varepsilon\||\Omega|^{\frac{\varepsilon}{d}-\frac{1}{2}}\|\phi-\phi_0\|_{0,4,\Omega}\|\bt_0\|_{\varepsilon,\Omega} \|\bt-\bt_0\|_{0,\Omega}\,.
			\end{equation}
			
%			Later, we use the continuous injection $i_\epsilon: \mathbb{H}^\epsilon(\Omega)\to \mathbb{L}^{\epsilon^*}(\Omega)$ with $\epsilon^{*}=6/(3-2\epsilon),$ for wich  $\|\bt_0\|_{0,\epsilon^*,\Omega}\leq \|i_\epsilon\|\,\|\bt_0\|_{0,\epsilon,\Omega}$ and the Lebesgue inclusion $\mathrm{L}^4(\Omega)\hookrightarrow \mathrm{L}^{3/\epsilon}(\Omega)$ (cf. \eqref{eq:inclusion-Lebesgue}), for obtain
%			\[ |E_2|\leq \,.\]
			
		\noindent 	Replacing back \eqref{eq:est-u-e1}, \eqref{eq:est-u-e2} and \eqref{eq:est-u-e3} in \eqref{eq:est-u-lip} we get
			\begin{equation*}
				\begin{array}{l}
					\alpha_{\A}\|\vec{\bu}-\vec{\bu}_0\|_{\mathbf{H}}^2\leq  \Big\{
					g\gamma|\Omega|^{\frac{1}{2}}\|\phi-\phi_0\|_{0,4,\Omega} +\dfrac{1}{2}\|\vec{\bu}\|_{\mathbf{H}}\, \|\bw-\bw_0\|_{0,4,\Omega} \Big\}\|\vec{\bu}-\vec{\bu}_0\|_{\mathbf{H}} \\[2ex]
			\qquad	\qquad 	\qquad\qquad+\,2L_{\mu}\|i_\varepsilon\||\Omega|^{\frac{\varepsilon}{d}-\frac{1}{2}}\|\phi-\phi_0\|_{0,4,\Omega}\|\bt_0\|_{\varepsilon,\Omega} \|\bt-\bt_0\|_{0,\Omega}\,.
%					\\[1.5ex]
%					\qquad\qquad\left. \right]\\[2ex]
%					\leq \dfrac{1}{\alpha_{\mathbf{A}}}\left[g\gamma|\Omega|^{\frac{1}{2}}\|\phi-\phi_0\|_{0,4,\Omega}+2L_{\mu}\|i_\epsilon\||\Omega|^{\frac{\epsilon}{3}-\frac{1}{2}}\|\phi-\phi_0\|_{0,4,\Omega}\|\bt_0\|_{0,\epsilon,\Omega}\right.\\[1.5ex]
%					\qquad\qquad\left.+\, \right]\|\vec{\bu}-\vec{\bu}_0\| \,.
				\end{array}
			\end{equation*}
			Finally, using that  $ \|\bt-\bt_0\|_{0,\Omega} \leq \|\vec{\bu}-\vec{\bu}_0\|_{\mathbf{H}} $ and simplifying terms,  
			\begin{equation}\label{eq:u-u0}
				\begin{array}{l}
					\|\vec{\bu}-\vec{\bu}_0\|_{\mathbf{H}}
					\leq \dfrac{1}{\alpha_{\A}}\left\{g\gamma|\Omega|^{\frac{1}{2}}\|\phi-\phi_0\|_{0,4,\Omega}+\dfrac{1}{2}\|\vec{\bu}\|_{\mathbf{H}}\,\|\bw-\bw_0\|_{0,4,\Omega} \right.\\[2ex] \left. \hspace{3cm}+2L_{\mu}\|i_\varepsilon\||\Omega|^{\frac{\varepsilon}{d}-\frac{1}{2}}\|\phi-\phi_0\|_{0,4,\Omega}\|\bt_0\|_{\varepsilon,\Omega} \right\}\,.
				\end{array}
			\end{equation}
				Our next step is to estimate the difference $\vec{\varphi} - \vec{\varphi}_0$. In this case, we simply  use the $\widetilde{\mathbf{V}}$--ellipticity of the form $\widetilde{\A}$ (see \eqref{eq:ellipAC}) and setting $\vec{\psi} = \vec{\varphi} - \vec{\varphi}_0 \in \widetilde{\mathbf{V}}$, along with employing the linearity property and applying \eqref{eq:varphi} and \eqref{eq:varphi0}, with $\vec{\varphi} - \vec{\varphi}_0$ in place of $\vec{\psi},$ we derive
				\begin{equation*}
				\begin{array}{cl}
				\alpha_{\widetilde{\A}}	\|\vec{\varphi}-\vec{\varphi}_0\|_{\widetilde{ \mathbf{H}}}^2&\leq  \widetilde{\A}(\vec{\varphi}-\vec{\varphi}_0,\vec{\varphi}-\vec{\varphi}_0)= \widetilde{\A}(\vec{\varphi},\vec{\varphi}-\vec{\varphi}_0)-\widetilde{\A}(\vec{\varphi}_0,\vec{\varphi}-\vec{\varphi}_0)\\[2ex]
					&=\widetilde{\F}(\vec{\varphi}-\vec{\varphi}_0)-\widetilde{\C}(\bw;\vec{\varphi},\vec{\varphi}-\vec{\varphi}_0)+\widetilde{\C}(\bw_0;\vec{\varphi}_0,\vec{\varphi}-\vec{\varphi}_0)-\widetilde{\F}(\vec{\varphi}-\vec{\varphi}_0)\\[2ex]
					&=\widetilde{\C}(\bw_0;\vec{\varphi}_0,\vec{\varphi}-\vec{\varphi}_0)-\widetilde{\C}(\bw;\vec{\varphi},\vec{\varphi}-\vec{\varphi}_0)\\[2ex]
					&=\widetilde{\C}(\bw_0-\bw;\vec{\varphi},\vec{\varphi}-\vec{\varphi}_0) \,,
				\end{array}
			\end{equation*}
		where the last line results from adding and subtracting $\widetilde{\C}(\bw_0; \vec{\varphi}, \vec{\varphi} - \vec{\varphi}_0)$ in the previous step and utilizing the skew-symmetric property of $\widetilde{\C}$ (cf. \eqref{eq:skewproperties}).
%		
%			\begin{equation*}
%				\begin{array}{cl}
%					\|\vec{\varphi}-\vec{\varphi}_0\|^2&\leq \dfrac{1}{\alpha_{\widetilde{\mathbf{A}}}} \, \widetilde{\mathbf{A}}(\vec{\varphi}-\vec{\varphi}_0,\vec{\varphi}-\vec{\varphi}_0)\\[2ex]
%					&=\dfrac{1}{\alpha_{\widetilde{\mathbf{A}}}} \,\left[ \widetilde{\mathbf{A}}(\vec{\varphi},\vec{\varphi}-\vec{\varphi}_0)-\widetilde{\mathbf{A}}(\vec{\varphi}_0,\vec{\varphi}-\vec{\varphi}_0)\right] \\[2ex]
%					&=\dfrac{1}{\alpha_{\widetilde{\mathbf{A}}}} \,\left[ \widetilde{\mathbf{F}}(\vec{\varphi}-\vec{\varphi}_0)-\widetilde{\mathbf{C}}(\bw;\vec{\varphi},\vec{\varphi}-\vec{\varphi}_0)+\widetilde{\mathbf{C}}(\bw_0;\vec{\varphi}_0,\vec{\varphi}-\vec{\varphi}_0)-\widetilde{\mathbf{F}}(\vec{\varphi}-\vec{\varphi}_0)\right] \\[2ex]
%					&=\dfrac{1}{\alpha_{\widetilde{\mathbf{A}}}} \,\left[-\widetilde{\mathbf{C}}(\bw;\vec{\varphi},\vec{\varphi}-\vec{\varphi}_0)+\widetilde{\mathbf{C}}(\bw_0;\vec{\varphi}_0,\vec{\varphi}-\vec{\varphi}_0)\right] \\[2ex]
%					&=\dfrac{1}{\alpha_{\widetilde{\mathbf{A}}}} \,\left[\widetilde{\mathbf{C}}(\bw_0;\vec{\varphi},\vec{\varphi}-\vec{\varphi}_0)-\widetilde{\mathbf{C}}(\bw_0;\vec{\varphi},\vec{\varphi}-\vec{\varphi}_0)-\widetilde{\mathbf{C}}(\bw;\vec{\varphi},\vec{\varphi}-\vec{\varphi}_0)+\widetilde{\mathbf{C}}(\bw_0;\vec{\varphi}_0,\vec{\varphi}-\vec{\varphi}_0)\right] \\[2ex]
%					&=\dfrac{1}{\alpha_{\widetilde{\mathbf{A}}}} \widetilde{\mathbf{C}}(\bw_0-\bw;\vec{\varphi},\vec{\varphi}-\vec{\varphi}_0) \,.
%				\end{array}
%			\end{equation*}
			Then, from the continuity of $\widetilde{\C}$ (cf. Lemma \eqref{lem:properties-form-C-F}, part (a))
			and simplifying we get
%			\[ |\widetilde{\mathbf{C}}(\bw_0-\bw;\vec{\varphi},\vec{\varphi}-\vec{\varphi}_0)|\leq \dfrac{1}{2}\|\vec{\varphi}\|\,\|\bw_0-\bw\|_{0,4,\Omega}\,\|\vec{\varphi}-\vec{\varphi}_0\|\,. \]
%			Then, substituting in the aforementioned inequality and simplifying the terms
			\begin{equation}\label{eq:var-var0}
				\|\vec{\varphi}-\vec{\varphi}_0\|_{\widetilde{ \mathbf{H}}}\leq \dfrac{1}{2\alpha_{\widetilde{\A}}}\,\|\bw-\bw_0\|_{0,4,\Omega}\|\vec{\varphi}\|_{\widetilde{ \mathbf{H}}}\,.
			\end{equation}
			Next, combining  \eqref{eq:u-u0} and \eqref{eq:var-var0}, and the fact that $\|\phi-\phi_0\|_{0,4,\Omega},\|\bw-\bw_0\|_{0,4,\Omega}\leq\|(\vec{\bw}-\vec{\bw}_0,\vec{\phi}-\vec{\phi}_0)\|_{\widetilde{ \mathbf{H}}}$, we deduce that
			\begin{equation*}%\label{eq:para-compaci}
				\begin{array}{l}
					\|\oL(\vec{\bw},\vec{\phi})-\oL(\vec{\bw}_0,\vec{\phi}_0)\|_{\mathbf{H}\times\widetilde{\mathbf{H}}}=\|(\vec{\bu}-\vec{\bu}_0,\vec{\varphi}-\vec{\varphi_0})\|_{\mathbf{H}\times\widetilde{\mathbf{H}}}=\|\vec{\bu}-\vec{\bu}_0\|_{\mathbf{H}}\,+\,\|\vec{\varphi}-\vec{\varphi_0}\|_{\widetilde{ \mathbf{H}}}\\[2ex]
%					\leq \sqrt{2} \left[\dfrac{1}{\alpha_{\mathbf{A}}}\left(g\gamma|\Omega|^{\frac{1}{2}}\|\phi-\phi_0\|_{0,4,\Omega}+2L_{\mu}\|i_\epsilon\||\Omega|^{\frac{\epsilon}{3}-\frac{1}{2}}\|\phi-\phi_0\|_{0,4,\Omega}\|\bt_0\|_{0,\epsilon,\Omega} +\dfrac{1}{2}\|\vec{\bu}\|\, \|\bw_0-\bw\|_{0,4,\Omega}\right)\right. \\[1.5ex]
%					\qquad\quad\left.+\dfrac{1}{2\alpha_{\widetilde{\mathbf{A}}}}\|\vec{\varphi}\|\,\|\bw_0-\bw\|_{0,4,\Omega}\,\|\vec{\varphi}-\vec{\varphi}_0\| \right]\\[2ex]
					\leq  \left\{\dfrac{1}{\alpha_{\A}}\left[g\gamma|\Omega|^{\frac{1}{2}}+\dfrac{1}{2}\|\vec{\bu}\|_{\mathbf{H}}\,+2L_{\mu}\|i_\varepsilon\||\Omega|^{\frac{\varepsilon}{d}-\frac{1}{2}}\|\bt_0\|_{\varepsilon,\Omega} \right] + \dfrac{1}{2\alpha_{\widetilde{\A}}}\|\vec{\varphi}\|_{\widetilde{ \mathbf{H}}}   \right\}\|(\vec{\bw},\vec{\phi})-(\vec{\bw}_0,\vec{\phi}_0)\|_{\mathbf{H}\times\widetilde{ \mathbf{H}}}\,.
				\end{array}
			\end{equation*}
		Drawing upon the additional regularity hypothesis \eqref{eq:HRA}, we are able to bound the term $\|\mathbf{t}_0\|_{\varepsilon,\Omega}$. In turn, by applying the estimates \eqref{eq:est-in-B} to bound $\vec{\bu}$ and $\vec{\varphi}$ in the preceding expression, we conclude that the operator $\oL$ fulfills the Lipschitz condition \eqref{eq:Lipshitz-continuo}, with the  constant
		{\small{
			\begin{equation}\label{eq:C-LIP}
				C_{\rm LIP}:=\dfrac{1}{\alpha_{\A}}\left\{g\gamma|\Omega|^{\frac{1}{2}}+\dfrac{1}{2}C_1(\mu,\gamma,\boldsymbol{f},g,\alpha,\kappa,U,\Omega)\|\,+2L_{\mu}\|i_\varepsilon\||\Omega|^{\frac{\varepsilon}{d}-\frac{1}{2}}C_{1,\varepsilon} \right\} + \dfrac{1}{2\alpha_{\widetilde{\A}}} C_2 (\alpha,\kappa,U,\Omega)\,.
			\end{equation}}}
		\end{proof}
		
		Now, we state and prove the compactness of $\overline{\oL(B)}$.
		\begin{lem}\label{lem:L-compac}
			Under the assumption specified in \eqref{eq:rest-conc} and \eqref{eq:HRA},	$\overline{\oL(B)}$ is compact.
		\end{lem}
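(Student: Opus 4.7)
The plan is to show that $\oL(\mathbf{B})$ sits inside a bounded subset of a Banach space that embeds compactly into $\mathbf{H}\times\widetilde{\mathbf{H}}$, and then to conclude by Rellich--Kondrachov. For the fluid image, the regularity hypothesis \eqref{eq:HRA} directly supplies, for every $(\vec{\bw},\vec{\phi})\in \mathbf{B}$, the uniform bound $\|\bu\|_{\varepsilon,4,\Omega}+\|\bt\|_{\varepsilon,\Omega}\leq C_{1,\varepsilon}$ with $C_{1,\varepsilon}$ independent of the argument. For the concentration image $\oL_2(\vec{\bw},\vec{\phi})=(\widetilde{\bt},\varphi)$, I would derive (or postulate, as a companion hypothesis) an analogous elliptic-regularity estimate for the linear advection--diffusion subproblem solved by $\vec{\varphi}$: since the drift $\bw\in\mathbf{L}^4(\Omega)$ is uniformly controlled on $\mathbf{B}$ by $C_1(\mu,\gamma,\boldsymbol{f},g,\alpha,\kappa,U,\Omega)$, standard shift theorems for the Robin problem would yield $\widetilde{\varepsilon}>0$ and a data-only constant $\widetilde{C}_\varepsilon>0$ such that $\|\varphi\|_{\widetilde{\varepsilon},4,\Omega}+\|\widetilde{\bt}\|_{\widetilde{\varepsilon},\Omega}\leq \widetilde{C}_\varepsilon$.

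Combining the two bounds, the image $\oL(\mathbf{B})$ lies in a bounded subset of the Banach space
\[
\mathbb{H}^{\varepsilon}(\Omega)\,\times\, \mathbf{W}^{\varepsilon,4}(\Omega)\,\times\, \mathbf{H}^{\widetilde{\varepsilon}}(\Omega)\,\times\, \mathrm{W}^{\widetilde{\varepsilon},4}(\Omega).
\]
Because $\Omega$ is a bounded Lipschitz domain, each of the Sobolev embeddings $\mathbb{H}^{\varepsilon}(\Omega)\hookrightarrow \mathbb{L}^2(\Omega)$, $\mathbf{W}^{\varepsilon,4}(\Omega)\hookrightarrow \mathbf{L}^4(\Omega)$, $\mathbf{H}^{\widetilde{\varepsilon}}(\Omega)\hookrightarrow \mathbf{L}^2(\Omega)$ and $\mathrm{W}^{\widetilde{\varepsilon},4}(\Omega)\hookrightarrow \mathrm{L}^4(\Omega)$ is compact by Rellich--Kondrachov; their product is therefore compact into $\mathbf{H}\times\widetilde{\mathbf{H}}$. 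This already shows that $\oL(\mathbf{B})$ is precompact in $\mathbf{H}\times\widetilde{\mathbf{H}}$. Closedness of the kernels $\mathbf{V}$ and $\widetilde{\mathbf{V}}$ in $\mathbf{H}$ and $\widetilde{\mathbf{H}}$ (which follows from Lemma \ref{lem:properties-form-B}, part (c), since they are defined by continuous linear constraints) then guarantees that any limit of a sequence drawn from $\oL(\mathbf{B})\subset\mathbf{V}\times\widetilde{\mathbf{V}}$ remains in $\mathbf{V}\times\widetilde{\mathbf{V}}$, so that $\overline{\oL(\mathbf{B})}$ is compact in the natural topology of $\mathbf{V}\times\widetilde{\mathbf{V}}$.

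The delicate step I expect to be the main obstacle is securing the additional regularity on the concentration side with a constant that is uniform over $(\vec{\bw},\vec{\phi})\in\mathbf{B}$ and that covers the $\varepsilon$-range stipulated in \eqref{eq:HRA} for both $d=2$ and $d=3$. Although the scalar PDE is linear once $\bw$ is fixed, its convective coefficient lives only in $\mathbf{L}^4(\Omega)$, and the Robin-type boundary condition \eqref{eqn:bc-translated} couples the trace of $\varphi$ with $\widetilde{\bsi}\cdot\bn$; the associated shift theorem must therefore be invoked carefully, most naturally by adjoining a companion regularity assumption analogous to \eqref{eq:HRA} for $\oL_2$. Once such a bound is in place, the compactness assertion follows by the routine Rellich--Kondrachov argument sketched above.
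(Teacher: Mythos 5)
There is a genuine gap. Your argument establishes precompactness of the image by placing $\oL(\mathbf{B})$ in a bounded set of fractional-order Sobolev spaces and invoking Rellich--Kondrachov, but for the concentration pair $(\widetilde{\bt},\varphi)=\oL_2(\vec{\bw},\vec{\phi})$ no such bound is available: the hypotheses of the lemma are only \eqref{eq:rest-conc} and \eqref{eq:HRA}, and \eqref{eq:HRA} concerns exclusively the fluid component $(\bt,\bu)$. Without an extra regularity estimate, the flux component $\widetilde{\bt}=\nabla\varphi$ is controlled only in $\mathbf{L}^2(\Omega)$ through the a priori bound $\|\vec{\varphi}\|_{\widetilde{\mathbf{H}}}\leq C_2$, and boundedness in $\mathbf{L}^2(\Omega)$ does not give precompactness there; so the very step your proof needs fails for that component. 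You acknowledge this and propose to ``derive or postulate'' a companion shift theorem for the advection--diffusion subproblem with an $\mathbf{L}^4$ drift, but postulating it changes the statement of the lemma (it would then hold under stronger hypotheses than claimed), and deriving it uniformly over $\mathbf{B}$ is precisely the hard step that is left undone. In short, the proposal reduces the lemma to an unproved regularity assertion that the paper never assumes.

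The paper avoids output-side regularity altogether by working on the input side. Given a sequence in $\mathbf{B}\subset\mathbf{V}\times\widetilde{\mathbf{V}}$, the kernel characterizations \eqref{kernel-BS} and \eqref{kernel-BC} show that the second components are bounded in $\mathbf{H}^1_0(\Omega)$ and $\widetilde{\mathrm{H}}^1(\Omega)$, so a subsequence converges weakly there and, by the compact embedding $\mathrm{H}^1(\Omega)\hookrightarrow\mathrm{L}^4(\Omega)$, strongly in $\mathrm{L}^4$. Then the Lipschitz-type estimates \eqref{eq:u-u0} and \eqref{eq:var-var0} from the proof of Lemma \ref{lem:L-Lipshitz-continuo} bound the difference of images solely in terms of $\|\bw_n-\bw_0\|_{0,4,\Omega}$ and $\|\phi_n-\phi_0\|_{0,4,\Omega}$ (the hypothesis \eqref{eq:HRA} enters only to keep the constant, via $\|\bt_0\|_{\varepsilon,\Omega}\leq C_{1,\varepsilon}$, independent of the inputs), so the images converge strongly in $\mathbf{H}\times\widetilde{\mathbf{H}}$. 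This yields sequential precompactness of $\oL(\mathbf{B})$, hence compactness of $\overline{\oL(\mathbf{B})}$, with no regularity needed for the concentration image. If you wish to salvage your route, you would have to prove the uniform fractional regularity of $\vec{\varphi}$ (including the gradient) over $\mathbf{B}$, which is exactly what the paper's argument is designed to circumvent.
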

		\begin{proof}
			Considering \eqref{eq:B}, any sequence $\{(\vec{\bw}_n,\vec{\varphi}_n)\}_{n\geq 1}\subset \mathbf{B}\subset\mathbf{V}\times\widetilde{\mathbf{V}}$  is necessarily bounded.  Consequently, it admits a weakly convergent subsequence $\{(\vec{\bw}^{(1)}_n,\vec{\varphi}^{(1)}_n)\}_{n\geq 1}$  
			 converging to $(\vec{\bw}_0,\vec{\varphi}_0)\in\mathbf{B}$. In particular, this means that $\bw_n^{(1)}\overset{w}{\longrightarrow}\bw_0$ in $\mathbf{H}^1_{0}(\Omega)$ and $\phi_n^{(1)}\overset{w}{\longrightarrow}\phi_0$ in $\widetilde{\mathrm{H}}^1(\Omega),$ as detailed  in the characterization of  $\mathbf{V}$ and $\mathbf{\widetilde{V}}$ (see  \eqref{kernel-BS} and \eqref{kernel-BC}). 
			  \noindent Moreover, since the inclusion $i:\mathrm{H}^1(\Omega)\longrightarrow\mathrm{L}^4(\Omega)$ is compact, we ensure the strong convergence 
			  \begin{equation}\label{eq:strong-converge}
			  	\|\bw_n^{(1)}-\bw_0\|_{0,4,\Omega}\overset{n\to\infty}{\longrightarrow}\quad 0 \qan 	\|\phi_n^{(1)}-\phi_0\|_{0,4,\Omega}\quad \overset{n\to\infty}{\longrightarrow}\quad 0\,.
			  	\end{equation}
		     \noindent On the other hand,  setting
			 \[ (\vec{\bu}^{(1)}_n,\vec{\varphi}^{(1)}_n)=\oL(\vec{\bw}^{(1)}_n,\vec{\varphi}^{(1)}_n)\qan  (\vec{\bu}_0,\vec{\varphi}_0)=\oL(\vec{\bw}_0,\vec{\phi}_0)\,,\]
			 and proceeding exactly as in Lema \ref{lem:L-Lipshitz-continuo} with  $(\vec{\bw}^{(1)}_n,\vec{\varphi}^{(1)}_n)$ instead of $(\vec{\bw},\vec{\varphi}),$  we deduce  from \eqref{eq:u-u0} and \eqref{eq:var-var0} that
			  			\begin{equation*}
			  	\begin{array}{l}
			  		\|\oL(\vec{\bw}^{(1)}_n,\vec{\phi}^{(1)}_n)-\oL(\vec{\bw}_0,\vec{\phi}_0)\|_{\mathbf{H}\times\widetilde{\mathbf{H}}}
			  		\leq \dfrac{1}{2\alpha_{\widetilde{\A}}}\|\vec{\varphi}^{(1)}_{n}\|_{\widetilde{ \mathbf{H}}}\,\|\bw_n-\bw_0\|_{0,4,\Omega}\\[2ex]
			  		+\dfrac{1}{\alpha_{\A}}\left\{ \big(g\gamma|\Omega|^{\frac{1}{2}} + 2L_{\mu}\|i_\epsilon\||\Omega|^{\frac{\epsilon}{d}-\frac{1}{2}}\|\bt_0\|_{\varepsilon,\Omega}\big)\|\phi^{(1)}_n-\phi_0\|_{0,4,\Omega}+\dfrac{1}{2}\|\vec{\bu}_n^{(1)}\|_{\mathbf{H}}\, \|\bw^{(1)}_n-\bw^{(1)}_0\|_{0,4,\Omega} \right\}\,.
			  	\end{array}
			  \end{equation*}	
Note that  $\|\vec{\varphi}_{n}^{(1)}\|_{\widetilde{\mathbf{H}}}$, $\|\vec{\bu}_{n}^{(1)}\|_{\mathbf{H}}$ and  $\|\bt_0\|_{\varepsilon,\Omega}$ are bounded by data in accordance with \eqref{eq:est-u-lip} and \eqref{eq:HRA}. In virtue of \eqref{eq:strong-converge}, we then conclude that $\oL(\vec{\bw}^{(1)}_n,\vec{\phi}^{(1)}_n)\longrightarrow\oL(\vec{\bw}_0,\vec{\phi}_0)$ as $n\to+\infty,$ and so $\overline{\oL(B)}$ is compact.
%
%
%			 Then, in virtue of estimation \eqref{eq:para-compaci} we deduce that
%
%			 As the injection of $\textrm{H}^1(\Omega)$ in $\textrm{L}^4(\Omega)$ is compact and $\|\bt_0\|_{0,\epsilon,\Omega},\, \|\vec{\bu}\|$ and $\|\vec{\varphi}\|$, are bounded by data, then it is guaranteed that  $\|\oL(\vec{\bw}_n,\vec{\phi}_n)-\oL(\vec{\bw}_0,\vec{\phi}_0)\|\to 0$, when $n$ tends to infinity. In other words, the sequence $\{\oL(\vec{\bw}_n,\vec{\varphi}_n)\}_{n\geq 1}$ converge in norm to $\oL(\vec{\bw}_0,\vec{\varphi}_0)$. This proved that $\oL$ transforms weakly convergent successions into strongly convergent sequences and hence $\overline{\oL(B)}$ is compact.
		\end{proof}

At this point, we are in position to state the main result of this section. 
		
		\begin{thm}\label{thm:existence}
				Under the assumption specified in \eqref{eq:rest-conc} and \eqref{eq:HRA}, there exists at least one solution $(\vec{\bu},\vec{\varphi})$ for \eqref{eq:FV-des-lin}, satisfying  the a priori estimates 
					\begin{equation}\label{eq:a-priori-estimates-continuous}
					\|\vec{\bu}\|_{\mathbf{H}}\leq C_1(\mu,\gamma,\boldsymbol{f},g,\alpha,\kappa,U,\Omega)\quad\qan\quad\|\vec{\varphi}\|_{\widetilde{ \mathbf{H}}}\leq C_2 (\alpha,\kappa,U,\Omega)\,.
				\end{equation}
		\end{thm}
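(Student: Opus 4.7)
The plan is to apply Schauder's fixed point theorem (Theorem \ref{thm:Schauder}) directly to the operator $\oL:\mathbf{B}\subseteq\mathbf{V}\times\widetilde{\mathbf{V}}\to\mathbf{V}\times\widetilde{\mathbf{V}}$ defined in \eqref{eq:L}, and then invoke the equivalence \eqref{eq:equvalencia-L} between fixed points of $\oL$ and solutions of the kernel-reduced problem \eqref{eq:FV-ker} (which itself is equivalent to \eqref{eq:FV} by the inf-sup conditions of Lemma \ref{lem:properties-form-B}). Since every preparatory ingredient has already been assembled in the preceding lemmas, the argument essentially reduces to checking the hypotheses of Schauder's theorem on the ball $\mathbf{B}$.

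First, I would observe that $\mathbf{B}$, defined in \eqref{eq:B} as the set of pairs $(\vec{\bw},\vec{\phi})\in\mathbf{V}\times\widetilde{\mathbf{V}}$ bounded in their respective norms by $C_1(\cdot)$ and $C_2(\cdot)$, is a closed convex subset of the Banach space $\mathbf{H}\times\widetilde{\mathbf{H}}$ (it is the intersection of two closed balls with a closed subspace, and convexity is obvious from the triangle inequality). Next, by Lemma \ref{lem:Lweldef}, the operator $\oL$ is well-defined on $\mathbf{B}$ and maps $\mathbf{B}$ into itself, so the assumption $\oL(\mathbf{B})\subseteq\mathbf{B}$ of Theorem \ref{thm:Schauder} is satisfied. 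Continuity of $\oL$ on $\mathbf{B}$ follows from Lemma \ref{lem:L-Lipshitz-continuo}, since Lipschitz continuity trivially implies continuity. Finally, Lemma \ref{lem:L-compac} ensures that $\overline{\oL(\mathbf{B})}$ is compact in $\mathbf{H}\times\widetilde{\mathbf{H}}$. With all hypotheses verified, Schauder's theorem yields at least one $(\vec{\bu},\vec{\varphi})\in\mathbf{B}$ with $\oL(\vec{\bu},\vec{\varphi})=(\vec{\bu},\vec{\varphi})$.

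To conclude, I would use the characterization \eqref{eq:equvalencia-L} to translate this fixed point into a solution of \eqref{eq:FV-ker}, and observe that the a priori bounds \eqref{eq:a-priori-estimates-continuous} are automatic from membership in $\mathbf{B}$, given the definitions \eqref{eq:C1}--\eqref{eq:C2} of $C_1$ and $C_2$. The restriction \eqref{eq:rest-conc} is needed to guarantee the coercivity constant $\alpha_{\widetilde{\A}}$ is positive (and thus that $C_2$ is finite and $\oL$ is well-defined), while the additional regularity hypothesis \eqref{eq:HRA} is used only through Lemmas \ref{lem:L-Lipshitz-continuo} and \ref{lem:L-compac} to bound the term $\|\bt_0\|_{\varepsilon,\Omega}$ appearing in the Lipschitz constant.

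The main obstacle, in my view, is conceptual rather than technical at this final stage: the real work has already been distributed across the earlier lemmas, and the theorem itself is essentially a bookkeeping step. If any difficulty remains, it lies in correctly invoking the equivalence between the three formulations (the kernel problem \eqref{eq:FV-ker}, the full mixed formulation \eqref{eq:FV}, and the fixed-point equation for $\oL$), which rests on the inf-sup conditions of Lemma \ref{lem:properties-form-B}; one must be careful that the pressure-like multipliers $\bsi$ and $\widetilde{\bsi}$ can be recovered uniquely from $(\vec{\bu},\vec{\varphi})$ through these inf-sup conditions in the respective Banach space settings.
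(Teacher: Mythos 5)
Your proposal is correct and follows essentially the same route as the paper's own proof: Schauder's theorem (Theorem \ref{thm:Schauder}) applied to $\oL$ on the ball $\mathbf{B}$, with the hypotheses supplied by Lemmas \ref{lem:Lweldef}, \ref{lem:L-Lipshitz-continuo}, and \ref{lem:L-compac}, followed by the equivalence \eqref{eq:equvalencia-L} and the a priori bounds of Lemma \ref{lem:a-priori-estimates}. Your explicit check that $\mathbf{B}$ is closed and convex, and your remarks on where \eqref{eq:rest-conc} and \eqref{eq:HRA} enter, are details the paper leaves implicit but do not change the argument.
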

		\begin{proof} From Lemmas \ref{lem:Lweldef}, \ref{lem:L-Lipshitz-continuo}, and \ref{lem:L-compac}, it follows that the operator $\oL$ (cf. equations \eqref{eq:FV-des-lin}--\eqref{eq:L}) satisfies all the conditions required by the Schauder Fixed Point Theorem (cf. Theorem~\ref{thm:Schauder}), thereby guaranteeing the existence of at least one fixed point for $\oL$. Furthermore, according to the relation in equation~\eqref{eq:equvalencia-L}, this fixed point corresponds to a solution of the problem described by equation~\eqref{eq:FV-des-lin}. This solution must also fulfill the a priori estimates presented in Lemma~\ref{lem:a-priori-estimates}.			
%			
%			Note that we proved that the operator $\oL$ is well posed (Lemma \ref{lem:L-bien-def}), is Lipschitz-continuous (Lemma \ref{lem:L-Lipshitz-continuo}) and $\overline{\oL(B)}$ is compact (Lemma \ref{lem:L-compac}), then are satisfy the hypothesis the Schauder Theorem (Theorem \ref{thm:Schauder}), i.e., the operator $\oL$ has at least one fixed point $(\vec{\bu},\vec{\varphi})\in B$. Additionally, by the equivalence \eqref{eq:equvalencia-L}, $(\vec{\bu},\vec{\varphi})$ is a solution of problem \eqref{eq:FV-des-lin}.
		\end{proof}

		Observe that  if $(\vec{\bu},\vec{\varphi}), (\vec{\bu}_0,\vec{\varphi}_0)$ are two solutions of problem \eqref{eq:FV-des-lin}, and therefore fixed points of operator $\oL$, then from the  Lipschitz continuity (cf. Lemma \ref{lem:L-Lipshitz-continuo}), it follows that 
		\[\|(\vec{\bu},\vec{\varphi})-  (\vec{\bu}_0,\vec{\varphi}_0)\|_{\mathbf{H}\times\widetilde{\mathbf{H}}}=\|\oL(\vec{\bu},\vec{\varphi})-\oL  (\vec{\bu}_0,\vec{\varphi}_0)\|_{\mathbf{H}\times\widetilde{\mathbf{H}}}\leq C_{\rm{LIP}}\|(\vec{\bu},\vec{\varphi})-  (\vec{\bu}_0,\vec{\varphi}_0)\|_{\mathbf{H}\times\widetilde{\mathbf{H}}}\,,\]
		and so
		\[(1-C_{\rm{LIP}})\,\|(\vec{\bu},\vec{\varphi})-  (\vec{\bu}_0,\vec{\varphi}_0)\|_{\mathbf{H}\times\widetilde{\mathbf{H}}}\leq 0\,.\]
		Then $(\vec{\bu},\vec{\varphi}) =(\vec{\bu}_0,\vec{\varphi}_0)$ whenever $C_{\mathrm{LIP}}<1,$ as defined in \eqref{eq:C-LIP}.   The following uniqueness result has been then demonstrated.

		\begin{thm}\label{thm:uniqueness}
			Under the hypothesis of Theorem \ref{thm:existence}, and assuming  that the data are sufficiently small such that the Lipschitz continuity constant (cf. \eqref{eq:C-LIP}) satisfies $C_{\rm{LIP}}<1$, there exists a unique solution $(\vec{\bu},\vec{\varphi})$ of problem  \eqref{eq:FV-des-lin}.
		\end{thm}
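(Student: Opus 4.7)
The plan is to invoke the Lipschitz continuity of the fixed-point operator $\oL$ established in Lemma \ref{lem:L-Lipshitz-continuo}, combined with the equivalence in \eqref{eq:equvalencia-L} between solutions of \eqref{eq:FV-des-lin} and fixed points of $\oL$. Existence is already secured by Theorem \ref{thm:existence}, so the only thing left to show is uniqueness under the smallness assumption $C_{\rm{LIP}}<1$.

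First I would take two arbitrary solutions $(\vec{\bu},\vec{\varphi}),(\vec{\bu}_0,\vec{\varphi}_0)\in\mathbf{B}$ of \eqref{eq:FV-des-lin}, which by \eqref{eq:equvalencia-L} are fixed points of $\oL$, i.e.\ $\oL(\vec{\bu},\vec{\varphi})=(\vec{\bu},\vec{\varphi})$ and $\oL(\vec{\bu}_0,\vec{\varphi}_0)=(\vec{\bu}_0,\vec{\varphi}_0)$. Note that both candidates belong to $\mathbf{B}$ thanks to the a priori bounds \eqref{eq:a-priori-estimates-continuous}, so the Lipschitz estimate \eqref{eq:Lipshitz-continuo} is applicable to the pair.

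The key step is then to apply \eqref{eq:Lipshitz-continuo} directly to these two fixed points, which yields
\begin{equation*}
\|(\vec{\bu},\vec{\varphi})-(\vec{\bu}_0,\vec{\varphi}_0)\|_{\mathbf{H}\times\widetilde{\mathbf{H}}}
=\|\oL(\vec{\bu},\vec{\varphi})-\oL(\vec{\bu}_0,\vec{\varphi}_0)\|_{\mathbf{H}\times\widetilde{\mathbf{H}}}
\leq C_{\rm{LIP}}\,\|(\vec{\bu},\vec{\varphi})-(\vec{\bu}_0,\vec{\varphi}_0)\|_{\mathbf{H}\times\widetilde{\mathbf{H}}},
\end{equation*}
and rearranging gives $(1-C_{\rm{LIP}})\,\|(\vec{\bu},\vec{\varphi})-(\vec{\bu}_0,\vec{\varphi}_0)\|_{\mathbf{H}\times\widetilde{\mathbf{H}}}\leq 0$. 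The smallness hypothesis $C_{\rm{LIP}}<1$ forces the norm on the left to vanish, whence $(\vec{\bu},\vec{\varphi})=(\vec{\bu}_0,\vec{\varphi}_0)$.

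There is no real obstacle here: all the heavy lifting was done in Lemma \ref{lem:L-Lipshitz-continuo}, where the data-dependent constant $C_{\rm{LIP}}$ was made explicit in \eqref{eq:C-LIP}. The proof is essentially a one-line Banach-type contraction argument on the set of fixed points, and the only thing worth emphasising is that the smallness condition $C_{\rm{LIP}}<1$ translates, via the expression \eqref{eq:C-LIP}, into explicit restrictions on $\|\boldsymbol{f}\|_{0,4/3,\Omega}$, on the parameters $\gamma,g,\alpha,U$, and on the Lipschitz constant $L_\mu$, so that uniqueness is indeed a data-smallness result rather than an unconditional one.
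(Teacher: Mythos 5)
Your argument is correct and coincides with the paper's own proof: the authors likewise take two solutions, observe via \eqref{eq:equvalencia-L} that they are fixed points of $\oL$ lying in $\mathbf{B}$ (by the a priori estimates), apply the Lipschitz bound \eqref{eq:Lipshitz-continuo} to obtain $(1-C_{\rm{LIP}})\,\|(\vec{\bu},\vec{\varphi})-(\vec{\bu}_0,\vec{\varphi}_0)\|_{\mathbf{H}\times\widetilde{\mathbf{H}}}\leq 0$, and conclude uniqueness from $C_{\rm{LIP}}<1$. No gap to report.
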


	As we conclude this section, it is pertinent to highlight some key observations that underpin the framework of our analysis.

	\begin{rem}\label{rem:sigma-existence}
		 \begin{itemize}
		\item[(a)] The existence of the tensor $\bsi$ and the semi-advective flux vector $\widetilde{\bsi}$ follows from the inf-sup compatibility conditions satisfied by the bilinear forms $\B$ and $\widetilde{\B}$, as stated in part (b) of Lemma \ref{lem:properties-form-B}. Moreover, utilizing \eqref{eq:FV}, the continuity of the forms $\A_{\phi}(\cdot, \cdot)$, $\C(\bw;\cdot, \cdot)$, and $\F_{\phi}(\cdot)$, with $\bw = \bu$ and $\phi = \varphi$ (cf. Lemma \ref{lem:properties-form-A}, Lemma \ref{lem:properties-form-C-F}, and estimate \eqref{eq:continuous-FS}), as well as the a priori bounds \eqref{eq:a-priori-estimates-continuous}
		\begin{equation*} 
			\begin{array}{l}
	\disp	\,\|\bsi\|_{\bdiv_{4/3},\Omega} \leq \dfrac{1}{\beta}	\sup_{\substack{\vec{\bv}\,\in\,\mathbf{H} \\ \vec{\bv} \neq \vec{\mathbf{0}}}}
			\frac{\B(\vec{\bv},\bsi)}{\| \vec{\bv}\|_{\mathbf{H}}} 
			= \dfrac{1}{\beta} \sup_{\substack{\vec{\bv}\,\in\,\mathbf{H} \\ \vec{\bv} \neq \vec{\mathbf{0}}}}
			\frac{	\A_\varphi(\vec{\bu},\vec{\bv})+\C(\bu;\vec{\bu},\vec{\bv})- \F_\varphi(\vec{\bv})}{\| \vec{\bv}\|_{\mathbf{H}}}\\[2ex]
			\quad 
	\qquad 	\leq \dfrac{1}{\beta} \Big\{	\|\A_\varphi\|\|\vec{\bu}\|_{\mathbf{H}}+\dfrac{1}{2}\|\bu\|_{0,4,\Omega}^2+ \|\F_\varphi\|_{\mathbf{H}'}\Big\}	\leq C_3(\mu,\gamma,\boldsymbol{f},g,\alpha,\kappa,U,\Omega)	
			\end{array}
		\end{equation*}
which corresponds to the following a priori estimate for $\bsi$	with	
\begin{equation*}
	\begin{array}{l}
		C_3(\mu,\gamma,\boldsymbol{f},g,\alpha,\kappa,U,\Omega) := \dfrac{1}{\beta}\, \Big\{ 2\mu_2 \, C_1(\mu,\gamma,\boldsymbol{f},g,\alpha,\kappa,U,\Omega) + \dfrac{1}{2} \, C_1(\mu,\gamma,\boldsymbol{f},g,\alpha,\kappa,U,\Omega)^2 \\[2ex]
		\qquad + \|\boldsymbol{f}\|_{0,4/3,\Omega} + g(1+\gamma\alpha) |\Omega|^{4/3} + g\gamma|\Omega|^{1/2} C_2(\alpha,\kappa,U,\Omega) \Big\}\,.
	\end{array}
\end{equation*}
Similarly, we obtain the a priori bound for $\widetilde{\bsi},$
	\begin{equation*} 
	\begin{array}{l}
		\disp	\,\|\widetilde{\bsi}\|_{\div_{4/3},\Omega} \leq\dfrac{1}{\widetilde{\beta}} \Big\{	\|\widetilde{\A}\|\|\vec{\varphi}\|_{\widetilde{\mathbf{H}}}+\dfrac{1}{2}\|\bu\|_{0,4,\Omega}\|\vec{\varphi}\|_{\widetilde{\mathbf{H}}}+ \|\widetilde{\F}\|_{\widetilde{\mathbf{H}}'}\Big\} \leq C_4(\mu,\gamma,\boldsymbol{f},g,\alpha,\kappa,U,\Omega)	
	\end{array}
\end{equation*}
with
\begin{equation*}
	\begin{array}{l}
		C_4(\mu,\gamma,\boldsymbol{f},g,\alpha,\kappa,U,\Omega) := \dfrac{1}{\widetilde{\beta}}\, \Big\{ \sqrt{2}\max\{\kappa, U|\Omega|^{1/4}\}  \, C_2(\alpha,\kappa,U,\Omega) \\[2ex]
		\qquad  + \dfrac{1}{2} \, C_1(\mu,\gamma,\boldsymbol{f},g,\alpha,\kappa,U,\Omega) \,C_2(\alpha,\kappa,U,\Omega)+ \alpha U |\Omega|^{1/2} \Big\}\,.
	\end{array}
\end{equation*}

		\item[(b)] The   additional regularity   \eqref{eq:HRA} serves exclusively to bound the term $E_3$, as indicated in \eqref{eq:est-u-e3-1}, when proving the Lipschitz condition for $\oL$. This is due to the viscosity $\mu$ being concentration--dependent. However, in analyses that consider constant viscosity -- such as in
		\cite{aguiar-2017,childress-1976,harashima-1988,Teramoto-bioconvection,tb-2007} -- that assumption is unnecessary. Under these circumstances, the only prerequisite for the existence of solutions (cf. Theorem \ref{thm:existence}) is  condition \eqref{eq:rest-conc}, and the Lipschitz constant \eqref{eq:C-LIP} becomes
				\begin{equation*}%\label{eq:C-LIP-mu-constant}
			C_{\rm LIP}:=\dfrac{1}{\alpha_{\A}}\left\{g\gamma|\Omega|^{\frac{1}{2}}+\dfrac{1}{2}C_1(\mu,\gamma,\boldsymbol{f},g,\alpha,\kappa,U,\Omega)\, \right\} + \dfrac{1}{2\alpha_{\widetilde{\A}}} C_2 (\alpha,\kappa,U,\Omega)\,.
	\end{equation*}

		\end{itemize}
		
		\end{rem}

\section{The Galerkin scheme}\label{section3}

We now describe the discretization of the variational formulation   \eqref{eq:FV}. We start in Section \ref{sec:disc-form} by introducing the finite element spaces that serve as the basis for our discrete problem formulation  along with the properties of the forms involved. Next, in Section \ref{section32}, we analyze the well--posedness of the discrete problem, applying a fixed--point strategy similar to the one used in the continuous case. In Section \ref{section33} we derive the corresponding Cea's estimate and prove optimal order a priori error estimates

	\subsection{Discretization and Finite Element Spaces}\label{sec:disc-form}
Consider a regular triangulation $\mathcal{T}_h$ over $\overline{\Omega}$, consisting of simplices $T$, specifically, triangles for $d=2$ and tetrahedra for $d=3$. We use $\mathcal{T}_h^{\rm b}$ to represent the barycentric refinement of $\mathcal{T}_h$. Denote by $h_T$ the diameter of each simplex $T$ in $\mathcal{T}^{\rm {b}}_h$, and define $h$ as the maximum diameter, $h:=\max\{h_T:T \in \mathcal{T}^b_h\}$, corresponding to the  mesh size for $\mathcal{T}^b_h$.

\noindent  For a given positive integer $\ell$, we define $\mathrm{P}_{\ell}(\mathcal{T}_h^{\rm b})$ as the set of scalar piecewise polynomial functions of degree less than or equal to $\ell$ on $\mathcal{T}_h^{\rm b}$, 
%such that for every $p_h$ in this set, the restriction of $p_h$ to any simplex $K$ in $\mathcal{T}_h^{\rm b}$, denoted by $p_h|_{K}$, is an element of $\mathrm{P}_{\ell}(K)$. Formally, 
that is
\begin{equation*}
	\mathrm{P}_{\ell}(\mathcal{T}_h^{\rm b}) := \left\{ p_h:\quad p_h|_{T} \in \mathrm{P}_{\ell}(T)\quad   \forall \,T\, \in \mathcal{T}_h^{\rm b} \right\}.
\end{equation*}
%
%This defines the spaces of scalar piecewise polynomials of degree $\leq \ell$ on $\mathcal{T}_h^{\rm b}$. 
Consistent with the notations introduced in Section \ref{section1}, we denote the spaces of vector--valued and tensor--valued polynomials on $\Thb$ by $\mathbf{P}_{\ell}(\mathcal{T}_h^{\rm b}) $ and $\mathbb{P}_{\ell}(\mathcal{T}_h^{\rm b})$, respectively. We also recall the local Raviart--Thomas  space of order $\ell$ defined as $\disp \mathbf{RT}_\ell(T):=\mathbf{P}_\ell(T)\oplus \mathrm{P}_{\tilde{\ell}}(T)\textbf{x}$, where $\textbf{x}$ is a generic vector in $\mathbf{R}$ and $\mathrm{P}_{\tilde{\ell}}(T)$ denotes the space of polynomials of degree $\ell$ on $T$
. Consequently, the global Raviart--Thomas space of order $\ell$ is characterized by
\[
	\mathbb{RT}_{\ell}(\Thb):= \left\{\bta_h \in \mathbb{H}(\bdiv;\Omega)\,:\quad\mathbf{c}^t\bta_h|_T\in \mathbf{RT}_\ell(T)\,,\quad \forall \, \mathbf{c}\in \mathbf{R}\,,\quad \,\forall\, T\in\mathcal{T}_h^b \right\}\,.
	\]

\noindent The finite element spaces  for approximating the unknowns $\bt$, $\bu$, $\bsi$, $\widetilde{\bt}$, $\varphi$, and $\widetilde{\bsi}$ of problem \eqref{eq:FV} are then given as
\begin{equation}\label{eq:FEspaces}
\begin{aligned}
	\mathbb{H}^\bt_h &:= \mathbb{L}^2_{\text{tr}}(\Omega) \cap \mathbb{P}_{\ell}(\mathcal{T}_h^{\rm b}), & 
	\mathbf{H}^\bu_h &:= \mathbf{L}^4(\Omega) \cap \mathbf{P}_{\ell}(\mathcal{T}_h^{\rm b}), & 
	\mathbb{H}^\bsi_h &:= \mathbb{H}_0(\bdiv_{4/3}; \Omega) \cap \mathbb{RT}_\ell(\mathcal{T}_h^{\rm b}), \\
	\mathbf{H}^{\widetilde{\bt}}_h &:= \mathbf{L}^2(\Omega) \cap \mathbf{P}_{\ell}(\mathcal{T}_h^{\rm b}), & 
	\mathrm{H}^\varphi_h &:= \mathrm{L}_0^4(\Omega) \cap \mathrm{P}_{\ell}(\mathcal{T}_h^{\rm b}), & 
	\mathbf{H}^{\widetilde{\bsi}}_h &:= \mathbf{H}_\Gamma(\mathrm{div}_{4/3}; \Omega) \cap \mathbf{RT}_\ell(\mathcal{T}_h^{\rm b}).
\end{aligned}
\end{equation}

	Following the approach used in the continuous case, we simplify the notation by setting
	\[\vec{\bu}_h:=(\bt_h,\bu_h)\,,\quad \vec{\bv}_h:=(\br_h,\bv_h)\,\in \mathbf{H}_h:=\mathbf{H}^\bu_h\times\mathbb{H}^\bt_h\,,  \] 
	\[\vec{\varphi}_h:=(\widetilde{\bt}_h,\varphi_h)\,,\quad\vec{\psi}_h:=(\widetilde{\br}_h,\psi_h)\,\in \widetilde{\mathbf{H}}_h:=\mathrm{H}^\varphi_h\times\mathbf{H}^{\widetilde{\bt}}_h\,.\]
	In turn, for each $\bt_h\in\mathbb{H}^\bt_h$ we identify $\bt_{h,sym}$ and $\bt_{h,skw}$ as the symmetric and skew--symmetric parts, respectively.  The Galerkin scheme associated with problem \eqref{eq:FV} seeks to find $(\vec{\bu}_h, \bsi_h, \vec{\varphi}_h, \widetilde{\bsi}_h) \in \mathbf{H}_h \times \mathbb{H}^\bsi_h \times \widetilde{\mathbf{H}}_h \times \mathbf{H}^{\widetilde{\bsi}}_h$ satisfying 
	\begin{equation}\label{eq:FV_h}
		\begin{array}{rl}
			\A_{\varphi_h}(\vec{\bu}_h,\vec{\bv}_h)+\C(\bu_h;\vec{\bu}_h,\vec{\bv}_h)- \B(\vec{\bv}_h,\bsi_h)&=\F_{\varphi_h}(\vec{\bv}_h)  \\[2ex]
			\B(\vec{\bu}_h,\bta_h)&=0  \\[2ex]
			\widetilde{\A}(\vec{\varphi}_h,\vec{\psi}_h) +\widetilde{\C}(\bu_h;\vec{\varphi}_h,\vec{\psi}_h) - \widetilde{\B}(\vec{\psi}_h,\widetilde{\bsi}_h)&=\widetilde{\F}(\vec{\psi}_h)  \\[2ex]
			\widetilde{\B}(\vec{\varphi}_h,\widetilde{\bta}_h)&=0,
		\end{array}
	\end{equation}
	for all $(\vec{\bv}_h,\bta_h,\vec{\psi}_h,\widetilde{\bta}_h) \in \mathbf{H}_h\times \mathbb{H}^\bsi_h\times \widetilde{\mathbf{H}}_h\times\mathbf{H}^{\widetilde{\bsi}}_h$. Here,  $\A_{\phi_{h}}(\cdot,\cdot),\C(\bw_h;\cdot,\cdot):\mathbf{H}_h\times\mathbf{H}_h,\rightarrow \mathrm{R}$ (with $\phi_h$ and $\bw_h$ in place of $\phi$ and $\bw$, respectively), $\widetilde{\A}(\cdot,\cdot),\widetilde{\C}(\bw_h;\cdot,\cdot):\widetilde{\mathbf{H}}_h\times\widetilde{\mathbf{H}}_h\rightarrow\mathrm{R},$ (with $\bw_h$ in place of $\bw$), $\B:\mathbf{H}_h\times\mathbb{H}_h^\bsi\rightarrow\mathrm{R}$ and $\widetilde{\B}:\widetilde{\mathbf{H}}_h\times\mathbf{H}_h^{\widetilde{\bsi}}\rightarrow\mathrm{R}$  are the bilinear forms defined in \eqref{A^S}-\eqref{C^C} constrained to operate within the respective finite--dimensional spaces.
	
	\noindent In turn, $\F_{\phi_h}$ (with $\phi_h$ instead of $\phi$) and $\widetilde{\F}$ are the linear functionals defined in \eqref{FS} and \eqref{FC}, respectively, and satisfy
    \begin{subequations}
				\begin{equation}\label{eq:continuous-FS-h}
		\big|\F_{\phi_h}(\vec{\bv_h})\big| \leq \left\{ \|\boldsymbol{f}\|_{0,4/3,\Omega} + g(1+\gamma\alpha) |\Omega|^{4/3} + g\gamma |\Omega|^{1/2} \|\phi_h\|_{0,4,\Omega}  \right\} \|\vec{\bv}_h\|_{\mathbf{H}} \quad \forall\, \vec{\bv}_h \in \mathbf{H}_h,
	\end{equation}
	and
	\begin{equation}\label{eq:continuous-FC-h}
		\big|\widetilde{\F}(\vec{\psi}_h)\big| \leq \alpha U |\Omega|^{\frac{1}{2}}\|\vec{\psi}_h\|_{\widetilde{\mathbf{H}}} \quad \forall \,\vec{\psi}_h \in \widetilde{\mathbf{H}}_h.
	\end{equation}\end{subequations}

In the following, we outline the properties of the forms at the discrete level, starting with $\B$ and $\widetilde{\B}$. We emphasize that the extensive development and analysis of the finite element set $(\mathbf{H}_h, \mathbb{H}_h^{\bsi})$ are detailed in \cite{hw-M2AN-2013} for a dual-mixed formulation of the Navier--Stokes equations. That work states that if the discrete spaces are constructed on meshes with a macroelement structure (such as $\Thb$) and if the polynomial degree $\ell$ meets the condition $\ell \geq d-1$, then these spaces are inf--sup compatible and satisfy a discrete Korn's inequality (cf. \eqref{eq:inf-supBS_h} and \eqref{eq:ineq-t}). These conditions are vital for ensuring the well-posedness of the discrete problem, particularly with fluid equations. For similar properties of $\widetilde{\B}$ (refer to equations \eqref{eq:inf-supBC_h} and \eqref{eq:ineq-psi}), please see \cite{CGMor}. Consequently, we omit the proofs here.

\begin{lem}\label{lem:properties-form-B-h} For $\ell \geq d-1,$ the forms $\B:\mathbf{H}_h\times\mathbb{H}_h^{\bsi} \rightarrow \mathrm{R}$ and $\widetilde{\B}:\widetilde{\mathbf{H}}_h\times\mathbf{H}_h^{\widetilde{\bsi}}\rightarrow \mathrm{R}$ defined in \eqref{B^S} and \eqref{B^C}, exhibit the following properties.
	\begin{itemize}
		\item[(a)]  \textbf{Continuity:} $\B$ and $\widetilde{\B}$  are bounded, that is 
		\begin{equation*}\label{eq:continuous-bilinear-BS-h}
			|\B(\vec{\bv}_h,\bta_h)|\leq  \|\vec{\bv}_h\|_{\mathbf{H}}\,\|\bta_h\|_{\bdiv_{4/3},\Omega} \qquad \forall\, \vec{\bv}_h\in \mathbf{H}_h\,,\quad  \forall\, \bta_h\in\mathbb{H}_{h}^{\bsi}\,,
		\end{equation*}
		\begin{equation*}\label{eq:continuous-bilinear-BC-h}
			|\widetilde{\B}(\vec{\psi}_h,\widetilde{\bta}_h)|\leq  \|\vec{\psi}_h\|_{\widetilde{\mathbf{H}}}\,\|\widetilde{\bta}_h\|_{\div_{4/3},\Omega}\qquad \forall\,\vec{\psi}_h\in\widetilde{\mathbf{H}}_h\,,\quad \forall\,\widetilde{\bta}_h\in \mathbf{H}_{h}^{\widetilde{\bsi}}\,.
		\end{equation*}
		%
		%	where 
		%	\begin{equation}\label{eq:||BS||-||BC||}
			%		\|\mathbf{B}\|:= 1\qquad\qan\qquad
			%		\|\widetilde{\mathbf{B}}\|:= 1\,.
			%		\end{equation}
		\item[(b)] \textbf{Discrete inf--sup conditions:} There exist positive constants $\beta_\mathtt{d}$ and $\widetilde{\beta}_\mathtt{d}$, independent of $h$, such that
        \begin{subequations}
		\begin{equation}\label{eq:inf-supBS_h}
			\sup_{\substack{\vec{\bv}_h\,\in\,\mathbf{H}_h \\ \vec{\bv}_h \neq \vec{\mathbf{0}}}}
			\frac{\B(\vec{\bv}_h,\bta_h)}{\| \vec{\bv}_h\|_{\mathbf{H}}}\,\geq\,\beta_\mathtt{d}\,\|\bta_h\|_{\bdiv_{4/3};\Omega}\,\qquad  \forall\, \bta_h \in\mathbb{H}^\bsi_h\,,
		\end{equation}
		\begin{equation}\label{eq:inf-supBC_h}
	\sup_{\substack{\vec{\psi}_h\,\in\,\widetilde{\mathbf{H}}_h\\ \vec{\psi}_h \neq \vec{0} } }
			\frac{\widetilde{\B}(\vec{\psi}_h,\widetilde{\bta}_h)}{\| \vec{\psi}_h\|_{\widetilde{\mathbf{H}}}}\, \geq\,\widetilde{\beta}_\mathtt{d}\,\|\widetilde{\bta}_h\|_{\div_{4/3};\Omega}\,\qquad  \forall\,\widetilde{\bta}_h\in \mathbf{H}^{\widetilde{\bsi}}_h\,.
		\end{equation}\end{subequations}
		\item[(c)] There exist  positive constants $C_\mathtt{d}$ and $\widetilde{C}_\mathtt{d}$, independents of $h$, such that
        \begin{subequations}
		\begin{equation}\label{eq:ineq-t}
			\| \br_{h,sym}\|_{0,\Omega}\geq C_\mathtt{d} \|(\br_{h,skw},\bv_h)\|\,,\qquad\forall\, \vec{\bv}_h=(\br_h,\bv_h)\,\in \mathbf{V}_h\,,
		\end{equation}
		and 
		\begin{equation}\label{eq:ineq-psi}
			\|\widetilde{\br}_h\|_{0,\Omega}\geq \widetilde{C}_\mathtt{d}\,\|\psi_h\|_{0,4;\Omega}\,,\qquad\forall\, \vec{\psi}_h=(\widetilde{\br}_h,\psi_h)\in \widetilde{\mathbf{V}}_h\,,
		\end{equation} \end{subequations}
		where $\mathbf{V}_h$ and $\widetilde{\mathbf{V}}_h$ are the discrete kernels of the forms $\B$ and $\widetilde{\B}$, that is,
        \begin{subequations}
		\begin{equation}\label{kernel-BS_h}
			\mathbf{V}_h:=\left\{ \vec{\bv}_h\in  \mathbf{H}_h\quad : \quad \int_{\Omega}\br_h:\bta_h +\int_{\Omega}\bv_h\cdot\bdiv\,\bta_h=0 \quad, \,\forall\, \bta_h \in\mathbb{H}^\bsi_h\right\}\,,
		\end{equation}
		and
		\begin{equation}\label{kernel-BC_h}
			\widetilde{\mathbf{V}}_h:=\left\{ \vec{\psi}_h\in \widetilde{\mathbf{H}}_h\quad : \quad \int_{\Omega}\widetilde{\br}_h\cdot\widetilde{\bta}_h +\int_{\Omega}\psi_h\,\div\,\widetilde{\bta}_h=0 \quad, \, \forall\,\widetilde{\bta}_h\in \mathbf{H}^{\widetilde{\bsi}}_h\right\}\,.
		\end{equation}\end{subequations}
	\end{itemize}	
\end{lem}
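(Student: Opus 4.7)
The plan is to handle the three parts separately, reducing each to a statement that is essentially inherited from the continuous analysis combined with properties of the underlying finite element spaces.

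For part (a), the continuity bounds are inherited from the continuous case treated in Lemma \ref{lem:properties-form-B}. Since $\mathbf{H}_h\subset\mathbf{H}$, $\widetilde{\mathbf{H}}_h\subset\widetilde{\mathbf{H}}$, $\mathbb{H}^{\bsi}_h\subset\mathbb{H}_0(\bdiv_{4/3};\Omega)$ and $\mathbf{H}^{\widetilde{\bsi}}_h\subset\mathbf{H}_{\Gamma}(\div_{4/3};\Omega)$, the same constants transfer, so no additional work is needed.

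For part (b), I would use a Fortin-operator strategy to inherit the discrete inf--sup conditions from the continuous ones \eqref{eq:inf-supBS} and \eqref{eq:inf-supBC}. More precisely, it suffices to construct operators $\Pi_h:\mathbb{H}_0(\bdiv_{4/3};\Omega)\to\mathbb{H}^{\bsi}_h$ and $\widetilde{\Pi}_h:\mathbf{H}_{\Gamma}(\div_{4/3};\Omega)\to\mathbf{H}^{\widetilde{\bsi}}_h$ that are bounded in the $\bdiv_{4/3}$ norms uniformly in $h$, commute with the divergence operators up to the $\mathrm{L}^{4/3}$-orthogonal projection onto $\mathbf{H}^{\bu}_h$ (resp. $\mathrm{H}^{\varphi}_h$), and satisfy $\int_{\Omega}\br_h:(\Pi_h\bta-\bta)=0$ and $\int_{\Omega}\widetilde{\br}_h\cdot(\widetilde{\Pi}_h\widetilde{\bta}-\widetilde{\bta})=0$ for all $\br_h\in\mathbb{H}^{\bt}_h$ and $\widetilde{\br}_h\in\mathbf{H}^{\widetilde{\bt}}_h$. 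The canonical Raviart--Thomas interpolator enjoys the commuting-diagram property but is not stable in $\mathrm{L}^{4/3}$ directly, so I would compose it with a regularization (Scott--Zhang or Cl\'ement type) adapted to $W^{s,4/3}$ norms, possibly corrected by local bubble contributions that exploit the internal degrees of freedom made available by the barycentric refinement $\Thb$. The requirement $\ell\geq d-1$ enters precisely to guarantee that those local corrections can be realized within the chosen polynomial spaces.

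For part (c), the continuous characterizations \eqref{kernel-BS} and \eqref{kernel-BC} of the kernels as gradients of $H^1$ functions are not available at the discrete level, so the Friedrichs--Korn-type bounds must be established directly. The idea is a macroelement argument on $\Thb$: given $\vec{\bv}_h=(\br_h,\bv_h)\in\mathbf{V}_h$, I would select tensorial test functions $\bta_h\in\mathbb{H}^{\bsi}_h$ supported on individual simplices of $\Thb$ whose divergence can be prescribed inside $\mathbf{H}^{\bu}_h$ and whose deviatoric part controls the skew-symmetric piece of $\br_h$. Insertion into the kernel condition $\B(\vec{\bv}_h,\bta_h)=0$, combined with a scaling argument on the reference element, yields \eqref{eq:ineq-t}; the analogous construction for scalar test functions yields \eqref{eq:ineq-psi}.

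The main obstacle is part (b), specifically the construction of the Fortin operator stable in the $\bdiv_{4/3}$ norm; the standard $\mathrm{L}^2$-based Raviart--Thomas theory does not transfer verbatim to the Banach setting, and the needed modification is precisely what motivates the use of the barycentric refinement $\Thb$ and the degree condition $\ell\geq d-1$. Once part (b) is secured, part (c) follows by essentially the same local bubble constructions, and part (a) is immediate.
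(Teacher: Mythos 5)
Your part (a) is fine and coincides with what the paper does, but be aware that the paper itself does not reprove parts (b) and (c): its entire proof is the citation of \cite[Lemma 4.1]{hw-M2AN-2013} and \cite[Section 5]{CGMor}, where the discrete inf--sup conditions and the inequalities \eqref{eq:ineq-t}--\eqref{eq:ineq-psi} are established for exactly these spaces on barycentrically refined meshes with $\ell\geq d-1$. Your attempt to rebuild those results from scratch contains two concrete gaps. First, your Fortin operator acts on the wrong side of the pairing: \eqref{eq:inf-supBS_h} bounds $\|\bta_h\|_{\bdiv_{4/3},\Omega}$ by a supremum over $\vec{\bv}_h\in\mathbf{H}_h$, so the Fortin argument requires a uniformly bounded operator from $\mathbf{H}=\mathbb{L}^2_{\tr}(\Omega)\times\mathbf{L}^4(\Omega)$ into $\mathbf{H}_h$ that preserves $\B(\cdot,\bta_h)$ for every discrete $\bta_h$. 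The operator you propose, $\Pi_h:\mathbb{H}_0(\bdiv_{4/3};\Omega)\to\mathbb{H}_h^{\bsi}$ with commuting-diagram and orthogonality properties, satisfies $\B(\vec{\bv}_h,\Pi_h\bta-\bta)=0$ for discrete $\vec{\bv}_h$, i.e.\ it is a Fortin operator for the \emph{transposed} inf--sup (control of $\|\vec{\bv}_h\|_{\mathbf{H}}$ by a supremum over stresses), which is not the inequality stated in part (b); as written, your construction does not yield \eqref{eq:inf-supBS_h} or \eqref{eq:inf-supBC_h}.

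Second, the element-local construction you sketch for part (c) cannot work. A tensor $\bta_h$ belonging to $\mathbb{H}(\bdiv;\Omega)$ and supported in a single simplex $T$ must have vanishing normal trace on $\partial T$, hence $\int_T \bdiv\,\bta_h=0$: its divergence cannot be prescribed freely in $\mathbf{H}_h^{\bu}$, so such test functions control at most the mean-zero part of $\bv_h$ on each element and see nothing of the inter-element coupling. That coupling is essential here, because $\bv_h$ is discontinuous and the weak-gradient structure of the kernel \eqref{kernel-BS_h} is transmitted precisely through the normal components of the Raviart--Thomas functions across faces. Moreover, the bound \eqref{eq:ineq-t} is a genuine discrete Korn inequality (control of $\br_{h,skw}$ and $\bv_h$ by $\br_{h,sym}$ on $\mathbf{V}_h$), and its known proof in \cite{hw-M2AN-2013} hinges on macroelement arguments exploiting the barycentric (Alfeld) refinement and the degree restriction $\ell\geq d-1$ in an essential, Scott--Vogelius-type way; in your proposal this is exactly the part relegated to ``the same local bubble constructions'' and to unproved assertions about why $\ell\geq d-1$ suffices, so the hardest content of the lemma is left unestablished.
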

   \begin{proof}
   	See \cite[Lemma 4.1]{hw-M2AN-2013} \cite[Section 5]{CGMor}
   \end{proof}

   The discrete version of the Lemma \eqref{lem:properties-form-A} concerning the bilinear forms $\A$ and $\widetilde{\A}$ is presented as follows
   
   \begin{lem}\label{lem:properties-form-A-h}
   	The bilinear forms $\A_{\phi_h} : \mathbf{H}_h \times \mathbf{H}_h \to \mathbb{R}$ (for a given $\phi_h \in \mathrm{H}_h^{\varphi}$) and $\widetilde{\A} : \widetilde{\mathbf{H}}_h \times \widetilde{\mathbf{H}}_h \to \mathbb{R}$, as defined in \eqref{A^S} and \eqref{A^C} respectively, have the following properties:
   	\begin{itemize}
   		\item[(a)] \textbf{Continuity:} Both $\A_{\phi_h}$ and $\widetilde{\A}$ are continuous, satisfying
   		\begin{align*}
   			|\A_{\phi_h}(\vec{\bu}_h,\vec{\bv}_h)| &\leq \|\A\|\, \|\vec{\bu}_h\|_{\mathbf{H}}\, \|\vec{\bv}_h\|_{\mathbf{H}} \quad \forall \, \vec{\bu}_h, \vec{\bv}_h \in \mathbf{H}_h, \\
   			|\widetilde{\A}(\vec{\varphi}_h, \vec{\psi}_h)| &\leq \|\widetilde{\A}\|\, \|\vec{\varphi}_h\|_{\widetilde{\mathbf{H}}}\, \|\vec{\psi}_h\|_{\widetilde{\mathbf{H}}} \quad \forall \, \vec{\varphi}_h, \vec{\psi}_h \in \widetilde{\mathbf{H}}_h,
   		\end{align*}
   		with the same constants $\|\A\|$ and $\|\widetilde{\A}\|$ from Lemma \ref{lem:properties-form-A}, part (a).
   		
   		\item[(b)] \textbf{Coercivity of $\A_{\phi_h}$:} The form $\A_{\phi_{h}}$ is coercive on the kernel $\mathbf{V}_h$ of the bilinear form $\B$ (cf. \eqref{kernel-BS_h}), for any $\phi_h \in \mathrm{H}_h^{\varphi}$. That is, there exists a positive constant $\alpha_{\A}^\ast:=\mu_1\, \min\left\{1,C_\mathtt{d}^2\right\}$, independent of $h$, such that
   		\begin{equation}\label{eq:ellipAS-h}
   			\A_{\phi_h}(\vec{\bv}_h,\vec{\bv}_h) \geq \alpha_{\A}^\ast \|\vec{\bv}_h\|_{\mathbf{H}}^2 \quad \forall \, \vec{\bv}_h \in \mathbf{V}_h\,,
   		\end{equation}
   		where the constant $C_{\rm d}$ comes from \eqref{eq:ineq-t}.
   		\item[(c)] \textbf{Coercivity of $\widetilde{\A}$:} Assume the diffusive constant $\kappa$ and the mean velocity constant $U$ satisfy
   		\begin{equation}\label{eq:rest-conc-h}
   			\frac{U}{\kappa}|\Omega|^{1/4} < \min\{1,\widetilde{C}_{\rm{d}}^2\}
   		\end{equation}
   		where the constant $\widetilde{C}_{\rm d}$ comes from \eqref{eq:ineq-psi}. Then, the form $\widetilde{\A}$ is coercive on the kernel $\widetilde{\mathbf{V}}_h$ of $\widetilde{\B}$ (cf. \eqref{kernel-BC_h}). Specifically, there exists a positive constant $\alpha_{\widetilde{\A}}^\ast := \frac{\kappa}{2}\min\left(1-\frac{U}{\kappa},\widetilde{C}_{\rm d}^2-\frac{U}{\kappa}|\Omega|^{1/4}\right)$, independent of $h$, such that
   		\begin{equation}\label{eq:ellipAC-h}
   			\widetilde{\A}(\vec{\psi}_h,\vec{\psi}_h) \geq \alpha_{\widetilde{\A}}^\ast\|\vec{\psi}_h\|_{\widetilde{\mathbf{H}}}^2 \quad\, \forall\, \vec{\psi}_h\in \widetilde{\mathbf{V}}_h\,.
   		\end{equation}
   		%
   		%with $C_{\mathrm{FP}}$ and $\|\mathrm{i}_4\|$ from \eqref{eqn:fp}, and \eqref{eq:inclusion-Lebesgue}, respectively.
   	\end{itemize}
   \end{lem}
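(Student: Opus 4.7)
The plan is to follow the same three-part structure as the continuous counterpart (Lemma \ref{lem:properties-form-A}), exploiting that the discrete spaces $\mathbf{H}_h$, $\widetilde{\mathbf{H}}_h$ are subspaces of $\mathbf{H}$, $\widetilde{\mathbf{H}}$. For part (a), the continuity bounds transfer verbatim from parts (a) of Lemma \ref{lem:properties-form-A}, since no discrete structure is used there: I would simply restrict the continuous estimates (which only use H\"older, the bound $\mu_2$ on $\mu$, and Cauchy--Schwarz) to the discrete spaces, obtaining the claimed constants $\|\A\|=2\mu_2$ and $\|\widetilde{\A}\|=\sqrt{2}\max\{\kappa,U|\Omega|^{1/4}\}$.

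For part (b), fix $\vec{\bv}_h=(\br_h,\bv_h)\in\mathbf{V}_h$ and decompose $\br_h=\br_{h,sym}+\br_{h,skw}$. Since $\br_{h,sym}:\br_{h,skw}=0$ pointwise, we get the identity $\br_{h,sym}:\br_h=|\br_{h,sym}|^2$, so using $\mu(\phi_h+\alpha)\geq \mu_1$ we obtain $\A_{\phi_h}(\vec{\bv}_h,\vec{\bv}_h)\geq 2\mu_1\|\br_{h,sym}\|_{0,\Omega}^2$. Next I would split this term as $\mu_1\|\br_{h,sym}\|_{0,\Omega}^2+\mu_1\|\br_{h,sym}\|_{0,\Omega}^2$ and apply the discrete Korn-type inequality \eqref{eq:ineq-t} to the second copy, yielding
\begin{equation*}
\A_{\phi_h}(\vec{\bv}_h,\vec{\bv}_h)\,\geq\, \mu_1\|\br_{h,sym}\|_{0,\Omega}^2+\mu_1 C_{\mathtt{d}}^2\bigl(\|\br_{h,skw}\|_{0,\Omega}^2+\|\bv_h\|_{0,4,\Omega}^2\bigr)\,\geq\,\mu_1\min\{1,C_{\mathtt{d}}^2\}\|\vec{\bv}_h\|_{\mathbf{H}}^2\,,
\end{equation*}
which is the desired coercivity with the announced constant $\alpha_{\A}^{\ast}=\mu_1\min\{1,C_{\mathtt{d}}^2\}$.

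Part (c) is the most delicate one and the place where condition \eqref{eq:rest-conc-h} enters. Given $\vec{\psi}_h=(\widetilde{\br}_h,\psi_h)\in\widetilde{\mathbf{V}}_h$, I would first bound the cross term $-U\int_\Omega\psi_h\widehat{\mathbf{e}}_d\cdot\widetilde{\br}_h$ via H\"older together with the continuous injection $\|\widehat{\mathbf{e}}_d\|_{0,4,\Omega}=|\Omega|^{1/4}$, followed by Young's inequality, obtaining
\begin{equation*}
\Bigl|U\!\int_\Omega\psi_h\widehat{\mathbf{e}}_d\cdot\widetilde{\br}_h\Bigr|\,\leq\, \tfrac{U|\Omega|^{1/4}}{2}\bigl(\|\psi_h\|_{0,4,\Omega}^2+\|\widetilde{\br}_h\|_{0,\Omega}^2\bigr)\,.
\end{equation*}
The key step is then to split $\kappa\|\widetilde{\br}_h\|_{0,\Omega}^2=\tfrac{\kappa}{2}\|\widetilde{\br}_h\|_{0,\Omega}^2+\tfrac{\kappa}{2}\|\widetilde{\br}_h\|_{0,\Omega}^2$ and apply the discrete inequality \eqref{eq:ineq-psi} to the second copy, producing a lower bound of the form $\tfrac{\kappa}{2}\widetilde{C}_{\mathtt{d}}^2\|\psi_h\|_{0,4,\Omega}^2$. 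Combining the two estimates I would arrive at
\begin{equation*}
\widetilde{\A}(\vec{\psi}_h,\vec{\psi}_h)\,\geq\,\tfrac{1}{2}\bigl(\kappa-U|\Omega|^{1/4}\bigr)\|\widetilde{\br}_h\|_{0,\Omega}^2+\tfrac{1}{2}\bigl(\kappa\widetilde{C}_{\mathtt{d}}^2-U|\Omega|^{1/4}\bigr)\|\psi_h\|_{0,4,\Omega}^2\,,
\end{equation*}
and hypothesis \eqref{eq:rest-conc-h} guarantees both coefficients are strictly positive, yielding the stated coercivity constant $\alpha_{\widetilde{\A}}^{\ast}$ (modulo normalization) after taking the minimum over the two prefactors.

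The whole argument is essentially a mechanical translation of the continuous proof, with the Friedrichs--Poincar\'e inequality and the continuous embedding $\bi_4$ replaced by their discrete analogues \eqref{eq:ineq-t} and \eqref{eq:ineq-psi}. The only subtle point I would have to check is that the splitting constants in the $\kappa\|\widetilde{\br}_h\|^2$ term are chosen consistently so that the final minimum matches the $\alpha_{\widetilde{\A}}^{\ast}$ declared in the statement; since both the cross term and the discrete inequality enter linearly, this is just a matter of bookkeeping and is expected to go through without difficulty.
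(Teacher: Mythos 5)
Your proposal is correct and follows essentially the same route as the paper: part (a) by restriction of the continuous bounds to the subspaces $\mathbf{H}_h\subset\mathbf{H}$, $\widetilde{\mathbf{H}}_h\subset\widetilde{\mathbf{H}}$, and part (c) by the H\"older--Young argument combined with splitting $\kappa\|\widetilde{\br}_h\|_{0,\Omega}^2$ in halves and invoking \eqref{eq:ineq-psi}, which is exactly the computation displayed in the paper. The only difference is in part (b): the paper simply cites \cite{hw-M2AN-2013} and \cite[Lemma 4.1]{CGMor}, whereas you write out the short self-contained argument (pointwise orthogonality of $\br_{h,sym}$ and $\br_{h,skw}$, the lower bound $\mu(\phi_h+\alpha)\geq\mu_1$, and the discrete inequality \eqref{eq:ineq-t} applied to half of $2\mu_1\|\br_{h,sym}\|_{0,\Omega}^2$); this reproduces the cited result and yields precisely $\alpha_{\A}^{\ast}=\mu_1\min\{1,C_{\mathtt{d}}^2\}$, so it buys self-containedness at no cost. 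Regarding your closing concern about bookkeeping in (c): your bound $\frac{\kappa}{2}\min\bigl(1-\frac{U}{\kappa}|\Omega|^{1/4},\,\widetilde{C}_{\mathtt{d}}^2-\frac{U}{\kappa}|\Omega|^{1/4}\bigr)$ coincides with the constant the paper's own proof produces, namely $\frac{\kappa}{2}\min\{1,\widetilde{C}_{\mathtt{d}}^2\}-\frac{U}{2}|\Omega|^{1/4}$; the absence of $|\Omega|^{1/4}$ in the first argument of $\alpha_{\widetilde{\A}}^{\ast}$ as written in the lemma statement is an inconsistency of the paper itself, not a gap in your argument.
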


	\begin{proof}
	The boundedness of the forms $\A_{\phi_h}$ and $\widetilde{\A}$ is a direct consequence of the inclusions $\mathbf{H}_h \subset \mathbf{H}$ and $\widetilde{\mathbf{H}}_h \subset \widetilde{\mathbf{H}}$, respectively. The coercivity of the form $\A_{\phi_h}$ with respect to $\mathbf{V}_h$ has been established in \cite{hw-M2AN-2013} and is further discussed in \cite[Lemma 4.1]{CGMor}. As for the $\widetilde{\mathbf{V}}_h$-coercivity of the form $\widetilde{\A}$,  it is suffices to utilize the definition of $\widetilde{\A}$ along with the application of H\"{o}lder's and Young's inequalities. Then,  the property \eqref{eq:ineq-psi} and the norm defined on $\widetilde{\mathbf{H}}_h$ lead to the desired result, as follows
		\begin{equation*}
		\begin{array}{cl}
			\widetilde{\A}(\vec{\psi}_h,\vec{\psi}_h)&
			\geq \kappa \|\widetilde{\br}_h\|^2_{0,\Omega}-U\|\psi_h\|_{0,4,\Omega}\|\widehat{\mathbf{e}}_d\|_{0,4,\Omega}\|\widetilde{\br}_h\|_{0,\Omega}\\[2ex]
			&\geq \dfrac{\kappa}{2}\Big\{ \|\widetilde{\br}_h\|^2_{0,\Omega}+\|\widetilde{\br}_h\|^2_{0,\Omega}\Big\}- \dfrac{U}{2}|\Omega|^{1/4}\left\{\|\psi_h\|_{0,4,\Omega}^2+\|\widetilde{\br}_h\|_{0,\Omega}^2\right\} \\[2ex]
			&\geq\dfrac{\kappa}{2}\, \|\widetilde{\br}_h\|^2_{0,\Omega} + \dfrac{\kappa}{2}  \widetilde{C}_{\rm d}^2    \|\psi_h\|^2_{0,4,\Omega}-\dfrac{U}{2}|\Omega|^{1/4} \|\vec{\psi}_h\|_{\widetilde{\mathbf{H}}}^2 \\[2ex]
			&\geq\dfrac{\kappa}{2}\min\Big\{1,\widetilde{C}_{\mathrm{d}}^{2} \Big\}\|\vec{\psi}_h\|_{\widetilde{\mathbf{H}}}^2-\dfrac{U}{2}|\Omega|^{1/4} \|\vec{\psi}_h\|_{\widetilde{\mathbf{H}}}^2 \geq \alpha_{\widetilde{\A}}^\ast\|\vec{\psi}_h\|_{\widetilde{\mathbf{H}}}^2\,.
		\end{array}	
	\end{equation*}
		where the constant $\alpha_{\widetilde{\A}}^\ast$ is clearly positive, thanks to the assumption \eqref{eq:rest-conc-h}, and independent of $h$.
	\end{proof}

\noindent The following properties directly follows from the definitions of $\C_h$ and $\widetilde{\C}_h$, paralleling the proof presented for their continuous counterparts in Lemma~\ref{lem:properties-form-C-F}, adjusted for the discrete spaces.  
	
\begin{lem}\label{lem:discrete-properties-form-C-F-h}
For each $\bw_h \in \mathbf{H}_h^{\bu}$, the bilinear forms $\C(\bw_h;\cdot,\cdot) : \mathbf{H}_h \times \mathbf{H}_h \rightarrow \mathrm{R}$ and $\widetilde{\C}(\bw_h;\cdot,\cdot) : \widetilde{\mathbf{H}}_h \times \widetilde{\mathbf{H}}_h \rightarrow \mathrm{R}$  are endowed with the properties of continuity, skew-symmetry, and boundedness as  in Lemma \ref{lem:properties-form-C-F}. In particular, 
	\begin{equation}\label{eq:skewproperties-h}
	\C(\bw_h;\vec{\bv}_h,\vec{\bv}_h)\,=0 \quad \forall\,\vec{\bv}_h\in \mathbf{H}_h\qan
	\widetilde{\C}(\bw_h;\vec{\psi}_h,\vec{\psi}_h)\,=0\quad\forall\,\vec{\psi}_h\in\widetilde{\mathbf{H}}_h\,.
\end{equation}

\end{lem}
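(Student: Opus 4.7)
The plan is straightforward, since this statement is essentially a restriction of Lemma \ref{lem:properties-form-C-F} to the finite-dimensional setting. The key observation is that the discrete spaces satisfy the inclusions $\mathbf{H}_h \subset \mathbf{H}$, $\widetilde{\mathbf{H}}_h \subset \widetilde{\mathbf{H}}$, and $\mathbf{H}_h^{\bu} \subset \mathbf{L}^4(\Omega)$, and that the forms $\C$ and $\widetilde{\C}$ are defined by the same formulas \eqref{C^S} and \eqref{C^C} at the discrete level, without any additional modification. Consequently, the continuity and boundedness estimates of Lemma \ref{lem:properties-form-C-F} transfer verbatim to the discrete forms with identical constants, by simply restricting the arguments to $\mathbf{H}_h \times \mathbf{H}_h$ and $\widetilde{\mathbf{H}}_h \times \widetilde{\mathbf{H}}_h$.

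For the skew-symmetry identities \eqref{eq:skewproperties-h}, I would observe that the cancellation is purely algebraic and does not rely on any functional framework. Indeed, for any $\vec{\bv}_h = (\br_h, \bv_h) \in \mathbf{H}_h$ and $\bw_h \in \mathbf{H}_h^{\bu}$, the definition \eqref{C^S} gives directly
\begin{equation*}
\C(\bw_h;\vec{\bv}_h,\vec{\bv}_h) = \frac{1}{2}\left[\int_{\Omega} \br_h \bw_h \cdot \bv_h - \int_{\Omega} \br_h \bw_h \cdot \bv_h\right] = 0,
\end{equation*}
and an analogous computation with \eqref{C^C} yields $\widetilde{\C}(\bw_h;\vec{\psi}_h,\vec{\psi}_h) = 0$ for all $\vec{\psi}_h \in \widetilde{\mathbf{H}}_h$. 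Hence the bilinear skew-symmetry $\C(\bw_h;\vec{\bu}_h,\vec{\bv}_h) = -\C(\bw_h;\vec{\bv}_h,\vec{\bu}_h)$ follows by polarizing this identity (replacing $\vec{\bv}_h$ by $\vec{\bu}_h + \vec{\bv}_h$ and expanding bilinearly).

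Finally, the boundedness-type estimate (part (c) of Lemma \ref{lem:properties-form-C-F}) with two different advecting fields $\bw_1,\bw_2$ is likewise inherited, since it relies only on the bilinearity of $\C$ and $\widetilde{\C}$ in the first argument together with H\"older's inequality. No obstacle is anticipated; the proof reduces to citing Lemma \ref{lem:properties-form-C-F} and invoking the subspace inclusions, which is why the statement is labelled as an immediate consequence.
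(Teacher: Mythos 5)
Your proposal is correct and matches the paper's treatment: the paper simply states that these properties follow directly from the definitions of the forms restricted to the discrete subspaces, paralleling the continuous proof of Lemma \ref{lem:properties-form-C-F}, which is exactly your argument (subspace inclusions plus the purely algebraic cancellation for skew-symmetry). No further comparison is needed.
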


	\begin{rem} We conclude this section by highlightinhg that   our fully mixed finite element formulation \eqref{eq:FV_h} naturally endows the forms $\C$ and $\widetilde{\C}$, corresponding to the convective terms, with inherent skew--symmetry at discrete level. Consequently, the customary requisite for post--discretization mo\-di\-fi\-ca\-tions to maintain such mathematical properties is unnecesarry in our framework. This inherent skew--symmetry, a direct advantage of our formulation mathematical structure, plays a pivotal role in preserving the conservation of  energy and numerical stability. 
\end{rem}

	\subsection{Well-posedness  of the discrete problem}\label{section32} 
	
	Following the approach from Section \ref{section23}, we find that the problem \eqref{eq:FV_h} is equivalent to a problem reduced to the kernel of $\B_h$ and $\widetilde{\B}_h$, as defined in \eqref{kernel-BS_h} and \eqref{kernel-BC_h}, respectively. The task is to find $(\vec{\bu}_h, \vec{\varphi}_h) \in \mathbf{V}_h \times \widetilde{\mathbf{V}}_h$ such that
	\begin{equation}\label{eq:FV-ker_h}
		\begin{array}{rl}
			\A_{\varphi_h}(\vec{\bu}_h,\vec{\bv}_h)+\C(\bu_h;\vec{\bu}_h,\vec{\bv}_h)&=\F_{\varphi_h}(\vec{\bv}_h)\,,  \\[2ex]
			\widetilde{\A}(\vec{\varphi}_h,\vec{\psi}_h) 	+\widetilde{\C}(\bu_h;\vec{\varphi}_h,\vec{\psi}_h)&=\widetilde{\F}(\vec{\psi}_h)\,,
		\end{array}
	\end{equation}
	for all $(\vec{\bv}_h,\vec{\psi}_h) \in  \mathbf{V}_h\times\widetilde{\mathbf{V}}_h$.

The discrete counterpart of Lemma \ref{lem:a-priori-estimates} is presented below.
\begin{lem}\label{lem:a-priori-estimates-h}
	Assuming the data satisfy \eqref{eq:rest-conc-h}, any solution $(\vec{\bu}_h, \vec{\varphi}_h)$ to problem \eqref{eq:FV-ker_h} satisfies the following a priori estimates
	\begin{equation}\label{eq:est-a-priori_h}
		\|\vec{\bu}_h\|_{\mathbf{H}} \leq C_1^\ast(\mu, \gamma, \boldsymbol{f}, g, \alpha, \kappa, U, \Omega) \quad \text{and} \quad \|\vec{\varphi}_h\|_{\widetilde{\mathbf{H}}} \leq C_2^\ast(\alpha, \kappa, U, \Omega),
	\end{equation}
	where
\begin{subequations}
    \begin{align}\label{eq:C1*}
		C_1^\ast(\mu, \gamma, \boldsymbol{f}, g, \alpha, \kappa, U, \Omega) &:= \frac{1}{\alpha_{\A}^\ast} \left\{ \|\boldsymbol{f}\|_{0,4/3,\Omega} + g(1+\gamma\alpha) |\Omega|^{4/3} + g\gamma |\Omega|^{1/2} C_2^\ast(\alpha, \kappa, U, \Omega) \right\}\,,\\
	\label{eq:C2*}
		C_2^\ast(\alpha, \kappa, U, \Omega) & := (\alpha^\ast_{\widetilde{\A}})^{-1}\alpha U|\Omega|^{\frac{1}{2}}.
	\end{align}\end{subequations}
\end{lem}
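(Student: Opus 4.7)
The plan is to mirror exactly the argument used in the continuous setting (Lemma \ref{lem:a-priori-estimates}), now exploiting the discrete analogs of the key structural properties of the forms that were collected in Lemmas \ref{lem:properties-form-B-h}, \ref{lem:properties-form-A-h} and \ref{lem:discrete-properties-form-C-F-h}. Let $(\vec{\bu}_h,\vec{\varphi}_h)\in \mathbf{V}_h\times\widetilde{\mathbf{V}}_h$ be any solution of \eqref{eq:FV-ker_h}. I test the two equations with $\vec{\bv}_h=\vec{\bu}_h$ and $\vec{\psi}_h=\vec{\varphi}_h$ respectively, and invoke the skew-symmetry identities \eqref{eq:skewproperties-h} (applied with $\bw_h=\bu_h\in\mathbf{H}_h^{\bu}$) to remove the convective contributions, yielding
\begin{equation*}
\A_{\varphi_h}(\vec{\bu}_h,\vec{\bu}_h)=\F_{\varphi_h}(\vec{\bu}_h)\quad\qan\quad \widetilde{\A}(\vec{\varphi}_h,\vec{\varphi}_h)=\widetilde{\F}(\vec{\varphi}_h).
\end{equation*}

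I would deal with the concentration equation first, since the bound it produces feeds into the estimate for the fluid variables. The assumption \eqref{eq:rest-conc-h} is exactly what guarantees the $\widetilde{\mathbf{V}}_h$-coercivity \eqref{eq:ellipAC-h} of $\widetilde{\A}$ with a constant $\alpha_{\widetilde{\A}}^\ast>0$ independent of $h$. Combining this with the continuity bound \eqref{eq:continuous-FC-h} gives
\begin{equation*}
\alpha_{\widetilde{\A}}^\ast\|\vec{\varphi}_h\|_{\widetilde{\mathbf{H}}}^2\leq \widetilde{\A}(\vec{\varphi}_h,\vec{\varphi}_h)=\widetilde{\F}(\vec{\varphi}_h)\leq \alpha U|\Omega|^{1/2}\|\vec{\varphi}_h\|_{\widetilde{\mathbf{H}}},
\end{equation*}
from which the bound $\|\vec{\varphi}_h\|_{\widetilde{\mathbf{H}}}\leq (\alpha_{\widetilde{\A}}^\ast)^{-1}\alpha U|\Omega|^{1/2}=:C_2^\ast(\alpha,\kappa,U,\Omega)$ follows at once, matching \eqref{eq:C2*}.

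For the fluid part I would apply the $\mathbf{V}_h$-coercivity \eqref{eq:ellipAS-h} of $\A_{\phi_h}$, which holds for every $\phi_h\in \mathrm{H}_h^{\varphi}$ (in particular for $\phi_h=\varphi_h$) with $h$-independent constant $\alpha_{\A}^\ast$, together with \eqref{eq:continuous-FS-h} specialized to $\vec{\bv}_h=\vec{\bu}_h$ and $\phi_h=\varphi_h$. Using that $\|\varphi_h\|_{0,4,\Omega}\leq \|\vec{\varphi}_h\|_{\widetilde{\mathbf{H}}}$ (cf. \eqref{eq:norm-varphi}) and inserting the already-established bound $\|\vec{\varphi}_h\|_{\widetilde{\mathbf{H}}}\leq C_2^\ast$, I get
\begin{equation*}
\alpha_{\A}^\ast \|\vec{\bu}_h\|_{\mathbf{H}}^2\leq |\F_{\varphi_h}(\vec{\bu}_h)|\leq \bigl\{\|\boldsymbol{f}\|_{0,4/3,\Omega}+g(1+\gamma\alpha)|\Omega|^{4/3}+g\gamma|\Omega|^{1/2}C_2^\ast\bigr\}\|\vec{\bu}_h\|_{\mathbf{H}},
\end{equation*}
and cancelling one factor of $\|\vec{\bu}_h\|_{\mathbf{H}}$ gives precisely the constant $C_1^\ast$ from \eqref{eq:C1*}.

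There is really no genuine obstacle here: the entire argument is structurally identical to the continuous one, and every property invoked (skew-symmetry of the convective forms, discrete kernel coercivities, and boundedness of the load functionals) has been established, with constants independent of $h$, in the preceding lemmas of this section. The only sensitive ingredient is making sure the condition \eqref{eq:rest-conc-h} is the correct hypothesis for the discrete coercivity of $\widetilde{\A}$, which is indeed the case because $\alpha_{\widetilde{\A}}^\ast$ is built precisely out of the discrete Korn-type inequality constant $\widetilde{C}_{\mathtt{d}}$ from \eqref{eq:ineq-psi}; no additional smallness assumption on the data is required at this stage.
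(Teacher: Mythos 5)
Your argument is correct and coincides with the paper's own proof: both test \eqref{eq:FV-ker_h} with the solution itself, eliminate the convective terms via the discrete skew-symmetry \eqref{eq:skewproperties-h}, bound $\vec{\varphi}_h$ first using the $\widetilde{\mathbf{V}}_h$-coercivity \eqref{eq:ellipAC-h} (which is where \eqref{eq:rest-conc-h} enters) together with \eqref{eq:continuous-FC-h}, and then feed that bound into the estimate for $\vec{\bu}_h$ via \eqref{eq:ellipAS-h} and \eqref{eq:continuous-FS-h}. No gaps.
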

	\begin{proof}
		Proceeding similarly to the a priori estimates for the continuous problem, let $(\vec{\bu}_h, \vec{\varphi}_h)$ be a solution to problem \eqref{eq:FV-ker_h}. Taking $\vec{\bv}_h = \vec{\bu}_h$ and $\vec{\psi}_h = \vec{\varphi}_h$, and utilizing the skew--symmetry property of the forms $\C$ and $\widetilde{\C}$ (refer to \eqref{eq:skewproperties-h}), we find that
		\begin{equation}\label{eq:int-apri_h}
			\A_{\varphi_h}(\vec{\bu}_h,\vec{\bu}_h)=\F_{\varphi_h}(\vec{\bu}_h)\quad\qan\quad \widetilde{\A}(\vec{\varphi}_h,\vec{\varphi}_h)=\widetilde{\F}(\vec{\varphi}_h)\,.
		\end{equation}
		In particular, we use the $\widetilde{\mathbf{V}}_h$--coercivity of $\widetilde{\A}$ and the continuity of   $\widetilde{\F}$ (cf. part (c) of Lemma \ref{lem:properties-form-A-h}  and  \eqref{eq:continuous-FC-h},  respectively), to get 
		\[\alpha_{\widetilde{\A}}^\ast\|\vec{\varphi}_h\|_{\widetilde{\mathbf{H}}}^2\leq\widetilde{\A}(\vec{\varphi}_h,\vec{\varphi}_h)\leq \big|\widetilde{\F}(\vec{\varphi}_h)\big|\leq \alpha U |\Omega|^{\frac{1}{2}}\|\vec{\varphi}_h\|_{\widetilde{\mathbf{H}}}\,. \]
	Thus, after simplification, we readily obtain the a priori bound \eqref{eq:est-a-priori_h} for $\vec{\varphi}_h$ with the constant $C_2^\ast (\alpha, \kappa, U, \Omega)$ as defined in \eqref{eq:C2*}.  Similarly, from the first equation in \eqref{eq:int-apri_h}, we derive the corresponding a priori bound for $\vec{\bu}_h$. By using the coercivity of $\A_{\varphi_h}$ (refer to Lemma \ref{lem:properties-form-A-h}, part (b)) and the continuity bound of the functional $\F_{\varphi_h}$ (see \eqref{eq:continuous-FS-h}), we deduce that
	
		\[\alpha_{\A}^\ast\|\vec{\bu}_h\|_{\mathbf{H}}^2\leq |\F_{\varphi_h}(\vec{\bu}_h)|\leq \left\{ \|\boldsymbol{f}\|_{0,4/3,\Omega} + g(1+\gamma\alpha) |\Omega|^{4/3} + g\gamma |\Omega|^{1/2} \|\varphi_h\|_{0,4,\Omega}  \right\} \|\vec{\bu}_h\|_{\mathbf{H}}\,.\]
This immediately leads to the desired result upon simplifying, given that $\|\varphi_h\|_{0,4,\Omega} \leq \|\vec{\varphi}_h\|_{\widetilde{\mathbf{H}}}$, and considering the estimate previously derived for $\vec{\varphi}_h$.
\end{proof}

The next step involves transforming \eqref{eq:FV-ker_h} into a fixed--point problem. Following a methodology inspired by the continuous case, we first address a linearized and decoupled version of the problem. Thus, given $(\vec{\bw}_h, \vec{\phi}_h) \in \mathbf{V}_h \times \widetilde{\mathbf{V}}_h$, we seek $(\vec{\bu}_h, \vec{\varphi}_h) \in \mathbf{V}_h \times \widetilde{\mathbf{V}}_h$ that satisfies:
\begin{equation}\label{eq:FV-des-lin_h}
	\begin{array}{rl}
		\A_{\phi_h}(\vec{\bu}_h, \vec{\bv}_h) + \C(\bw_h; \vec{\bu}_h, \vec{\bv}_h) &= \F_{\phi_h}(\vec{\bv}_h), \\[2ex]
		\widetilde{\A}(\vec{\varphi}_h, \vec{\psi}_h) + \widetilde{\C}(\bw_h; \vec{\varphi}_h, \vec{\psi}_h) &= \widetilde{\F}(\vec{\psi}_h),
	\end{array}
\end{equation}
for all $(\vec{\bv}_h, \vec{\psi}_h) \in \mathbf{V}_h \times \widetilde{\mathbf{V}}_h$.

\noindent With the help of the a priori estimates for discrete solutions derived in Lemma \ref{lem:a-priori-estimates-h}, we define the closed convex subset $\mathbf{B}_h$ of $\mathbf{V}_h \times \widetilde{\mathbf{V}}_h$ given by
\begin{equation}\label{eq:B_h}
	\mathbf{B}_h = \left\{ (\vec{\bw}_h, \vec{\phi}_h) \in \mathbf{V}_h \times \widetilde{\mathbf{V}}_h : \|\vec{\bw}_h\|_{\mathbf{H}} \leq C_1^\ast(\mu, \gamma, \boldsymbol{f}, g, \alpha, \kappa, U, \Omega) \qan \|\vec{\phi}_h\|_{\widetilde{\mathbf{H}}} \leq C_2^\ast(\alpha,\kappa,U,\Omega) \right\},
\end{equation}
where $C_1^\ast(\mu, \gamma, \boldsymbol{f}, g, \alpha, \kappa, U, \Omega) $ and $C_2^\ast(\alpha,\kappa,U,\Omega) $ are as defined in \eqref{eq:C1*} and \eqref{eq:C2*}, respectively.

We then introduce the operator $\oL_h: \mathbf{B}_h \to \mathbf{V}_h \times \widetilde{\mathbf{V}}_h$, defined by
\begin{equation}\label{eq:L_h}
	\oL_h(\vec{\bw}_h, \vec{\phi}_h) = (\vec{\bu}_h, \vec{\varphi}_h) \quad \forall (\vec{\bw}_h, \vec{\phi}_h) \in \mathbf{B}_h,
\end{equation}
where $(\vec{\bu}_h, \vec{\varphi}_h)$ is the solution to the problem \eqref{eq:FV-des-lin_h}. It is evident that any solution of \eqref{eq:FV-ker_h} corresponds to a fixed--point of the operator $\oL_h$, i.e.,
\begin{equation}\label{eq:equivalencia-L_h}
	(\vec{\bu}_h, \vec{\varphi}_h) \text{ solves \eqref{eq:FV-ker_h} } \quad \Longleftrightarrow \quad \oL_h(\vec{\bu}_h, \vec{\varphi}_h) = (\vec{\bu}_h, \vec{\varphi}_h).
\end{equation}

Certainly, the viability of this approach hinges on the wel--defined nature of $\oL_h$. This is  addressed in the ensuing discussion.

	\begin{lem}\label{lem:Lweldef-h}
	Under the assumption specified in \eqref{eq:rest-conc-h}, consider $\mathbf{B}_h$ to be the ball given  in \eqref{eq:B_h}. The operator $\oL_h:\mathbf{B}_h \rightarrow \mathbf{V}_h \times \widetilde{\mathbf{V}}_h$, as detailed through \eqref{eq:FV-des-lin_h}--\eqref{eq:L_h}, is well-defined. Furthermore, it holds that $\oL_h(\mathbf{B}_h) \subseteq \mathbf{B}_h$.
\end{lem}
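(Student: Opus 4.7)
The plan is to mirror the argument used in the continuous setting (cf. Lemma \ref{lem:Lweldef}), exploiting the linearity and uncoupled structure of \eqref{eq:FV-des-lin_h}. Given $(\vec{\bw}_h,\vec{\phi}_h)\in\mathbf{B}_h$, I would split the problem into the fluid subproblem posed on $\mathbf{V}_h$ and the concentration subproblem posed on $\widetilde{\mathbf{V}}_h$, and treat each one with the Lax--Milgram lemma (equivalently, the Banach--Ne\v{c}as--Babu\v{s}ka theorem restricted to finite--dimensional spaces).

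For the fluid subproblem, the bilinear form $\A_{\phi_h}(\cdot,\cdot)+\C(\bw_h;\cdot,\cdot)$ is bounded on $\mathbf{V}_h\times \mathbf{V}_h$ by Lemma \ref{lem:properties-form-A-h}(a) together with Lemma \ref{lem:discrete-properties-form-C-F-h}, where the convective term is controlled via $\|\bw_h\|_{0,4,\Omega}\le \|\vec{\bw}_h\|_{\mathbf{H}}\le C_1^\ast$ coming from $(\vec{\bw}_h,\vec{\phi}_h)\in\mathbf{B}_h$. Coercivity on $\mathbf{V}_h$ is a direct consequence of the discrete skew--symmetry \eqref{eq:skewproperties-h}, which annihilates the convective contribution on the diagonal, combined with the $\mathbf{V}_h$--coercivity \eqref{eq:ellipAS-h} of $\A_{\phi_h}$ with constant $\alpha_{\A}^{\ast}$ independent of $h$. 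Boundedness of $\F_{\phi_h}$ follows from \eqref{eq:continuous-FS-h}, using $\|\phi_h\|_{0,4,\Omega}\le \|\vec{\phi}_h\|_{\widetilde{\mathbf{H}}}\le C_2^\ast$. Lax--Milgram then yields a unique $\vec{\bu}_h\in\mathbf{V}_h$ satisfying
\[
\|\vec{\bu}_h\|_{\mathbf{H}}\,\le\,\frac{1}{\alpha_{\A}^{\ast}}\left\{\|\boldsymbol{f}\|_{0,4/3,\Omega}+g(1+\gamma\alpha)|\Omega|^{4/3}+g\gamma|\Omega|^{1/2}\,C_2^{\ast}(\alpha,\kappa,U,\Omega)\right\}\,=\,C_1^{\ast}(\mu,\gamma,\boldsymbol{f},g,\alpha,\kappa,U,\Omega),
\]
which matches exactly the size constraint defining $\mathbf{B}_h$.

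For the concentration subproblem I would proceed identically. The form $\widetilde{\A}(\cdot,\cdot)+\widetilde{\C}(\bw_h;\cdot,\cdot)$ is continuous on $\widetilde{\mathbf{V}}_h\times\widetilde{\mathbf{V}}_h$ by Lemmas \ref{lem:properties-form-A-h}(a) and \ref{lem:discrete-properties-form-C-F-h}, and the right--hand side $\widetilde{\F}$ is bounded by \eqref{eq:continuous-FC-h}. The essential ingredient is the $\widetilde{\mathbf{V}}_h$--coercivity of $\widetilde{\A}$ with constant $\alpha_{\widetilde{\A}}^{\ast}>0$, provided by Lemma \ref{lem:properties-form-A-h}(c) under the standing assumption \eqref{eq:rest-conc-h}, together once more with the skew--symmetry of $\widetilde{\C}$. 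Invoking Lax--Milgram delivers a unique $\vec{\varphi}_h\in\widetilde{\mathbf{V}}_h$ with $\|\vec{\varphi}_h\|_{\widetilde{\mathbf{H}}}\le (\alpha_{\widetilde{\A}}^{\ast})^{-1}\alpha U|\Omega|^{1/2}=C_2^{\ast}(\alpha,\kappa,U,\Omega)$.

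Combining both estimates shows that $\oL_h$ is well-defined and maps $\mathbf{B}_h$ into itself, exactly as in the continuous case. I do not anticipate any genuine obstacle: the proof is a direct discrete transcription of Lemma \ref{lem:Lweldef}, the key points being (i) the discrete $\mathbf{V}_h$-- and $\widetilde{\mathbf{V}}_h$--coercivities of Lemma \ref{lem:properties-form-A-h} (which replace the continuous Friedrichs--Poincar\'e/embedding estimates by the discrete inequalities \eqref{eq:ineq-t}--\eqref{eq:ineq-psi}), (ii) the skew--symmetry inherited at the discrete level in Lemma \ref{lem:discrete-properties-form-C-F-h}, and (iii) the smallness condition \eqref{eq:rest-conc-h} that ensures $\alpha_{\widetilde{\A}}^{\ast}>0$. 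The constants $C_1^{\ast}$ and $C_2^{\ast}$ in \eqref{eq:C1*}--\eqref{eq:C2*} have been chosen precisely so that the resulting a priori bounds close up within $\mathbf{B}_h$.
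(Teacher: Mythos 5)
Your proposal is correct and follows essentially the same route as the paper: split the linearized, decoupled problem \eqref{eq:FV-des-lin_h} into the fluid and concentration subproblems, use the discrete coercivities \eqref{eq:ellipAS-h} and \eqref{eq:ellipAC-h} together with the skew--symmetry \eqref{eq:skewproperties-h} and the bounds \eqref{eq:continuous-FS-h}--\eqref{eq:continuous-FC-h}, and conclude solvability with the a priori bounds matching $C_1^{\ast}$ and $C_2^{\ast}$ so that $\oL_h(\mathbf{B}_h)\subseteq\mathbf{B}_h$. The only cosmetic difference is that the paper invokes the Banach--Ne\v{c}as--Babu\v{s}ka theorem where you cite Lax--Milgram, which is immaterial here since the forms are coercive (though nonsymmetric) on the finite--dimensional kernels.
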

	\begin{proof}
		Adapting the proof from Lemma \ref{lem:Lweldef} to the discrete setting of \eqref{eq:FV-des-lin_h}, consider any pair $(\vec{\bw}_h, \vec{\phi}_h) \in \mathbf{B}_h$. We begin with the fluid problem, seeking $\vec{\bu}_h \in \mathbf{V}_h$ that satisfies
		\begin{equation}\label{eq:FV-des-lin_h-1}
			\A_{\phi_h}(\vec{\bu}_h, \vec{\bv}_h) + \C(\bw_h; \vec{\bu}_h, \vec{\bv}_h) = \F_{\phi_h}(\vec{\bv}_h) \quad \forall \vec{\bv}_h \in \mathbf{V}_h.
		\end{equation}
		Given the continuity of $\A_{\phi_h}$ and $\C$, and considering the coercivity of $\A$ and the skew--symmetry of $\C$ (cf. \eqref{eq:ellipAS-h} and \eqref{eq:skewproperties-h}), the form $\A_{\phi_h}(\cdot, \cdot) + \C(\bw_h; \cdot, \cdot)$ is shown to be uniformly coercive on $\mathbf{V}_h$, independent of $(\phi_h, \bw_h)$. Moreover, with $\F_{\phi_h} \in \mathbf{V}_h^\prime$, we find
		\[
		\big\|\F_{\phi_h}\big\|_{\mathbf{V}_h^\prime} \leq \|\boldsymbol{f}\|_{0,4/3,\Omega} + g(1+\gamma\alpha) |\Omega|^{4/3} + g\gamma |\Omega|^{1/2} C_2^\ast(\alpha,\kappa,U,\Omega),
		\]
		where we have used that $\|\phi_h\|_{0,4,\Omega} \leq C_2^\ast(\alpha,\kappa,U,\Omega)$ due to $(\vec{\bw}_h, \vec{\phi}_h) \in \mathbf{B}_h$. The Banach–Ne\v{c}as–Babu\v{s}ka Theorem
	 assures the existence and uniqueness of $\vec{\bu}_h$ solving \eqref{eq:FV-des-lin_h-1}, with
		\begin{equation}\label{eq:dep.cont.dat.u_h}
			\|\vec{\bu}_h\|_{\mathbf{H}} \leq \frac{1}{\alpha_{\A}^\ast} \left\{ \|\boldsymbol{f}\|_{0,4/3,\Omega} + g(1+\gamma\alpha) |\Omega|^{4/3} + g\gamma |\Omega|^{1/2} C_2^\ast(\alpha, \kappa, U, \Omega) \right\}.
		\end{equation}

	\noindent 	For the concentration equation, the problem of finding $\vec{\varphi}_h \in \widetilde{\mathbf{V}}_h$ that satisfies
		\begin{equation}\label{eq:FV-des-lin_h-2}
			\widetilde{\A}(\vec{\varphi}_h, \vec{\psi}_h) + \widetilde{\C}(\bw_h; \vec{\varphi}_h, \vec{\psi}_h) = \widetilde{\F}(\vec{\psi}_h) \quad \forall \vec{\psi}_h \in \widetilde{\mathbf{V}}_h,
		\end{equation}
		follows a similar approach. The continuity and coercivity of $\widetilde{\A}$ (cf. \eqref{eq:ellipAC-h}), along with the skew-symmetry of $\widetilde{\C}$ (cf. \eqref{eq:skewproperties-h}), and the  boundedness of $\widetilde{\F}$, allow to deduce that
		\[\widetilde{\A}(\vec{\psi}_h,\vec{\psi}_h)+ \widetilde{\C}(\bw_h;\vec{\psi}_h,\vec{\psi}_h) \geq \alpha_{\widetilde{\A}}^\ast \|\vec{\psi}_h\|_{\widetilde{\mathbf{H}}}^2\quad\forall\, \vec{\psi}_h\in\widetilde{\mathbf{V}}_h\,,\qan \big\|\widetilde{\F}\big\|_{\widetilde{\mathbf{V}}^{\prime}_h}\leq  \alpha U|\Omega|^{1/2}\,.\]
		\noindent Again, Banach–Ne\v{c}as–Babu\v{s}ka Theorem
		 gives the existence and uniqueness of $\vec{\varphi}_h$ solving \eqref{eq:FV-des-lin_h-2}, satisfying
		\begin{equation}\label{eq:dep.cont.dat.varphi_h}
			\|\vec{\varphi}_h\|_{\widetilde{ \mathbf{H}}} \leq (\alpha^\ast_{\widetilde{\A}})^{-1}\alpha U|\Omega|^{1/2}\,.
		\end{equation}

		\noindent Consequently, $\oL_h$ is well--defined. Note further from \eqref{eq:dep.cont.dat.u_h} and \eqref{eq:dep.cont.dat.varphi_h}, the definition of $C_1^\ast(\cdot)$ and $C_2^\ast(\ast)$ in \eqref{eq:C1*} and \eqref{eq:C2*}, and the definition of $\mathbf{B}_h$ in \eqref{eq:B_h} that $(\vec{\bu}_h,\vec{\varphi}_h)\in\mathbf{B}_h$ and therefore $\oL_h(\mathbf{B}_h) \subseteq \mathbf{B}_h$.
	\end{proof}

We now turn to the Lipschitz continuity of $\oL_h$. We caution in advance that due to the characteristics of finite element spaces, applying a continuous regularity hypothesis such as \eqref{eq:HRA} directly to the discrete context is impractical, as highlighted in references \cite{CGMir-1,CGMor}. The main reasons are that finite element spaces inherently restrict the level of regularity that can be achieved, and the discretization process introduces mesh--size--dependent estimates. Therefore, our analysis of $\oL_h$ moves forward without relying on this type of regularity assumption.

\begin{lem}\label{lem:L-Lipshitz-continuo-h}
Under the hypotheses from Lemma \ref{lem:properties-form-B-h} and the condition  \eqref{eq:rest-conc-h},	$\oL_h$ exhibits Lipschitz continuity. Specifically, there exists a constant $C^{\ast}_{\rm{LIP}} > 0$ (refer to \eqref{eq:C-LIP-h}, below) ensuring that 
	\begin{equation}\label{eq:Lipshitz-continuo-h}
		\|\oL_h(\vec{\bw}_h,\vec{\phi}_h)-\oL_h(\vec{\bw}_{0,h},\vec{\phi}_{0,h})\|_{\mathbf{H}\times\widetilde{ \mathbf{H}}}\leq C^\ast_{\rm LIP} \|(\vec{\bw}_h,\vec{\phi}_h)-(\vec{\bw}_{0,h},\vec{\phi}_{0,h})\|_{\mathbf{H}\times\widetilde{ \mathbf{H}}}
	\end{equation}
	for all $(\vec{\bw}_h,\vec{\phi}_h),\,(\vec{\bw}_{0,h},\vec{\phi}_{0,h})\in \mathbf{B}_h. $
%	where $C^\ast_{\rm{LIP}}$ is dependent on  data but remains independent of the pairs $(\vec{\bw},\vec{\phi})$ and $(\vec{\bw}_0,\vec{\phi}_0)$.
\end{lem}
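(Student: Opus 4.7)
The plan is to mirror the argument of Lemma~\ref{lem:L-Lipshitz-continuo} in the discrete setting, since the discrete forms inherit the same continuity, coercivity and skew-symmetry properties from their continuous counterparts (cf.\ Lemmas~\ref{lem:properties-form-A-h} and \ref{lem:discrete-properties-form-C-F-h}). Given $(\vec{\bw}_h,\vec{\phi}_h),(\vec{\bw}_{0,h},\vec{\phi}_{0,h})\in\mathbf{B}_h$, Lemma~\ref{lem:Lweldef-h} ensures that the images $(\vec{\bu}_h,\vec{\varphi}_h)=\oL_h(\vec{\bw}_h,\vec{\phi}_h)$ and $(\vec{\bu}_{0,h},\vec{\varphi}_{0,h})=\oL_h(\vec{\bw}_{0,h},\vec{\phi}_{0,h})$ also lie in $\mathbf{B}_h$ and hence obey the a~priori bounds of Lemma~\ref{lem:a-priori-estimates-h}. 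The first step is to test the two discrete momentum equations against $\vec{\bv}_h=\vec{\bu}_h-\vec{\bu}_{0,h}\in\mathbf{V}_h$, subtract, and invoke the $\mathbf{V}_h$--coercivity of $\A_{\phi_h}$; after adding and subtracting $\A_{\phi_{0,h}}(\vec{\bu}_{0,h},\vec{\bu}_h-\vec{\bu}_{0,h})$ one obtains a three--term decomposition
\begin{equation*}
\alpha_\A^\ast\,\|\vec{\bu}_h-\vec{\bu}_{0,h}\|_{\mathbf{H}}^2 \;\leq\; E_1^h + E_2^h + E_3^h,
\end{equation*}
whose summands are the discrete analogues of $E_1$, $E_2$, $E_3$ in \eqref{eq:est-u-lip}.

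The bounds on $E_1^h$ (buoyancy perturbation coming from $\F_{\phi_h}-\F_{\phi_{0,h}}$) and on $E_2^h$ (convective perturbation, recast via the skew-symmetry \eqref{eq:skewproperties-h} as $\C(\bw_{0,h}-\bw_h;\vec{\bu}_h,\vec{\bu}_h-\vec{\bu}_{0,h})$) transfer verbatim from the continuous proof, producing the factors $g\gamma|\Omega|^{1/2}\|\phi_h-\phi_{0,h}\|_{0,4,\Omega}$ and $\tfrac12 C_1^\ast\|\bw_h-\bw_{0,h}\|_{0,4,\Omega}$, respectively, upon using the a~priori bound $\|\vec{\bu}_h\|_{\mathbf{H}}\leq C_1^\ast$ from $\mathbf{B}_h$. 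In parallel, the concentration difference is estimated via the $\widetilde{\mathbf{V}}_h$--coercivity of $\widetilde{\A}$ combined with the skew-symmetry of $\widetilde{\C}$, exactly as in the continuous setting, since no viscosity perturbation is present in the concentration equation; this yields
\begin{equation*}
\|\vec{\varphi}_h-\vec{\varphi}_{0,h}\|_{\widetilde{\mathbf{H}}} \;\leq\; \tfrac{C_2^\ast}{2\,\alpha_{\widetilde{\A}}^\ast}\,\|\bw_h-\bw_{0,h}\|_{0,4,\Omega}.
\end{equation*}

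The main obstacle is the viscosity-perturbation term
\begin{equation*}
E_3^h \;=\; 2\int_\Omega\bigl\{\mu(\phi_{0,h}+\alpha)-\mu(\phi_h+\alpha)\bigr\}\,\bt_{0,h,sym}:(\bt_h-\bt_{0,h}),
\end{equation*}
whose continuous counterpart was controlled only thanks to the regularity hypothesis \eqref{eq:HRA}, giving $\|\bt_0\|_{\varepsilon,\Omega}\leq C_{1,\varepsilon}$ together with a Sobolev embedding. Since no such regularity is available for generic finite element functions, I would instead exploit the polynomial structure of $\mathbb{H}_h^{\bt}\subset \mathbb{P}_\ell(\Thb)$ and invoke the standard inverse estimate $\|\bt_{0,h}\|_{0,4,\Omega}\leq C_{\rm inv}\,h^{-d/4}\,\|\bt_{0,h}\|_{0,\Omega}\leq C_{\rm inv}\,h^{-d/4}\,C_1^\ast$. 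Combining this with the Lipschitz bound on $\mu$ and H\"older's inequality with exponents $(4,4,2)$ gives
\begin{equation*}
|E_3^h| \;\leq\; 2L_\mu\,C_{\rm inv}\,h^{-d/4}\,C_1^\ast\,\|\phi_h-\phi_{0,h}\|_{0,4,\Omega}\,\|\vec{\bu}_h-\vec{\bu}_{0,h}\|_{\mathbf{H}}.
\end{equation*}

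Dividing through by $\|\vec{\bu}_h-\vec{\bu}_{0,h}\|_{\mathbf{H}}$, combining with the bounds for $E_1^h$, $E_2^h$ and the concentration estimate, and absorbing $\|\phi_h-\phi_{0,h}\|_{0,4,\Omega},\|\bw_h-\bw_{0,h}\|_{0,4,\Omega}\leq \|(\vec{\bw}_h,\vec{\phi}_h)-(\vec{\bw}_{0,h},\vec{\phi}_{0,h})\|_{\mathbf{H}\times\widetilde{\mathbf{H}}}$, delivers \eqref{eq:Lipshitz-continuo} with a constant $C^\ast_{\rm LIP}$ of the same structure as \eqref{eq:C-LIP}, but with $\alpha_\A,\alpha_{\widetilde{\A}},C_1,C_2$ replaced by their starred discrete counterparts and the regularity-dependent factor $2L_\mu\|i_\varepsilon\||\Omega|^{\varepsilon/d-1/2}C_{1,\varepsilon}$ replaced by $2L_\mu\,C_{\rm inv}\,h^{-d/4}\,C_1^\ast$. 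The resulting $h$--dependence is harmless, since only continuity (not contractivity) of $\oL_h$ is required to invoke Brouwer's fixed-point theorem for existence of discrete solutions, and the uniqueness result under constant viscosity bypasses $E_3^h$ altogether.
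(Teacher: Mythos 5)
Your proposal is correct and follows essentially the same route as the paper: the same three-term decomposition $E_1^\ast+E_2^\ast+E_3^\ast$ after testing with $\vec{\bu}_h-\vec{\bu}_{0,h}$, the same verbatim transfer of the buoyancy and convective bounds, and the same treatment of the concentration difference via the $\widetilde{\mathbf{V}}_h$--coercivity of $\widetilde{\A}$ and the skew-symmetry of $\widetilde{\C}$. The only point of divergence is the viscosity term $E_3^\ast$. The paper simply applies H\"older with exponents $(4,4,2)$ and leaves the factor $\|\bt_{0,h}\|_{0,4,\Omega}$ in the constant $C^\ast_{\rm LIP}$ (cf. \eqref{eq:est-u-e3-h} and \eqref{eq:C-LIP-h}), observing in Remark \ref{rem:Clip-h}(a) that this quantity is finite (piecewise polynomial) though not provably $h$-uniform. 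You go one step further and bound $\|\bt_{0,h}\|_{0,4,\Omega}\leq C_{\rm inv}h^{-d/4}\|\bt_{0,h}\|_{0,\Omega}\leq C_{\rm inv}h^{-d/4}C_1^\ast$ via a global inverse estimate, which makes the possible $h$-dependence explicit; the price is that a \emph{global} inverse inequality with a single factor $h^{-d/4}$ requires quasi-uniformity of $\mathcal{T}_h^{\rm b}$, an assumption the paper does not impose (it only assumes shape regularity). Since on any fixed mesh both versions give a finite Lipschitz constant, and continuity is all that Brouwer's theorem needs, your argument is sound; the paper's formulation is marginally more general because it avoids the quasi-uniformity hypothesis, while yours is more quantitative about how the constant may degenerate as $h\to 0$. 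Your closing remarks (Brouwer needs only continuity; the constant-viscosity case drops $E_3^\ast$ and yields an $h$-independent constant) coincide with the paper's Remark \ref{rem:Clip-h}(b).
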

\begin{proof}
	We adapt the proof of Lemma \ref{lem:L-Lipshitz-continuo-h} and consider $(\vec{\bw}_h,\vec{\phi}_h)$ and $(\vec{\bw}_{0,h},\vec{\phi}_{0,h})$ arbitrary pairs in  the ball  $\mathbf{B}_h$, we denote by   $(\vec{\bu}_h,\vec{\varphi}_h)=\oL_h(\vec{\bw}_h,\vec{\phi}_h)$ and $(\vec{\bu}_{0,h},\vec{\varphi}_{0,h})=\oL_h(\vec{\bw}_{0,h},\vec{\phi}_{0,h})$ in $\mathbf{B}_h$, satisfying 
	\begin{equation}\label{eq:est-in-B-h}
		\|\vec{\bu}_h\|_{\mathbf{H}}\,, 	\|\vec{\bu}_{0,h}\|_{\mathbf{H}}\leq  C^{\ast}_1(\mu,\gamma,\boldsymbol{f},g,\alpha,\kappa,U,\Omega)\qan
		\|\vec{\varphi}_h\|_{\widetilde{ \mathbf{H}}}\,,\|\vec{\varphi}_{0,h}\|_{\widetilde{ \mathbf{H}}}\leq C^{\ast}_2 (\alpha,\kappa,U,\Omega)\,.
	\end{equation}
	From the definition of the operator $\oL_h$ (see \eqref{eq:FV-des-lin_h}-\eqref{eq:L_h}), it follows that
	\begin{align*}%\label{eq:u_h}
	\A_{\phi_h}(\vec{\bu}_h,\vec{\bv}_h)+\C(\bw_h;\vec{\bu}_h,\vec{\bv}_h)=\F_{\phi_h}(\vec{\bv}_h)\qquad \forall\, \vec{\bv}_h\in \textbf{V}_h \,, \\
	\widetilde{\A}(\vec{\varphi}_h,\vec{\psi}_h)+\widetilde{\C}(\bw_h;\vec{\varphi}_h,\vec{\psi}_h)=\widetilde{\F}(\vec{\psi}_h)\qquad \forall\, \vec{\psi}_h\in \widetilde{\mathbf{V}}_h\,,
\end{align*}
and
\begin{align*}%\label{eq:u0_h}
	\A_{\phi_{0,h}}(\vec{\bu}_{0,h},\vec{\bv}_{0,h})+\C(\bw_{0,h};\vec{\bu}_{0,h},\vec{\bv}_{0,h})=\F_{\phi_{0,h}}(\vec{\bv}_{0,h})\qquad \forall\, \vec{\bv}_{0,h}\in \textbf{V}_h \,, \\
	\widetilde{\A}(\vec{\varphi}_{0,h},\vec{\psi}_{0,h})+\widetilde{\C}(\bw_{0,h};\vec{\varphi}_{0,h},\vec{\psi}_{0,h})=\widetilde{\F}(\vec{\psi}_{0,h})\qquad \forall\, \vec{\psi}_{0,h}\in \widetilde{\mathbf{V}}_h\,.
\end{align*}

	\noindent	The analogous estimatation to the continuous one in \eqref{eq:est-u-lip} becomes

	\begin{equation}\label{eq:est-u-lip-h}
		\begin{array}{l}
			\alpha^{\ast}_{\A}	\|\vec{\bu}_h-\vec{\bu}_{0,h}\|_{\mathbf{H}}^2	\leq
			\Big\{\F_\phi(\vec{\bu}-\vec{\bu}_0)-\F_{\phi_0}(\vec{\bu}-\vec{\bu}_{0,h})\Big\}\,\\[2ex]
			\qquad +\,\Big\{\C(\bw_{0,h};\vec{\bu}_{0,h},\vec{\bu}-\vec{\bu}_{0,h})-\C(\bw_h;\vec{\bu}_h,\vec{\bu}_h-\vec{\bu}_{0,h})\Big\}
			\\[2ex]
		\qquad +\Big\{\A_{\phi_{0,h}}(\vec{\bu}_{0,h},\vec{\bu}-\vec{\bu}_{0,h})-\A_\phi(\vec{\bu}_{0,h},\vec{\bu}-\vec{\bu}_{0,h})\Big\}\\[2ex]
		\qquad 	=:E_1^{\ast}\, +\,E_2^{\ast}\, +\, E_3^{\ast}\,.
		\end{array}
	\end{equation}

	\noindent The expresions $E_1^{\ast}$ and $E_2^{\ast}$ can be estimated straightforwardly from the respective counterparts $E_1$ and $E_2$ (see \eqref{eq:est-u-e1} and \eqref{eq:est-u-e2}), respectively, which leads to 
	\begin{equation}\label{eq:est-u-e1-h}
		%	\begin{array}{l}
			E_1^{\ast}
			 \leq  g\gamma|\Omega|^{1/2}\|\phi_h-\phi_{0,h}\|_{0,4,\Omega}\|\vec{\bu}_h-\vec{\bu}_{0,h}\|_{\mathbf{H}}\,.
			%	\end{array}
	\end{equation}
and
	\begin{equation}\label{eq:est-u-e2-h}
		\begin{array}{l}
				E_2^{\ast}\leq \dfrac{1}{2}\|\vec{\bu}_h\|_{\mathbf{H}}\, \|\bw_h-\bw_{0,h}\|_{0,4,\Omega}\, \|\vec{\bu}_h-\vec{\bu}_{0,h}\|_{\mathbf{H}},.
		\end{array}
	\end{equation}
	In turn, as anticipated,  for bounding $E_3^\ast$ we note that   regularity assumption such as \eqref{eq:HRA} is not available in the present setting.  Therefore,  we will utilize a $\mathrm{L}^4-\mathbb{L}^4-\mathbb{L}^2$ argument based on the H$\ddot{\mathrm{o}}$lder inequality in the estimation  \eqref{eq:est-u-e3-1}, adapted to the discrete setting, to obtain that
	\begin{equation}\label{eq:est-u-e3-h}
	E_3^\ast\,\leq\,2L_{\mu}\|\phi_h-\phi_{0,h}\|_{0,4,\Omega}\|\bt_{0,h}\|_{0,4,\Omega} \|\bt_h-\bt_{0,h}\|_{0,\Omega} \,.
	\end{equation}
		\noindent 	Replacing back \eqref{eq:est-u-e1-h}, \eqref{eq:est-u-e2-h} and \eqref{eq:est-u-e3-h} in \eqref{eq:est-u-lip-h}, and symplifying, we get
	\begin{equation}\label{eq:u-u0_h}
		\begin{array}{l}
			\|\vec{\bu}_h-\vec{\bu}_{0,h}\|
			\leq \dfrac{1}{\alpha_{\A}^\ast}\left\{\big(g\gamma|\Omega|^{\frac{1}{2}}+2L_{\mu}\|\bt_{0,h}\|_{0,4,\Omega}\big)\|\phi_h-\phi_{0,h}\|_{0,4,\Omega} +\dfrac{1}{2}\|\vec{\bu}_h\|_{\mathbf{H}}\, \|\bw_{0,h}-\bw_h\|_{0,4,\Omega}\right\}.
		\end{array}
	\end{equation}
To estimate the difference $\vec{\varphi}_h - \vec{\varphi}_{0,h}$, we follow the same procedure to get \eqref{eq:var-var0} and easily deduce that
	\begin{equation}\label{eq:var-var0-h}
		\|\vec{\varphi}_h-\vec{\varphi}_{0,h}\|_{\widetilde{ \mathbf{H}}}\leq \dfrac{1}{2\alpha^{\ast}_{\widetilde{\A}}}\,\|\bw_h-\bw_{0,h}\|_{0,4,\Omega}\|\vec{\varphi}_h\|_{\widetilde{ \mathbf{H}}}\,.
	\end{equation}
	Next, combining  \eqref{eq:u-u0_h} and \eqref{eq:var-var0-h}, and the fact that $\|\phi_h-\phi_{0,h}\|_{0,4,\Omega}$ and $\|\bw_h-\bw_{0,h}\|_{0,4,\Omega}$ are both bounded by $\|(\vec{\bw}_h,\vec{\phi}_h)-(\vec{\bw}_{0,h},\vec{\phi}_{0,h})\|$, we deduce that
	\begin{equation*} 
		\begin{array}{l}
			\|\oL_h(\vec{\bw}_h,\vec{\phi}_h)-\oL_h(\vec{\bw}_{0,h},\vec{\phi}_{0,h})\|_{\mathbf{H}\times\widetilde{ \mathbf{H}}}%=\|(\vec{\bu}-\vec{\bu}_0,\vec{\varphi}-\vec{\varphi_0})\|=\|\vec{\bu}-\vec{\bu}_0\|\,+\,\|\vec{\varphi}-\vec{\varphi_0}\|
			\\[2ex]
			\leq  \left\{\dfrac{1}{\alpha_{\A}^\ast}\left[\big(g\gamma|\Omega|^{\frac{1}{2}}+2L_{\mu}\|\bt_{0,h}\|_{0,4,\Omega}\big) +\dfrac{1}{2}\|\vec{\bu}_h\|_{\mathbf{H}}\,\right] + \dfrac{1}{2\alpha^\ast_{\widetilde{\A}}}\|\vec{\varphi}_h\|_{\widetilde{ \mathbf{H}}}   \right\}\|(\vec{\bw}_h,\vec{\phi}_h)-(\vec{\bw}_{0,h},\vec{\phi}_{0,h})\|_{\mathbf{H}\times\widetilde{ \mathbf{H}}}\,.
		\end{array}
	\end{equation*}
	Finally, by applying the estimates \eqref{eq:est-in-B-h} to bound $\vec{\bu}_h$ and $\vec{\varphi}_h$ in the preceding expression, we conclude that the operator $\oL_h$ satisfies the Lipschitz condition \eqref{eq:Lipshitz-continuo-h}, with the  constant

			\begin{equation}\label{eq:C-LIP-h}
				C^{\ast}_{\rm LIP}:=\dfrac{1}{\alpha_{\A}^\ast}\left\{\big(g\gamma|\Omega|^{\frac{1}{2}}+2L_{\mu}\|\bt_{0,h}\|_{0,4,\Omega}\big) +\dfrac{1}{2}C^\ast_1(\mu,\gamma,\boldsymbol{f},g,\alpha,\kappa,U,\Omega)\,\right\} + \dfrac{1}{2\alpha^\ast_{\widetilde{\A}}} C^\ast_2 (\alpha,\kappa,U,\Omega)\,.
	\end{equation}
\end{proof}

It is important to highlight a few key points here.

\begin{rem}\label{rem:Clip-h}
	\begin{itemize}
	\item[(a)] The determination of the constant $C_{\mathrm{LIP}}^\ast$ is influenced by the term $\bt_{0,h}$, which re\-pre\-sents the first component of the pair $(\bt_{0,h}, \bu_{0,h}) = \vec{\bu}_{0,h} = \oL_{h,1}(\vec{\bw}_{0,h}, \vec{\phi}_{0,h})$ within $\mathbf{B}_h$. Similar to the findings in previous studies \cite{CGMir-1,CGMor}, given that elements of $\mathbb{H}_h^{\bt}$ are  piecewise polynomial by components, we can affirm that $\|\bt_{0,h}\|_{0,4,\Omega}$ is finite. However, we cannot assert that this finiteness is independent of the discretization parameter $h$.
	\item[(b)] Significantly, in scenarios where the viscosity is constant as analyzed in 
		\cite{aguiar-2017,childress-1976,harashima-1988,Teramoto-bioconvection,tb-2007}, the term $E^\ast_3$ (see  \eqref{eq:est-u-lip-h}), does not appear. This means that, with constant viscosity, the Lipschitz continuity constant for the discrete operator $\oL_h$ depends only on given data and is independent of the mesh size $h$. More precisely, the constant $C^{\ast}_{\mathrm{LIP}}$ is defined as
	\begin{equation}\label{eq:C-LIP-h-nuconstant}
		C^{\ast}_{\mathrm{LIP}} := \frac{1}{\alpha_{\A}^\ast}\left\{g\gamma|\Omega|^{\frac{1}{2}} +\frac{1}{2}C^\ast_1(\mu,\gamma,\boldsymbol{f},g,\alpha,\kappa,U,\Omega)\right\} + \frac{1}{2\alpha^\ast_{\widetilde{\A}}} C^\ast_2 (\alpha,\kappa,U,\Omega).
	\end{equation}
	
	\end{itemize}
\end{rem}

We close the section with the main result establishing the well--posedness of the discrete problem \eqref{eq:FV-ker_h}. 

	\begin{thm}\label{thm:existence-h}
	Under the hypotheses from Lemma \ref{lem:properties-form-B-h} and the condition  \eqref{eq:rest-conc-h}, there exists at least one solution $(\vec{\bu}_h,\vec{\varphi}_h)$ for \eqref{eq:FV-ker_h}, satisfying  the a priori estimates 
	\begin{equation*}
		\|\vec{\bu}_h\|_{\mathbf{H}}\leq C^\ast_1(\mu,\gamma,\boldsymbol{f},g,\alpha,\kappa,U,\Omega)\quad\qan\quad\|\vec{\varphi}_h\|_{\widetilde{ \mathbf{H}}}\leq C^\ast_2 (\alpha,\kappa,U,\Omega)\,.
	\end{equation*}
\end{thm}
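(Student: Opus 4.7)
My plan is to mirror the strategy used for the continuous existence result (Theorem \ref{thm:existence}), but to replace Schauder's theorem by Brouwer's fixed--point theorem, which is the natural tool in the finite--dimensional discrete setting mentioned in the abstract. The core observation is that all the ingredients needed for a fixed--point argument have already been assembled in the lemmas preceding the statement: Lemma \ref{lem:Lweldef-h} shows that $\oL_h$ is well--defined and maps $\mathbf{B}_h$ into itself, Lemma \ref{lem:L-Lipshitz-continuo-h} provides Lipschitz continuity (hence continuity) of $\oL_h$ on $\mathbf{B}_h$, and Lemma \ref{lem:a-priori-estimates-h} supplies the a priori bounds.

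Concretely, I would proceed as follows. First, I would recall from \eqref{eq:B_h} that $\mathbf{B}_h$ is a closed, bounded, convex subset of the finite--dimensional space $\mathbf{V}_h\times\widetilde{\mathbf{V}}_h$. Second, I would invoke Lemma \ref{lem:Lweldef-h} (which requires condition \eqref{eq:rest-conc-h} together with the inf--sup hypotheses of Lemma \ref{lem:properties-form-B-h}) to conclude that $\oL_h(\mathbf{B}_h)\subseteq \mathbf{B}_h$. Third, Lemma \ref{lem:L-Lipshitz-continuo-h} gives the continuity of $\oL_h$ on $\mathbf{B}_h$. At this point the Brouwer fixed--point theorem applies directly: any continuous self--map of a nonempty compact convex subset of a finite--dimensional normed space has at least one fixed point. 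Since in finite dimensions closed and bounded implies compact, $\mathbf{B}_h$ is compact, so $\oL_h$ admits at least one fixed point $(\vec{\bu}_h,\vec{\varphi}_h)\in\mathbf{B}_h$.

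Finally, by the equivalence \eqref{eq:equivalencia-L_h}, such a fixed point is precisely a solution of the discrete kernel--reduced problem \eqref{eq:FV-ker_h}, and the inf--sup conditions \eqref{eq:inf-supBS_h}--\eqref{eq:inf-supBC_h} of Lemma \ref{lem:properties-form-B-h} recover the full Galerkin system \eqref{eq:FV_h} with uniquely determined $\bsi_h$ and $\widetilde{\bsi}_h$ (an observation I would comment on briefly in analogy with Remark \ref{rem:sigma-existence}). The a priori estimates stated in the theorem are then immediate from Lemma \ref{lem:a-priori-estimates-h}, since the fixed point belongs to $\mathbf{B}_h$ whose definition encodes exactly those bounds through the constants $C_1^\ast$ and $C_2^\ast$ in \eqref{eq:C1*}--\eqref{eq:C2*}.

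The argument is essentially a bookkeeping exercise once the preparatory lemmas are in place, so I do not anticipate a significant technical obstacle here; the genuine work has been done in proving that $\oL_h$ maps $\mathbf{B}_h$ into itself and that it is continuous. The only subtle point worth flagging, as already noted in Remark \ref{rem:Clip-h}(a), is that the Lipschitz constant $C_{\mathrm{LIP}}^{\ast}$ in \eqref{eq:C-LIP-h} depends on $\|\bt_{0,h}\|_{0,4,\Omega}$ and therefore cannot in general be controlled independently of $h$; however, for the existence statement this is irrelevant because we only need continuity of $\oL_h$, not contractivity, at this stage. Uniqueness under smallness assumptions would be stated separately, in analogy with Theorem \ref{thm:uniqueness} of the continuous setting.
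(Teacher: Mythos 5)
Your proposal is correct and follows essentially the same route as the paper: the paper's proof likewise invokes Lemma \ref{lem:Lweldef-h} (self-map of $\mathbf{B}_h$) and Lemma \ref{lem:L-Lipshitz-continuo-h} (continuity) to apply Brouwer's fixed-point theorem, then uses the equivalence \eqref{eq:equivalencia-L_h} and the a priori bounds of Lemma \ref{lem:a-priori-estimates-h}. Your additional remarks on compactness of $\mathbf{B}_h$ in finite dimensions and on the irrelevance of the $h$-dependence of $C^{\ast}_{\rm LIP}$ for mere existence are accurate but do not change the argument.
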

\begin{proof} From Lemmas \ref{lem:Lweldef-h} and \ref{lem:L-Lipshitz-continuo-h}, the operator $\oL_h$ (refer to equations \eqref{eq:FV-des-lin_h}--\eqref{eq:L_h}) satisfies the Brouwer Fixed Point Theorem criteria, ensuring at least one fixed point for $\oL_h$. Moreover, from \eqref{eq:equivalencia-L_h}, this fixed point aligns with a solution for the problem outlined in equation \eqref{eq:FV-ker_h}, which also adheres to the a priori estimates from Lemma \ref{lem:a-priori-estimates-h}.		
\end{proof}

%
%Observe that  if $(\vec{\bu},\vec{\varphi}), (\vec{\bu}_0,\vec{\varphi}_0)$ are two solutions of problem \eqref{eq:FV-des-lin}, and therefore fixed points of operator $\oL$, then from the  Lipschitz continuity (cf. Lemma \ref{lem:L-Lipshitz-continuo}), it follows that 
%%
%\[\|(\vec{\bu},\vec{\varphi})-  (\vec{\bu}_0,\vec{\varphi}_0)\|=\|\oL(\vec{\bu},\vec{\varphi})-\oL  (\vec{\bu}_0,\vec{\varphi}_0)\|\leq C_{\rm{LIP}}\|(\vec{\bu},\vec{\varphi})-  (\vec{\bu}_0,\vec{\varphi}_0)\|\,,\]
%%
%and so
%% 
%\[(1-C_{\rm{LIP}})\,\|(\vec{\bu},\vec{\varphi})-  (\vec{\bu}_0,\vec{\varphi}_0)\|\leq 0\,.\]
%%
%Then $(\vec{\bu},\vec{\varphi}) =(\vec{\bu}_0,\vec{\varphi}_0)$ whenever $C_{\mathrm{LIP}}<1,$ as defined in \eqref{eq:C-LIP}.   The following uniqueness result has been then demonstrated.
%%
%
%\begin{thm}\label{thm:uniqueness}
%	Under the hypothesis of Theorem \ref{thm:existence}, and assuming  that the data are sufficiently small such that the Lipschitz continuity constant (cf. \eqref{eq:C-LIP}) satisfies $C_{\rm{LIP}}<1$, there exists a unique solution $(\vec{\bu},\vec{\varphi})$ of problem  \eqref{eq:FV-des-lin}.
%\end{thm}

As we conclude this section, it is pertinent to highlight some key observations that underpin the framework of our analysis.

\begin{rem}
	\begin{itemize}
		\item[(a)] Similar to the continuous scenario, the existence of   $\bsi_h$ and  $\widetilde{\bsi}_h$ as well as the corresponding bounds follow from the inf--sup compatibility of $\B$ and $\widetilde{\B}$, as outlined in part (b) of Lemma \ref{lem:properties-form-B-h} (see Remark \ref{rem:sigma-existence}, part (a)).
		\item[(b)] Due to the Lipschitz continuity constant \eqref{eq:C-LIP-h}'s dependence on $\bt_{0,h}$, establishing a uniqueness result for the discrete problem \eqref{eq:FV_h} is not straightforward. Nonetheless, as mentioned in Remark \eqref{rem:Clip-h}, for   constant viscosity the Lipschitz constant \eqref{eq:C-LIP-h-nuconstant} of $\oL_h$ only depends on given data. Thus, the result \eqref{thm:existence-h} can be improved to assure both the existence and uniqueness of the discrete solution, from the Banach Fixed-Point Theorem, by requiring $\oL_h$ to be a contraction. This implies the data must be sufficiently small so that the Lipschitz constant \eqref{eq:C-LIP-h-nuconstant} ensures $C^{\ast}_{\rm LIP}<1.$
	\end{itemize}
	
\end{rem}

\subsection{A priori error analysis}\label{section33}
The objective of this section is to estimate the approximation error associated with the Galerkin scheme presented by the fully-mixed finite element method \eqref{eq:FV_h}, utilizing the discrete spaces specified in \eqref{eq:FEspaces} and under the conditions given in Theorems \ref{thm:existence}, \ref{thm:uniqueness}, and \ref{thm:existence-h}. We aim to derive theoretical convergence rates in terms of the discretization parameter $h$, providing a error estimate of the form
\begin{equation*}%\label{eq:apriori-estimation}
	\|(\vec{\bu}, \bsi) - (\vec{\bu}_h, \bsi_h)\|_{\mathbf{H} \times \mathbb{H}_0(\bdiv_{4/3}; \Omega)} + \|(\vec{\varphi}, \widetilde{\bsi}) - (\vec{\varphi}_h, \widetilde{\bsi}_h)\|_{\widetilde{\mathbf{H}} \times \mathbf{H}_\Gamma(\div_{4/3}; \Omega)} \leq C \, h^s, 
\end{equation*}
where $(\vec{\bu}, \bsi, \vec{\varphi}, \widetilde{\bsi}) \in \mathbf{H} \times \mathbb{H}_0(\bdiv_{4/3}; \Omega) \times \widetilde{\mathbf{H}} \times \mathbf{H}_\Gamma(\div_{4/3}; \Omega)$ is the unique solution of the coupled problem \eqref{eq:FV}, $(\vec{\bu}_h, \bsi_h, \vec{\varphi}_h, \widetilde{\bsi}_h) \in \mathbf{H}_h \times \mathbb{H}^\bsi_h \times \widetilde{\mathbf{H}}_h \times \mathbf{H}^{\widetilde{\bsi}}_h$ is a solution of the discrete coupled problem \eqref{eq:FV_h},  $C$ is a positive constant independent of $h$, and $s$ denotes the theoretical convergence rate.

	 According to the structure inherent in the respective variational formulations, we employ \cite[Lemma 6.1]{CGMor}. This lemma provides a Strang--type estimate for a problem with a similar structure of ours. The lemma is presented as follows. 
	\begin{lem}\label{strang}
		Let $\mathrm H$ and $\mathrm Q$ be reflexive Banach spaces,  and let $a:\mathrm H \times \mathrm H \longrightarrow \mathrm{R}$
		and $b : \mathrm H\times \mathrm Q \longrightarrow \mathrm{R}$ be bounded bilinear forms  such that
		$a$ and $b$ satisfy the hypotheses of \cite[Theorem 2.34]{ErnG}. Furthermore, let $\big\{H_h \big\}_{h>0}$ and 
		$\big\{Q_h \big\}_{h>0}$ be sequences of finite dimensional subspaces of $H$ and $Q$, respectively, and for each 
		$h>0$ consider a bounded bilinear form $a_h: H \times H \longrightarrow R$, such that $a_h |_{H_h\times H_h}$ and $b|_{H_h\times Q_h}$
		satisfy the hypotheses of  \cite[Theorem 2.34]{ErnG} as well, with discrete  coercivity constant $\alpha^\ast$
		and discrete inf-sup condition constant $\beta^\ast$, both independent of $h$. In turn, given $F\in H'$, $G\in Q'$, and a sequence
		of functionals $\big\{F_h \big\}_{h>0}$, with $F_h \in H'_h$ for each $h > 0$, we let $(u,\sigma)\in H\times Q$
		and $(u_h,\sigma_h) \in H_h \times Q_h$ be the unique solutions, respectively, to the problems
		\begin{equation*}%\label{strang1}
			\begin{array}{rcll}
				a(u,v) \,+\, b(v,\sigma)&=& F(v) & \qquad \forall \, v\in \mathrm H \,,\\[1ex]
				b(v,\tau)&=&G(\tau) & \qquad \forall\, \tau \in \mathrm Q\,,
			\end{array}
		\end{equation*}
		and
		\begin{equation*}%\label{strang2}
			\begin{array}{rcll}
				a_h(u_h,v_h) \,+\, b(v_h,\sigma_h)&=& F_h(v_h) & \qquad \forall \, v_h\in \mathrm H_h \,,\\[1ex]
				b(v_h,\tau_h)&=&G(\tau_h) & \qquad \forall\, \tau_h \in \mathrm Q_h\,.
			\end{array}
		\end{equation*}
		Then, there holds
		\begin{equation}\label{eq-Strang}
			\begin{array}{c}
				\|u - u_h\| + \|\sigma - \sigma_h\| \,\le\, C_{S,1}\,\mathrm{dist}\big(u,H_h\big) \,+\, C_{S,2}\,\mathrm{dist}\big(\sigma,Q_h\big)\\[2ex]
				\disp
				+\,\, C_{S,3}\,\Big\{ \|F - F_h\|_{H'_h} \,+\, \|a(u,\cdot) - a_h(u,\cdot)\|_{H'_h} \Big\}\,,
			\end{array}
		\end{equation}
		where $C_{S,i}$, $i\in \{1,2,3\}$, are positive constants depending only on $\alpha^\ast$,
		$ \beta^\ast$, and other constants, all of which are independent of $h$.
	\end{lem}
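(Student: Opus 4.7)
The plan is to adapt the classical Strang lemma argument for saddle-point problems to the Banach space setting by exploiting the discrete coercivity of $a_h$ on the discrete kernel $V_h := \{w_h \in H_h : b(w_h, \tau_h) = 0\ \forall\,\tau_h \in Q_h\}$ and the discrete inf--sup condition for $b$ on $H_h\times Q_h$. Existence and uniqueness at both the continuous and discrete levels follow by hypothesis from \cite[Theorem 2.34]{ErnG}, so the error $(u-u_h,\sigma-\sigma_h)$ is well defined and the task reduces to controlling it.

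I would first write down the Galerkin-type identities obtained by subtracting the discrete system from the continuous one. Since $\sigma_h$ lies in $Q_h\subset Q$, testing the first continuous equation with $w_h\in H_h$ and subtracting the discrete one gives, after the algebraic manipulation $a(u,w_h)-a_h(u_h,w_h)=a_h(u-u_h,w_h)+\{a(u,w_h)-a_h(u,w_h)\}$, the error equation
\[
a_h(u-u_h,w_h)+b(w_h,\sigma-\sigma_h)=\{F(w_h)-F_h(w_h)\}+\{a_h(u,w_h)-a(u,w_h)\}, \quad\forall\,w_h\in H_h.
\]
Similarly, testing the second equation with $\eta_h\in Q_h$ yields the constraint consistency $b(u-u_h,\eta_h)=0$ for all $\eta_h\in Q_h$.

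Next, given an arbitrary $v_h\in H_h$, I would use the discrete inf--sup condition to produce $\hat z_h\in H_h$ satisfying $b(\hat z_h,\eta_h)=b(u-v_h,\eta_h)$ for all $\eta_h\in Q_h$, with $\|\hat z_h\|\le (\beta^\ast)^{-1}\|b\|\,\|u-v_h\|$. The constraint orthogonality above then ensures $w_h:=u_h-v_h-\hat z_h \in V_h$. Testing the error equation with this $w_h$, using $b(w_h,\sigma-\sigma_h)=b(w_h,\sigma-\tau_h)$ for any $\tau_h\in Q_h$ (because $w_h\in V_h$ and $\sigma_h\in Q_h$), and applying the discrete coercivity of $a_h$ on $V_h$ together with the boundedness of $a_h$ and $b$, I would obtain
\[
\alpha^\ast\|w_h\|\le C\Big\{\|u-v_h\|+\|\sigma-\tau_h\|+\|F-F_h\|_{H_h'}+\|a(u,\cdot)-a_h(u,\cdot)\|_{H_h'}\Big\}.
\]
A triangle inequality $\|u-u_h\|\le\|u-v_h\|+\|\hat z_h\|+\|w_h\|$ and infimization over $v_h\in H_h$ and $\tau_h\in Q_h$ yield the desired bound on $\|u-u_h\|$.

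Finally, to bound $\|\sigma-\sigma_h\|$, for any $\xi_h\in Q_h$ I would invoke the discrete inf--sup condition on $\xi_h-\sigma_h\in Q_h$, bound the supremum using the first error equation and the already obtained estimate for $\|u-u_h\|$, and then combine via a triangle inequality $\|\sigma-\sigma_h\|\le \|\sigma-\xi_h\|+\|\xi_h-\sigma_h\|$ followed by infimization over $\xi_h\in Q_h$. Summing both contributions produces \eqref{eq-Strang} with constants depending only on $\alpha^\ast$, $\beta^\ast$, $\|a_h\|$, $\|b\|$, all independent of $h$. The main technical obstacle is that, unlike in the Hilbert setting, no orthogonal projection is available, so the Fortin-type reconstruction of $\hat z_h$ must be carried out purely via the inf--sup condition; keeping track of the additional consistency term $\|a(u,\cdot)-a_h(u,\cdot)\|_{H_h'}$ throughout, which is absent from the classical Galerkin Strang lemma, and verifying that all constants remain bounded independently of $h$, is the most delicate aspect of the argument.
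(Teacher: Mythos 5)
Your argument is correct and is essentially the canonical proof of this Strang-type estimate: the paper itself does not reprove the lemma but imports it verbatim from \cite[Lemma 6.1]{CGMor}, whose proof follows the same route you describe (error equations, the discrete inf-sup condition to build $\hat z_h$ correcting the constraint mismatch so that $u_h-v_h-\hat z_h$ lies in the discrete kernel, coercivity of $a_h$ there, and the inf-sup condition once more for $\sigma-\sigma_h$). In a full write-up you would only need to make explicit the uniform ($h$-independent) bound on $\|a_h\|$, which is implicit in the hypotheses, and the splitting $a_h(w_h,w_h)=a_h(u_h-u,w_h)+a_h(u-v_h,w_h)-a_h(\hat z_h,w_h)$ before inserting the error equation.
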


With this at hand, we now separately address the fluid and concentration equations and estimate the individual errors $\|(\vec{\bu},\bsi)-(\vec{\bu}_h,\bsi_h)\|_{\mathbf{H}\times\mathbb{H}_{0}(\bdiv_{4/3};\Omega)}$ and $\|(\vec{\varphi},\widetilde{\bsi})-(\vec{\varphi}_h,\widetilde{\bsi}_h)\|_{\widetilde{\mathbf{H}}\times\mathbf{H}_{\Gamma}(\div_{4/3};\Omega)}.$

\subsubsection*{Estimation for $\|(\vec{\bu},\bsi)-(\vec{\bu}_h,\bsi_h)\|_{\mathbf{H}\times\mathbb{H}_{0}(\bdiv_{4/3};\Omega)}$.} 
The fluid equations, as expressed in the first two rows of \eqref{eq:FV} and \eqref{eq:FV_h}, can be equivalently rewritten as
	\begin{equation}\label{eq:velocity}
		\begin{array}{rll}
			\mathcal{A}_{\bu,\varphi}(\vec{\bu},\vec{\bv})- \B(\vec{\bv},\bsi)&=\F_\varphi(\vec{\bv}) &\forall\,\vec{\bv}\in \mathbf{H}\,, \\
			\B(\vec{\bu},\bta)&=0 &\forall\, \bta\in \mathbb{H}_0(\bdiv_{4/3};\Omega)\,,
		\end{array}
	\end{equation}
and 
	\begin{equation}\label{eq:velocity_h}
	\begin{array}{rll}
		\mathcal{A}_{\bu_h,\varphi_h}(\vec{\bu}_h,\vec{\bv}_h)-\B(\vec{\bv}_h,\bsi_h)&=\F_{\varphi_h}(\vec{\bv}_h)& \forall\varphi_h\in  \mathbf{H} _h\,, \\
		\B(\vec{\bu}_h,\bta_h)&=0 &\forall\, \bta_h\in \mathbb{H}^\bsi_h\,,
	\end{array}
\end{equation}
where  $\mathcal{A}_{\bu,\varphi}$ and $\mathcal{A}_{\bu_h,\varphi_h}$ are the bilinear forms
\begin{equation}\label{eq:A-u,var}
	\begin{array}{c}
	\A_{\bu,\varphi}(\vec{\bw},\vec{\bv}):=\A_\varphi(\vec{\bw},\vec{\bv})+\C(\bu;\vec{\bw},\vec{\bv}) \qquad \forall \,\vec{\bw},\vec{\bv}\in \mathbf{H}\\[2ex]
	\A_{\bu_h,\varphi_h}(\vec{\bw}_h,\vec{\bv}_h):=\A_{\varphi_h}(\vec{\bw}_h,\vec{\bv}_h)+\C(\bu_h;\vec{\bw}_h,\vec{\bv}_h) \qquad \forall\,\vec{\bw}_h,\vec{\bv}_h\in\, \mathbf{H}_h\,.
	\end{array}
\end{equation}
By proceding similarly as in \eqref{eq:est-u-e3-1}-\eqref{eq:est-u-e3} and using the regularity assumption \eqref{eq:HRA}, we find that
\begin{equation}\label{eq:AS-ASh}
	\begin{array}{l}
		\big|\A_{\varphi}(\vec{\bu},\vec{\bv}_h)-\A_{\varphi_h}(\vec{\bu},\vec{\bv}_h )\big|\leq 2L_{\mu}\|i_\varepsilon\||\Omega|^{\frac{\epsilon}{d}-\frac{1}{2}}\|\varphi-\varphi_h\|_{0,4,\Omega}\|\bt_0\|_{\varepsilon,\Omega} \|\vec{\bv}_h\|_{\mathbf{H}_h}\\[2ex]
		\quad \leq  2L_{\mu}\|i_\varepsilon\||\Omega|^{\frac{\epsilon}{d}-\frac{1}{2}}\,C_{1,\varepsilon}\|\varphi-\varphi_h\|_{0,4,\Omega}\|\vec{\bv}_h\|_{\mathbf{H}_h}\,.
	\end{array}
\end{equation}
%
%where 
%%
%\begin{equation}\label{eq:constantc1-ap}
%	\boldsymbol{C}_{1,\varepsilon}(\mu,\gamma,\boldsymbol{f},g,\alpha,\Omega):=2L_{\mu}\|i_\varepsilon\||\Omega|^{\frac{\epsilon}{d}-\frac{1}{2}}\,C_{1,\varepsilon}\,.
%\end{equation}
%%
\noindent 	In turn, from the boundedness properties of $\C$ and in part (c) of Lemmas \ref{lem:properties-form-C-F} and \ref{lem:discrete-properties-form-C-F-h} along with the a priori \eqref{eq:a-priori-estimates-continuous} for $\vec{\bu}$, we find that 
\begin{equation}\label{eq:CS-CSh}
	\begin{array}{l}
		\big|\C(\bu;\vec{\bu},\vec{\bv}_h)-\C(\bu_h;\vec{\bu},\vec{\bv}_h)\big|\leq \|\bu-\bu_h\|_{0,4,\Omega} \|\vec{\bu}\|_{\mathbf{H}}\, \|\vec{\bv}_h\|_{\mathbf{H}}\\[2ex]
		\leq	C_1(\mu,\gamma,\boldsymbol{f},g,\alpha,\kappa,U,\Omega)\, \|\bu-\bu_h\|_{0,4,\Omega}\|\vec{\bv}_h\|_{\mathbf{H}} \,.
	\end{array}
\end{equation}
\noindent 	Combining   \eqref{eq:AS-ASh} and \eqref{eq:CS-CSh}, and using the norm definition on $\mathbf{H}^{\prime}_{h}$
%
%\begin{equation*}
%	\begin{array}{l}
%		\big|\mathcal{A}_{\bu,\varphi}(\vec{\bu},\vec{\bv}_h)-\mathcal{A}_{\bu_h,\varphi_h}(\vec{\bu},\vec{\bv}_h)\big|\\[2ex]
%		\quad 	\disp \leq \Big\{ 2L_{\mu}\|i_\varepsilon\||\Omega|^{\frac{\epsilon}{d}-\frac{1}{2}}\,C_{1,\varepsilon}\|\varphi-\varphi_h\|_{0,4,\Omega}+C_1(\mu,\gamma,\boldsymbol{f},g,\alpha,\kappa,U,\Omega) \, \|\bu-\bu_h\|_{0,4,\Omega} \Big\}\|\vec{\bv}_h\|_{\widetilde{ \mathbf{H}}}\,,
%	\end{array}
%\end{equation*}
%%
%from which
% 
\begin{equation}\label{eq:estimation-A_u-A_uh}
	\begin{array}{l}
		\|\mathcal{A}_{\bu,\varphi}(\vec{\bu},\cdot)-\mathcal{A}_{\bu_h,\varphi_h}(\vec{\bu},\cdot)\|_{\mathbf{H}_h'}\\[2ex]
		\quad	\disp \leq 2L_{\mu}\|i_\varepsilon\||\Omega|^{\frac{\epsilon}{d}-\frac{1}{2}}\,C_{1,\varepsilon}\|\varphi-\varphi_h\|_{0,4,\Omega}+C_1(\mu,\gamma,\boldsymbol{f},g,\alpha,\kappa,U,\Omega) \, \|\bu-\bu_h\|_{0,4,\Omega}\,.
	\end{array}
\end{equation}

\noindent In turn, similarly to the estimation $E_1$ in Lemma \ref{lem:L-Lipshitz-continuo}, we find that
\begin{equation}\label{eq:estimation-F_var-F_varh}
	\| \F_\varphi-\F_{\varphi_h}\|_{\mathbf{H}_h'}\leq  g\gamma|\Omega|^{\frac{1}{2}}\|\varphi-\varphi_h\|_{0,4,\Omega}\,.
\end{equation}
Note that the hypotheses of  Lemma \ref{strang} are satisfied for the bilinear forms $\mathcal{A}_{\bu,\varphi}$, $\mathcal{A}_{\bu_h,\varphi_h}$ and $\B$.  Thus, considering the  estimates \eqref{eq:estimation-A_u-A_uh} and \eqref{eq:estimation-F_var-F_varh}, then the Strang estimate \eqref{eq-Strang} for problems  \eqref{eq:velocity} and \eqref{eq:velocity_h}, is given by 
\begin{equation}\label{eq:u-uh}
	\begin{aligned}
		&\|(\vec{\bu},\bsi)-(\vec{\bu}_h,\bsi_h)\|_{\mathbf{H}\times\mathbb{H}_{0}(\bdiv_{4/3};\Omega)}\leq C_{S,1} \,\mathrm{dist}(\vec{\bu},\widetilde{\mathbf{H}}_h)+C_{S,2}\, \mathrm{dist}(\bsi,\mathbb{H}^\bsi_h)\\[2ex]
		&\quad  +C_{S,3} \boldsymbol{C}_{1,\varepsilon}(\mu,\gamma,\boldsymbol{f},g,\alpha,\Omega)\|\varphi-\varphi_h\|_{0,4,\Omega} \\[2ex]
		&\quad	+C_{S,3} C_1(\mu,\gamma,\boldsymbol{f},g,\alpha,\kappa,U,\Omega) \, \|\bu-\bu_h\|_{0,4,\Omega}\,.
	\end{aligned}
\end{equation}
where $C_{S,i}$, $i\in \{1,2,3\}$, are positive constants  independent of $h$ and 
	\begin{equation}\label{eq:constantc1-apr}
		\boldsymbol{C}_{1,\varepsilon}(\mu,\gamma,\boldsymbol{f},g,\alpha,\Omega):=g\gamma|\Omega|^{\frac{1}{2}}+	2L_{\mu}\|i_\varepsilon\||\Omega|^{\frac{\epsilon}{d}-\frac{1}{2}}\,C_{1,\varepsilon}\,.
	\end{equation}

\subsubsection*{Estimation for $\|(\vec{\varphi},\widetilde{\bsi})-(\vec{\varphi}_h,\widetilde{\bsi}_h)\|_{\widetilde{\mathbf{H}}\times\mathbf{H}_{\Gamma}(\div_{4/3};\Omega)} $.}
Regarding the concentration equations, given as the two last rows of \eqref{eq:FV} and \eqref{eq:FV_h}, they can be expressed as
	\begin{equation}\label{eq:concentration}
		\begin{array}{rll}
			\widetilde{\mathcal{A}}_{\bu}(\vec{\varphi},\vec{\psi})-\widetilde{\B}(\vec{\psi},\widetilde{\bsi})&=\widetilde{\F}(\vec{\psi}) & \forall\,\vec{\psi}\in \widetilde{\mathbf{H}}\,, \\
			\widetilde{\B}(\vec{\varphi},\widetilde{\bta})&=0 &\forall\,\widetilde{\bta}\in \mathbf{H}_\Gamma(\div_{4/3};\Omega)\,,
		\end{array}
	\end{equation}
	and
	\begin{equation}\label{eq:concentration_h}
		\begin{array}{rll}
			\widetilde{\mathcal{A}}_{\bu_h}(\vec{\varphi}_h,\vec{\psi}_h)-\widetilde{\B}(\vec{\psi}_h,\widetilde{\bsi}_h)&=\widetilde{\F}(\vec{\psi}_h) &\forall\, \vec{\psi_h}\in  \widetilde{\mathbf{H}}_h \,,\\
			\widetilde{\B}(\vec{\varphi}_h,\widetilde{\bta}_h)&=0&\forall\, \widetilde{\bta_h}\in \mathbf{H}^{\widetilde{\bsi}}_h\,,
		\end{array}
	\end{equation}
where $\widetilde{\mathcal{A}}_{\bu}$ and $\widetilde{\mathcal{A}}_{\bu_h,}$ are the bilinear forms
	\begin{equation}\label{eq:Atilde-uh}
		\begin{array}{c}
			\widetilde{\mathcal{A}}_{\bu}(\vec{\phi},\vec{\psi}):=\widetilde{\A}(\vec{\phi},\vec{\psi}) +\widetilde{\C}(\bu;\vec{\phi},\vec{\psi}) \qquad \forall \, \vec{\phi},\vec{\psi}\in \widetilde{\mathbf{H}}\,,\\[2ex]
		\widetilde{\mathcal{A}}_{\bu_h}(\vec{\phi}_h,\vec{\psi}_h):=\widetilde{\A}(\vec{\phi}_h,\vec{\psi}_h) +\widetilde{\C}(\bu_h;\vec{\phi}_h,\vec{\psi}_h) \qquad \forall \,\vec{\phi}_h,\vec{\psi}_h\in \widetilde{\mathbf{H}}_h\,.
		\end{array}
	\end{equation}

\noindent From  the boundedness properties of $\widetilde{\C}$ and the a priori estimate \eqref{eq:a-priori-estimates-continuous} for $\vec{\varphi}$, we find that 
\begin{equation*}
	\begin{array}{l}
		\big|\widetilde{\mathcal{A}}_{\bu}(\vec{\varphi},\vec{\psi}_h)-\widetilde{\mathcal{A}}_{\bu_h}(\vec{\varphi},\vec{\psi}_h)\big|=\big|\widetilde{\C}(\bu;\vec{\varphi},\vec{\psi}_h)-\widetilde{\C}(\bu_h;\vec{\varphi},\vec{\psi}_h)\big|\\[2ex]
		\quad \leq\,\|\bu-\bu_h\|_{0,4,\Omega}  \|\vec{\varphi}\|_{\widetilde{ \mathbf{H}}} \|\vec{\psi}\|_{\widetilde{ \mathbf{H}}}\leq C_2 (\alpha,\kappa,U,\Omega)\,\|\bu-\bu_h\|_{0,4,\Omega} \|\vec{\psi}_h\|_{\widetilde{ \mathbf{H}}}\,,
	\end{array}
\end{equation*}
 and so 
	\begin{equation}\label{eq:estimation Atilde_u-Atilde_uh}
		\|\widetilde{\mathcal{A}}_{\bu}(\vec{\varphi},\cdot)-\widetilde{\mathcal{A}}_{\bu_h}(\vec{\varphi},\cdot)\|_{\widetilde{\mathbf{H}}'_h}\leq C_2 (\alpha,\kappa,U,\Omega)\,\|\bu-\bu_h\|_{0,4,\Omega} \,.
	\end{equation}

The bilinear forms $(\widetilde{\mathcal{A}}_{\bu,\varphi},\widetilde{\B})$ and $(\widetilde{\mathcal{A}}_{\bu_h,\varphi_h}, \widetilde{\B})$ satisfy the Lemma \ref{strang}'s criteria. Then, the Strang estimate  \eqref{eq-Strang}  applied to problems \eqref{eq:concentration} and \eqref{eq:concentration_h}, together with the bound \eqref{eq:estimation Atilde_u-Atilde_uh}, is given by
	\begin{equation}\label{eq:var-varh}
		\begin{array}{l}
	\|(\vec{\varphi},\widetilde{\bsi})-(\vec{\varphi}_h,\widetilde{\bsi}_h)\|_{\widetilde{\mathbf{H}}\times\mathbf{H}_{\Gamma}(\div_{4/3};\Omega)}\leq \widetilde{C}_{S,1} \,\mathrm{dist}(\vec{\varphi}, \widetilde{\mathbf{H}}_h)+\widetilde{C}_{S,2} \,\mathrm{dist}(\widetilde{\bsi},\mathbf{H}^{\widetilde{\bsi}}_h) \\[2ex]
	\qquad +\widetilde{C}_{S,3}C_2 (\alpha,\kappa,U,\Omega)\,\|\bu-\bu_h\|_{0,4,\Omega}
		\end{array}
	\end{equation}
	where $\widetilde{C}_{S,i}$, $i\in \{1,2,3\}$, are positive constants  independent of $h$. 
%	\begin{equation*}
%		\begin{array}{ll}
%			\hat{C}_{S,1} :=& \left( 1+\dfrac{\|\widetilde{\mathcal{A}}_{\bu_h}\|}{\widetilde{\beta}_\mathtt{d}}\right)\left(1+\dfrac{\|\widetilde{\mathbf{B}}\|}{\widetilde{\beta}_\mathtt{d}}\right) \left( 1+2\dfrac{\|\widetilde{\mathcal{A}}_{\bu}\|}{\alpha_{\widetilde{\mathbf{A}}}^*}+\dfrac{\|\widetilde{\mathcal{A}}_{\bu_h}\|}{\alpha_{\widetilde{\mathbf{A}}}^*}\right)\\[2ex]
%			\hat{C}_{S,2} :=&1+\dfrac{\|\widetilde{\mathbf{B}}\|}{\widetilde{\beta}_\mathtt{d}}+\dfrac{\|\widetilde{\mathbf{B}}\|}{\alpha_{\widetilde{\mathbf{A}}}^*}+\dfrac{\|\widetilde{\mathcal{A}}_{\bu_h}\|\,\|\widetilde{\mathbf{B}}\|}{\alpha_{\widetilde{\mathbf{A}}}^*\,\widetilde{\beta}_\mathtt{d}} \\[2ex]
%			\hat{C}_{S,3} :=& \dfrac{1}{\alpha_{\widetilde{\mathbf{A}}}^*}+\dfrac{1}{\widetilde{\beta}_\mathtt{d}}+\dfrac{\|\widetilde{\mathcal{A}}_{\bu_h}\| }{\alpha_{\widetilde{\mathbf{A}}}^*\,\widetilde{\beta}_\mathtt{d}}\\[2ex]
%		\end{array}
%	\end{equation*}
\subsubsection*{Estimation for the total error.}

	Finally, we bound $\|\varphi-\varphi_h\|_{0,4,\Omega}$ in \eqref{eq:u-uh}  by using equation \eqref{eq:var-varh} to get
	\begin{equation}\label{eq:||u-uh||}
		\begin{array}{l}
		\|(\vec{\bu},\bsi)-(\vec{\bu}_h,\bsi_h)\|_{\mathbf{H}\times\mathbb{H}_{0}(\bdiv_{4/3};\Omega)}+\|(\vec{\varphi},\widetilde{\bsi})-(\vec{\varphi}_h,\widetilde{\bsi}_h)\|_{\widetilde{\mathbf{H}}\times\mathbf{H}_{\Gamma}(\div_{4/3};\Omega)} \\[2ex]
		\qquad \leq \,C_{S,1} \,\mathrm{dist}(\vec{\bu},\widetilde{\mathbf{H}}_h)+C_{S,2}\, \mathrm{dist}(\bsi,\mathbb{H}^\bsi_h)\\[2ex]
			\qquad+\,C_{S,3}\,\widetilde{C}_{S,1} \boldsymbol{C}_{1,\varepsilon}(\mu,\gamma,\boldsymbol{f},g,\alpha,\Omega) \mathrm{dist}(\vec{\varphi}, \widetilde{\mathbf{H}}_h)\\[2ex]
			\qquad+\,C_{S,3}\,\widetilde{C}_{S,2} \boldsymbol{C}_{1,\varepsilon}(\mu,\gamma,\boldsymbol{f},g,\alpha,\Omega) \mathrm{dist}(\widetilde{\bsi},\mathbf{H}^{\widetilde{\bsi}}_h)\\[2ex]
			\qquad+	\,	\boldsymbol{C}_{2,\varepsilon}(\mu,\gamma,\boldsymbol{f},g,\alpha,\kappa,U,\Omega)\, \|\bu-\bu_h\|_{0,4,\Omega} \\[2ex]
		\end{array}
	\end{equation}
where $\boldsymbol{C}_{1,\varepsilon}$ is given in \eqref{eq:constantc1-apr} and
\begin{equation}\label{eq:constantapc2}
	\begin{aligned}
		\boldsymbol{C}_{2,\varepsilon}(\mu,\gamma,\boldsymbol{f},g,\alpha,\kappa,U,\Omega) &:= C_{S,3}\Big\{\widetilde{C}_{S,3}C_2 (\alpha,\kappa,U,\Omega)\boldsymbol{C}_{1,\varepsilon}(\mu,\gamma,\boldsymbol{f},g,\alpha,\Omega) \\
		&\quad + C_1(\mu,\gamma,\boldsymbol{f},g,\alpha,\kappa,U,\Omega)\Big\}\,.
	\end{aligned}
\end{equation}
%

%From the estimation \eqref{eq:||u-uh||}, we observe that a condition must be set on the constant $\boldsymbol{C}_{2,\varepsilon}(\mu,\gamma,\boldsymbol{f},g,\alpha,\Omega)$, which multiplies $\|\bu-\bu_h\|_{0,4,\Omega}$, to facilitate the derivation of the total error estimate. Consequently, we have established the subsequent theorem.
%
%
%
%	\begin{thm}
%Assume that the given data are sufficiently small so that 
%		\[	\boldsymbol{C}_{2,\varepsilon}(\mu,\gamma,\boldsymbol{f},g,\alpha,\kappa,U,\Omega)\leq \dfrac{1}{2}\,.\]
%	where is defined in \eqref{eq:constantapc2}. Then, there exist a positive constant $C$, independent of $h$, such that
%		\begin{equation}\label{eq:cea}
%			\begin{array}{l}
%			\|(\vec{\bu},\bsi)-(\vec{\bu}_h,\bsi_h)\|_{\mathbf{H}\times\mathbb{H}_{0}(\bdiv_{4/3};\Omega)}+\|(\vec{\varphi},\widetilde{\bsi})-(\vec{\varphi}_h,\widetilde{\bsi}_h)\|_{\widetilde{\mathbf{H}}\times\mathbf{H}_{\Gamma}(\div_{4/3};\Omega)} \\[2ex]
%				\qquad\leq C \left\{ \mathrm{dist}(\vec{\bu},\widetilde{\mathbf{H}}_h)+\mathrm{dist}(\bsi,\mathbb{H}^\bsi_h)+\mathrm{dist}(\vec{\varphi}, \widetilde{\mathbf{H}}_h)+\mathrm{dist}(\widetilde{\bsi},\mathbf{H}^{\widetilde{\bsi}}_h)  \right\}\,.
%			\end{array}
%		\end{equation}
%	\end{thm}

We are now in a position to establish the main result of this section, which provides the theoretical convergence rates for the numerical approximation of our fully-mixed formulation.
	\begin{thm}\label{thm:convergencerates}
	Assume that the hypotheses of Theorems \ref{thm:existence}, \ref{thm:uniqueness}, and \ref{thm:existence-h} hold, and the  data is sufficiently small so that 
	\begin{equation}\label{eq:datares-apriori}	\boldsymbol{C}_{2,\varepsilon}(\mu,\gamma,\boldsymbol{f},g,\alpha,\kappa,U,\Omega)\leq \dfrac{1}{2}\,,
		\end{equation}
	where $\boldsymbol{C}_{2,\varepsilon}(\,\cdot\,)$ is defined in \eqref{eq:constantapc2}. Suppose further that the solution satisfies $\bu\in \mathrm{\mathbf{W}}^{s,4}(\Omega),$ $ \bt \in \mathbb{H}^s(\Omega)\cap\mathbb{L}_{\mathrm{tr}}^2(\Omega),$ $\bsi \in \mathbb{H}^s(\Omega)\cap\mathbb{H}_0(\bdiv_{4/3};\Omega)$, $\bdiv\, \bsi \in \mathrm{\mathbf{W}}^{s,4/3}(\Omega),$ $\,\varphi\in \mathrm{W}^{s,4}(\Omega)\cap\mathrm{L}^4_{0}(\Omega),$ $\widetilde{\bt}\in \mathrm{\mathbf{H}}^s(\Omega)$, $\widetilde{\bsi} \in \mathrm{\mathbf{H}}^s(\Omega)\cap\mathbf{H}_\Gamma(\div_{4/3};\Omega)$ and $\mathrm{div}\,\widetilde{\bsi}\in\mathrm{\mathbf{W}}^{s,4/3}(\Omega)$,  for some $s\in [0,\ell+1].$ Then the errors satisfy 
		\begin{equation}\label{eq:rates-of-convergence}
			\begin{array}{c}
				\|(\vec{\bu},\bsi)-(\vec{\bu}_h,\bsi_h)\|_{\mathbf{H}\times\mathbb{H}_{0}(\bdiv_{4/3};\Omega)}+\|(\vec{\varphi},\widetilde{\bsi})-(\vec{\varphi}_h,\widetilde{\bsi}_h)\|_{\widetilde{\mathbf{H}}\times\mathbf{H}_{\Gamma}(\div_{4/3};\Omega)}\leq C_{\rm rate}\,h^s%\,\biggl\{\biggr. \|\bu\|_{l,4;\Omega}+\|\bt\|_{l,\Omega} \\[2ex]
		%	+\,\|\bsi\|_{l,\Omega} 	+\|\bdiv\,  \bsi\|_{l,4/3;\Omega} + \|\varphi\|_{l,4;\Omega} + \|\widetilde{\bt}\|_{l,\Omega} + \|\widetilde{\bsi}\|_{l,\Omega} + \|\div\, \widetilde{\bsi}\|_{l,4/3;\Omega} \biggl. \biggr\}\,.
			\end{array}
		\end{equation}
		where the constant  $C_{\rm rate}>0$, independent of $h$, depends on the data and high-order norms of the solution (cf. \eqref{eq:rates-of-convergence-2}), but is independent of $h$.
	\end{thm}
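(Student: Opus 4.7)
The starting point is the combined Strang--type bound \eqref{eq:||u-uh||}, which already collects the errors for the fluid and concentration subproblems. The last term on its right--hand side, $\boldsymbol{C}_{2,\varepsilon}(\cdots)\,\|\bu-\bu_h\|_{0,4,\Omega}$, is the only one that contains the total error itself, since from the definition \eqref{eq:norm-u} of $\|\cdot\|_{\mathbf{H}}$ one has
\begin{equation*}
\|\bu-\bu_h\|_{0,4,\Omega}\,\leq\,\|\vec{\bu}-\vec{\bu}_h\|_{\mathbf{H}}\,\leq\,\|(\vec{\bu},\bsi)-(\vec{\bu}_h,\bsi_h)\|_{\mathbf{H}\times\mathbb{H}_0(\bdiv_{4/3};\Omega)}.
\end{equation*}
Under the smallness hypothesis \eqref{eq:datares-apriori}, this term can be absorbed into the left--hand side, yielding an estimate of the form
\begin{equation*}
\|(\vec{\bu},\bsi)-(\vec{\bu}_h,\bsi_h)\|_{\mathbf{H}\times\mathbb{H}_0(\bdiv_{4/3};\Omega)}+\|(\vec{\varphi},\widetilde{\bsi})-(\vec{\varphi}_h,\widetilde{\bsi}_h)\|_{\widetilde{\mathbf{H}}\times\mathbf{H}_\Gamma(\div_{4/3};\Omega)}\,\leq\,\widehat{C}\,\Big\{\mathrm{dist}(\vec{\bu},\mathbf{H}_h)+\mathrm{dist}(\bsi,\mathbb{H}_h^{\bsi})+\mathrm{dist}(\vec{\varphi},\widetilde{\mathbf{H}}_h)+\mathrm{dist}(\widetilde{\bsi},\mathbf{H}_h^{\widetilde{\bsi}})\Big\},
\end{equation*}
where $\widehat{C}>0$ collects the Strang constants and the constants $\boldsymbol{C}_{1,\varepsilon}(\cdots)$ defined in \eqref{eq:constantc1-apr}.

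Next I would bound each best--approximation error through classical interpolation and projection estimates tailored to the finite element spaces \eqref{eq:FEspaces}. For the components living in Lebesgue spaces, namely $\bt$, $\widetilde{\bt}$, $\bu$ and $\varphi$, I would use standard approximation properties of piecewise--polynomial spaces on $\Thb$ (cf. \cite[Theorem 4.4.4]{ErnG} or equivalent), which give under the assumed regularities $\bt\in\mathbb{H}^s(\Omega)$, $\widetilde{\bt}\in\mathbf{H}^s(\Omega)$, $\bu\in\mathbf{W}^{s,4}(\Omega)$, $\varphi\in\mathrm{W}^{s,4}(\Omega)$, the bounds
\begin{equation*}
\mathrm{dist}(\vec{\bu},\mathbf{H}_h)\,\leq\,C\,h^s\big\{\|\bt\|_{s,\Omega}+\|\bu\|_{s,4,\Omega}\big\},\qquad \mathrm{dist}(\vec{\varphi},\widetilde{\mathbf{H}}_h)\,\leq\,C\,h^s\big\{\|\widetilde{\bt}\|_{s,\Omega}+\|\varphi\|_{s,4,\Omega}\big\},
\end{equation*}
with $s\in[0,\ell+1]$. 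For the Raviart--Thomas variables $\bsi$ and $\widetilde{\bsi}$, I would apply the standard Raviart--Thomas interpolation error estimates on $\Thb$, which require $\mathbf{L}^{4/3}$--regularity of the divergence; under the assumed hypotheses these yield
\begin{equation*}
\mathrm{dist}(\bsi,\mathbb{H}_h^{\bsi})\,\leq\,C\,h^s\big\{\|\bsi\|_{s,\Omega}+\|\bdiv\,\bsi\|_{s,4/3,\Omega}\big\},\qquad \mathrm{dist}(\widetilde{\bsi},\mathbf{H}_h^{\widetilde{\bsi}})\,\leq\,C\,h^s\big\{\|\widetilde{\bsi}\|_{s,\Omega}+\|\mathrm{div}\,\widetilde{\bsi}\|_{s,4/3,\Omega}\big\}.
\end{equation*}
Collecting these bounds leads to \eqref{eq:rates-of-convergence} with
\begin{equation}\label{eq:rates-of-convergence-2}
C_{\mathrm{rate}}\,:=\,2\widehat{C}\,\big\{\|\bt\|_{s,\Omega}+\|\bu\|_{s,4,\Omega}+\|\bsi\|_{s,\Omega}+\|\bdiv\,\bsi\|_{s,4/3,\Omega}+\|\widetilde{\bt}\|_{s,\Omega}+\|\varphi\|_{s,4,\Omega}+\|\widetilde{\bsi}\|_{s,\Omega}+\|\mathrm{div}\,\widetilde{\bsi}\|_{s,4/3,\Omega}\big\}.
\end{equation}

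The main delicate point is the absorption step: the factor $\boldsymbol{C}_{2,\varepsilon}(\cdots)$ in \eqref{eq:constantapc2} depends on the regularity constant $C_{1,\varepsilon}$ coming from \eqref{eq:HRA}, as well as on the a priori bounds $C_1(\cdots)$, $C_2(\cdots)$ and the Strang constants. One must ensure that the smallness assumption \eqref{eq:datares-apriori} is compatible with those already imposed for well--posedness (in particular \eqref{eq:rest-conc} and $C_{\mathrm{LIP}}<1$ from Theorem \ref{thm:uniqueness}), so that no circularity arises. Once this is verified, the remaining steps are purely interpolation--theoretic and use only the reflexivity of the underlying Banach spaces together with density of smooth functions, so that the theoretical order $h^s$ propagates cleanly from each approximation estimate to the global error bound.
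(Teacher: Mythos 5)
Your proposal is correct and follows essentially the same route as the paper: the smallness condition \eqref{eq:datares-apriori} is used to absorb the $\boldsymbol{C}_{2,\varepsilon}\,\|\bu-\bu_h\|_{0,4,\Omega}$ term in \eqref{eq:||u-uh||} into the left-hand side, yielding a C\'ea-type estimate, after which the distances are bounded by the standard approximation properties of the piecewise-polynomial and Raviart--Thomas subspaces (the paper simply cites \cite[Sections 5.2 and 5.5]{CGMor} for these), giving the rate $h^s$ with a constant depending on the indicated higher-order norms of the solution.
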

	\begin{proof} The hypothesis  \eqref{eq:datares-apriori} applied to \eqref{eq:||u-uh||} gives the Cea estimate
			\begin{equation*}\label{eq:cea}
			\begin{array}{l}
				\|(\vec{\bu},\bsi)-(\vec{\bu}_h,\bsi_h)\|_{\mathbf{H}\times\mathbb{H}_{0}(\bdiv_{4/3};\Omega)}+\|(\vec{\varphi},\widetilde{\bsi})-(\vec{\varphi}_h,\widetilde{\bsi}_h)\|_{\widetilde{\mathbf{H}}\times\mathbf{H}_{\Gamma}(\div_{4/3};\Omega)} \\[2ex]
				\qquad\leq C \left\{ \mathrm{dist}(\vec{\bu},\widetilde{\mathbf{H}}_h)+\mathrm{dist}(\bsi,\mathbb{H}^\bsi_h)+\mathrm{dist}(\vec{\varphi}, \widetilde{\mathbf{H}}_h)+\mathrm{dist}(\widetilde{\bsi},\mathbf{H}^{\widetilde{\bsi}}_h)  \right\}\,.
			\end{array}
		\end{equation*}
Then, from the regularity of the solution and the approximation properties of the finite dimensional subspaces \cite[Sections 5.2 and 5.5]{CGMor}, we obtain
	\begin{equation}\label{eq:rates-of-convergence-2}
	\begin{array}{c}
		\|(\vec{\bu},\bsi)-(\vec{\bu}_h,\bsi_h)\|_{\mathbf{H}\times\mathbb{H}_{0}(\bdiv_{4/3};\Omega)}+\|(\vec{\varphi},\widetilde{\bsi})-(\vec{\varphi}_h,\widetilde{\bsi}_h)\|_{\widetilde{\mathbf{H}}\times\mathbf{H}_{\Gamma}(\div_{4/3};\Omega)}\leq C\,h^s\,\biggl\{\biggr. \|\bu\|_{l,4;\Omega}+\|\bt\|_{l,\Omega} \\[2ex]
			+\,\|\bsi\|_{l,\Omega} 	+\|\bdiv\,  \bsi\|_{l,4/3;\Omega} + \|\varphi\|_{l,4;\Omega} + \|\widetilde{\bt}\|_{l,\Omega} + \|\widetilde{\bsi}\|_{l,\Omega} + \|\div\, \widetilde{\bsi}\|_{l,4/3;\Omega} \biggl. \biggr\}\,,
	\end{array}
\end{equation}
which gives the desired result.
\end{proof}
	
\begin{rem}\label{rem:cv-pressure} From the identities \eqref{eqn:p} and \eqref{eq:constant-cu}, we recall that %have that  if $\bsi\in \mathbb{H}_0(\bdiv_{4/3};\Omega) $ and $\bsi_h\in\mathbb{H}^\bsi_h $, then we get
	\[p=-\dfrac{1}{2d}\tr(\,2\bsi\,+\,\bu\otimes\bu\,)-\mathrm{c}_\bu \,, \quad\text{with}\quad \mathrm{c}_\bu:=-\dfrac{1}{2d|\Omega|}\int_{\Omega}\tr(\bu\otimes\bu)\,, \]
which suggests to define the discrete pressure as
	\[p_h=-\dfrac{1}{2d}\tr(\,2\bsi_h\,+\,\bu_h\otimes\bu_h\,)-\mathrm{c}_{\bu_h} \,, \quad\text{with}\quad \mathrm{c}_{\bu_h}:=-\dfrac{1}{2d|\Omega|}\int_{\Omega}\tr(\bu_h\otimes\bu_h)\,.\]
	Then it is straightforward to demonstrate the existence of a positive constant $C$, which is independent of $h$, such that
	\[\|p-p_h\|_{0,\Omega}\leq C\left\{ \|\bsi-\bsi_h\|_{\bdiv_{4/3};\Omega}+\|\bu-\bu_h\|_{0,4;\Omega}\right\}\,.\]
	Note that the rate of convergence of $p_h$ is the same of the rest of variables as in  \eqref{eq:rates-of-convergence}.
	\end{rem}
	
\section{A posteriori error analysis}\label{section4}

In contrast to the a priori error analysis (cf. Section \ref{section33}), which provides theoretical convergence rates and estimates the error based on the discretization parameter $h$, this section is dedicated to the a posteriori error analysis for the fully mixed finite element method \eqref{eq:FV_h}. The goal is to establish error bounds that depend on the computed solution and provide practical error estimates for adaptive refinement. Specifically, we aim to show the existence of positive constants $C_{\text{eff}}$ and $C_{\text{rel}}$ such that the following inequality holds
\begin{equation}\label{eq:objective-estimation}
	C_{\text{eff}} \, \mathbf{\Theta} \leq \|(\vec{\bu}, \bsi) - (\vec{\bu}_h, \bsi_h)\|_{\mathbf{H} \times \mathbb{H}_0(\div_{4/3}; \Omega)} + \|(\vec{\varphi}, \widetilde{\bsi}) - (\vec{\varphi}_h, \widetilde{\bsi}_h)\|_{\widetilde{\mathbf{H}} \times \mathbf{H}_\Gamma(\div_{4/3}; \Omega)} \leq C_{\text{rel}} \, \mathbf{\Theta},
\end{equation}
where $\mathbf{\Theta}$ represents the a posteriori error indicator. This approach allows for the assessment of the error based on the actual computed solution, enabling effective adaptive mesh refinement strategies to improve the solution accuracy. In Section \ref{section41}, we establish and review the necessary notations and results to define, derive, and analyze the a posteriori error estimator. The residual-based indicator is then introduced in Section \ref{section42}, where its reliability (upper bound of \eqref{eq:objective-estimation}) is also demonstrated. Finally, the efficiency property (lower bound of \eqref{eq:objective-estimation}) is established in Section \ref{section43}.

\subsection{Preliminary results for the a posteriori error analysis}\label{section41}

\noindent\textbf{Mesh faces, jumps, curl operator, and tangential/normal components.} Consider a barycentric refinement mesh $\Thb$. We denote by $\mathcal{F}_h$ the set of all facets (edges or faces, applicable for $d=2$ or $d=3$), with their respective diameters represented as $h_F$. We categorize the facets into internal and boundary subsets as
\[
\mathcal{F}_h^{\,\rm i} := \{ F \in \mathcal{F}_h : F \subset \Omega \} \quad \text{and} \quad \mathcal{F}_h^{\rm b} := \{ F \in \mathcal{F}_h : F \subset \Gamma \}.
\]
For any element \( T \in \mathcal{T}_h^{\rm b} \), let \(\mathcal{F}_{h,T}\) denote its set of facets. These can be further classified into
\[
\mathcal{F}_{h,T}^{\,\rm i} := \{ F \subset \partial T : F \in \mathcal{F}_h^{\,\rm i} \} \quad \text{and} \quad \mathcal{F}_{h,T}^{\,\rm b} := \{ F \subset \partial T : F \in \mathcal{F}_h^{\rm b} \}.
\]
The unit normal vector $\bn_F$ on each facet and  the tangential vector $\bs_F$  on each edge are defined as
\[
\bn_F := (n_1, \ldots, n_d)^t \quad \forall F \in \mathcal{F}_h\qan  \bs_F := (-n_2, n_1)^t \quad \forall F \in \mathcal{F}_h.
\]
For simplicity, when the context is clear, we will use $\bn$ and $\bs$ instead of $\bn_F$ and $\bs_F$.
\noindent Let  $\psi$ be a sufficiently smooth scalar-valued function to admit on all $F\in\mathcal{F}_{h}^{\,\rm i}$ possibly two-valued trace, we define the jump on $F$ as
\[
\jump{\psi}_{F} = (\psi \big|_{T^+}) \big|_F - (\psi \big|_{T^-}) \big|_F \quad \text{where } F = \partial T^+ \cap \partial T^-\qan T^+,T^-\in\mathcal{T}_h^{\rm b}\,.
\]
 For vector/matrix-valued functions, the above jump operator act component-wise on the function and  whenever no confusion can arise, we write $\jump{\cdot}$ instead of $\jump{\cdot}_{F}.$

\noindent Finally, let $\psi$ be a scalar field, $\bv := (v_1, \ldots, v_d)^{\rm t}$ a vector field, and $\bta := (\bta_1, \ldots, \bta_d)^{\rm t}=(\tau_{ij})_{1 \leq i, j \leq d}$ a tensor field, all of which possess partial distributional derivatives $\partial_{x_i}$. We define for $d=2,$ 
\[
\curle(\psi) := \left( \partial_{x_2} \psi, - \partial_{x_1} \psi \right)^{\rm t},\quad \curlv(\bv) := 
	\partial_{x_1} v_2 - \partial_{x_2} v_1,\quad \curlt(\bta) := 
		\begin{pmatrix} \curlv(\bta_1)^{\rm t} \\ \curlv(\bta_2)^{\rm t} \end{pmatrix},\quad  \tncomp(\bv) := 
			\bv\cdot \bs.
\]
and for $d=3$
\[
\curlv(\bv) := 
	\nabla \times \bv, \quad 
\curlt(\bta) := 
	\begin{pmatrix} \curlv(\bta_1)^{\rm t} \\ \curlv(\bta_1)^{\rm t} \\ \curlv(\bta_3)^{\rm t} \end{pmatrix},\quad \tncomp(\bta):=\begin{pmatrix} \bta_1\times \bn \\ \bta_2\times \bn \\ \bta_3\times \bn \end{pmatrix}\,.
\]

\noindent\textbf{Raviart-Thomas Intepolator.} For each $p\geq \frac{2d}{d+2},$ we set
\[
\mathbf{H}_{p} := \left\{ \bta \in \mathbf{H}(\div_p; \Omega):\quad  \bta |_T \in \mathbf{W}^{1,p}(T) \quad \forall T \in \Thb \right\},
\]
and let 
\[
\Pi_{h}^\ell\,:\mathbf{H}_{p}\longrightarrow \mathbf{H}_{p,h} := \left\{ \bta \in \mathbf{H}(\div_p; \Omega):\quad  \bta |_T \in \mathbf{RT}^{1,p}(T) \quad \forall T \in \Thb \right\},
\]
be the Raviart-Thomas intepolation operator defined by the following properties
\begin{equation}\label{eq:rt-prop1}
\int_{F}(\Pi_h^l(\bta) \cdot \bn)  \xi \, = \int_F (\bta \cdot \bn) \xi \quad \forall\, \xi \in \mathrm{P}_\ell(F) \quad  \forall\, F \in \mathcal{F}_{h}\,, \quad\mbox{when}\quad \ell\geq 0\,,\qan 
\end{equation}
\[
\int_{T}\Pi_h^l(\bta)  \psi \, = \int_K \bta \cdot \psi \quad \forall\, \psi \in \mathbf{P}_{\ell-1}(T) \quad  \forall\, T \in \Thb\,, \quad\mbox{when}\quad \ell\geq 1\,.
\]
Particularly, from \cite[Lemma B.67, Lemma 1.101]{ErnG} and \cite[Lemma 5.3, eq. (5.38)]{CGMor}, there exists a positive and $h-$independent constant $C$ such that  $\Pi_{h}^{\ell}$ satisfies the local approximation property 
\begin{equation}\label{eq:RT-approx-prop}
	\begin{array}{c}
\|\bta - \Pi_h^\ell(\bta)\|_{0,p,T} \leq C h_T^{k+1} |\bta|_{k+1,p,T},\quad \forall\, \bta \in \mathbf{W}^{k+1,p}(T),\quad 0\leq k\leq \ell\,,\qan \forall\, T\in\Thb,\\
\|\bta - \Pi_h^\ell(\bta)\|_{0,4/3,T} \leq C h_T^{1-d/4} |\bta|_{1,4/3,T},\quad \forall\, \bta \in \mathbf{W}^{1,4/3}(T),\quad \forall\, T\in\Thb\,.
\end{array} 
\end{equation}
The tensorial version of $\Pi_h^\ell$ is denoted by $\mathbf{\Pi}_h^\ell$ and operates row-wise as $\Pi_h^\ell$.

\noindent\textbf{Cl\'ement Interpolator.} Consider the space $\mathrm{H}_h^1 = \{ v_h \in \mathrm{C}(\overline{\Omega}) : v_h |_T \in \mathrm{P}_1(T) \; \forall T \in \Thb \}$. Let $\mathrm{I}_h : \mathrm{H}^1(\Omega) \to \mathrm{H}_h^1$ be the well-known Clément interpolation operator. According to \cite{Clement-1975} (see also \cite[Lemma 1.127]{ErnG}), for any $\psi \in \mathrm{H}^1(\Omega)$, there exist positive constants $c_1$ and $c_2$ independent of $h$, such that the following local approximation properties hold
\begin{equation}\label{eq:clement-prop}
	\| \psi - \mathrm{I}_h\psi \|_{0,T} \leq c_1 h_T \|\psi\|_{1,\Delta(T)} \quad \forall\,T \in \Thb \quad \text{and} \quad
	\| \psi -\mathrm{I}_h \psi \|_{0,F} \leq c_2 h_F^{1/2} \| \psi \|_{1,\Delta(F)} \quad \forall\,F \in \mathcal{F}_h,
\end{equation}
where $\Delta(T)$ and $\Delta(F)$ are the sets of elements intersecting $T$ and $F$, respectively. The vector version of $\mathrm{I}_h$, denoted by $\mathbf{I}_h : \mathbf{H}^1(\Omega) \to \mathbf{H}_h^1$, is defined component-wise by $\mathrm{I}_h$.

\noindent\textbf{Hemholtz Decompositions.} Based on \cite[Lemma 4.4]{CCOV-2022} and \cite[Lemma 4.4]{COV-2022}, we recall the following results regarding the existence of stable Helmholtz decompositions applicable to the spaces $\mathbb{H}_0(\bdiv_{p};\Omega)$ and $\mathbf{H}_{\Gamma}(\div_{p};\Omega)$, respectively. These results are established as follows.

\begin{lem}\label{Hdiv-decomp-1}
	Let $p > 1$. Then, for each $\bta \in \mathbb{H}(\bdiv_{p};\Omega)$ there exist
	\begin{itemize}
		\item[$a)$] $\boldsymbol{\eta} \in \mathbb{W}^{1,p}(\Omega)$ and $\boldsymbol{\xi} \in \mathbf{H}^1(\Omega)$ such that $\bta = \boldsymbol{\eta}  + \curlv{(\boldsymbol{\xi})}$ when $d = 2$,
		\item[$a)$] $\boldsymbol{\eta} \in \mathbb{W}^{1,p}(\Omega)$ and $\boldsymbol{\xi} \in \mathbb{H}^1(\Omega)$ such that $\bta = \boldsymbol{\eta}  + \curlt{(\boldsymbol{\xi})}$ when $d = 3$.
	\end{itemize}
	In addition, in both cases,
	\[
	\| \boldsymbol{\eta}\|_{1,p,\Omega} + \| \boldsymbol{\xi} \|_{1,\Omega} \leq C_{Hel} \| \bta \|_{\bdiv_p;\Omega},
	\]
	where $C_{Hel}$ is a positive constant independent of all the foregoing variables.
\end{lem}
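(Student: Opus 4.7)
The plan is to split $\bta$ into a piece that carries its divergence and a divergence-free remainder, and then represent the remainder as a curl via a Poincaré-type potential. Enclose $\Omega$ in an open ball $B$ with $\overline{\Omega}\subset B$. For each row $\bta_i$ of $\bta$, we have $\div\,\bta_i\in\mathrm{L}^p(\Omega)$; extending by zero gives $\widetilde f_i\in \mathrm{L}^p(B)$. Solve the Dirichlet problem
\[
-\Delta z_i \,=\, \widetilde f_i \qin B,\qquad z_i \,=\, 0 \qon \partial B.
\]
Since $B$ is smooth and $p\in(1,\infty)$, Agmon--Douglis--Nirenberg regularity yields $z_i\in \mathrm{W}^{2,p}(B)$ together with $\|z_i\|_{2,p,B}\leq C\,\|\widetilde f_i\|_{0,p,B}=C\,\|\div\,\bta_i\|_{0,p,\Omega}$. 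Setting $\boldsymbol{\eta}_i:=-\nabla z_i|_\Omega$, one gets $\boldsymbol{\eta}_i\in \mathbf{W}^{1,p}(\Omega)$ with $\div\,\boldsymbol{\eta}_i=\div\,\bta_i$ in $\Omega$ and $\|\boldsymbol{\eta}_i\|_{1,p,\Omega}\leq C\,\|\div\,\bta_i\|_{0,p,\Omega}$. Stacking the rows yields $\boldsymbol{\eta}\in\mathbb{W}^{1,p}(\Omega)$ with $\bdiv\,\boldsymbol{\eta}=\bdiv\,\bta$ and $\|\boldsymbol{\eta}\|_{1,p,\Omega}\leq C\,\|\bdiv\,\bta\|_{0,p,\Omega}$.

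Consider next the residual $\boldsymbol{\rho}:=\bta-\boldsymbol{\eta}$, which satisfies $\bdiv\,\boldsymbol{\rho}=0$ in $\Omega$. By the Sobolev embedding $\mathrm{W}^{1,p}(\Omega)\hookrightarrow \mathrm{L}^{p^*}(\Omega)\hookrightarrow \mathrm{L}^2(\Omega)$ on bounded $\Omega$ for the values of $p$ relevant here (in particular $p=4/3$ in both $d=2$ and $d=3$), one has $\boldsymbol{\eta}\in\mathbb{L}^2(\Omega)$, hence $\boldsymbol{\rho}\in\mathbb{L}^2(\Omega)$. Applying the Poincaré/vector-potential theorem on a simply connected Lipschitz domain (the Amrouche--Bernardi--Dauge--Girault theorem in $d=3$; the classical stream-function construction in $d=2$), each divergence-free row $\boldsymbol{\rho}_i\in\mathbf{L}^2(\Omega)$ admits a potential $\xi_i\in\mathrm{H}^1(\Omega)$ with $\curle(\xi_i)=\boldsymbol{\rho}_i$ when $d=2$, and $\boldsymbol{\xi}_i\in\mathbf{H}^1(\Omega)$ with $\curlv(\boldsymbol{\xi}_i)=\boldsymbol{\rho}_i$ when $d=3$, after fixing a natural gauge (zero mean or divergence-free representative) so that $\|\xi_i\|_{1,\Omega}\leq C\,\|\boldsymbol{\rho}_i\|_{0,\Omega}$. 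Assembling rows produces $\boldsymbol{\xi}$ in $\mathbf{H}^1(\Omega)$ (resp. $\mathbb{H}^1(\Omega)$) with $\bta-\boldsymbol{\eta}$ equal to the corresponding row-wise curl of $\boldsymbol{\xi}$.

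Combining the two constructions and using the continuous embedding $\mathbb{W}^{1,p}(\Omega)\hookrightarrow \mathbb{L}^2(\Omega)$ to absorb $\|\boldsymbol{\eta}\|_{0,\Omega}$ into $\|\boldsymbol{\eta}\|_{1,p,\Omega}$, we obtain
\[
\|\boldsymbol{\eta}\|_{1,p,\Omega}+\|\boldsymbol{\xi}\|_{1,\Omega}\,\leq\, C\bigl(\|\bdiv\,\bta\|_{0,p,\Omega}+\|\bta\|_{0,\Omega}\bigr)\,\leq\, C_{Hel}\,\|\bta\|_{\bdiv_p;\Omega},
\]
as required.

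The main obstacle will be the vector-potential step in $d=3$ on a polyhedral, possibly multiply connected, Lipschitz domain: one must invoke the correct version of the Amrouche--Bernardi--Dauge--Girault theorem (including the choice of gauge that yields a continuous right inverse of $\curlv$) and verify that the topological hypotheses on $\Omega$ are compatible with the framework used throughout the paper. A secondary subtlety is the Sobolev embedding step, which requires $p\geq 2d/(d+2)$ so that $\boldsymbol{\eta}\in\mathbb{L}^2(\Omega)$ and the remainder $\boldsymbol{\rho}$ lies in $\mathbb{L}^2(\Omega)$; this holds comfortably for all values of $p$ used in this work, but would fail for too-small $p$.
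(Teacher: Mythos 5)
The paper offers no proof of this lemma to compare against: it is recalled directly from \cite[Lemma 4.4]{CCOV-2022} and \cite[Lemma 4.4]{COV-2022}. Your construction is, in substance, the same argument used in those sources: solve an auxiliary Poisson problem on a smooth (or convex) enlargement $B\supset\overline{\Omega}$ with datum the zero extension of $\div\,\bta_i$, take $\boldsymbol{\eta}$ as (minus) the gradient of the solution, which carries the divergence and lies in $\mathbb{W}^{1,p}(\Omega)$ by elliptic regularity, and then represent the divergence-free remainder row-wise as a curl with an $\mathrm{H}^1$ potential via the classical stream-function (for $d=2$) or vector-potential (for $d=3$) theorem, the stability bound following from the two a priori estimates. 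So your proof is correct in the regime in which the paper actually uses the lemma, and it is essentially the proof behind the citation rather than a different route.

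Two provisos are worth recording. First, as you yourself note, the step $\boldsymbol{\rho}=\bta-\boldsymbol{\eta}\in\mathbb{L}^2(\Omega)$ relies on $\mathrm{W}^{1,p}(\Omega)\hookrightarrow\mathrm{L}^2(\Omega)$, i.e. $p\geq 2d/(d+2)$; for $d=3$ this excludes $1<p<6/5$, so the statement as written (``$p>1$'' in both dimensions) is not fully covered by your argument --- harmless here, since the lemma is only invoked with $p=4/3$, but the restriction should be made explicit if one wants the proof to match the claimed range. Second, the curl-potential step needs the topological hypotheses (connected, simply connected $\Omega$, or $\Omega$ contained in a convex domain, as is made explicit in the companion decomposition for $\mathbf{H}_{\Gamma}(\div_{4/3};\Omega)$) together with a gauge yielding a bounded right inverse of the curl; the paper leaves these assumptions implicit, and for its polygonal/polyhedral, simply connected domains the standard Girault--Raviart/Amrouche--Bernardi--Dauge--Girault results you invoke do apply.
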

\begin{lem}\label{Hdiv-decomp-2}
	Assume that there exists a convex domain $B$ such that $\Omega\subseteq B$ and let $\Gamma_{\mathrm{N}}\subseteq \Gamma$ such that   $\Gamma_\mathrm{N}\subseteq \partial B,$ and  let $p > 1.$ Then, for each $\widetilde{\bta}\in\mathbf{H}(\div_{4/3},\Omega)$ such that $\widetilde{\bta}\cdot \bn=0$ on $\Gamma_\mathrm{N}$ there exist
	\begin{itemize}
		\item[$a)$] $\widetilde{\boldsymbol{\eta}} \in \mathbf{W}^{1,p}(\Omega)$ and $\widetilde{\boldsymbol{\xi}}\in \mathrm{H}_{\Gamma_\mathrm{N}}^1(\Omega)$ such that $\widetilde{\bta} = \widetilde{\boldsymbol{\eta}} + \curle{(\widetilde{\boldsymbol{\xi}})}$ when $d = 2$,
		\item[$a)$] $\widetilde{\boldsymbol{\eta}} \in \mathbf{W}^{1,p}(\Omega)$ and $\widetilde{\boldsymbol{\xi}} \in \mathbf{H}^1_{\Gamma_{\mathrm{N}}}(\Omega)$ such that $\bta = \widetilde{\boldsymbol{\eta}}  + \curlv{(\widetilde{\boldsymbol{\xi}})}$ when $d = 3$, 
	\end{itemize}
	where $\mathrm{H}_{\Gamma_\mathrm{N}}^1(\Omega)=\{\psi\in\mathrm{H}^1(\Omega): \quad \psi|_{\Gamma_{\mathrm{N}}}=0\}\,.$ In addition, in both cases,
	\[
	\| \widetilde{\boldsymbol{\eta}}\|_{1,p,\Omega} + \|\widetilde{ \boldsymbol{\xi}} \|_{1,\Omega} \leq C_{Hel} \| \bta \|_{\bdiv_p;\Omega},
	\]
	where $C_{Hel}$ is a positive constant independent of all the foregoing variables.
\end{lem}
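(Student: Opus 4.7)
The strategy will be a two-step Helmholtz-type construction. First, I will absorb the divergence content of $\widetilde{\bta}$ into a gradient field $\widetilde{\boldsymbol{\eta}}$ obtained from a Poisson problem posed on the enveloping convex domain $B$; this will produce the $\mathbf{W}^{1,p}(\Omega)$ component. Second, I will represent the divergence-free remainder $\bchi:=\widetilde{\bta}-\widetilde{\boldsymbol{\eta}}$ as a curl, exhibiting the second component $\widetilde{\boldsymbol{\xi}}$ with trace vanishing on $\Gamma_{\mathrm{N}}$. The overall bound will come from combining the two a priori estimates.

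For the first step, set $f:=\div\widetilde{\bta}\in\mathrm{L}^{p}(\Omega)$ and let $\widetilde{f}\in\mathrm{L}^{p}(B)$ be its zero extension, corrected by an additive constant on $B\setminus\overline{\Omega}$ so that $\int_{B}\widetilde{f}=0$. On the convex set $B$ I will solve the Neumann problem $-\Delta z=\widetilde{f}$ in $B$, $\partial_{\bn}z=0$ on $\partial B$. Convexity yields the Kadlec--Grisvard $\mathrm{W}^{2,p}$ regularity $\|z\|_{2,p,B}\le C\,\|\widetilde{f}\|_{0,p,B}\le C\,\|\div\widetilde{\bta}\|_{0,p,\Omega}$. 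Taking $\widetilde{\boldsymbol{\eta}}:=-\nabla z\big|_{\Omega}$ then gives $\widetilde{\boldsymbol{\eta}}\in\mathbf{W}^{1,p}(\Omega)$ with $\div\widetilde{\boldsymbol{\eta}}=\div\widetilde{\bta}$ in $\Omega$ and $\widetilde{\boldsymbol{\eta}}\cdot\bn=0$ on $\Gamma_{\mathrm{N}}$, the latter because $\Gamma_{\mathrm{N}}\subseteq\partial B$ where $\partial_{\bn}z$ vanishes by construction.

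For the second step, the field $\bchi:=\widetilde{\bta}-\widetilde{\boldsymbol{\eta}}\in\mathbf{L}^{2}(\Omega)$ satisfies $\div\bchi=0$ in $\Omega$ and $\bchi\cdot\bn=0$ on $\Gamma_{\mathrm{N}}$. When $d=2$ and $\Omega$ is simply connected, a stream-function argument produces a scalar $\widetilde{\boldsymbol{\xi}}\in\mathrm{H}^{1}(\Omega)$ with $\bchi=\curle(\widetilde{\boldsymbol{\xi}})$; since $\curle(\widetilde{\boldsymbol{\xi}})\cdot\bn=\partial_{\bs}\widetilde{\boldsymbol{\xi}}$ vanishes on $\Gamma_{\mathrm{N}}$, the potential is constant on each connected component of $\Gamma_{\mathrm{N}}$ and can be normalized so that $\widetilde{\boldsymbol{\xi}}\in\mathrm{H}^{1}_{\Gamma_{\mathrm{N}}}(\Omega)$. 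When $d=3$, I will invoke the vector potential results of Girault--Raviart and Amrouche--Bernardi--Dauge--Girault to obtain $\widetilde{\boldsymbol{\xi}}\in\mathbf{H}^{1}_{\Gamma_{\mathrm{N}}}(\Omega)$ with $\bchi=\curlv(\widetilde{\boldsymbol{\xi}})$, selecting the Coulomb gauge $\div\widetilde{\boldsymbol{\xi}}=0$ to pin down $\mathbf{H}^{1}$ regularity and continuous dependence on $\bchi$.

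The final estimate will be assembled by chaining the bounds: the $\mathrm{W}^{2,p}(B)$ estimate on $z$ gives $\|\widetilde{\boldsymbol{\eta}}\|_{1,p,\Omega}\le C\,\|\div\widetilde{\bta}\|_{0,p,\Omega}$, a triangle inequality controls $\|\bchi\|_{0,\Omega}$ by $\|\widetilde{\bta}\|_{0,\Omega}+\|\widetilde{\boldsymbol{\eta}}\|_{0,\Omega}$ with the appropriate Sobolev embedding between $\mathbf{L}^{p}$ and $\mathbf{L}^{2}$, and the de Rham-type step yields $\|\widetilde{\boldsymbol{\xi}}\|_{1,\Omega}\le C\,\|\bchi\|_{0,\Omega}$ via the Poincar\'e/Friedrichs inequality on $\mathrm{H}^{1}_{\Gamma_{\mathrm{N}}}(\Omega)$. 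The hard part I anticipate is the $d=3$ case: constructing a vector potential $\widetilde{\boldsymbol{\xi}}$ in $\mathbf{H}^{1}_{\Gamma_{\mathrm{N}}}(\Omega)$ with the precise mixed trace condition requires sensitivity to the topology of $\Omega$ and the geometry of the interface where $\Gamma_{\mathrm{N}}$ meets $\Gamma\setminus\Gamma_{\mathrm{N}}$, and the gauge choice must be compatible with the prescribed boundary trace so that the associated elliptic problem for $\widetilde{\boldsymbol{\xi}}$ is well-posed with $h$-independent constants.
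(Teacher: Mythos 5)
You should first note that the paper never proves this lemma: it is recalled verbatim from \cite[Lemma 4.4]{CCOV-2022} and \cite[Lemma 4.4]{COV-2022}, so the relevant comparison is with the construction underlying those references, and your first step reproduces it faithfully. Solving a Neumann problem on the convex envelope $B$, with the compatibility constant placed only on $B\setminus\overline{\Omega}$ so that $-\Delta z=\div\,\widetilde{\bta}$ still holds inside $\Omega$, using convexity for the $\mathrm{W}^{2,p}$ estimate, and observing that the homogeneous Neumann condition on all of $\partial B\supseteq\Gamma_{\mathrm{N}}$ makes the remainder $\bchi=\widetilde{\bta}-\widetilde{\boldsymbol{\eta}}$ solenoidal with vanishing normal trace on $\Gamma_{\mathrm{N}}$, is exactly the right mechanism. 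Two caveats: the $\mathrm{W}^{2,p}$ Neumann regularity on a convex domain is only guaranteed for $1<p\leq 2$ (sufficient for the $p=4/3$ actually used, not for the full range $p>1$ in the statement), and your $d=2$ normalization needs $\Omega$ simply connected and $\Gamma_{\mathrm{N}}$ connected, since $\partial_{\bs}\widetilde{\boldsymbol{\xi}}=\bchi\cdot\bn=0$ only makes the stream function locally constant on $\Gamma_{\mathrm{N}}$; these hypotheses are implicit in the cited lemmas, so the two-dimensional half of your argument is essentially complete.

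The genuine gap is the case $d=3$, which you flag but do not resolve. The vector potential theorems of Girault--Raviart and Amrouche--Bernardi--Dauge--Girault are formulated on a simply connected domain and yield a potential whose normal component, or whose tangential component, vanishes on the \emph{whole} boundary; none of them delivers, off the shelf, a potential with full zero trace on only the portion $\Gamma_{\mathrm{N}}\subseteq\partial\Omega$, which is what membership in $\mathbf{H}^1_{\Gamma_{\mathrm{N}}}(\Omega)$ demands, and the Coulomb gauge $\div\,\widetilde{\boldsymbol{\xi}}=0$ is in general incompatible with prescribing the complete trace of $\widetilde{\boldsymbol{\xi}}$ on a boundary part. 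The standard way to close this step is to leave $\Omega$ and work on $B$: extend $\bchi$ to $B$ by a divergence-free field on $B\setminus\overline{\Omega}$ whose normal trace matches that of $\bchi$ across $\Gamma\setminus\Gamma_{\mathrm{N}}$ (the flux compatibility follows from $\div\,\bchi=0$ in $\Omega$ and $\bchi\cdot\bn=0$ on $\Gamma_{\mathrm{N}}$); the extended field is then divergence free on the convex, simply connected domain $B$ with zero normal trace on $\partial B$, the potential theorem is applied on $B$, and the potential is subsequently adjusted by a gradient so that its full trace vanishes on $\partial B\supseteq\Gamma_{\mathrm{N}}$ before restricting to $\Omega$ --- this is precisely the content of the dedicated three-dimensional stable Helmholtz decomposition results on which \cite{COV-2022} relies. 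Without that extension-to-$B$ ingredient (or an explicit citation of such a result), your appeal to Girault--Raviart/ABDG with a Coulomb gauge does not establish the $d=3$ statement.
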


\noindent\textbf{Bubble functions.} Given $T\in\Thb$, we let $\phi_{T}$ be the usual element-bubble function, satisfying (cf. \cite{rv-1996}) $\phi_T \in \mathrm{P}_{3}(T), $ $\mathrm{supp}(\phi_T)\subseteq T$, $ \psi_K =0$ on $ \partial T$, and $0\leq\phi_T\leq 1$ in  $T.$ Additionally, there exists $C>0$, independent of $h,$ such that
\begin{equation}\label{eq:prop-bbf}
	\|\psi_T q\|^2_{0,T}\leq 	\|q\|^2_{0,T} \leq C 	\|\psi_T^{1/2} q\|^0_{0,T} \,.
\end{equation}

\subsection{Residual-based a posteriori error estimator reliability}\label{section42}
%
%Let 	$(\vec{\bu},\bsi,\vec{\varphi},\widetilde{\bsi})\in \mathbf{H} \times \mathbb{H}_0(\div_{4/3}; \Omega)\times\widetilde{\mathbf{H}} \times \mathbf{H}_\Gamma(\div_{4/3};\Omega)$ be the unique solution to the problem \eqref{eq:FV} and  $(\vec{\bu}_h, \bsi_h, \vec{\varphi}_h, \widetilde{\bsi}_h) \in \mathbf{H}_h \times \mathbb{H}^\bsi_h \times \widetilde{\mathbf{H}}_h \times \mathbf{H}^{\widetilde{\bsi}}_h$ a solution \eqref{eq:FV_h}. 
In this section, we introduce and demonstrate the reliability property (upper bound of \eqref{eq:objective-estimation}) of the a posteriori error indicator $\mathbf{\Theta}$. The global a posteriori error estimator is formulated as 
\begin{equation}\label{eq:global-estimator}
	\mathbf{\Theta} := \Bigg\{ \sum_{T \in \Thb} \overline{\mathbf{\Theta}}^2_{T} \Bigg\}^{1/2} + \Bigg\{ \sum_{T \in \Thb} \widehat{\mathbf{\Theta}}^{4/3}_{T} \Bigg\}^{3/4},
\end{equation}
where, for each $T \in \Thb$, the local error indicators $\overline{\mathbf{\Theta}}$ and $\widehat{\mathbf{\Theta}}$ are defined as
\begin{equation}\label{eq:local-estimator-1}
	\begin{aligned}
		\overline{\mathbf{\Theta}}^{2}_{T} =\; & h_T^{2-d/2} \Big\| \bt_h - \nabla \bu_h \Big\|^2_{0,T} + \Big\| \bsi_h^{\tt d} - 2\mu(\varphi_h + \alpha)\bt_{h,\mathrm{sym}} + \dfrac{1}{2} \left( \bu_h \otimes \bu_h \right)^{\tt d} \Big\|_{0,T}^2 \\[2ex]
		& + h_T^{2-d/2} \Big\| \widetilde{\bt}_h - \nabla \varphi_h \Big\|^2_{0,T} + \Big\| \widetilde{\bsi}_h - \kappa \widetilde{\bt}_h + \dfrac{1}{2} \varphi_h \bu_h + U(\varphi_h + \alpha) \widehat{\mathbf{e}}_d \Big\|_{0,T}^2 \\[2ex]
		& + h_T^2 \Big\| \curlt(\bt_h) \Big\|_{0,T}^2 + \sum_{F \in \mathcal{F}_{h,T}} h_F \Big\| \jump{\tncomp(\bt_h)} \Big\|_{0,F}^2 \\[2ex]
		& + h_T^2 \Big\| \curlv(\widetilde{\bt}_h) \Big\|_{0,T}^2 + \sum_{F \in \mathcal{F}_{h,T}^{\rm i}} h_F \Big\| \jump{\tncomp(\widetilde{\bt}_h)} \Big\|_{0,F}^2
	\end{aligned}
\end{equation}
and
\begin{equation}\label{eq:local-estimator-2}
	\widehat{\mathbf{\Theta}}^{4/3}_{T} = \Big\| \bdiv \, \bsi_h - \dfrac{1}{2} \bt_h \bu_h + \boldsymbol{f} - g[1+\gamma(\varphi_h+\alpha)]\widehat{\mathbf{e}}_d \Big\|^{4/3}_{0,4/3,T} + \Big\| \div \, \widetilde{\bsi}_h - \dfrac{1}{2} \widetilde{\bt}_h \cdot \bu_h \Big\|^{4/3}_{0,4/3,T}.
\end{equation}
\begin{rem}
	\begin{itemize}
		\item[(a)] Note that $\mathbf{\Theta}$ provides a quantitative measure of the discretization error based on the computed solution from our method \eqref{eq:FV_h}. It is evidently a residual-based indicator, as can be recognized by a simple inspection of each term defined in the continuous problem \eqref{eqn:auxcomp}-\eqref{eq:bc-full}.
		\item[(b)] Differently from \cite{GIRS-2022}, our a posteriori error indicator \eqref{eq:global-estimator} involves the residual terms $h_T^{2-d/2} \Big\| \bt_h - \nabla \bu_h \Big\|^2_{0,T}$ and $h_T^{2-d/2} \Big\| \widetilde{\bt}_h - \nabla \varphi_h \Big\|^2_{0,T}$ instead of $h_T^4 \big\| \bt_h - \nabla \bu_h \big\|^4_{0,4,T}$ and $h_T^4 \big\| \widetilde{\bt}_h - \nabla \varphi_h \big\|^4_{0,4,T}$. In this way, $	\mathbf{\Theta}$ offers a more refined control by using the $L^2$ norm and considering the dimension $d$, which can yield sharper error estimates in higher dimensions. Also,  the tensor $\bt$ and the vector $\widetilde{\bt}$ in our formulation are sought in $L^2$, making the use of the $L^2$ norm a theoretically consistent choice that aligns with the function space in which these variables reside.
	\end{itemize}
\end{rem}

Before presenting the main result of this section, it is important to note that,
 for a given $(\bw,\phi)\in\mathbf{L}^4(\Omega)\times\mathrm{L}_0^4(\Omega)$,  thanks to the properties satisfied by the bilinear forms $\A_{\phi}(\cdot,\cdot) + \C(\bw;\cdot,\cdot)$ and $\B(\cdot,\cdot)$ and $\widetilde{\A}(\cdot,\cdot) + \widetilde{\C}(\bw;\cdot,\cdot)$ and $\widetilde{\B}(\cdot,\cdot)$ (cf. Lemmas \ref{lem:properties-form-B}, \ref{lem:properties-form-A} and \ref{lem:properties-form-C-F}), it follows (see \cite[Proposition 2.36]{ErnG}, for instance) that there exist positive constants $\rho,\widetilde{\rho}>0$ such that the  global inf-sup conditions hold 
\begin{subequations}
\begin{equation}\label{eq:inf-sup-glob-1}
	\sup_{(\vec{\bv},\bta) \in \mathbf{H} \times \mathbb{H}_0(\bdiv_{4/3}; \Omega)} \frac{\A_{\phi}(\vec{\mathbf{z}}, \vec{\bv}) + \C(\bw; \vec{\mathbf{z}}, \vec{\bv}) - \B(\vec{\bv}, \mathbf{\zeta}) - \B(\vec{\mathbf{z}}, \bta) }{\|(\vec{\bv},\bta)\|_{\mathbf{H} \times \mathbb{H}_0(\bdiv_{4/3}; \Omega)}} \geq\, \rho\, \|(\vec{\mathbf{z}}, \mathbf{\zeta})\|_{ \mathbf{H} \times \mathbb{H}_0(\bdiv_{4/3}; \Omega)},
\end{equation}
\begin{equation}\label{eq:inf-sup-glob-2}
	\sup_{(\vec{\psi},\widetilde{\bta}) \in\widetilde{\mathbf{H}} \times \mathbf{H}_\Gamma(\div_{4/3}; \Omega)} \frac{\widetilde{\A}(\vec{\eta}, \vec{\psi}) + \widetilde{\C}(\bw; \vec{\eta}, \vec{\psi}) - \widetilde{\B}(\vec{\psi}, \widetilde{\mathbf{\zeta}}) - \widetilde{\B}(\vec{\eta}, \widetilde{\bta}) }{\|(\vec{\psi},\widetilde{\bta})\|_{\widetilde{\mathbf{H}} \times \mathbf{H}_\Gamma(\div_{4/3}; \Omega)}} \geq\,\widetilde{\rho}\, \|(\vec{\eta}, \widetilde{\mathbf{\zeta}})\|_{\widetilde{\mathbf{H}} \times \mathbf{H}_\Gamma(\div_{4/3}; \Omega)}, 
\end{equation}\end{subequations}
for all  $(\vec{\mathbf{z}}, \mathbf{\zeta}) \in \mathbf{H} \times \mathbb{H}_0(\div_{4/3}; \Omega)$	 and for all $(\vec{\eta}, \widetilde{\mathbf{\zeta}}) \in \widetilde{\mathbf{H}} \times \mathbf{H}_\Gamma(\div_{4/3}; \Omega)\,, $ respectively.  These properties are crucial to establish the main result of this section which is stated as follows.  
\begin{thm}\label{thm:reliability}
	Under the hypotheses of Theorems \ref{thm:uniqueness} and \ref{thm:existence-h}, and assuming that the data is sufficiently small so that
	\begin{equation}\label{eq:data-as-reliability}
	2\,	\max\Big\{\widetilde{\rho}^{-1}C^\ast_1(\mu,\gamma,\boldsymbol{f},g,\alpha,\kappa,U,\Omega)+\widetilde{\rho}^{-1}C^\ast_2(\alpha,\kappa,U,\Omega),\rho^{-1}	\boldsymbol{C}_{1,\varepsilon}(\mu,\gamma,\boldsymbol{f},g,\alpha,\Omega) \Big\}\leq 1\,,
	\end{equation}	
where the constants  $ C^\ast_1 (\,\cdot\,)$ $C^\ast_2(\cdot)$, $\boldsymbol{C}_{1,\varepsilon}(\,\cdot\,),$ $\rho$ and $\widetilde{\rho}$ are given by \eqref{eq:C1*}, \eqref{eq:C2*} \eqref{eq:constantc1-apr},  \eqref{eq:inf-sup-glob-1} and \eqref{eq:inf-sup-glob-2}, respectively. 	There exists $C_{\rm rel} > 0$, independent of $h$,  such that $\mathbf{\Theta}$ defined by \eqref{eq:global-estimator} satisfies
	\begin{equation}\label{eq:rel-estimation}
		\|(\vec{\bu},\bsi)-(\vec{\bu}_h,\bsi_{h})\|_{ \mathbf{H} \times \mathbb{H}_0(\div_{4/3}; \Omega)}+\|(\vec{\varphi},\widetilde{\bsi})-(\vec{\varphi}_h,\widetilde{\bsi}_{h})\|_{\widetilde{\mathbf{H}} \times \mathbf{H}_\Gamma(\div_{4/3}}\,\leq \,C_{\rm rel} \,\mathbf{\Theta}\,.
	\end{equation}	
\end{thm}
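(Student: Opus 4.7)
The plan is to apply the two global inf-sup conditions \eqref{eq:inf-sup-glob-1} and \eqref{eq:inf-sup-glob-2} at $(\vec{\mathbf{z}},\mathbf{\zeta})=(\vec{\bu}-\vec{\bu}_h,\bsi-\bsi_h)$ (taking $\bw=\bu$, $\phi=\varphi$) and $(\vec{\eta},\widetilde{\mathbf{\zeta}})=(\vec{\varphi}-\vec{\varphi}_h,\widetilde{\bsi}-\widetilde{\bsi}_h)$ (taking $\bw=\bu$), so that each error norm in \eqref{eq:rel-estimation} is controlled by the dual norm of an explicit residual functional on $\mathbf{H}\times\mathbb{H}_0(\bdiv_{4/3};\Omega)$ and $\widetilde{\mathbf{H}}\times\mathbf{H}_\Gamma(\div_{4/3};\Omega)$, respectively. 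Using \eqref{eq:FV} and adding-and-subtracting $\A_{\varphi_h}$, $\C(\bu_h;\cdot,\cdot)$, $\F_{\varphi_h}$, $\widetilde{\C}(\bu_h;\cdot,\cdot)$, the residual splits naturally into (i) the discrete residual of \eqref{eq:FV_h} tested against non-discrete functions and (ii) consistency contributions depending on $\bu-\bu_h$ and $\varphi-\varphi_h$.

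Choosing $\bta=\mathbf{0}$, $\widetilde{\bta}=\mathbf{0}$ in the numerators isolates the residuals of the first and third rows of \eqref{eq:FV_h}. Testing against $\vec{\bv}=(\br,\bv)\in\mathbb{L}^2_{\tr}(\Omega)\times\mathbf{L}^4(\Omega)$ and $\vec{\psi}=(\widetilde{\br},\psi)\in\mathbf{L}^2(\Omega)\times\mathrm{L}^4_0(\Omega)$, and localizing to each $T\in\Thb$, produces the constitutive residuals $\|\bsi_h^{\tt d}-2\mu(\varphi_h+\alpha)\bt_{h,\mathrm{sym}}+\tfrac{1}{2}(\bu_h\otimes\bu_h)^{\tt d}\|_{0,T}^{2}$ and $\|\widetilde{\bsi}_h-\kappa\widetilde{\bt}_h+\tfrac{1}{2}\varphi_h\bu_h+U(\varphi_h+\alpha)\widehat{\mathbf{e}}_d\|_{0,T}^{2}$ entering $\overline{\mathbf{\Theta}}_T^{2}$, and the momentum/mass residuals $\|\bdiv\,\bsi_h-\tfrac{1}{2}\bt_h\bu_h+\boldsymbol{f}-g[1+\gamma(\varphi_h+\alpha)]\widehat{\mathbf{e}}_d\|_{0,4/3,T}^{4/3}$ and $\|\div\,\widetilde{\bsi}_h-\tfrac{1}{2}\widetilde{\bt}_h\cdot\bu_h\|_{0,4/3,T}^{4/3}$ forming $\widehat{\mathbf{\Theta}}_T^{4/3}$.

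The delicate step is bounding the residuals of the second and fourth rows, namely $\B(\vec{\bu}-\vec{\bu}_h,\bta)$ against $\bta\in\mathbb{H}_0(\bdiv_{4/3};\Omega)$ and $\widetilde{\B}(\vec{\varphi}-\vec{\varphi}_h,\widetilde{\bta})$ against $\widetilde{\bta}\in\mathbf{H}_\Gamma(\div_{4/3};\Omega)$. Since these test functions are only $\mathbb{L}^2$- and $\mathbf{L}^2$-regular, I would invoke Lemmas~\ref{Hdiv-decomp-1}--\ref{Hdiv-decomp-2} to write $\bta=\boldsymbol{\eta}+\curlt(\boldsymbol{\xi})$ and $\widetilde{\bta}=\widetilde{\boldsymbol{\eta}}+\curle(\widetilde{\boldsymbol{\xi}})$ (with the analogous 3D versions), subtract the Raviart--Thomas interpolants $\mathbf{\Pi}_h^{\ell}(\boldsymbol{\eta})$, $\Pi_h^{\ell}(\widetilde{\boldsymbol{\eta}})$ and the Cl\'ement interpolants $\mathbf{I}_h(\boldsymbol{\xi})$, $\mathrm{I}_h(\widetilde{\boldsymbol{\xi}})$, and then exploit the discrete conservation rows of \eqref{eq:FV_h} (orthogonality against $\mathbb{H}_h^{\bsi}$ and $\mathbf{H}_h^{\widetilde{\bsi}}$). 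Elementwise integration by parts on the $\curlt$/$\curle$ contribution produces the volumetric curl terms $h_T^2\|\curlt(\bt_h)\|_{0,T}^{2}$, $h_T^2\|\curlv(\widetilde{\bt}_h)\|_{0,T}^{2}$ and the tangential-jump terms $h_F\|\jump{\tncomp(\bt_h)}\|_{0,F}^{2}$, $h_F\|\jump{\tncomp(\widetilde{\bt}_h)}\|_{0,F}^{2}$ in $\overline{\mathbf{\Theta}}_T^{2}$, while the $\boldsymbol{\eta}$-contribution, bounded via \eqref{eq:RT-approx-prop} and the Helmholtz stability constant $C_{Hel}$, yields the residuals $h_T^{2-d/2}\|\bt_h-\nabla\bu_h\|_{0,T}^{2}$ and $h_T^{2-d/2}\|\widetilde{\bt}_h-\nabla\varphi_h\|_{0,T}^{2}$.

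The main obstacle will be absorbing the consistency cross-terms. The differences $\A_\varphi(\vec{\bu}_h,\cdot)-\A_{\varphi_h}(\vec{\bu}_h,\cdot)$ and $\F_\varphi-\F_{\varphi_h}$ are controlled as in \eqref{eq:AS-ASh} and \eqref{eq:estimation-F_var-F_varh} by $\boldsymbol{C}_{1,\varepsilon}\|\varphi-\varphi_h\|_{0,4,\Omega}$ times the norm of the test function, while $\C(\bu;\cdot,\cdot)-\C(\bu_h;\cdot,\cdot)$ and $\widetilde{\C}(\bu;\cdot,\cdot)-\widetilde{\C}(\bu_h;\cdot,\cdot)$ are bounded as in \eqref{eq:CS-CSh} and \eqref{eq:estimation Atilde_u-Atilde_uh} by $C_1^{\ast}\|\bu-\bu_h\|_{0,4,\Omega}$ and $C_2^{\ast}\|\bu-\bu_h\|_{0,4,\Omega}$, respectively, using the a priori discrete bounds from Theorem~\ref{thm:existence-h}. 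Adding the two inf-sup estimates, dividing by $\rho$ and $\widetilde{\rho}$, and using $\|\bu-\bu_h\|_{0,4,\Omega}+\|\varphi-\varphi_h\|_{0,4,\Omega}\leq\|(\vec{\bu}-\vec{\bu}_h,\bsi-\bsi_h)\|_{\mathbf{H}\times\mathbb{H}_0(\bdiv_{4/3};\Omega)}+\|(\vec{\varphi}-\vec{\varphi}_h,\widetilde{\bsi}-\widetilde{\bsi}_h)\|_{\widetilde{\mathbf{H}}\times\mathbf{H}_\Gamma(\div_{4/3};\Omega)}$, the smallness hypothesis \eqref{eq:data-as-reliability} permits these cross-terms to be moved to the left-hand side, leaving only the local estimator contributions on the right; this yields \eqref{eq:rel-estimation} with $C_{\rm rel}$ depending on $\rho,\widetilde{\rho},C_{Hel}$, and the interpolation constants from \eqref{eq:RT-approx-prop} and \eqref{eq:clement-prop}.
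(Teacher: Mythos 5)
Your proposal is correct and follows essentially the same route as the paper: the global inf--sup conditions \eqref{eq:inf-sup-glob-1}--\eqref{eq:inf-sup-glob-2} applied to the errors, splitting of the residual into the discrete-equation part and the consistency terms (absorbed via \eqref{eq:data-as-reliability} together with the bounds of type \eqref{eq:AS-ASh}, \eqref{eq:CS-CSh} and the discrete a priori estimates), direct H\"older bounds for the constitutive/equilibrium residuals, and the Helmholtz decompositions of Lemmas \ref{Hdiv-decomp-1}--\ref{Hdiv-decomp-2} combined with Raviart--Thomas and Cl\'ement interpolation plus Galerkin orthogonality and elementwise integration by parts for the $\B$- and $\widetilde{\B}$-residuals. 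This is precisely the structure of Lemmas \ref{lem:est1}--\ref{lem:est4} in the paper, so no substantive difference or gap to report.
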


The proof of Theorem \ref{thm:reliability} is carried out in this subsection through consecutive steps. We begin with the following result, which provides a preliminary upper bound for the total error.

\begin{lem}\label{lem:est1}
	Under the same hypothesis of Theorem \ref{thm:reliability}, there exists $C_1:=2\max\{\rho^{-1}, \widetilde{\rho}^{-1}\}>0$ (cf. \eqref{eq:inf-sup-glob-1} and \eqref{eq:inf-sup-glob-2}), independent of $h$, such that
\begin{equation}\label{eq:est1-apost}
	\begin{array}{l}
	\|(\vec{\bu},\bsi)-(\vec{\bu}_h,\bsi_{h})\|_{ \mathbf{H} \times \mathbb{H}_0(\div_{4/3}; \Omega)}+\|(\vec{\varphi},\widetilde{\bsi})-(\vec{\varphi}_h,\widetilde{\bsi}_{h})\|_{\widetilde{\mathbf{H}} \times \mathbf{H}_\Gamma(\div_{4/3}}\,\\[2ex]
	\qquad \leq \disp \,C_1\Bigg\{  \big\|\R\big\|_{[\mathbf{H}\times \mathbb{H}_0(\bdiv_{4/3}; \Omega)]^{\prime}} + \big\|\widetilde{\R}\big\|_{[\widetilde{\mathbf{H}} \times \mathbf{H}_\Gamma(\div_{4/3}; \Omega)]^{\prime}}\Bigg\},
	\end{array}
\end{equation}	
where $\R:\mathbf{H} \times \mathbb{H}_0(\bdiv_{4/3}; \Omega)\longrightarrow \mathrm{R}$ and $\widetilde{\R}: \widetilde{\mathbf{H}} \times \mathbf{H}_\Gamma(\div_{4/3}; \Omega)\longrightarrow \mathrm{R}$ are the linear functionals given, respectively,  by 
\begin{equation}\label{eq:funcR}
\R(\vec{\bv},\bta)=\F_{\varphi_h}(\vec{\bv})-\A_{\varphi_h}(\vec{\bu}_h,\vec{\bv})-\C(\bu_h,\vec{\bu}_h,\vec{\bv})+\B(\vec{\bv},\bsi_h)+\B(\vec{\bu}_h,\bta)
\end{equation}
and 
\begin{equation}\label{eq:funcRt}
	\widetilde{\R}(\vec{\psi},\widetilde{\bta})=\widetilde{\F}(\vec{\psi})-\widetilde{\A}(\vec{\varphi}_h,\vec{\psi}_h)-\widetilde{\C}(\bu_h,\vec{\varphi}_h,\vec{\psi})+\widetilde{\B}(\vec{\psi},\widetilde{\bsi}_h)+\widetilde{\B}(\vec{\varphi}_h,\widetilde{\bta})\,. 
\end{equation}
\end{lem}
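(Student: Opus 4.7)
The plan is to invoke the global continuous inf-sup conditions \eqref{eq:inf-sup-glob-1} and \eqref{eq:inf-sup-glob-2}, evaluated at $\bw = \bu$ and $\phi = \varphi$ (the continuous solution), applied to the errors $(\vec{\bu} - \vec{\bu}_h, \bsi - \bsi_h)$ and $(\vec{\varphi} - \vec{\varphi}_h, \widetilde{\bsi} - \widetilde{\bsi}_h)$. Each inf-sup numerator will be rewritten, via the continuous formulation \eqref{eq:FV}, as the residual $\R$ (respectively $\widetilde{\R}$) plus a few cross-terms that are already under control from the a priori analysis, and the smallness assumption \eqref{eq:data-as-reliability} will finally allow the error to be absorbed. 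For brevity set
\[
\mathbf{E}_1 := \|(\vec{\bu},\bsi)-(\vec{\bu}_h,\bsi_h)\|_{\mathbf{H} \times \mathbb{H}_0(\bdiv_{4/3}; \Omega)}, \qquad
\mathbf{E}_2 := \|(\vec{\varphi},\widetilde{\bsi})-(\vec{\varphi}_h,\widetilde{\bsi}_h)\|_{\widetilde{\mathbf{H}} \times \mathbf{H}_\Gamma(\div_{4/3}; \Omega)}.
\]

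For the fluid estimate, I will apply \eqref{eq:inf-sup-glob-1} and use \eqref{eq:FV} to eliminate $(\vec{\bu}, \bsi)$ from the numerator. After adding and subtracting $\F_{\varphi_h}(\vec{\bv})$, $\A_{\varphi_h}(\vec{\bu}_h,\vec{\bv})$ and $\C(\bu_h;\vec{\bu}_h,\vec{\bv})$, the numerator should split as $\R(\vec{\bv},\bta)$ plus the three cross-terms $[\F_\varphi - \F_{\varphi_h}](\vec{\bv})$, $[\A_{\varphi_h} - \A_{\varphi}](\vec{\bu}_h,\vec{\bv})$ and $[\C(\bu_h;\cdot,\cdot) - \C(\bu;\cdot,\cdot)](\vec{\bu}_h, \vec{\bv})$. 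These are exactly the quantities already estimated in the Strang-type analysis of Section~\ref{section33} (cf.~\eqref{eq:AS-ASh}--\eqref{eq:estimation-F_var-F_varh}), leveraging the Lipschitz property \eqref{eqn:mu-lips}, the regularity hypothesis \eqref{eq:HRA}, the boundedness inequalities from Lemma~\ref{lem:properties-form-C-F}, and the a priori bound $\|\vec{\bu}_h\|_{\mathbf{H}} \leq C^\ast_1$ from Theorem~\ref{thm:existence-h}. The result should read
\[
\rho\, \mathbf{E}_1 \,\leq\, \|\R\|_{[\mathbf{H} \times \mathbb{H}_0(\bdiv_{4/3}; \Omega)]'} + \boldsymbol{C}_{1,\varepsilon} \|\varphi - \varphi_h\|_{0,4,\Omega} + C^\ast_1 \|\bu - \bu_h\|_{0,4,\Omega}.
\]

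An analogous treatment of \eqref{eq:inf-sup-glob-2} will produce the concentration counterpart: here only the single cross-term $[\widetilde{\C}(\bu_h;\cdot,\cdot) - \widetilde{\C}(\bu;\cdot,\cdot)](\vec{\varphi}_h,\vec{\psi})$ survives, since $\widetilde{\A}$ and $\widetilde{\F}$ do not depend on $\bu$ or $\varphi$. Combining the boundedness of $\widetilde{\C}$ with the a priori bound $\|\vec{\varphi}_h\|_{\widetilde{\mathbf{H}}} \leq C^\ast_2$ I expect to obtain
\[
\widetilde{\rho}\, \mathbf{E}_2 \,\leq\, \|\widetilde{\R}\|_{[\widetilde{\mathbf{H}} \times \mathbf{H}_\Gamma(\div_{4/3}; \Omega)]'} + C^\ast_2 \|\bu - \bu_h\|_{0,4,\Omega}.
\]

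To conclude, the norm definitions \eqref{eq:norm-u}--\eqref{eq:norm-varphi} yield $\|\bu - \bu_h\|_{0,4,\Omega} \leq \mathbf{E}_1$ and $\|\varphi - \varphi_h\|_{0,4,\Omega} \leq \mathbf{E}_2$, so summing the two displayed inequalities and collecting terms produces a bound of the form $(\mathbf{E}_1 + \mathbf{E}_2) \leq \rho^{-1}\|\R\| + \widetilde{\rho}^{-1}\|\widetilde{\R}\| + \delta\,(\mathbf{E}_1 + \mathbf{E}_2)$, where $\delta$ is precisely the quantity controlled by the left-hand side of \eqref{eq:data-as-reliability}. The hypothesis forces $\delta \leq 1/2$, after which the $\delta$-term is absorbed on the left and the desired inequality \eqref{eq:est1-apost} follows with $C_1 = 2\max\{\rho^{-1},\widetilde{\rho}^{-1}\}$. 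The only delicate point is the bookkeeping that isolates exactly $\R$ and $\widetilde{\R}$ from the inf-sup numerators, since every cross-term is then estimated by tools already in hand; no further regularity beyond \eqref{eq:HRA} is needed.
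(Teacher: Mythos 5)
Your proposal is correct and follows essentially the same route as the paper: apply the global inf-sup conditions at $(\bw,\phi)=(\bu,\varphi)$ to the errors, use the continuous equations \eqref{eq:FV} and the add-and-subtract of $\F_{\varphi_h}$, $\A_{\varphi_h}(\vec{\bu}_h,\cdot)$, $\C(\bu_h;\vec{\bu}_h,\cdot)$ (resp.\ $\widetilde{\C}(\bu_h;\vec{\varphi}_h,\cdot)$) to isolate $\R$ and $\widetilde{\R}$, bound the cross-terms exactly as in \eqref{eq:est-u-e1}, \eqref{eq:est-u-e3-1}--\eqref{eq:est-u-e3} with \eqref{eq:HRA} and the discrete a priori bounds, and absorb via \eqref{eq:data-as-reliability}. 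The identification of the three fluid cross-terms, the single concentration cross-term, and the final absorption yielding $C_1=2\max\{\rho^{-1},\widetilde{\rho}^{-1}\}$ all coincide with the paper's argument.
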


\begin{proof}  Taking $(\bw,\phi)=(\bu,\varphi)$ and $(\vec{\mathbf{z}}, \mathbf{\zeta})=(\vec{\bu},\bsi)-(\vec{\bu}_h,\bsi_h)$ in \eqref{eq:inf-sup-glob-1}, we get 
\begin{equation}\label{eq:inf-sup-glob-1a}
\begin{array}{l}
 \rho\, \|(\vec{\bu},\bsi)-(\vec{\bu}_h,\bsi_h)\|_{ \mathbf{H} \times \mathbb{H}_0(\bdiv_{4/3}; \Omega)} \\[2ex]
\quad   \leq 	\disp \sup_{(\vec{\bv},\bta) \in \mathbf{H} \times \mathbb{H}_0(\bdiv_{4/3}; \Omega)} \frac{\A_{\phi}(\vec{\bu}-\vec{\bu}_h, \vec{\bv}) + \C(\bw; \vec{\bu}-\vec{\bu}_h, \vec{\bv}) - \B(\vec{\bv}, \bsi-\bsi_h) - \B(\vec{\bu}-\vec{\bu}_h, \bta) }{\|(\vec{\bv},\bta)\|_{\mathbf{H} \times \mathbb{H}_0(\bdiv_{4/3}; \Omega)}}\,
%\\[2.5ex]
%\disp \sup_{(\vec{\bv},\bta) \in \mathbf{H} \times \mathbb{H}_0(\div_{4/3}; \Omega)} \frac{\R(\vec{\bv},\bta) + [\F_{\varphi}(\vec{\bv}) -\F_{\varphi_h}(\vec{\bv})] +[\A_{\varphi_h}(\vec{\bu}_h,\vec{\bv})-\A_{\varphi}(\vec{\bu}_h,\vec{\bv})]+\C(\bu-\bu_h;\vec{\bu}_{h},\vec{\bv}) }{\|(\vec{\bv},\bta)\|_{\mathbf{H} \times \mathbb{H}_0(\div_{4/3}; \Omega)}}
%
\end{array}
\end{equation}
From the linearity of the forms, using the first and second equations of \eqref{eq:FV}, and after adding and subtracting $\F_{\varphi_h}(\vec{\bv})$, $\A_{\varphi_h}(\vec{\bu_h},\vec{\bv})$, and $\C(\bu_h; \vec{\bu}_{h}, \vec{\bv})$, we find that
\begin{equation}\label{eq:inf-sup-glob-1c}
	\begin{array}{l}
		\A_{\phi}(\vec{\bu} - \vec{\bu}_h, \vec{\bv}) + \C(\bw; \vec{\bu} - \vec{\bu}_h, \vec{\bv}) - \B(\vec{\bv}, \bsi - \bsi_h) - \B(\vec{\bu} - \vec{\bu}_h, \bta) \\[2ex]
		\quad = \R(\vec{\bv}, \bta) + [\F_{\varphi}(\vec{\bv}) - \F_{\varphi_h}(\vec{\bv})] + [\A_{\varphi_h}(\vec{\bu}_h, \vec{\bv}) - \A_{\varphi}(\vec{\bu}_h, \vec{\bv})] + \C(\bu - \bu_h; \vec{\bu}_{h}, \vec{\bv})\,,
	\end{array}
\end{equation}
where $\R(\,\cdot\,)$ is defined by \eqref{eq:funcR}. By proceeding as in \eqref{eq:est-u-e1}, we have that
\begin{equation}\label{eq:inf-sup-glob-1d}
	\begin{array}{l}
		\F_{\varphi}(\vec{\bv}) - \F_{\varphi_h}(\vec{\bv}) \leq g \gamma |\Omega|^{\frac{1}{2}} \|\varphi - \varphi_h\|_{0,4,\Omega} \|\bv\|_{0,4,\Omega} \\[2ex]
		\qquad \leq g \gamma |\Omega|^{\frac{1}{2}} \|\varphi - \varphi_h\|_{0,4,\Omega} \|(\vec{\bv}, \bta)\|_{\mathbf{H} \times \mathbb{H}_0(\bdiv_{4/3}; \Omega)}\,,
	\end{array}
\end{equation}
and from part (c) of Lemma \ref{lem:properties-form-C-F} and the a priori bound \eqref{eq:est-a-priori_h} for $\vec{\bu}_h$, we have that
\begin{equation}\label{eq:inf-sup-glob-1e}
	\begin{array}{l}
		\C(\bu - \bu_h; \vec{\bu}_{h}, \vec{\bv}) \leq \|\bu - \bu_h\|_{0,4,\Omega} \, \|\vec{\bu}_h\|_{\mathbf{H}} \, \|\vec{\bv}\|_{\mathbf{H}} \\[2ex]
		\qquad \leq C_1^\ast(\mu, \gamma, \boldsymbol{f}, g, \alpha, \kappa, U, \Omega) \|\bu - \bu_h\|_{0,4,\Omega} \|(\vec{\bv}, \bta)\|_{\mathbf{H} \times \mathbb{H}_0(\bdiv_{4/3}; \Omega)}\,.
	\end{array}
\end{equation}
In turn, by adding and subtracting $\A_{\varphi}(\vec{\bu}, \vec{\bv})$ and $\A_{\varphi_h}(\vec{\bu}, \vec{\bv})$, grouping terms conveniently, using the definition of $\A(\cdot, \cdot)$ (cf. \eqref{A^S}), the upper bound for $\mu$ (cf. \eqref{eqn:mu-lips}), and proceeding as in \eqref{eq:est-u-e3-1}--\eqref{eq:est-u-e3} along with the regularity hypothesis \eqref{eq:HRA}, we find that
\begin{equation}\label{eq:inf-sup-glob-1f}
	\begin{array}{l}
		\A_{\varphi_h}(\vec{\bu}_h, \vec{\bv}) - \A_{\varphi}(\vec{\bu}_h, \vec{\bv}) \\[2ex]
		\qquad = [\A_{\varphi_h}(\vec{\bu}_h, \vec{\bv}) - \A_{\varphi_h}(\vec{\bu}, \vec{\bv})] + [\A_{\varphi_h}(\vec{\bu}, \vec{\bv}) - \A_{\varphi}(\vec{\bu}, \vec{\bv})] + [\A_{\varphi}(\vec{\bu}, \vec{\bv}) - \A_{\varphi}(\vec{\bu}_h, \vec{\bv})] \\[2ex]
		\qquad \disp \leq 2 \int_{\Omega} [\mu(\varphi_h + \alpha) - \mu(\varphi + \alpha)] \bt_{sym} : \br \\[2ex]
		\qquad \leq 2 L_{\mu} \|i_\varepsilon\| |\Omega|^{\frac{\varepsilon}{d} - \frac{1}{2}} \|\varphi - \varphi_h\|_{0,4,\Omega} \|\bt\|_{\varepsilon, \Omega} \|\br\|_{0, \Omega} \\[2ex]
		\qquad \leq 2 L_{\mu} \|i_\varepsilon\| |\Omega|^{\frac{\varepsilon}{d} - \frac{1}{2}} C_{1, \varepsilon} \|\varphi - \varphi_h\|_{0,4,\Omega} \|(\vec{\bv}, \bta)\|_{\mathbf{H} \times \mathbb{H}_0(\bdiv_{4/3}; \Omega)}\,.
	\end{array}
\end{equation}
Gathering \eqref{eq:inf-sup-glob-1d}, \eqref{eq:inf-sup-glob-1e}, and \eqref{eq:inf-sup-glob-1f} together and replacing them back in \eqref{eq:inf-sup-glob-1c}, we find from \eqref{eq:inf-sup-glob-1a} after simplifying and using the norm definition in $[\mathbf{H} \times \mathbb{H}_0(\bdiv_{4/3}, \Omega)]'$ that
\begin{equation}\label{eq:inf-sup-glob-1g}
	\begin{array}{c}
		\rho \|(\vec{\bu}, \bsi) - (\vec{\bu}_h, \bsi_h)\|_{\mathbf{H} \times \mathbb{H}_0(\bdiv_{4/3}; \Omega)} \leq \big\| \R \big\|_{[\mathbf{H} \times \mathbb{H}_0(\bdiv_{4/3}, \Omega)]'} \\[2ex]
		\qquad + C_1^\ast(\mu, \gamma, \boldsymbol{f}, g, \alpha, \kappa, U, \Omega) \|\bu - \bu_h\|_{0,4,\Omega} + 	\boldsymbol{C}_{1,\varepsilon}(\mu,\gamma,\boldsymbol{f},g,\alpha,\Omega) \|\varphi - \varphi_h\|_{0,4,\Omega}\,.
	\end{array}
\end{equation}
where $\boldsymbol{C}_{1,\varepsilon}(\,\cdot\,)$ is the constant defined in \eqref{eq:constantc1-apr}. 

Next, to obtain an upper preliminary bound for the error associated with the concentration variables, we proceed similarly.   In \eqref{eq:inf-sup-glob-2}, we now take $\bw=\bu,$ and $(\vec{\eta}, \widetilde{\zeta})=(\vec{\varphi},\widetilde{\bsi})-(\vec{\varphi}_h,\widetilde{\bsi}_h)$, use the last two equations of \eqref{eq:FV}, and after adding ans subtracting $\widetilde{\C}(\bu_h;\vec{\varphi}_h,\vec{\psi})$ and using the norm definition in $[\widetilde{\mathbf{H}} \times \mathbf{H}_\Gamma(\div_{4/3}; \Omega)]'$, it follows that 
	\begin{equation}\label{eq:inf-sup-glob-2a}
		\begin{array}{l}
\widetilde{\rho}\, \|(\vec{\varphi}, \widetilde{\bsi})-(\vec{\varphi}_h,\widetilde{\bsi}_h)\|_{\widetilde{\mathbf{H}} \times \mathbf{H}_\Gamma(\div_{4/3}; \Omega)} 
\\[2ex]
\disp \qquad \leq  	\sup_{(\vec{\psi},\widetilde{\bta}) \in\widetilde{\mathbf{H}} \times \mathbf{H}_\Gamma(\div_{4/3}; \Omega)} \frac{\widetilde{\A}(\vec{\varphi}-\vec{\varphi}_h, \vec{\psi}) + \widetilde{\C}(\bu; \vec{\varphi}-\vec{\varphi}_h, \vec{\psi}) - \widetilde{\B}(\vec{\psi}, \widetilde{\bsi}-\widetilde{\bsi}_h) - \widetilde{\B}(\vec{\varphi}-\vec{\varphi}_h, \widetilde{\bta}) }{\|(\vec{\psi},\widetilde{\bta})\|_{\widetilde{\mathbf{H}} \times \mathbf{H}_\Gamma(\bdiv_{4/3}; \Omega)}}\\[3ex]
\disp \qquad \leq   \big\| \widetilde{\R} \big\|_{[\widetilde{\mathbf{H}} \times \mathbf{H}_\Gamma(\div_{4/3}; \Omega)]'}\,+\, 	\sup_{\vec{\psi} \in\widetilde{\mathbf{H}}} \frac{\widetilde{\C}(\bu_h-\bu;\vec{\varphi}_h,\vec{\psi})}{ \|\vec{\psi}\|_{\widetilde{\mathbf{H}} }}
 \\[3ex]
\disp \qquad \leq   \big\| \widetilde{\R} \big\|_{[\widetilde{\mathbf{H}} \times \mathbf{H}_\Gamma(\div_{4/3}; \Omega)]'}\,+\, C_2^\ast(\alpha, \kappa, U, \Omega) \|\bu-\bu_h\|_{0,4,\Omega}
	\end{array}
\end{equation}
where we have bounded $\widetilde{\C}$ according to part (c) of Lemma \ref{lem:properties-form-C-F} and used the a priori bound \eqref{eq:est-a-priori_h} for $\vec{\varphi}_{h}$ in the last inequality. Finally, by combining \eqref{eq:inf-sup-glob-1g} and  \eqref{eq:inf-sup-glob-2a}, we get
\begin{equation*}%\label{eq:inf-sup-glob-1h}
	\begin{array}{l}
	 \|(\vec{\bu}, \bsi) - (\vec{\bu}_h, \bsi_h)\|_{\mathbf{H} \times \mathbb{H}_0(\bdiv_{4/3}; \Omega)} + \|(\vec{\varphi}, \widetilde{\bsi})-(\vec{\varphi}_h,\widetilde{\bsi}_h)\|_{\widetilde{\mathbf{H}} \times \mathbf{H}_\Gamma(\div_{4/3}; \Omega)} 
	 \\[2ex]\qquad  \leq 	\rho^{-1} \big\| \R \big\|_{[\mathbf{H} \times \mathbb{H}_0(\bdiv_{4/3}, \Omega)]'} + \widetilde{\rho}^{-1}\big\| \widetilde{\R} \big\|_{[\widetilde{\mathbf{H}} \times \mathbf{H}_\Gamma(\div_{4/3}; \Omega)]'}  \\[2ex]
		\quad \qquad  + \big(	\rho^{-1} C_1^\ast(\mu, \gamma, \boldsymbol{f}, g, \alpha, \kappa, U, \Omega)\,+\, \widetilde{\rho}^{-1} C_2^\ast(\alpha, \kappa, U, \Omega)\big)\, \|\bu - \bu_h\|_{0,4,\Omega} \\[2ex] 
		\quad \qquad  + 	\rho^{-1}\boldsymbol{C}_{1,\varepsilon}(\mu,\gamma,\boldsymbol{f},g,\alpha,\Omega)\, \|\varphi - \varphi_h\|_{0,4,\Omega}\,.
	\end{array}
\end{equation*}
If the data satisfy \eqref{eq:data-as-reliability}, the terms multiplying $\|\bu - \bu_h\|_{0,4,\Omega}$ and $\|\varphi - \varphi_h\|_{0,4,\Omega}$ can be absorbed into the left-hand side of the previous estimate to finally get \eqref{eq:est1-apost} with the constant $C_1=2\max\{\rho^{-1}, \widetilde{\rho}^{-1}\}>0.$

\end{proof}

We focus now on bounding the functionals $\R$ and $\widetilde{\R}$ defined by \eqref{eq:funcR} and \eqref{eq:funcRt}. Note that we can write them as
\begin{equation*}
	\R(\vec{\bv},\bta)=\R_1(\vec{\bv}) +\R_2(\bta) \qan 	\widetilde{\R}(\vec{\psi},\widetilde{\bta})=	\widetilde{\R}_1(\vec{\psi})+	\widetilde{\R}_2(\widetilde{\bta})\,,
\end{equation*}
where 
\begin{equation}\label{eq:funcR1R2}
	\R_1(\vec{\bv})=\F_{\varphi_h}(\vec{\bv})-\A_{\varphi_h}(\vec{\bu}_h,\vec{\bv})-\C(\bu_h,\vec{\bu}_h,\vec{\bv})+\B(\vec{\bv},\bsi_h)\qan 	\R_2(\bta)=\B(\vec{\bu}_h,\bta)\,,
\end{equation}
for all  $\vec{\bv}\in \mathbf{H} $ and $\bta\in  \mathbb{H}_0(\bdiv_{4/3}; \Omega)$, and 
\begin{equation}\label{eq:funcRt1RT2}
	\widetilde{\R}_1(\vec{\psi})=\widetilde{\F}(\vec{\psi})-\widetilde{\A}(\vec{\varphi}_h,\vec{\psi}_h)-\widetilde{\C}(\bu_h,\vec{\varphi}_h,\vec{\psi})+\widetilde{\B}(\vec{\psi},\widetilde{\bsi}_h)\qan \widetilde{\R}_{2}(\widetilde{\bta})=\widetilde{\B}(\vec{\varphi}_h,\widetilde{\bta})\,. 
\end{equation}
for all $\vec{\psi}\in \widetilde{\mathbf{H}}$ and $\widetilde{\bta}\in  \mathbf{H}_\Gamma(\div_{4/3}; \Omega)$. Therefore, \eqref{eq:est1-apost} becomes
\begin{equation*}%\label{eq:est1-apost-sep}
	\begin{array}{l}
		\|(\vec{\bu},\bsi)-(\vec{\bu}_h,\bsi_{h})\|_{ \mathbf{H} \times \mathbb{H}_0(\div_{4/3}; \Omega)}+\|(\vec{\varphi},\widetilde{\bsi})-(\vec{\varphi}_h,\widetilde{\bsi}_{h})\|_{\widetilde{\mathbf{H}} \times \mathbf{H}_\Gamma(\div_{4/3}}\,\\[2ex]
		\qquad \leq \disp \,C_1\Bigg\{  \big\|\R_1\big\|_{\mathbf{H}'}+ \big\|\R_2\big\|_{\mathbb{H}_0(\bdiv_{4/3}; \Omega)'} + \big\|\widetilde{\R}_1\big\|_{\widetilde{\mathbf{H}}'}+\big\|\widetilde{\R}_2\big\|_{\mathbf{H}_\Gamma(\div_{4/3}; \Omega)'}\Bigg\}\,.
	\end{array}
\end{equation*}	
The estimations of $\R_1$ and $\widetilde{\R}_1$ are obtained straightforwardly by applying the definitions of the forms and utilizing the Hölder inequality, which immediately gives the following result. 
\begin{lem}\label{lem:est2}
There exist positive constants $C_2$ and $C_3$, independent of $h$,  such that
	\begin{equation*}%\label{eq:est-R1}
		\begin{array}{c}
	 \big\|\R_1\big\|_{\mathbf{H}'} \,\leq
	 C_2\ \Bigg\{ \Big\|\bdiv\,\bsi_h-\dfrac{1}{2}\bt_h\bu_h+\boldsymbol{f}-g[1+\gamma(\varphi_h+\alpha)]\widehat{\mathbf{e}}_d\Big\|_{0,4/3,\Omega}\\[2ex]
	 \disp \qquad  \,+\,\Big\|\bsi_h^{\tt d}-2\mu(\varphi_h+\alpha)\bt_{h,sym}- \dfrac{1}{2}\left(\bu_h\otimes\bu_h\right)^{\tt d}\Big\|_{0,\Omega} \Bigg\}\,,
		\end{array}
	\end{equation*}	
		\begin{equation*}%\label{eq:est-R1t}
	%	\begin{array}{l}
		 \big\|\widetilde{\R}_1\big\|_{\widetilde{\mathbf{H}}'}\,\leq
			C_3\Bigg\{ \Big\|\div\,\widetilde{\bsi}_h-\dfrac{1}{2}\widetilde{\bt}_h\cdot\bu_h\Big\|_{0,4/3,\Omega} \,+\,\Big\|\widetilde{\bsi}_h-\kappa\,\widetilde{\bt}_h+\dfrac{1}{2}\varphi_h\bu_h+U(\varphi_h+\alpha)\widehat{\mathbf{e}}_d\Big\|_{0,\Omega} \Bigg\}\,.
%		\end{array}
	\end{equation*}	
\end{lem}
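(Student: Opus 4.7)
The plan is to obtain both bounds by directly exploiting the structure of the forms defining $\R_1$ and $\widetilde{\R}_1$: after unfolding the definitions and regrouping the integrals according to the test components, each residual decomposes into two contributions---one paired with the $\mathbb{L}^2_{\tr}$ (or $\mathbf{L}^2$) test and one paired with the $\mathbf{L}^4$ (or $\mathrm{L}^4_0$) test---so that the standard Cauchy--Schwarz and $\mathrm{L}^{4/3}$--$\mathrm{L}^4$ H\"older inequalities immediately yield the announced estimators.

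For $\R_1$, I would substitute the expressions \eqref{A^S}, \eqref{B^S}, \eqref{C^S} and \eqref{FS} into \eqref{eq:funcR1R2} evaluated at $\vec{\bv}=(\br,\bv)\in\mathbf{H}$ and, by the linearity of every form, rewrite
\begin{equation*}
\R_1(\vec{\bv})=\int_{\Omega}\br:\Big\{\bsi_h-2\mu(\varphi_h+\alpha)\bt_{h,\mathrm{sym}}+\tfrac{1}{2}\bu_h\otimes\bu_h\Big\}+\int_{\Omega}\bv\cdot\Big\{\bdiv\,\bsi_h-\tfrac{1}{2}\bt_h\bu_h+\boldsymbol{f}-g[1+\gamma(\varphi_h+\alpha)]\widehat{\mathbf{e}}_d\Big\}\,.
\end{equation*}
The key observation is that $\br\in\mathbb{L}^2_{\tr}(\Omega)$ forces $\tr\,\br=0$, so the trace parts of $\bsi_h$ and $\bu_h\otimes\bu_h$ contribute nothing; this allows one to replace both tensors by their deviatoric counterparts $\bsi_h^{\tt d}$ and $(\bu_h\otimes\bu_h)^{\tt d}$. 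Applying Cauchy--Schwarz on the first integral and the $\mathrm{L}^{4/3}$--$\mathrm{L}^4$ H\"older estimate on the second, and using $\|\br\|_{0,\Omega},\|\bv\|_{0,4,\Omega}\leq\|\vec{\bv}\|_{\mathbf{H}}$ (cf. \eqref{eq:norm-u}), yields the bound on $\|\R_1\|_{\mathbf{H}'}$ after taking the supremum.

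For $\widetilde{\R}_1$, the argument is entirely analogous. Substituting \eqref{A^C}, \eqref{B^C}, \eqref{C^C} and \eqref{FC} into \eqref{eq:funcRt1RT2} and regrouping yields
\begin{equation*}
\widetilde{\R}_1(\vec{\psi})=\int_{\Omega}\widetilde{\br}\cdot\Big\{\widetilde{\bsi}_h-\kappa\widetilde{\bt}_h+\tfrac{1}{2}\varphi_h\bu_h+U(\varphi_h+\alpha)\widehat{\mathbf{e}}_d\Big\}+\int_{\Omega}\psi\Big\{\div\,\widetilde{\bsi}_h-\tfrac{1}{2}\widetilde{\bt}_h\cdot\bu_h\Big\}\,,
\end{equation*}
so that Cauchy--Schwarz and $\mathrm{L}^{4/3}$--$\mathrm{L}^4$ H\"older produce the second estimate after invoking $\|\widetilde{\br}\|_{0,\Omega},\|\psi\|_{0,4,\Omega}\leq\|\vec{\psi}\|_{\widetilde{\mathbf{H}}}$ (cf. \eqref{eq:norm-varphi}).

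There is essentially no obstacle here: the statement is a purely algebraic regrouping followed by two applications of duality estimates. The only item requiring a moment of care is the trace argument that transforms $\bsi_h$ and $\bu_h\otimes\bu_h$ into their deviatoric parts in the $\R_1$ identity, which is precisely what makes the first norm in the stated bound involve the deviator $\bsi_h^{\tt d}$ rather than $\bsi_h$ itself. The constants $C_2$ and $C_3$ can be taken equal to $1$ (or $\sqrt{2}$ if one prefers to bound the sum of two contributions by a single multiple of $\|\vec{\bv}\|_{\mathbf{H}}$), and they are manifestly independent of the mesh parameter $h$.
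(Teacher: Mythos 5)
Your proposal is correct and follows exactly the route the paper intends: the paper's proof consists precisely of unfolding the definitions of $\F_{\varphi_h}$, $\A_{\varphi_h}$, $\C$, $\B$ (respectively their tilde counterparts), regrouping by test component, and applying Cauchy--Schwarz together with the $\mathrm{L}^{4/3}$--$\mathrm{L}^{4}$ H\"older inequality, with the trace-free test tensor $\br$ justifying the passage to deviatoric parts. Note only that your (correct) regrouping produces $\bsi_h^{\tt d}-2\mu(\varphi_h+\alpha)\bt_{h,sym}+\tfrac12(\bu_h\otimes\bu_h)^{\tt d}$, in agreement with the constitutive law and with the local estimator \eqref{eq:local-estimator-1}, so the minus sign in front of $\tfrac12(\bu_h\otimes\bu_h)^{\tt d}$ in the printed statement of the lemma is a typo and does not affect your argument.
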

 It is clear from their definitions of $\R_2$ and $\widetilde{\R}_2$ in \eqref{eq:funcR1R2} and \eqref{eq:funcRt1RT2}, as well as the second and fourth equations of \eqref{eq:FV_h}, that $\R_2(\bta_h) = \widetilde{\R}_{2}(\widetilde{\bta}_h) = 0$ for all $\bta_h \in \mathbb{H}_{h}^{\bsi}$ and $\widetilde{\bta}_h \in \widetilde{\mathbf{H}}_{h}^{\widetilde{\bsi}}$. Therefore,
\begin{equation}\label{eq:dif-helm-gen}
	\R_2(\bta) = \R_2(\bta - \bta_h) \quad \text{and} \quad \widetilde{\R}_{2}(\widetilde{\bta}) = \widetilde{\R}_{2}(\widetilde{\bta} - \widetilde{\bta}_h).
\end{equation}
These properties are used in Lemmas \ref{lem:est3} and \ref{lem:est4} to establish the bounds for the functionals.

\begin{lem}\label{lem:est3}
	There exists a positive constant $C_4>0,$  independent of $h$,  such that
	\begin{equation}\label{eq:est-R2}
			\big\|\R_2\big\|_{\mathbb{H}_{0}(\bdiv_{4/3};\Omega)'} \,\leq \disp
			C_4\ \Bigg\{ \sum_{T\in \Thb}   \overline{\boldsymbol{\Theta}}_{1,T}^{2}\Bigg\}^{1/2}\,,
	\end{equation}	
	where, for all $T\in\Thb$,  
		\begin{equation*}%\label{eq:est-R2-indicator}
			\disp   \overline{\boldsymbol{\Theta}}_{1,T}^{2} \,=\, h_T^{2-d/2} \big\|\bt_h - \nabla \bu_{h} \big\|^2_{0,T}\,+\,  h_T^2 \big\| \curlt{(\bt_h)}\big\|^{2}_{0,T} \,+\, \sum_{F\in\mathcal{F}_{h,T}} h_F \big\|\jump{\tncomp{(\bt_h)}} \big\|^2_{0,F}\,.
	\end{equation*}	
\end{lem}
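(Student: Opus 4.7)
\smallskip
\noindent\textbf{Proof proposal.} The plan is to combine three ingredients: a stable Helmholtz decomposition in the Banach-space setting (Lemma~\ref{Hdiv-decomp-1}), the Galerkin orthogonality encoded in \eqref{eq:dif-helm-gen}, and the approximation properties of the Raviart--Thomas and Cl\'ement operators introduced in Section~\ref{section41}. Given an arbitrary $\bta \in \mathbb{H}_0(\bdiv_{4/3};\Omega)$, we would first apply Lemma~\ref{Hdiv-decomp-1} to obtain $\bta = \boldsymbol{\eta} + \curlt(\boldsymbol{\xi})$ (respectively $\bta = \boldsymbol{\eta} + \curlv(\boldsymbol{\xi})$ when $d=2$), with $\boldsymbol{\eta}\in\mathbb{W}^{1,4/3}(\Omega)$, $\boldsymbol{\xi}\in\mathbb{H}^1(\Omega)$ (resp.\ $\mathbf{H}^1(\Omega)$), and $\|\boldsymbol{\eta}\|_{1,4/3,\Omega}+\|\boldsymbol{\xi}\|_{1,\Omega}\leq C_{Hel}\|\bta\|_{\bdiv_{4/3},\Omega}$. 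We would then define the discrete approximant $\bta_h := \mathbf{\Pi}_h^{\ell}(\boldsymbol{\eta}) + \curlt(\mathbf{I}_h(\boldsymbol{\xi})) + c\,\mathbb{I}$, with the constant $c$ chosen so that $\int_\Omega \tr\,\bta_h = 0$, so that $\bta_h \in \mathbb{H}_h^{\bsi}$. Since $\tr\,\bt_h = 0$ and $\bdiv(c\,\mathbb{I})=\mathbf{0}$, the identity correction contributes nothing to $\R_2$, and \eqref{eq:dif-helm-gen} yields the decomposition $\R_2(\bta) = \R_2(\boldsymbol{\eta}-\mathbf{\Pi}_h^{\ell}\boldsymbol{\eta}) + \R_2(\curlt(\boldsymbol{\xi}-\mathbf{I}_h\boldsymbol{\xi}))$.

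For the first contribution, elementwise integration by parts applied to $\int_\Omega \bu_h\cdot\bdiv(\boldsymbol{\eta}-\mathbf{\Pi}_h^{\ell}\boldsymbol{\eta})$ produces volume terms and interelement face contributions of the form $\int_F \bu_h\cdot(\boldsymbol{\eta}-\mathbf{\Pi}_h^{\ell}\boldsymbol{\eta})\bn$; the defining property \eqref{eq:rt-prop1} of $\mathbf{\Pi}_h^{\ell}$ makes every such face integral vanish since $\bu_h|_F \in \mathbf{P}_\ell(F)$, leaving $\R_2(\boldsymbol{\eta}-\mathbf{\Pi}_h^{\ell}\boldsymbol{\eta}) = \sum_{T\in\Thb}\int_T(\bt_h-\nabla\bu_h):(\boldsymbol{\eta}-\mathbf{\Pi}_h^{\ell}\boldsymbol{\eta})$. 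Cauchy--Schwarz on each element, the approximation estimate \eqref{eq:RT-approx-prop} (second line) producing the scaling $h_T^{1-d/4}$, a discrete Cauchy--Schwarz, and the sequence embedding $\ell^2(\Thb)\hookrightarrow\ell^{4/3}(\Thb)$ to reassemble $|\boldsymbol{\eta}|_{1,4/3,\Omega}$ together, yield
\begin{equation*}
|\R_2(\boldsymbol{\eta}-\mathbf{\Pi}_h^{\ell}\boldsymbol{\eta})| \;\leq\; C\,\Bigg\{\sum_{T\in\Thb} h_T^{2-d/2}\|\bt_h-\nabla\bu_h\|_{0,T}^2\Bigg\}^{1/2} |\boldsymbol{\eta}|_{1,4/3,\Omega}.
\end{equation*}

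For the curl contribution, the identity $\bdiv\,\curlt \equiv \mathbf{0}$ cancels the divergence term, reducing matters to $\int_\Omega \bt_h:\curlt(\boldsymbol{\xi}-\mathbf{I}_h\boldsymbol{\xi})$. Row-wise elementwise integration by parts gives volume terms $\int_T \curlt(\bt_h)\cdot(\boldsymbol{\xi}-\mathbf{I}_h\boldsymbol{\xi})$ and tangential jump terms $\int_F \jump{\tncomp(\bt_h)}\cdot(\boldsymbol{\xi}-\mathbf{I}_h\boldsymbol{\xi})$ over all $F \in \mathcal{F}_h$ (interior jumps together with single-sided boundary traces). The local Cl\'ement estimates \eqref{eq:clement-prop}, combined with the finite overlap of the patches $\Delta(T)$ and $\Delta(F)$, deliver
\begin{equation*}
|\R_2(\curlt(\boldsymbol{\xi}-\mathbf{I}_h\boldsymbol{\xi}))| \;\leq\; C\,\Bigg\{\sum_{T\in\Thb}\Big( h_T^2\|\curlt(\bt_h)\|_{0,T}^2 + \!\!\sum_{F\in\mathcal{F}_{h,T}}\! h_F\|\jump{\tncomp(\bt_h)}\|_{0,F}^2\Big)\Bigg\}^{1/2}\|\boldsymbol{\xi}\|_{1,\Omega}.
\end{equation*}

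Adding the two bounds, using the Helmholtz stability to replace $|\boldsymbol{\eta}|_{1,4/3,\Omega} + \|\boldsymbol{\xi}\|_{1,\Omega}$ by $C_{Hel}\|\bta\|_{\bdiv_{4/3},\Omega}$, and taking the supremum over $\bta$, yields \eqref{eq:est-R2}. The main technical difficulty I anticipate is the mismatch between the $L^2$-type structure of the local indicator $\overline{\boldsymbol{\Theta}}_{1,T}$ and the $W^{1,4/3}$ regularity inherent to the Banach-space Helmholtz decomposition; this is precisely what is resolved by the scaling $h_T^{1-d/4}$ in \eqref{eq:RT-approx-prop} together with the sequence embedding $\ell^2\hookrightarrow\ell^{4/3}$, which must be invoked only after the discrete Cauchy--Schwarz to avoid introducing mesh-cardinality factors.
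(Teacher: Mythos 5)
Your proposal is correct and follows essentially the same route as the paper's own proof: the stable Helmholtz decomposition of Lemma \ref{Hdiv-decomp-1}, the discrete approximant $\bta_h=\mathbf{\Pi}_h^{\ell}(\boldsymbol{\eta})+\curlt(\mathbf{I}_h(\boldsymbol{\xi}))+c\,\mathbb{I}$ with $\R_2(c\,\mathbb{I})=0$, the Raviart--Thomas facet property \eqref{eq:rt-prop1} to annihilate the interelement terms, the scaling $h_T^{1-d/4}$ from \eqref{eq:RT-approx-prop}, and the Cl\'ement estimates \eqref{eq:clement-prop} with finite patch overlap for the curl part, exactly as in the paper. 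The only cosmetic point is the direction of the sequence-norm inequality: what you need after the discrete Cauchy--Schwarz is $\|\cdot\|_{\ell^2}\le\|\cdot\|_{\ell^{4/3}}$ (i.e. $\ell^{4/3}\hookrightarrow\ell^{2}$), not the reverse as written, but your placement of this step (after the discrete Cauchy--Schwarz, so no mesh-cardinality factor appears) is precisely right.
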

\begin{proof}
  We first address the three-dimensional case. According to Lemma \ref{Hdiv-decomp-1} part (b), for any $\bta\in\mathbb{H}_{0}(\bdiv_p;\Omega)$ there exists $\boldsymbol{\eta} \in \mathbb{W}^{1,p}(\Omega)$ and $\boldsymbol{\xi} \in \mathbb{H}^1(\Omega)$ such that $\bta = \boldsymbol{\eta}  + \curlt{(\boldsymbol{\xi})}$. Let us then define $\bta_h$ as the discrete Helmholtz decomposition   
  \begin{equation}\label{eq:disc-helm-R2}
  	\bta_{h}=\mathbf{\Pi}_{h}^{\ell}(\boldsymbol{\eta})\,+\,\curlt{(\mathbf{I}_h(\boldsymbol{\xi}))}\,+\,c\,\mathbb{I}\in \mathbb{H}_{h}^{\bsi}\,\quad \mbox{where}\quad c = -\dfrac{1}{3|\Omega|}\int_{\Omega} \tr (\mathbf{\Pi}_{h}^{\ell}(\boldsymbol{\eta})\,+\,\curlt{(\mathbf{I}_h(\boldsymbol{\xi}))}\,.
  \end{equation}
  Therefore, since $\R_2(c\,\mathbb{I})=0$ we find that
    \begin{equation}\label{eq:dif-helm-R2}
 \R_2(\bta-\bta_h)=\R_2\big( \boldsymbol{\eta} - \mathbf{\Pi}_{h}^{\ell}(\boldsymbol{\eta})\big) + \R_2\big( \curlt{\big(\boldsymbol{\xi} - \mathbf{I}_h(\boldsymbol{\xi})\big)} \big)\,.
  \end{equation}
  On the one hand, using the definition of $\R_2$ (cf. \eqref{eq:funcR1R2}) and integrating by parts  on each element, we get
  \begin{equation}\label{eq:estR2-1}
  	\begin{array}{l}
  		\disp 	\R_2\big( \boldsymbol{\eta} - \mathbf{\Pi}_{h}^{\ell}(\boldsymbol{\eta})\big)\,=\,\int_\Omega \bt_h:\big( \boldsymbol{\eta} - \mathbf{\Pi}_{h}^{\ell}(\boldsymbol{\eta})\big)\,+\, \sum_{T\in\Thb} \int_T \bu_h \cdot \bdiv\big( \boldsymbol{\eta} - \mathbf{\Pi}_{h}^{\ell}(\boldsymbol{\eta})\big)  \\[2ex]
  		\disp 	\int_\Omega \bt_h:\big( \boldsymbol{\eta} - \mathbf{\Pi}_{h}^{\ell}(\boldsymbol{\eta})\big)\,+\, \displaystyle \sum_{T\in\Thb} \Bigg\{ \sum_{F\in \mathcal{F}_{h,T}}\int_F  \bu_h \cdot \big( \boldsymbol{\eta} - \mathbf{\Pi}_{h}^{\ell}(\boldsymbol{\eta})\big)\bn\,-\, \int_T \nabla \bu_h : \big( \boldsymbol{\eta} - \mathbf{\Pi}_{h}^{\ell}(\boldsymbol{\eta})\big)\Bigg\}\,,
  	\end{array}
  \end{equation}
  However, since $\bu_h \in \mathbf{P}_{\ell}(F)$, the integrals on the facets in \eqref{eq:estR2-1} vanish due to the property of the Raviart-Thomas interpolator (cf. \eqref{eq:rt-prop1}). Combining this with the Cauchy-Schwarz inequality, the local approximation property of the Raviart-Thomas interpolator (cf. \eqref{eq:RT-approx-prop}), the discrete Hölder inequality, and the stability of the Helmholtz decomposition in Lemma \ref{Hdiv-decomp-1}, we obtain
  \begin{equation}\label{eq:estR2-2}
  	\begin{aligned}
  		\mathcal{R}_2\big( \boldsymbol{\eta} - \mathbf{\Pi}_{h}^{\ell}(\boldsymbol{\eta})\big) &= \sum_{T\in\Thb} \int_T \big(\bt_h - \nabla \bu_{h}\big) : \big( \boldsymbol{\eta} - \mathbf{\Pi}_{h}^{\ell}(\boldsymbol{\eta})\big)  \\[2ex]
  		&\leq \sum_{T\in\Thb} \big\|\bt_h - \nabla \bu_{h} \big\|_{0,T} \big\| \boldsymbol{\eta} - \mathbf{\Pi}_{h}^{\ell}(\boldsymbol{\eta}) \big\|_{0,T} \\[2ex]
  		&\leq C \sum_{T\in\Thb} h_T^{1-d/4} \big\|\bt_h - \nabla \bu_{h} \big\|_{0,T} \, | \boldsymbol{\eta} |_{1,4/3,T} \\[2ex]
%  		&\leq C \, \Bigg\{ \sum_{T\in\Thb} h_T^{2-d/2} \big\|\bta_h - \nabla \bu_{h} \big\|^2_{0,T} \Bigg\}^{1/2} \big| \boldsymbol{\eta} \big|_{1,4/3,\Omega} \\[2ex]
  		&\leq C\, \Bigg\{ \sum_{T\in\Thb} h_T^{2-d/2} \big\|\bt_h - \nabla \bu_{h} \big\|^2_{0,T} \Bigg\}^{1/2} \big| \bta \big|_{\bdiv;\Omega}
  	\end{aligned}
  \end{equation}
  On the other hand, again using the definition of $\R_2$ (cf. \eqref{eq:funcR1R2}), the fact that $\bdiv(\curlt{\big(\boldsymbol{\xi} - \mathbf{I}_h(\boldsymbol{\xi})\big)})=0$, an element-wise integration by parts formula, the Cauchy-Schwarz inequality in $\mathbb{L}^2(T)$ and $\mathbb{L}^2(F)$, the local approximation properties of the Clément interpolator (cf. \eqref{eq:clement-prop}), the discrete Cauchy-Schwarz inequality, the uniform boundedness of the number of triangles in the macroelement $\Delta(T)$ and $\Delta(F)$, and the stability of the Helmholtz decomposition in Lemma \ref{Hdiv-decomp-1}, we have
  \begin{equation}\label{eq:estR2-3}
  	\begin{array}{l}
  		\disp \R_2\big( \curlt{\big(\boldsymbol{\xi} - \mathbf{I}_h(\boldsymbol{\xi})\big)}\big)\,=\,\int_\Omega \bta_h: \curlt{\big(\boldsymbol{\xi} - \mathbf{I}_h(\boldsymbol{\xi})\big)}  \\[2ex]
  		\quad \disp =\sum_{T\in\Thb}  \int_T \curlt{(\bt_h)} : \big(\boldsymbol{\xi} - \mathbf{I}_h(\boldsymbol{\xi})\big) \,+\,\sum_{F\in\mathcal{F}_{h}} \int_{F}    \jump{\tncomp{(\bt_h)}} : \big(\boldsymbol{\xi} - \mathbf{I}_h(\boldsymbol{\xi})\big)\\[2ex]
  		\quad \disp \leq\sum_{T\in\Thb}  \big\| \curlt{(\bt_h)}\big\|_{0,T} \big\| \boldsymbol{\xi} - \mathbf{I}_h(\boldsymbol{\xi})\big\|_{0,T} \,+\,\sum_{F\in\mathcal{F}_{h}} \big\|\jump{\tncomp{(\bt_h)}} \big\|_{0,F} \big\|\boldsymbol{\xi} - \mathbf{I}_h(\boldsymbol{\xi})\big\|_{0,F} \\[2ex]
  		\quad \disp \leq \sum_{T\in\Thb} c_1 h_T \big\| \curlt{(\bt_h)}\big\|_{0,T} \big\| \boldsymbol{\xi}\big\|_{1,\Delta(T)} \,+\,\sum_{F\in\mathcal{F}_{h}} c_2 h_F^{1/2} \big\|\jump{\tncomp{(\bt_h)}} \big\|_{0,F} \big\|\boldsymbol{\xi} \big\|_{0,\Delta(F)}\\[2ex]
  	%	\quad \disp \leq C\,\Bigg\{  \sum_{T\in\Thb} h_T^2 \big\| \curlt{(\bta_h)}\big\|^{2}_{0,T} \,+\, \sum_{F\in\mathcal{F}_{h}} h_F \big\|\jump{\tncomp{(\bta_h)}} \big\|^2_{0,F} \Bigg\}^{1/2} \big\|\boldsymbol{\xi} \big\|_{1,\Omega} \\[2ex]
  		\quad \disp \leq C\,\Bigg\{  \sum_{T\in\Thb} h_T^2 \big\| \curlt{(\bt_h)}\big\|^{2}_{0,T} \,+\, \sum_{F\in\mathcal{F}_{h}} h_F \big\|\jump{\tncomp{(\bt_h)}} \big\|^2_{0,F} \Bigg\}^{1/2} \|\bta\|_{\bdiv_{4/3};\Omega}
  	\end{array}
  \end{equation}
 Finally, the estimate \eqref{eq:est-R2} for $\R_2$ follows from the definition of the dual norm in $\mathbb{H}_{0}(\bdiv_{4/3};\Omega)$, utilizing the identity \eqref{eq:dif-helm-gen} with $\bta_h$ as defined in \eqref{eq:disc-helm-R2}, which yields \eqref{eq:dif-helm-R2} and the corresponding bounds \eqref{eq:estR2-2} and \eqref{eq:estR2-3}. For the case $d=2$, it suffices to employ the corresponding Helmholtz decomposition provided by Lemma \ref{Hdiv-decomp-1}, part (a), which involves the curl of the vector-valued function $\boldsymbol{\xi} \in \mathbf{H}^1(\Omega)$ and a respective integration-by-parts formula used in \eqref{eq:estR2-3}.
\end{proof}

The following result gives the bound for $\widetilde{\R}_{2}.$ 

\begin{lem}\label{lem:est4}
	There exists a positive constant $C_5>0,$  independent of $h$,  such that
	\begin{equation*}%\label{eq:est-R2t}
		%	\begin{array}{c}
			\big\|\widetilde{\R}_2\big\|_{\mathbf{H}_{\Gamma}(\div_{4/3};\Omega)'} \,\leq \disp
			C_5\ \Bigg\{ \sum_{T\in \Thb}   \overline{\boldsymbol{\Theta}}_{2,T}^{2}\Bigg\}^{1/2}\,,
			%	\end{array}
	\end{equation*}	
	where, for all $T\in\Thb$,  
	\begin{equation*}%\label{eq:est-R2t-indicator}
		\disp \overline{\boldsymbol{\Theta}}_{2,T}^{2} \,=\, h_T^{2-d/2} \big\|\widetilde{\bt}_h - \nabla \varphi_{h} \big\|^2_{0,T} \,+\,  h_T^2 \big\| \curlv{(\widetilde{\bt}_h)} \big\|^{2}_{0,T} \,+\, \sum_{F\in\mathcal{F}^{\, \rm i}_{h,T}} h_F \big\|\jump{\tncomp{(\widetilde{\bt}_h)}} \big\|^2_{0,F}\,.
	\end{equation*}
\end{lem}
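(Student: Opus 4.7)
The plan is to mimic the strategy of Lemma~\ref{lem:est3}, replacing the tensorial Helmholtz decomposition provided by Lemma~\ref{Hdiv-decomp-1} with the vector-valued one given by Lemma~\ref{Hdiv-decomp-2}, applied with $\Gamma_{\mathrm N}=\Gamma$. Specifically, given $\widetilde{\bta}\in\mathbf{H}_\Gamma(\div_{4/3};\Omega)$, I would first write $\widetilde{\bta}=\widetilde{\boldsymbol{\eta}}+\curle(\widetilde{\boldsymbol{\xi}})$ (when $d=2$) or $\widetilde{\bta}=\widetilde{\boldsymbol{\eta}}+\curlv(\widetilde{\boldsymbol{\xi}})$ (when $d=3$), with $\widetilde{\boldsymbol{\eta}}\in\mathbf{W}^{1,4/3}(\Omega)$ and $\widetilde{\boldsymbol{\xi}}\in\mathrm{H}^1_\Gamma(\Omega)$ (resp.\ $\mathbf{H}^1_\Gamma(\Omega)$), together with the stability bound $\|\widetilde{\boldsymbol{\eta}}\|_{1,4/3,\Omega}+\|\widetilde{\boldsymbol{\xi}}\|_{1,\Omega}\le C_{\mathrm{Hel}}\|\widetilde{\bta}\|_{\div_{4/3};\Omega}$.

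Next, I would define the discrete counterpart $\widetilde{\bta}_h:=\Pi_h^{\ell}(\widetilde{\boldsymbol{\eta}})+\curle(\mathrm{I}_h^{\Gamma}(\widetilde{\boldsymbol{\xi}}))$ (and analogously in 3D), where $\mathrm{I}_h^{\Gamma}$ is the Cl\'ement-type quasi-interpolator with values in continuous piecewise polynomials vanishing on $\Gamma$, still enjoying the local approximation properties \eqref{eq:clement-prop}. I then need to verify that $\widetilde{\bta}_h\in\mathbf{H}^{\widetilde{\bsi}}_h$: the normal trace of $\Pi_h^{\ell}(\widetilde{\boldsymbol{\eta}})$ on a boundary facet is determined by $\int_F(\widetilde{\boldsymbol{\eta}}\cdot\bn)\xi$, which vanishes since $\widetilde{\boldsymbol{\eta}}\cdot\bn=0$ on $\Gamma$ (this holds because both $\widetilde{\bta}\cdot\bn$ and the tangential derivative of $\widetilde{\boldsymbol{\xi}}$ vanish on $\Gamma$), while the $\curle$ term contributes zero normal component on $\Gamma$ thanks to the zero trace of $\mathrm{I}_h^{\Gamma}(\widetilde{\boldsymbol{\xi}})$.

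Having established $\widetilde{\R}_2(\widetilde{\bta}_h)=0$, I invoke \eqref{eq:dif-helm-gen} and split $\widetilde{\R}_2(\widetilde{\bta}-\widetilde{\bta}_h)=\widetilde{\R}_2(\widetilde{\boldsymbol{\eta}}-\Pi_h^{\ell}(\widetilde{\boldsymbol{\eta}}))+\widetilde{\R}_2(\curle(\widetilde{\boldsymbol{\xi}}-\mathrm{I}_h^{\Gamma}(\widetilde{\boldsymbol{\xi}})))$. The first summand is handled by element-wise integration by parts exactly as in \eqref{eq:estR2-1}--\eqref{eq:estR2-2}: since $\varphi_h|_F\in\mathrm{P}_\ell(F)$ and the interior facet integrals vanish by \eqref{eq:rt-prop1}, while the boundary facet integrals vanish because $(\widetilde{\boldsymbol{\eta}}-\Pi_h^{\ell}(\widetilde{\boldsymbol{\eta}}))\cdot\bn=0$ on $\Gamma$, only the volumetric term $\sum_T\int_T(\widetilde{\bt}_h-\nabla\varphi_h)\cdot(\widetilde{\boldsymbol{\eta}}-\Pi_h^{\ell}(\widetilde{\boldsymbol{\eta}}))$ remains, producing the $h_T^{2-d/2}\|\widetilde{\bt}_h-\nabla\varphi_h\|_{0,T}^2$ term after using \eqref{eq:RT-approx-prop} and the stability of the decomposition. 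For the second summand I proceed as in \eqref{eq:estR2-3}, using $\div\,\curle=0$ and element-wise integration by parts; crucially, the jumps on boundary facets drop out because $\widetilde{\boldsymbol{\xi}}-\mathrm{I}_h^{\Gamma}(\widetilde{\boldsymbol{\xi}})$ vanishes on $\Gamma$, which is precisely the reason the resulting sum of face residuals in $\overline{\boldsymbol{\Theta}}_{2,T}$ runs only over $\mathcal{F}^{\,\rm i}_{h,T}$.

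I expect the main obstacle to be the careful bookkeeping at boundary facets. This requires both checking that $\widetilde{\bta}_h$ indeed has vanishing normal trace (which ties the RT interpolator together with the homogeneous Dirichlet trace of $\mathrm{I}_h^{\Gamma}$) and ensuring that, for the curl branch, the boundary facet jumps contribute zero so that only $\mathcal{F}^{\,\rm i}_{h,T}$ enters the estimator. The remaining steps---Cauchy--Schwarz in $\mathbb{L}^2$ and on facets, the discrete H\"older inequality, the approximation bounds \eqref{eq:RT-approx-prop} and \eqref{eq:clement-prop}, uniform boundedness of the macroelement patches $\Delta(T),\Delta(F)$, and the Helmholtz stability---follow routinely, and combining everything by the definition of $\|\cdot\|_{\mathbf{H}_\Gamma(\div_{4/3};\Omega)'}$ yields the desired bound with some constant $C_5>0$ independent of $h$.
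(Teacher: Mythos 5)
Your proposal is correct and follows essentially the same route as the paper: the paper's proof simply repeats the arguments of the previous lemma using the Helmholtz decomposition of Lemma~\ref{Hdiv-decomp-2}, with the key observation that $\widetilde{\boldsymbol{\xi}}$ and its Cl\'ement interpolant vanish on $\Gamma$, so the boundary-facet contributions drop out and only $\mathcal{F}^{\,\rm i}_{h,T}$ enters the estimator. Your additional verification that $\widetilde{\bta}_h$ has zero normal trace is a fair (and slightly more explicit) elaboration of the same construction.
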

\begin{proof} It follows by  the same arguments as in the previous lemma, utilizing the Helmholtz decomposition  for the space $\mathbf{H}_{\Gamma}(\div_{4/3};\Omega)$ (cf. Lemma \ref{Hdiv-decomp-2}), and noting that the respective component $\widetilde{\boldsymbol{\xi}}$ has zero trace on $\Gamma$, and so does its discrete version $\mathrm{I}_h(\widetilde{\boldsymbol{\xi}})$. Consequently, after applying  integration by parts in the analogous estimate \eqref{eq:estR2-3}, the integrals on the boundary faces vanish.
\end{proof}

As a result, the reliability of   $\boldsymbol{\Theta}$ (cf. Theorem \ref{thm:reliability}) is ensured by Lemmas \ref{lem:est1}, \ref{lem:est2}, \ref{lem:est3}, and \ref{lem:est4}.

\subsection{Residual-based a posteriori error estimator efficiency}\label{section43}

The objective of this section is to show the efficiency property of the error indicator $\boldsymbol{\Theta}$ defined by \eqref{eq:global-estimator}-\eqref{eq:local-estimator-1}-\eqref{eq:local-estimator-2}. The result is presented as follows.
\begin{thm}\label{thm:effi}
		There exists $C_{\rm eff} > 0$, independent of $h$,  such that $\mathbf{\Theta}$ defined by \eqref{eq:global-estimator} satisfies
	\begin{equation*}%\label{eq:eff-estimation}
	C_{\rm eff} \,\mathbf{\Theta}\,\leq\,	\|(\vec{\bu},\bsi)-(\vec{\bu}_h,\bsi_{h})\|_{ \mathbf{H} \times \mathbb{H}_0(\div_{4/3}; \Omega)}+\|(\vec{\varphi},\widetilde{\bsi})-(\vec{\varphi}_h,\widetilde{\bsi}_{h})\|_{\widetilde{\mathbf{H}} \times \mathbf{H}_\Gamma(\div_{4/3}}.
	\end{equation*}	
\end{thm}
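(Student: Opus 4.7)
My plan is to establish efficiency locally by bounding each residual appearing in $\overline{\boldsymbol{\Theta}}_T^2$ and $\widehat{\boldsymbol{\Theta}}_T^{4/3}$ in terms of the $T$-restricted (or patch-restricted) errors of the primary unknowns, and then summing over $T\in\Thb$. The key observation is that the continuous first-order system $\eqref{eqn:auxcomp}$ forces every residual in the estimator to vanish identically at the continuous level, so each discrete residual can be rewritten as the difference between the continuous identity and its discrete counterpart. The local contributions split naturally into two families: algebraic residuals (the constitutive and equilibrium laws), handled by direct H\"older/triangle-inequality arguments; and differential residuals (the gradient identities and their curl/jump consequences), which require the classical Verf\"urth-type bubble-function localization via $\eqref{eq:prop-bbf}$.

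\textbf{Algebraic residuals.} For the constitutive residual $\bsi_h^{\tt d}-2\mu(\varphi_h+\alpha)\bt_{h,sym}+\frac{1}{2}(\bu_h\otimes\bu_h)^{\tt d}$, I would subtract the exact identity $\bsi^{\tt d}=2\mu(\varphi+\alpha)\bt_{sym}-\frac{1}{2}(\bu\otimes\bu)^{\tt d}$ from $\eqref{eqn:auxcomp}$ and apply the Lipschitz property $\eqref{eqn:mu-lips}$, the bound $\eqref{eqn:mu-bound}$, and a H\"older $L^4$-$L^4$-$L^2$ argument on the convective term, exploiting the uniform a priori bounds on $\|\bu\|_{0,4,\Omega}$ and $\|\bu_h\|_{0,4,\Omega}$. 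This produces local control by $\|\bsi-\bsi_h\|_{0,T}+\|\bt-\bt_h\|_{0,T}+\|\varphi-\varphi_h\|_{0,4,T}+\|\bu-\bu_h\|_{0,4,T}$. The same scheme treats the residual involving $\widetilde{\bsi}_h$, $\widetilde{\bt}_h$, $\varphi_h$ and $\bu_h$. For the two contributions to $\widehat{\boldsymbol{\Theta}}_T^{4/3}$, I subtract the exact equilibrium equations in $\eqref{eqn:auxcomp}$ and bound the differences in $L^{4/3}(T)$ via H\"older $L^{4/3}$-$L^2$-$L^4$, getting local control by $\|\bdiv(\bsi-\bsi_h)\|_{0,4/3,T}$ and $\|\div(\widetilde{\bsi}-\widetilde{\bsi}_h)\|_{0,4/3,T}$ plus cross terms in the primary unknowns that are already absorbed by the global error.

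\textbf{Differential residuals and main obstacle.} For $h_T^{2-d/2}\|\bt_h-\nabla\bu_h\|_{0,T}^2$, I would test with $\bchi_T:=\phi_T(\bt_h-\nabla\bu_h)$ extended by zero, use $\eqref{eq:prop-bbf}$, insert $\bt=\nabla\bu$, and integrate by parts on $T$ (the boundary term vanishes since $\phi_T$ is zero on $\partial T$) to get $\|\bt_h-\nabla\bu_h\|_{0,T}^2\lesssim \int_T(\bt_h-\bt)\!:\!\bchi_T-\int_T(\bu-\bu_h)\!\cdot\!\bdiv\bchi_T$. The polynomial inverse estimate $\|\bdiv\bchi_T\|_{0,T}\lesssim h_T^{-1}\|\bchi_T\|_{0,T}$ yields $\|\bt_h-\nabla\bu_h\|_{0,T}\lesssim \|\bt-\bt_h\|_{0,T}+h_T^{-1}\|\bu-\bu_h\|_{0,T}$, and the continuous embedding $\|\bu-\bu_h\|_{0,T}\lesssim h_T^{d/4}\|\bu-\bu_h\|_{0,4,T}$ produces exactly the weight $h_T^{2-d/2}$ so that the right-hand side matches the global error norm. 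The analogous argument handles $\widetilde{\bt}_h-\nabla\varphi_h$. The curl residuals $h_T^2\|\curlt(\bt_h)\|_{0,T}^2$ and $h_T^2\|\curlv(\widetilde{\bt}_h)\|_{0,T}^2$ exploit the identities $\curlt(\bt)=\mathbf{0}$, $\curlv(\widetilde{\bt})=\mathbf{0}$ (since both are gradients) together with a bubble test of the same flavour, yielding bounds by $\|\bt-\bt_h\|_{0,T}$ and $\|\widetilde{\bt}-\widetilde{\bt}_h\|_{0,T}$. For the tangential-jump terms, face bubbles $\phi_F$ extended to the patch $\omega_F$, combined with $\jump{\tncomp(\bt)}=\mathbf{0}$ on interior faces (valid for gradients of $\mathbf{H}^1$-functions), give the classical bound $h_F^{1/2}\|\jump{\tncomp(\bt_h)}\|_{0,F}\lesssim \|\bt-\bt_h\|_{0,\omega_F}+h_T\|\curlt(\bt_h)\|_{0,\omega_F}$, with an identical treatment for $\jump{\tncomp(\widetilde{\bt}_h)}$ restricted to internal faces (consistent with the sum over $\mathcal{F}_{h,T}^{\,\rm i}$ appearing in $\overline{\boldsymbol{\Theta}}_T^2$ for the concentration piece). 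The principal bookkeeping challenge is to track the mixed $L^2$-$L^4$-$L^{4/3}$ scalings consistently across all the terms: the exponent $4/3$ on $\widehat{\boldsymbol{\Theta}}_T$ and the weight $h_T^{2-d/2}$ on the gradient residuals are precisely what render the final efficiency constant $h$-independent while ensuring that, after using the finite-overlap property of the patches $\omega_F$ and summing over $T$, the right-hand side reconstructs exactly $\|(\vec{\bu},\bsi)-(\vec{\bu}_h,\bsi_h)\|_{\mathbf{H}\times\mathbb{H}_0(\bdiv_{4/3};\Omega)}+\|(\vec{\varphi},\widetilde{\bsi})-(\vec{\varphi}_h,\widetilde{\bsi}_h)\|_{\widetilde{\mathbf{H}}\times\mathbf{H}_\Gamma(\div_{4/3};\Omega)}$.
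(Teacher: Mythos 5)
Your proposal is correct and follows essentially the same route as the paper: the constitutive and equilibrium residuals are bounded by direct H\"older/Lipschitz arguments, the curl and tangential-jump terms by the classical bubble-function localization, and the new weighted gradient residuals $h_T^{1-d/4}\|\bt_h-\nabla\bu_h\|_{0,T}$ and $h_T^{1-d/4}\|\widetilde{\bt}_h-\nabla\varphi_h\|_{0,T}$ by exactly the element-bubble integration-by-parts argument the paper employs, your $L^2$ inverse estimate combined with elementwise H\"older producing the same $h_T^{-1+d/4}$ factor that the paper obtains via a $W^{1,4/3}$--$L^2$ inverse inequality. The only detail worth adding is that the $\bt_h$-jump term in the estimator is summed over all faces of $T$, so on boundary faces you must also use $\tncomp(\bt)=\tncomp(\nabla\bu)=\mathbf{0}$ on $\Gamma$ (a consequence of $\bu=\mathbf{0}$ there), exactly as in the references the paper cites for those terms.
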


 Most of term defining the a posteriori error indicator the error indicator $\boldsymbol{\Theta}$ appear in related works addressing a posteriori error analysis based on Banach spaces-based mixed finite element methods. The following result summarize the estimated of these terms in term of local error approximation.

\begin{lem}
	There exist positive constants $C_i$, $i\in\{1,2,...,10\}$, all of them independent of $h$, such that 
	{\small{
	\begin{itemize}
		\item[(a)]  $\Big\| \bsi_h^{\tt d} - 2\mu(\varphi_h + \alpha)\bt_{h,\mathrm{sym}} - \dfrac{1}{2} \left( \bu_h \otimes \bu_h \right)^{\tt d} \Big\|_{0,T} \leq C_1\,\Big\{ \|\bu-\bu_h\|_{0,4,T}\,+\, \|\bt-\bt_h\|_{0,T} +\|\varphi-\varphi_h\|_{0,4,T}\Big\}   $ 
		\item[(b)]   $\Big\| \widetilde{\bsi}_h - \kappa \widetilde{\bt}_h + \dfrac{1}{2} \varphi_h \bu_h + U(\varphi_h + \alpha) \widehat{\mathbf{e}}_d \Big\|_{0,T}\leq C_2\,\Big\{\| \widetilde{\bsi}- \widetilde{\bsi}_h\|_{\div_{4/3},T}+ \|\bu-\bu_h\|_{0,4,T}\,+\, \| \widetilde{\bt}- \widetilde{\bt}_h\|_{0,T} +\|\varphi-\varphi_h\|_{1,\Omega}\Big\}$ 
			\item[(c)]  $\Big\| \bdiv \, \bsi_h - \dfrac{1}{2} \bt_h \bu_h + \boldsymbol{f} - g[1+\gamma(\varphi_h+\alpha)]\widehat{\mathbf{e}}_d \Big\|_{0,4/3,T}\leq C_3\,\Big\{\| \bsi- \bsi_h\|_{\bdiv_{4/3},T}+ \|\bu-\bu_h\|_{0,4,T}\,+\, \|\bt-\bt_h\|_{0,T} +\|\varphi-\varphi_h\|_{0,4,T}\Big\}$
			\item[(d)]  $\Big\| \div \, \widetilde{\bsi}_h - \dfrac{1}{2} \widetilde{\bt}_h \cdot \bu_h \Big\|_{0,4/3,T}\leq C_4\,\Big\{\| \widetilde{\bsi}- \widetilde{\bsi}_h\|_{\bdiv_{4/3},T}+ \|\bu-\bu_h\|_{0,4,T}\,+\, \|\widetilde{\bt}-\widetilde{\bt}_h\|_{0,T} \Big\}$
			\item[(e)]   $ h_T^2 \Big\| \curlt(\bt_h) \Big\|_{0,T}^2 \leq C_5\, \|\bt-\bt_h\|_{0,T}$ and  $h_T^2 \Big\| \curlv(\widetilde{\bt}_h) \Big\|_{0,T}^2\leq C_6\, \|\bt-\bt_h\|_{0,T},$  for all $T\in\Thb.$
			\item[(g)]   $ h_F \Big\| \jump{\tncomp(\bt_h)} \Big\|_{0,F}^2\leq C_7\|\bt-\bt_h\|_{0,\omega_{F}}$ and  $h_F \Big\| \jump{\tncomp(\widetilde{\bt}_h)}\Big\|_{0,F}\leq C_8\, \|\widetilde{\bt}-\widetilde{\bt}_h\|_0,\omega_{F}, $ for all $F\in\mathcal{F}_h.$  
			\item[(h)] $h_F \Big\| \tncomp(\widetilde{\bt}_h)\Big\|_{0,F}\leq C_9\, \|\widetilde{\bt}-\widetilde{\bt}_h\|_{0,T_F}, $ for all $F\in\mathcal{F}_h^{\rm b},$ where $T_F$ is the element $T$ for which $F\in\mathcal{F}_{h,T}^{\rm b}$.  
	\end{itemize}
}}
	\end{lem}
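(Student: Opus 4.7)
The plan is to handle the estimates in two groups: the volumetric residuals coming from the constitutive and equilibrium equations, which are purely algebraic once the continuous identities are invoked, and the curl/jump residuals involving $\bt_h$ and $\widetilde{\bt}_h$, which require classical Verfürth-type localization via element and face bubble functions together with inverse inequalities.

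For parts (a)--(d), I would exploit the continuous equations \eqref{eqn:auxcomp}. For (a), writing $\bsi^{\tt d}=2\mu(\varphi+\alpha)\bt_{sym}-\frac{1}{2}(\bu\otimes\bu)^{\tt d}$, I would add and subtract the discrete version and split the resulting difference into three contributions: $\|\bsi^{\tt d}-\bsi_h^{\tt d}\|_{0,T}$, a viscosity term $\|2[\mu(\varphi+\alpha)\bt_{sym}-\mu(\varphi_h+\alpha)\bt_{h,sym}]\|_{0,T}$ handled via the Lipschitz property \eqref{eqn:mu-lips} combined with the regularity hypothesis \eqref{eq:HRA} to absorb the $\bt$-factor in an appropriate $L^{2p}$--$L^{2q}$ H\"older splitting, and a convective term $\frac{1}{2}\|(\bu\otimes\bu-\bu_h\otimes\bu_h)^{\tt d}\|_{0,T}$ that factorizes as $\|\bu\pm\bu_h\|_{0,4,T}\|\bu-\bu_h\|_{0,4,T}$, with the sum bounded a priori. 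Parts (b)--(d) follow the same add-and-subtract pattern, now using respectively the constitutive relation for $\widetilde{\bsi}$ and the two equilibrium equations; the key technical point is that the divergence residuals in (c)--(d) are measured in $L^{4/3}(T)$, so the $\bt\bu$ and $\widetilde{\bt}\cdot\bu$ terms must be split by H\"older in the form $L^2\cdot L^4$ to match the error norms.

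For the curl estimates (e), I would use that $\bt=\nabla\bu$ in the strong sense, hence $\curlt(\bt)=\mathbf{0}$ and $\curlv(\widetilde{\bt})=\mathbf{0}$ in $\Omega$. Therefore $\curlt(\bt_h)=\curlt(\bt_h-\bt)$, and multiplying by the tensor bubble $\phi_T(\curlt(\bt_h))$, using \eqref{eq:prop-bbf}, integration by parts on $T$ (boundary terms vanish since $\phi_T$ vanishes on $\partial T$), Cauchy--Schwarz and the inverse inequality $|\phi_T\mathbf{z}|_{1,T}\lesssim h_T^{-1}\|\mathbf{z}\|_{0,T}$ yield the desired bound. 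The same argument applies to $\curlv(\widetilde{\bt}_h)$. For the interior jump terms (g), I would employ the face bubble $\phi_F$ supported in the patch $\omega_F=T^+\cup T^-$, extend $\jump{\tncomp(\bt_h)}$ polynomially into $\omega_F$, and integrate by parts element-wise; the volume contributions are controlled by the already-estimated curl terms, while the face integral gives $h_F\|\jump{\tncomp(\bt_h)}\|_{0,F}^2\lesssim \|\bt-\bt_h\|_{0,\omega_F}^2$ after invoking standard scaled trace and inverse inequalities. Part (h) is analogous, using a boundary face bubble on $T_F$ and the fact that $\tncomp(\widetilde{\bt})=\widetilde{\bt}\cdot\bs$ (or $\widetilde{\bt}\times\bn$ in 3D) of the exact variable vanishes on $\Gamma$ thanks to $\widetilde{\bt}=\nabla\varphi$ together with the Robin condition absorbed into $\widetilde{\bsi}\cdot\bn=0$, so that $\tncomp(\widetilde{\bt}_h)=\tncomp(\widetilde{\bt}_h-\widetilde{\bt})$ on $\Gamma$.

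The main obstacle I anticipate is keeping track of the appropriate Lebesgue exponents in (c)--(d) and in the viscosity contribution to (a): the residuals naturally live in $L^{4/3}$ while the corresponding errors must be tested against $L^4$ factors, so the H\"older splitting must be chosen so that the regularity bound \eqref{eq:HRA} on $\bt$ (which lives in $\mathbb{H}^\varepsilon(\Omega)$) is used only at the global assembly stage, not locally, to keep the constants $h$-independent. In particular, for the viscosity piece in (a) I would pay attention that the factor $\|\bt\|_{\varepsilon,T}$ arising from the Lipschitz estimate does not degrade the efficiency constant: this is handled by estimating that contribution together with $\|\varphi-\varphi_h\|_{0,4,T}$ and absorbing the global $\varepsilon$-norm of $\bt$ into the a priori bound $C_{1,\varepsilon}$. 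Summing over $T\in\Thb$ and using the finite overlap of the patches $\omega_F$ then yields the global efficiency bound $C_{\mathrm{eff}}\,\boldsymbol{\Theta}\le \|(\vec\bu,\bsi)-(\vec\bu_h,\bsi_h)\|+\|(\vec\varphi,\widetilde\bsi)-(\vec\varphi_h,\widetilde\bsi_h)\|$ stated in Theorem~\ref{thm:effi}.
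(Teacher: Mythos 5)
Your treatment of (a)--(d) and (e)--(g) is essentially the paper's route: the paper disposes of (a)--(d) by citing \cite[Lemma 3.14]{GIRS-2022} (add--and--subtract against the continuous constitutive/equilibrium identities, H\"older with the $\mathrm{L}^2\cdot\mathrm{L}^4\to\mathrm{L}^{4/3}$ and $\mathrm{L}^4\cdot\mathrm{L}^4\to\mathrm{L}^2$ splittings, a priori bounds, and the same $\varepsilon$-regularity device \eqref{eq:HRA} for the viscosity difference), and of (e)--(g) by citing the classical bubble-function localization of \cite{BGGH-2006,GIRS-2022}, which is exactly the element-bubble/inverse-inequality and face-bubble arguments you describe. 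Your bookkeeping remarks on the exponents and on where $C_{1,\varepsilon}$ enters are consistent with how the paper handles the analogous term in \eqref{eq:est-u-e3-1}--\eqref{eq:est-u-e3}; note only that the term $\|\bsi-\bsi_h\|_{0,T}$ you (correctly) generate in (a) must appear on the right-hand side, as in the cited lemma.

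There is, however, a genuine flaw in your argument for (h). You claim that $\tncomp(\widetilde{\bt})$ vanishes on $\Gamma$ ``thanks to $\widetilde{\bt}=\nabla\varphi$ together with the Robin condition absorbed into $\widetilde{\bsi}\cdot\bn=0$''. This is false: $\tncomp(\widetilde{\bt})=\nabla\varphi\cdot\bs$ is the tangential derivative of $\varphi$ along $\Gamma$, whereas the condition $\widetilde{\bsi}\cdot\bn=0$ (equivalently $\kappa\,\partial_{\bn}\varphi-n_dU(\varphi+\alpha)=0$) constrains only the normal flux and says nothing about $\partial_{\bs}\varphi$, which does not vanish for a general solution. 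Consequently the identity $\tncomp(\widetilde{\bt}_h)=\tncomp(\widetilde{\bt}_h-\widetilde{\bt})$ on boundary faces, on which your boundary-bubble argument rests, is not available, and (h) cannot be obtained this way. The situation differs from the velocity variable, where $\bu=\mathbf{0}$ on $\Gamma$ does force the tangential components of $\bt=\nabla\bu$ to vanish, which is why boundary faces are included in the $\bt_h$-jump terms of \eqref{eq:local-estimator-1} but only interior faces appear in the $\widetilde{\bt}_h$-jump terms; the references invoked by the paper for (h)-type bounds (\cite{BGGH-2006,GMS-2010}) always subtract the tangential derivative of a known Dirichlet datum, which is absent here. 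To repair your write-up you would either restrict the boundary argument to $\bt_h$ (where the homogeneous Dirichlet condition supplies the vanishing exact tangential trace) or observe that the boundary term for $\widetilde{\bt}_h$ does not enter the estimator $\boldsymbol{\Theta}$, so that the efficiency of Theorem \ref{thm:effi} does not actually require (h) as you attempted to prove it.
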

\begin{proof}
Estimates (a)-(d) are detailed in \cite[Lemma 3.14]{GIRS-2022} for slightly different residual terms, utilizing their continuous counterparts, Hölder's inequality, and a priori bounds. Estimates (e) and (f) are found in \cite[Lemma 3.15]{GIRS-2022} (see also \cite[Lemmas 4.3 and 4.4]{BGGH-2006}). Finally, estimate (h) follows as in \cite[Lemma 3.15]{BGGH-2006} (see also \cite[Lemma 4.15]{GMS-2010}).	
\end{proof}

For the remaining terms, we have the following result.
\begin{lem}
	There exist positive constants $C_{10}$ and $C_{11},$ independent of $h$, such that 
	\begin{itemize}
		\item[(a)]   $ h_T^{1-d/4} \Big\| \bt_h - \nabla \bu_h \Big\|_{0,T} \leq C_{10}\Big\{   \|\bu-\bu_h\|_{0,4,\Omega}\,+\, \| \bt-\bt_h\|_{0,\Omega}\Big\}$
		\item[(b)]   $h_T^{1-d/4} \Big\| \widetilde{\bt}_h - \nabla \varphi_h \Big\|_{0,T}C_{11}\Big\{   \|\varphi-\varphi_h\|_{0,4,\Omega}\,+\, \| \widetilde{\bt}-\widetilde{\bt}_h\|_{0,\Omega}\Big\}$
	\end{itemize}
\end{lem}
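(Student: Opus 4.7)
The plan is to prove both estimates simultaneously using the classical bubble-function localization technique, since the two residuals $\bt_h - \nabla\bu_h$ and $\widetilde{\bt}_h - \nabla\varphi_h$ are structurally identical (each expresses the discrete departure from a gradient identity that holds exactly at the continuous level, cf.\ \eqref{eqn:auxcomp}). I will describe (a) in detail; (b) follows verbatim upon replacing $(\bt,\bu,\bt_h,\bu_h)$ by $(\widetilde{\bt},\varphi,\widetilde{\bt}_h,\varphi_h)$.

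Fix $T \in \Thb$ and let $\phi_T$ be the element bubble function introduced in \eqref{eq:prop-bbf}. Since $\bt_h|_T - \nabla\bu_h|_T$ is a matrix-valued polynomial on $T$, the equivalence \eqref{eq:prop-bbf} gives $\|\bt_h - \nabla\bu_h\|_{0,T}^{2} \le C\int_T \phi_T\,(\bt_h - \nabla\bu_h):(\bt_h - \nabla\bu_h)$. Setting $\bchi := \phi_T\,(\bt_h - \nabla\bu_h)$ and extending by zero, we have $\bchi \in \mathbb{H}_0^{1}(T)$ with $\bchi = \mathbf{0}$ on $\partial T$. An element-wise integration by parts, the identity $\bt = \nabla\bu$ at the continuous level (which yields $\int_T \bchi:\bt = -\int_T \bu\cdot\Div\,\bchi$ after another integration by parts), and a direct rearrangement produce
\begin{equation*}
\int_T \bchi:(\bt_h - \nabla\bu_h) \;=\; \int_T \bchi:(\bt_h - \bt) \;+\; \int_T (\bu_h - \bu)\cdot\Div\,\bchi.
\end{equation*}

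Next I bound the two terms on the right. The first is controlled by Cauchy--Schwarz and $\|\bchi\|_{0,T} \le \|\bt_h - \nabla\bu_h\|_{0,T}$. The second is estimated by H\"{o}lder's inequality with the dual pair $(4,4/3)$ and then by an inverse estimate on the polynomial $\bchi$: scaling to the reference element yields $\|\Div\,\bchi\|_{0,4/3,T} \le C\,h_T^{d/4}\|\Div\,\bchi\|_{0,T} \le C\,h_T^{d/4 - 1}\|\bchi\|_{0,T} \le C\,h_T^{d/4 - 1}\|\bt_h - \nabla\bu_h\|_{0,T}$. Combining the bounds and dividing by $\|\bt_h - \nabla\bu_h\|_{0,T}$ gives
\begin{equation*}
\|\bt_h - \nabla\bu_h\|_{0,T} \;\le\; C\left\{\|\bt - \bt_h\|_{0,T} \;+\; h_T^{d/4 - 1}\,\|\bu - \bu_h\|_{0,4,T}\right\}.
\end{equation*}
Multiplying by $h_T^{1-d/4}$ and using the uniform boundedness of $h_T^{1-d/4}$ (valid for $d\in\{2,3\}$ on a bounded domain), together with the trivial local-to-global majorations $\|\bt - \bt_h\|_{0,T} \le \|\bt - \bt_h\|_{0,\Omega}$ and $\|\bu - \bu_h\|_{0,4,T} \le \|\bu - \bu_h\|_{0,4,\Omega}$, yields (a).

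The main technical subtlety is the interplay between the $L^2$-structure produced by the bubble-function energy argument and the $L^{4/4/3}$-duality forced by the functional setting in which $\bu$ lives: the scaling factor $h_T^{d/4}$ converting $L^2$ to $L^{4/3}$ for polynomials is exactly what generates the weight $h_T^{1-d/4}$ on the left-hand side, so the exponent on the local indicator is dictated by the Banach-space framework rather than being an arbitrary choice. For (b), the argument is identical, with $\varphi\in\widetilde{\mathrm H}^1(\Omega)$ playing the role of $\bu$ and using $\widetilde{\bt} = \nabla\varphi$; no Robin-type boundary contribution enters because $\phi_T$ vanishes on $\partial T$, so the presence of $\Gamma$ is invisible at the element level.
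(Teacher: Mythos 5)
Your proposal is correct and follows essentially the same route as the paper: bubble-function localization via \eqref{eq:prop-bbf}, splitting $\bt_h-\nabla\bu_h$ using the continuous identity $\bt=\nabla\bu$, an element-wise integration by parts in which the boundary term vanishes because the bubble is zero on $\partial T$, Hölder with the dual pair $(4,4/3)$, and an inverse estimate producing the factor $h_T^{d/4-1}$ (your two-step Lebesgue-scaling plus $H^1$--$L^2$ inverse bound is just an unpacked version of the paper's single application of the inverse inequality with $l=1$, $p=4/3$, $m=0$, $q=2$), followed by multiplication by $h_T^{1-d/4}$. Part (b) is handled by the same argument in both cases, so no further comment is needed.
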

\begin{proof} Define $\boldsymbol{\chi}_T = \bt_h - \nabla \bu_h$ in $T$ and recall that $\bt = \nabla \bu$ in $\Omega.$ Thus, using the element bubble function properties as stated in \eqref{eq:prop-bbf}, local integration by parts, and the inverse inequality (cf. \cite[Lemma 1.138]{ErnG}) with $l=1$, $p=4/3$, $m=0$, and $q=2$, we find  
	\begin{align*}
			\|\boldsymbol{\chi}_T\|^2_{0,T}&\leq	C\|\psi_T^{1/2}\boldsymbol{\chi}_T\|^2_{0,T} = \disp C \int_{T}\psi_T\boldsymbol{\chi}_T: \big(\bt_h-\nabla\bu_h\big)\\[1ex]
		&\disp\leq C\Big\{ \int_{T} \psi_T\boldsymbol{\chi}_T:\big(\nabla\bu-\nabla\bu_h\big):+\int_{T} \psi_T\boldsymbol{\chi}_T:\big(\bt_h-\bt\big) \Big\} \\[1ex]
			&\disp\leq  C\Big\{ \int_{T} \bdiv\big(\psi_T\boldsymbol{\chi}_T\big)\cdot \big(\bu-\bu_h\big):+\int_{T}\psi_T\boldsymbol{\chi}_T : \big(\bt_h-\bt\big)\Big\}\\[1ex]
			&\disp\leq  C\Big\{ \|\psi_T\boldsymbol{\chi}_T\|_{1,4/3, \Omega} \|\bu-\bu_h\|_{0,4,\Omega}+ \|\psi_T\boldsymbol{\chi}_T\|_{0,\Omega} \| \bt-\bt_h\|_{0,\Omega}\Big\} \\[1ex]
			& \disp\leq  C\Big\{ h_T^{-1+d/4}\|\psi_T\boldsymbol{\chi}_T\|_{0, \Omega} \|\bu-\bu_h\|_{0,4,\Omega}+ \|\psi_T\boldsymbol{\chi}_T\|_{0,\Omega} \| \bt-\bt_h\|_{0,\Omega}\Big\} \,,
		\end{align*}
	After using the first inequality of \eqref{eq:prop-bbf}, simplifying and multiplying both sides by $h_T^{1-d/4}$, we obtain the desired result. Note that estimate (b) follows by the same arguments.	
\end{proof}

%%%%%%%%%%%%%%%%%%%%%%%%%%%%%%%%%%%%%%%%%%%%%%
\section{Numerical results}\label{section6}
This section presents a suite of numerical experiments illustrating the performance of the proposed fully mixed finite element formulation  \eqref{eq:FV_h}, corroborating the theoretical convergence rates established in Theorem \ref{thm:convergencerates}. To ensure the stability of the pair $(\mathbf{H}_h, \mathbb{H}_h^{\boldsymbol{\sigma}})$, as established in Theorem \ref{lem:properties-form-B-h}, the computations are performed on barycentric--refined meshes $\mathcal{T}_h^b$, derived from regular triangulations $\mathcal{T}_h$ of the domain $\Omega$. The discrete spaces for approximating $\bu, \bt, \boldsymbol{\sigma}, \varphi, \widetilde{\bt},$ and $\widetilde{\bsi}$ are constructed as specified in \eqref{eq:FEspaces}, suitable for order $\ell \geq d - 1$.% = 1$.

All computations were performed using the \texttt{FEniCS} finite element environment \cite{alnaes15} (legacy version 2019.1.0), with mesh generation and barycentric refinements carried out in \texttt{Gmsh} \cite{geuzaine09} (version 4.10.3). 
We first assess the theoretical convergence rates by means of manufactured smooth solutions in two and three space dimensions. 
We then investigate the performance of the residual-based a posteriori error estimator for non-smooth solutions on an L-shaped domain, comparing uniform and adaptive mesh refinement strategies. 
Next, we consider a steady bioconvective flow in a square cavity with inclusions, illustrating the robustness of the adaptive method in complex geometries. 
Finally, we study a time-dependent bioconvective benchmark with an Einstein--Batchelor-type viscosity law in a two-dimensional configuration, where the method is shown to capture the formation of bioconvective plume patterns.

The nonlinear systems are solved using the Newton--Raphson method with the SNES framework from PETSc and linear systems solved with the MUMPS direct solver. The Newton  initial guess is set to $(\bu_h^{(0)}, \varphi_h^{0}, \bt_h^{(0)}) = (\mathbf{0}, 0, \mathbf{0})$. Successive approximations are generated at each iteration, denoted as
\[\textbf{coeff}^{(m)}:=(\bu_{h}^{(m)},\bt_h^{(m)},\bsi_{h}^{(m)},\varphi_{h}^{(m)}, \widetilde{\bt}_h^{(m)},\widetilde{\bsi_h}^{(m)})\,\quad \forall\,m\geq1\]
where $\mathbf{coeff}^{m} \in \mathbb{R}^N$, with $N$ representing the total number of degrees of freedom across the finite element family $(\mathbf{H}^\bu_h, \, \mathbb{H}^\bt_h, \, \mathbb{H}^\bsi_h, \, \mathrm{H}^\varphi_h, \, \mathbf{H}^{\widetilde{\bt}}_h, \, \mathbf{H}^{\widetilde{\bsi}}_h)$.  Convergence is assessed using the relative error between two successive approximation vectors, $\mathbf{coeff}^{m+1}$ and $\mathbf{coeff}^{m}$, as follows:
\[\dfrac{\|\mathbf{coeff}^{m+1} - \mathbf{coeff}^{m}\|_{l^2}}{\|\mathbf{coeff}^{m+1}\|_{l^2}} < \texttt{tol},\]
where $\texttt{tol}$ denotes a predefined tolerance level set to $10^{-7}$ for both absolute and relative errors and $\|\cdot\|_{l^2}$ is the Euclidean norm in $\mathbb{R}^N$. The  zero-mean condition for $\tr\,\bsi$ and $\varphi$ is enforced via two real Lagrange multipliers. 

Convergence is typically achieved in four iterations for all refinement levels. All volume and surface integrals are evaluated using fourth-order quadrature rules. The a posteriori error indicators are computed locally in piecewise constant spaces and accumulated according to the residual decomposition defined in Section~\ref{section4}. 

For the convergence tests in both uniform and adaptive mesh refinement, barycentric (Alfeld) refinements are generated via the command \texttt{gmsh -barycentric\_refine file.msh} \texttt{-o file-bary.msh -format msh2}, and subsequently converted into \texttt{FEniCS}-compatible \texttt{.xml} meshes using \texttt{dolfin-convert}.

\noindent The individual errors associated with the main unknowns are calculated as
\[e(\bu):= \|\bu-\bu_h\|_{0,4;\Omega},\quad e(\bt):= \|\bt-\bt_h\|_{0;\Omega},\quad e(\bsi):= \|\bsi-\bsi_h\|_{\mathbf{div}_{4/3};\Omega}\]
\[e(\varphi):= \|\varphi-\varphi_h\|_{0,4;\Omega},\quad e(\widetilde{\bt}):= \|\widetilde{\bt}-\widetilde{\bt}_h\|_{0;\Omega},\quad e(\widetilde{\bsi}):= \|\widetilde{\bsi}-\widetilde{\bsi}_h\|_{\mathrm{div}_{4/3};\Omega}\]
and the error associated with the postprocessed pressure as
\[e(p):= \|p-p_h\|_{0,\Omega}\,.\]

Furthermore,  for any $\star \in \{\bu, \bt, \bsi, \varphi, \widetilde{\bt}, \widetilde{\bsi}, p\}$, we let $r(\star)$, we define the experimental convergence rate, $r(\star)$, as 
\[
r(\star):= -\,d\,\frac{\log\bigl(e(\star)/e'(\star)\bigr)}{\log(N/N')},
\]
where $N$ and $N'$ represent the total degrees of freedom of two successive meshes,  and $e(\star)$ and $e'(\star)$ are the corresponding errors associated with $\star$ on these meshes, respectively.

Finally, the effectivity index associated to the global error estimator $	\mathbf{\Theta}$ is defined as
\[
\mathrm{eff}\,=\,\dfrac{ e_\mathrm{tot} }{	\mathbf{\Theta}}\,, \quad \mbox{where} \quad   e_\mathrm{tot}\,=\,\left\{ { e}(\bu)^2\,
  \,+\, e(\bt) \, + { e}(\bsi)^2+\,{ e}(\varphi)^2\,+\, e(\widetilde{\bt}) \, + { e}(\widetilde{\bsi})^2\,\right\}^{1/2}\,.
\]

\begin{table}[t!]
\centering
\caption{Example 1. Convergence history for the 2D manufactured-solution test on barycentrically refined meshes with the finite element approximation $\mathbf{P}_1-\mathbb{P}_1-\mathbb{RT}_1-\mathrm{P}_1-\mathbf{P}_1-\mathbf{RT}_1 $ (with polynomial degree $\ell=1$). Here, $N$ represents the number of degrees of freedom associated with each barycentric-refined mesh $\Thb$.}
\label{tab:conv2D}
\resizebox{\textwidth}{!}{
\rowcolors{2}{white}{lightgray}
\begin{tabular}{rccccccccccccccccc}
\toprule
\rowcolor{white}
$N$ & $h$ & $e(\bu)$ & $r(\bu)$ & $e(\bt)$ & $r(\bt)$ & $e(\bsi)$ & $r(\bsi)$ &
$e(\varphi)$ & $r(\varphi)$ & $e(\widetilde{\bt})$ & $r(\widetilde{\bt})$ & $e(\widetilde{\bsi})$ & $r(\widetilde{\bsi})$ &
$e(p)$ & $r(p)$ & $\mathrm{eff}$ & $\mathrm{it}$\\
\midrule
962    & 1.414 & 4.64e--01 & --    & 3.88e+00 & --    & 5.55e+00 & --    & 4.22e--01 & --    & 2.04e+00 & --    & 1.53e+01 & --    & 6.54e--01 & --    & 0.53 & 5\\
3794   & 0.707 & 1.65e--01 & 1.49  & 1.85e+00 & 1.07  & 2.24e+00 & 1.31  & 1.08e--01 & 1.96  & 4.85e--01 & 2.07  & 2.62e+00 & 2.55  & 2.31e--01 & 1.50  & 0.43 & 4\\
15074  & 0.354 & 4.26e--02 & 1.95  & 9.08e--01 & 1.03  & 6.49e--01 & 1.79  & 2.72e--02 & 1.99  & 1.29e--01 & 1.91  & 6.50e--01 & 2.01  & 5.70e--02 & 2.02  & 0.37 & 4\\
60098  & 0.177 & 1.09e--02 & 1.97  & 3.98e--01 & 1.19  & 2.08e--01 & 1.64  & 6.94e--03 & 1.97  & 3.33e--02 & 1.96  & 1.61e--01 & 2.01  & 1.33e--02 & 2.10  & 0.36 & 4\\
240002 & 0.088 & 2.59e--03 & 2.07  & 1.43e--01 & 1.47  & 6.27e--02 & 1.73  & 1.74e--03 & 1.99  & 8.40e--03 & 1.99  & 4.02e--02 & 2.00  & 3.20e--03 & 2.06  & 0.36 & 4\\
959234 & 0.044 & 6.22e--04 & 2.06  & 4.21e--02 & 1.77  & 1.72e--02 & 1.87  & 4.36e--04 & 2.00  & 2.10e--03 & 2.00  & 1.00e--02 & 2.00  & 7.87e--04 & 2.03  & 0.35 & 4\\
\bottomrule
\end{tabular}}
\end{table}

\subsection*{Example 1: convergence for smooth solutions in 2D  and 3D}
We first assess the theoretical convergence rates on the square domain $\Omega=(-1,1)^2$ for the steady bioconvection system~\eqref{eqn:bio}. The parameters are fixed as
\[
 \widehat{\mathbf{e}}_{2}=(0,1),\quad \mu(\varphi) = e^{-\varphi}, \quad
\kappa = 1, \quad
g = 1, \quad
\gamma = \alpha = 0.5, \quad
U = 0.01.
\]
A manufactured smooth solution with the following primary unknowns 
\[
\bu(x,y) = \begin{pmatrix}\cos(\pi x)\sin(\pi y)\\ -\sin(\pi x)\cos(\pi y)\end{pmatrix}, \quad
p(x,y) = \sin(\pi x)\cos(\pi y), \quad
\varphi(x,y) = 1 + \sin(\pi x)\sin(\pi y),
\]
is used to derive analytic forms of the mixed variables, the forcing terms and boundary data, ensuring exact satisfaction of the governing equations. Non-homogeneous Dirichlet conditions are applied for $\bu$, and natural flux conditions for $\varphi$.
%\[(\kappa\nabla\varphi - \varphi U \widehat{\mathbf{e}}_{2})\cdot \bn = \alpha U\, \mathbf{e}\cdot\bn.\]

The mixed finite element spaces employ Raviart--Thomas elements $\mathrm{RT}_\ell$ for flux variables and discontinuous Galerkin elements $\mathrm{DG}_\ell$ for all remaining fields, with $\ell \ge d-1$. The meshes are successively refined barycentrically, yielding characteristic sizes $h$ from approximately $1.4$ down to $4.4\times 10^{-2}$. The total number of degrees of freedom ranges from fewer than $10^3$ to almost two million on the finest grid.

Tables \ref{tab:conv2D} and \ref{tab:conv2Dhigher} provide a summary of the errors and associated convergence rates for finite element approximations using  $\mathbf{P}_\ell-\mathbb{P}_\ell-\mathbb{RT}_\ell-\mathrm{P}_\ell-\mathbf{P}_\ell-\mathbf{RT}_\ell$ families for $\ell=1$ and $\ell=2$, respectively. Table \ref{tab:conv2D} particularly says that, when using $\ell=1,$ the error magnitudes exhibit quadratic convergence rates with respect to the mesh size $h$ for all the variables, aligning with the theoretical expectations  in Theorem \ref{thm:convergencerates}. The effectivity index of the residual-based estimator stabilises around $0.35$, and the nonlinear iteration count remains mesh-independent, achieving the prescribed tolerance   in four iterations. Additionally, Table \ref{tab:conv2Dhigher} illustrates that, by elevating the polynomial order to $\ell=2$, the method now achieves almost third-order convergence for all primal variables, while the stress and flux approximations approach the same asymptotic order $\mathcal{O}(h^3)$. This observation not only demonstrates superior convergence performance but also corroborates the theoretical predictions.  The residual-based error indicator remains asymptotically efficient (the effectivity index decreases slightly compared to the lower order case but still reaches a constant value), reflecting the higher resolution of the discrete solution relative to the estimator  for nonlinear coupled problems of this kind.

\begin{table}[t!]
\centering
\caption{Example 1. Convergence history and iteration count for the fully mixed approximation $\mathbf{P}_2-\mathbb{P}_2-\mathbb{RT}_2-\mathrm{P}_2-\mathbf{P}_2-\mathbf{RT}_2$ ($\ell=2$).}
\label{tab:conv2Dhigher}
\resizebox{\textwidth}{!}{
\rowcolors{2}{white}{lightgray}
\begin{tabular}{rccccccccccccccccc}
\toprule
\rowcolor{white}
$N$ & $h$ & $e(\bu)$ & $r(\bu)$ & $e(\bt)$ & $r(\bt)$ & $e(\bsi)$ & $r(\bsi)$ &
$e(\varphi)$ & $r(\varphi)$ & $e(\widetilde{\bt})$ & $r(\widetilde{\bt})$ & $e(\widetilde{\bsi})$ & $r(\widetilde{\bsi})$ &
$e(p)$ & $r(p)$ & $\mathrm{eff}$ & $\mathrm{it}$\\
\midrule
1946    & 1.414 & 1.88e--01 & --    & 1.81e+00 & --    & 1.46e+00 & --    & 1.58e--01 & --    & 8.26e--01 & --    & 1.93e+00 & --    & 3.46e--01 & --    & 0.15 & 4\\
7706    & 0.707 & 2.57e--02 & 2.87  & 5.03e--01 & 1.85  & 2.79e--01 & 2.39  & 1.21e--02 & 3.70  & 6.48e--02 & 3.67  & 1.02e--01 & 4.25  & 6.81e--02 & 2.35  & 0.13 & 4\\
30674   & 0.354 & 3.64e--03 & 2.82  & 1.44e--01 & 1.80  & 6.44e--02 & 2.11  & 1.38e--03 & 3.14  & 8.64e--03 & 2.91  & 1.39e--02 & 2.87  & 1.37e--02 & 2.31  & 0.17 & 4\\
122402  & 0.177 & 3.44e--04 & 3.40  & 2.42e--02 & 2.57  & 9.83e--03 & 2.71  & 1.69e--04 & 3.03  & 1.08e--03 & 2.99  & 1.62e--03 & 3.10  & 1.91e--03 & 2.85  & 0.19 & 4\\
489026  & 0.088 & 3.37e--05 & 3.35  & 3.24e--03 & 2.90  & 1.29e--03 & 2.94  & 2.10e--05 & 3.01  & 1.34e--04 & 3.02  & 1.91e--04 & 3.08  & 2.45e--04 & 2.96  & 0.19 & 4\\
1954946 & 0.044 & 3.84e--06 & 3.14  & 4.13e--04 & 2.97  & 1.63e--04 & 2.98  & 2.62e--06 & 3.00  & 1.66e--05 & 3.00  & 2.33e--05 & 3.03  & 3.08e--05 & 2.99  & 0.19 & 4\\
\bottomrule
\end{tabular}}
\end{table}

\begin{figure}[t!]
\centering
\includegraphics[width=0.244\textwidth]{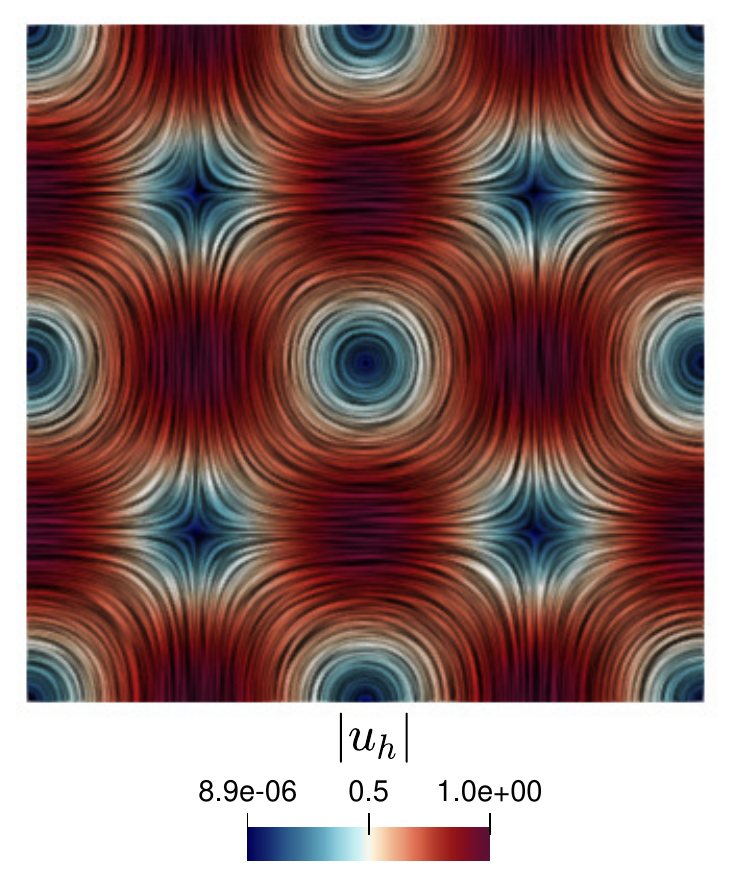}
\includegraphics[width=0.244\textwidth]{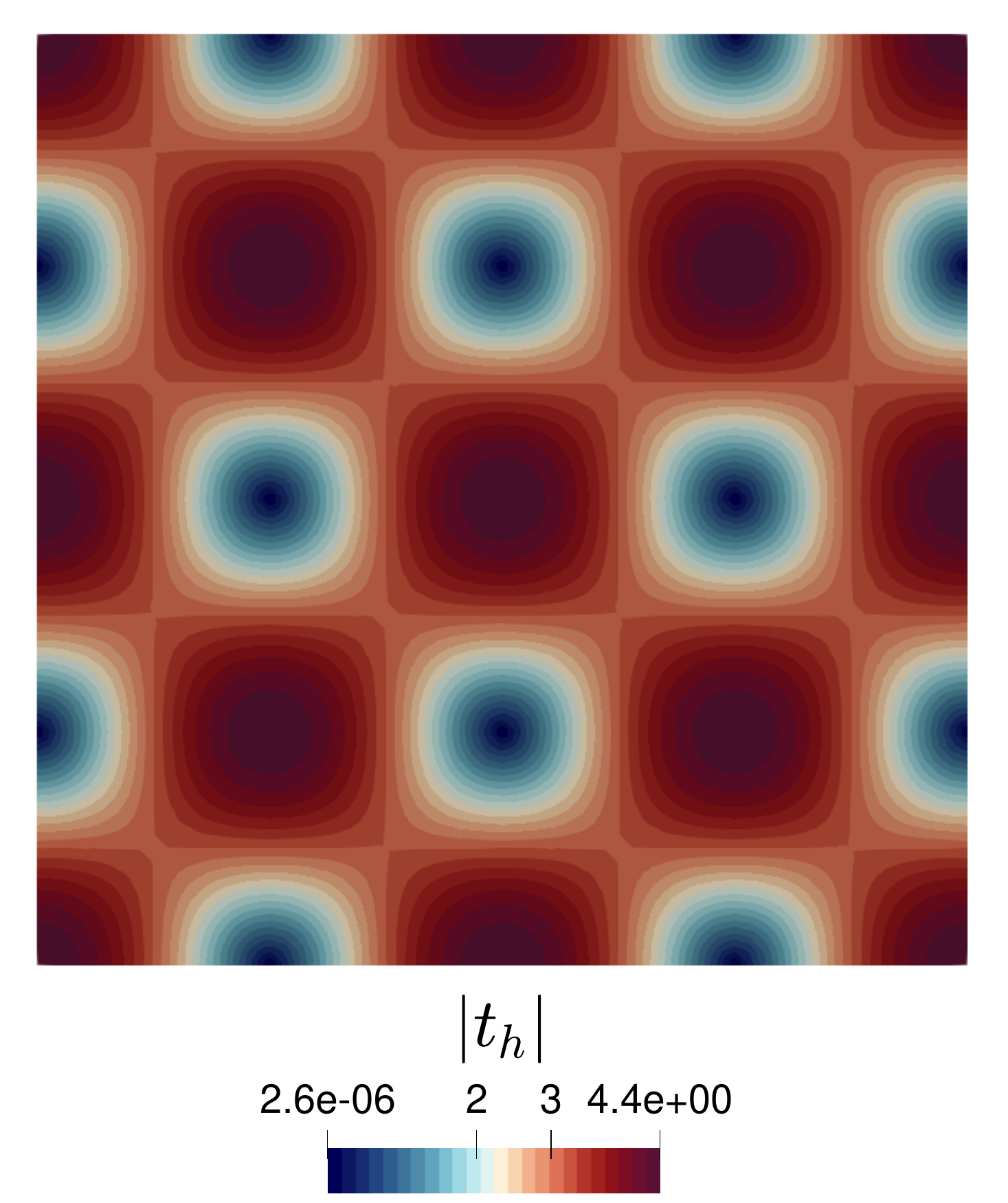}
\includegraphics[width=0.244\textwidth]{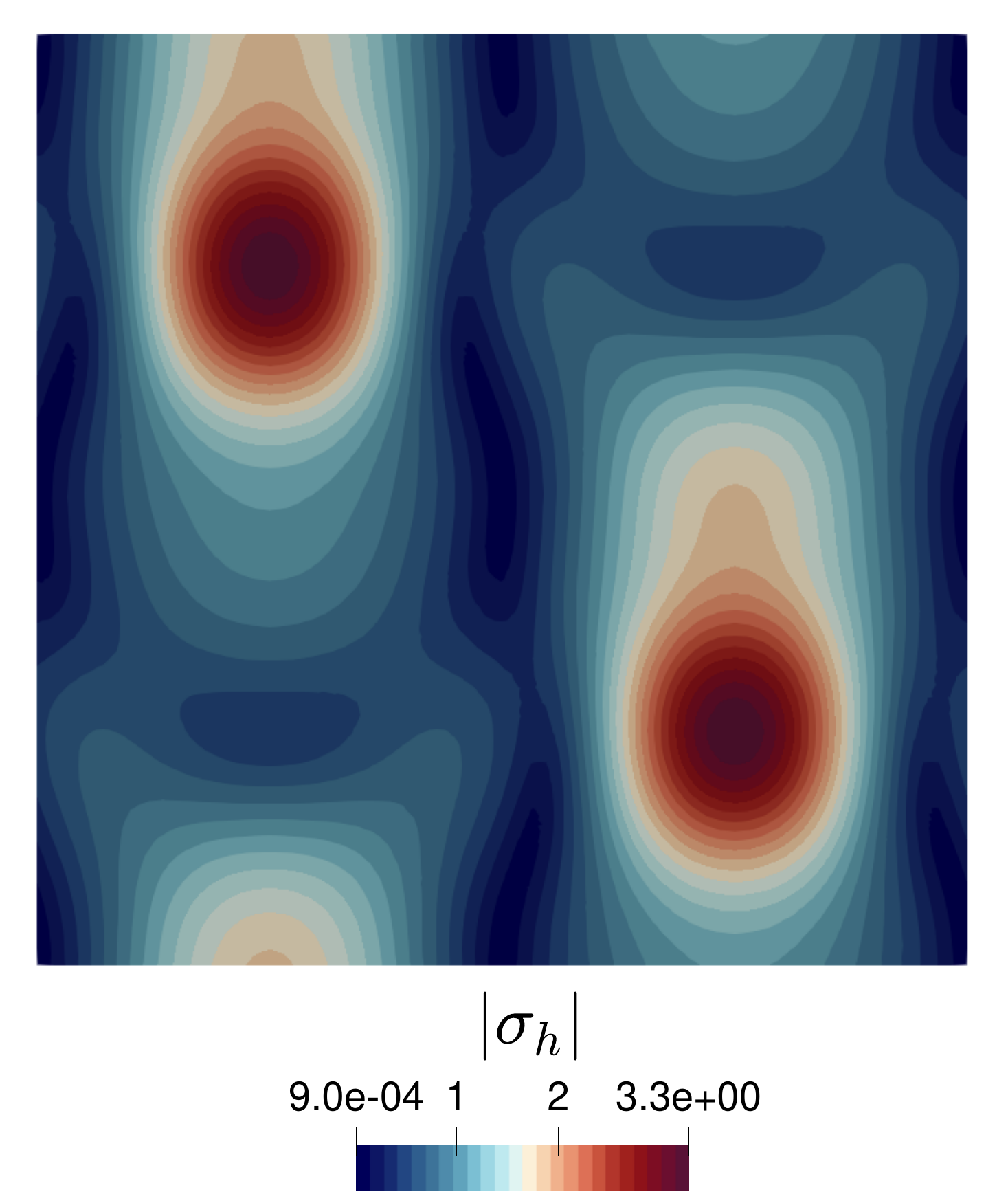}\\
\includegraphics[width=0.244\textwidth]{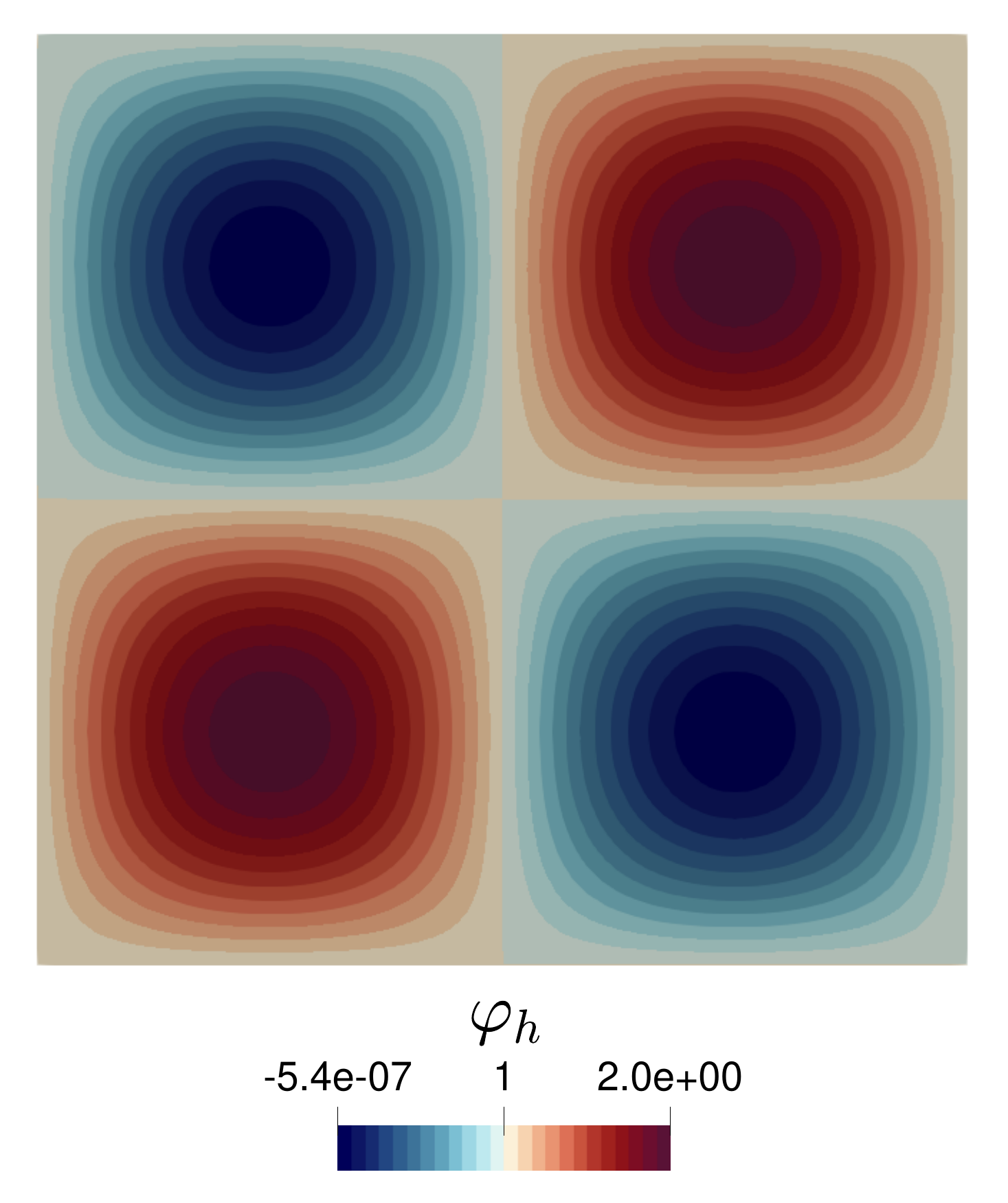}
\includegraphics[width=0.244\textwidth]{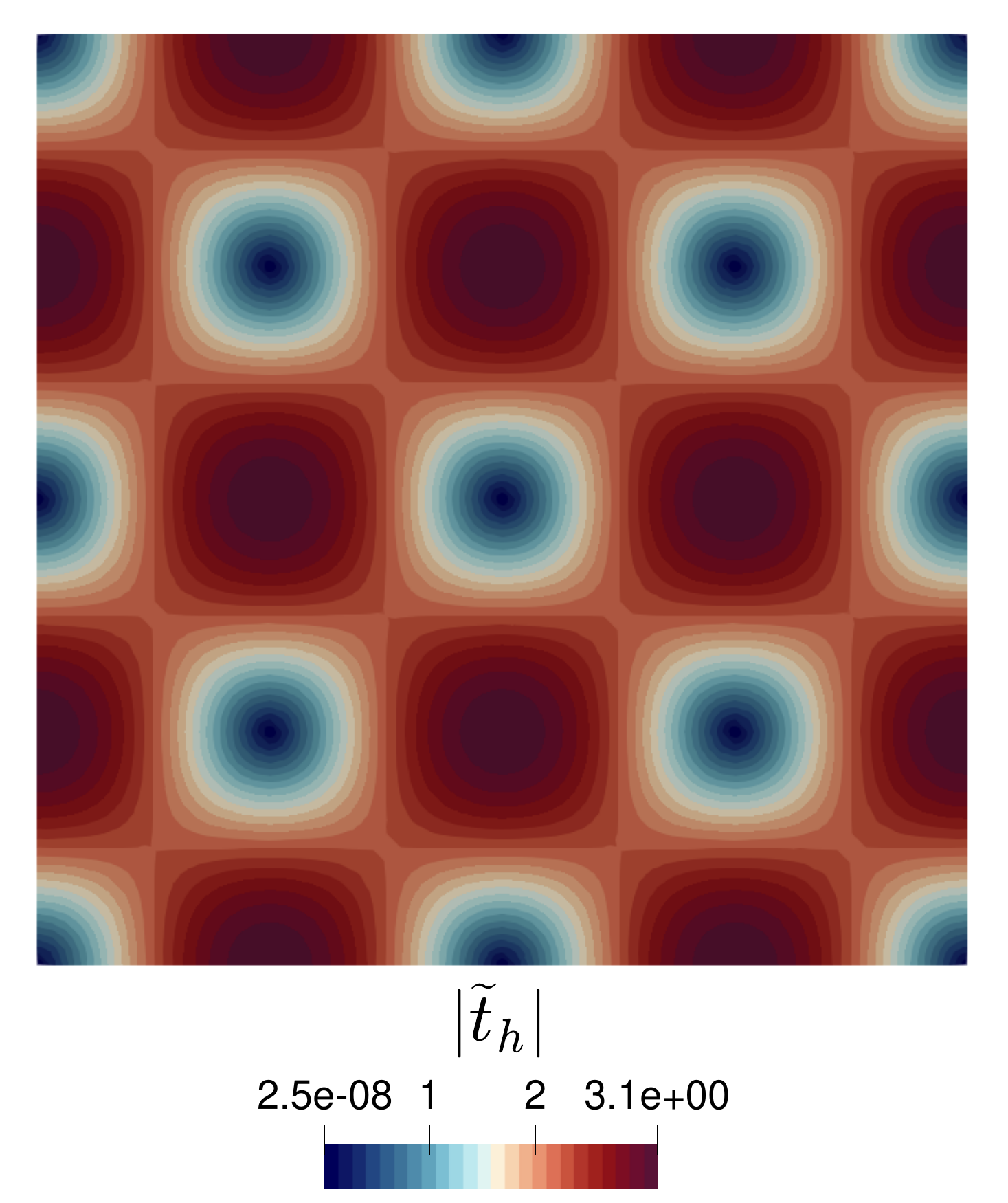}
\includegraphics[width=0.244\textwidth]{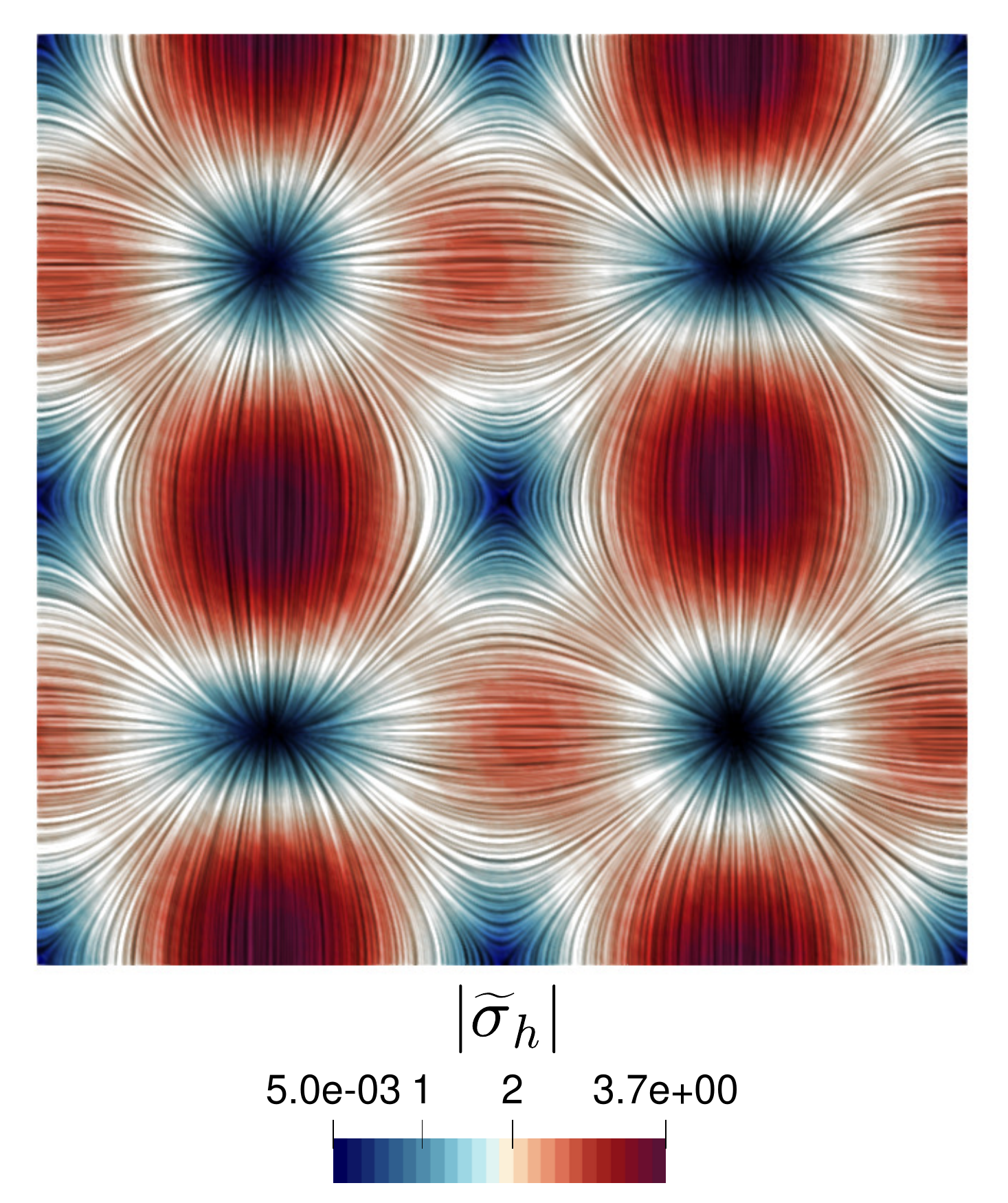}
\includegraphics[width=0.244\textwidth]{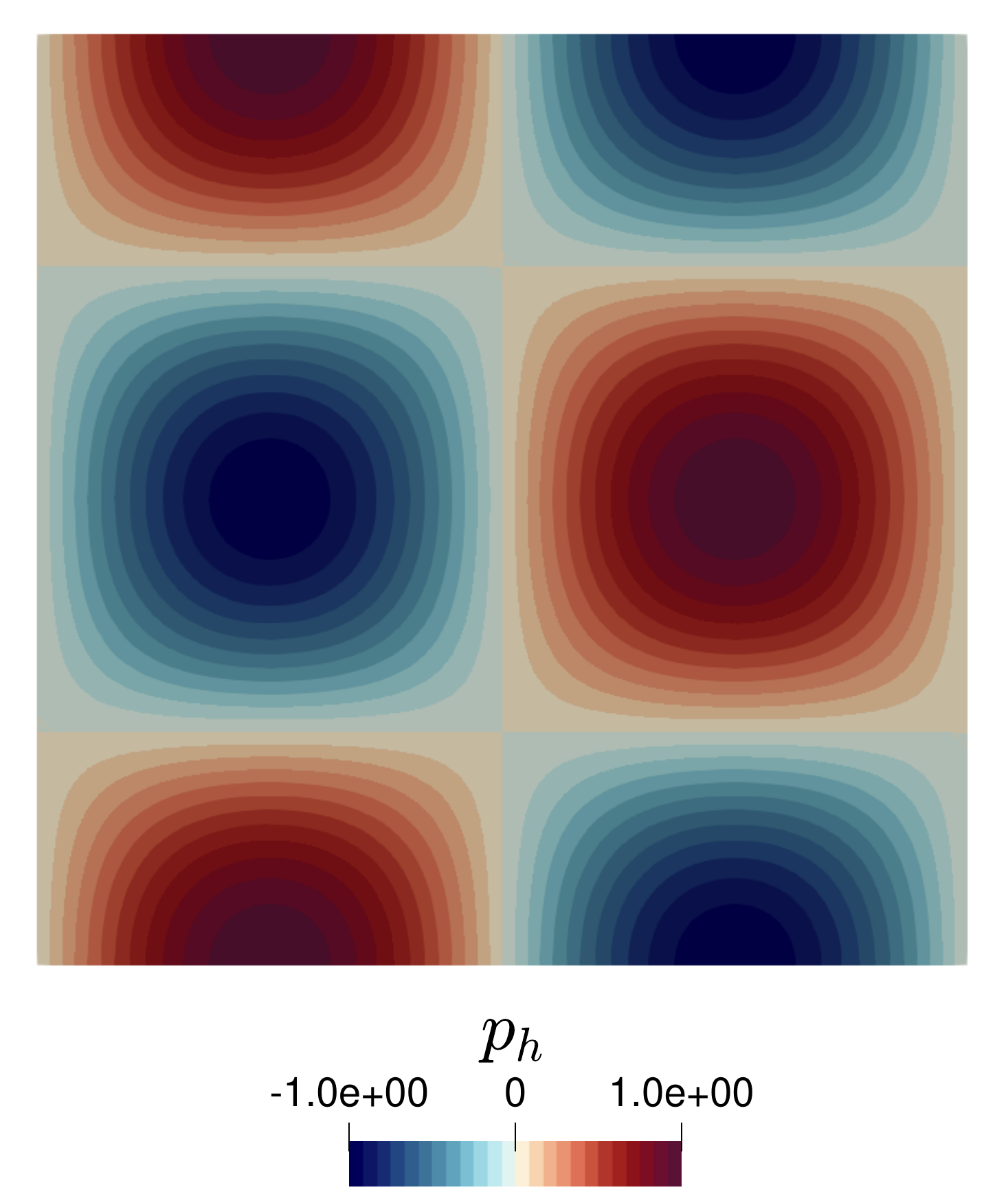}

\vspace{-4mm}
\caption{Example 1. Approximate solutions for the 2D manufactured problem with $\ell=2$: velocity line integral contours, strain rate magnitude, pseudostress magnitude, concentration profile, diffusive flux and total flux magnitudes,  and postprocessed pressure.}
\label{fig:convPlots}
\end{figure}

In Figure \ref{fig:convPlots}, we present the approximate solutions generated by our fully mixed technique on a barycentric--refined mesh with $N=1954946$ degrees of freedom.

\begin{figure}[t]
\centering
\includegraphics[width=0.245\textwidth]{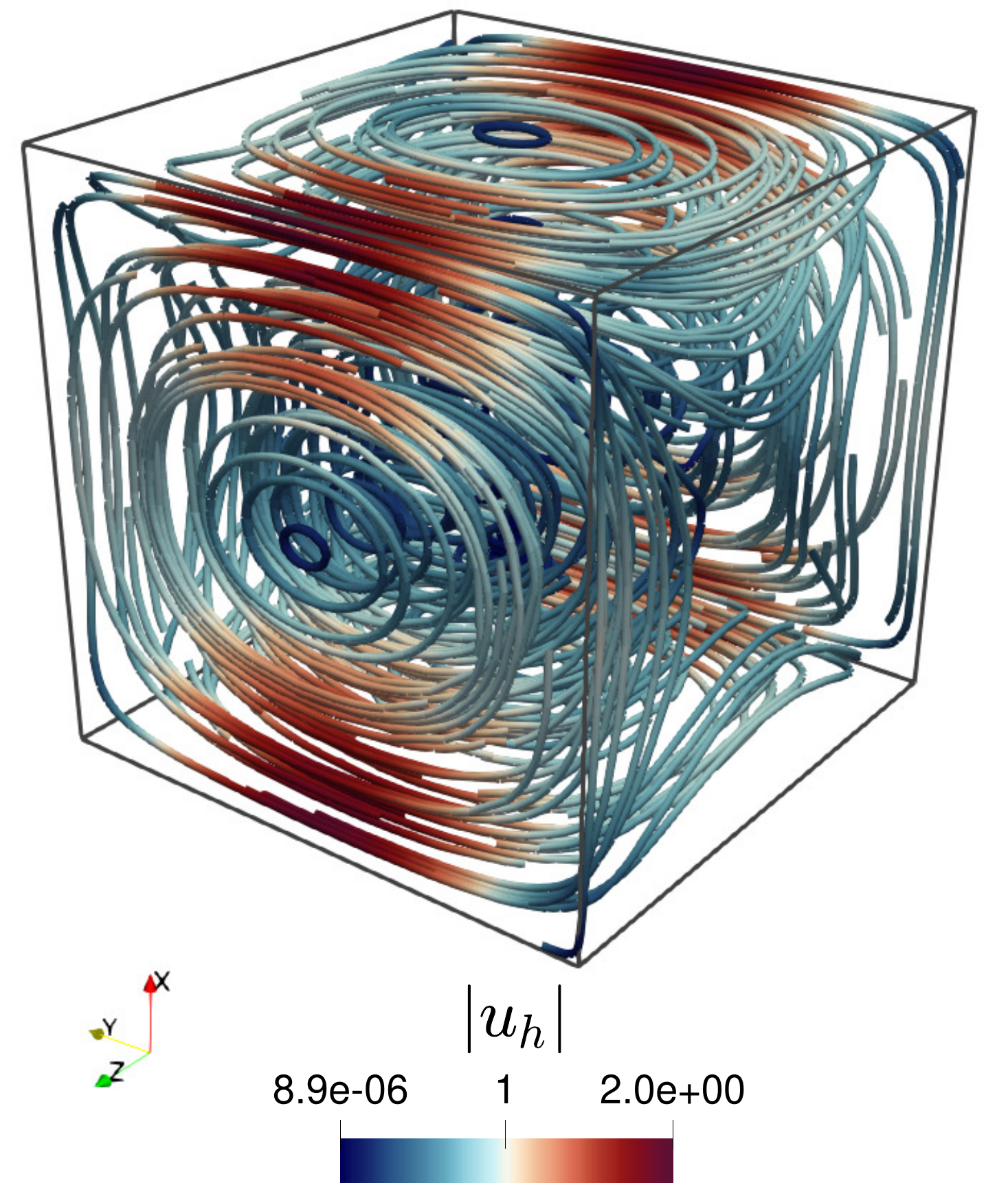}
\includegraphics[width=0.245\textwidth]{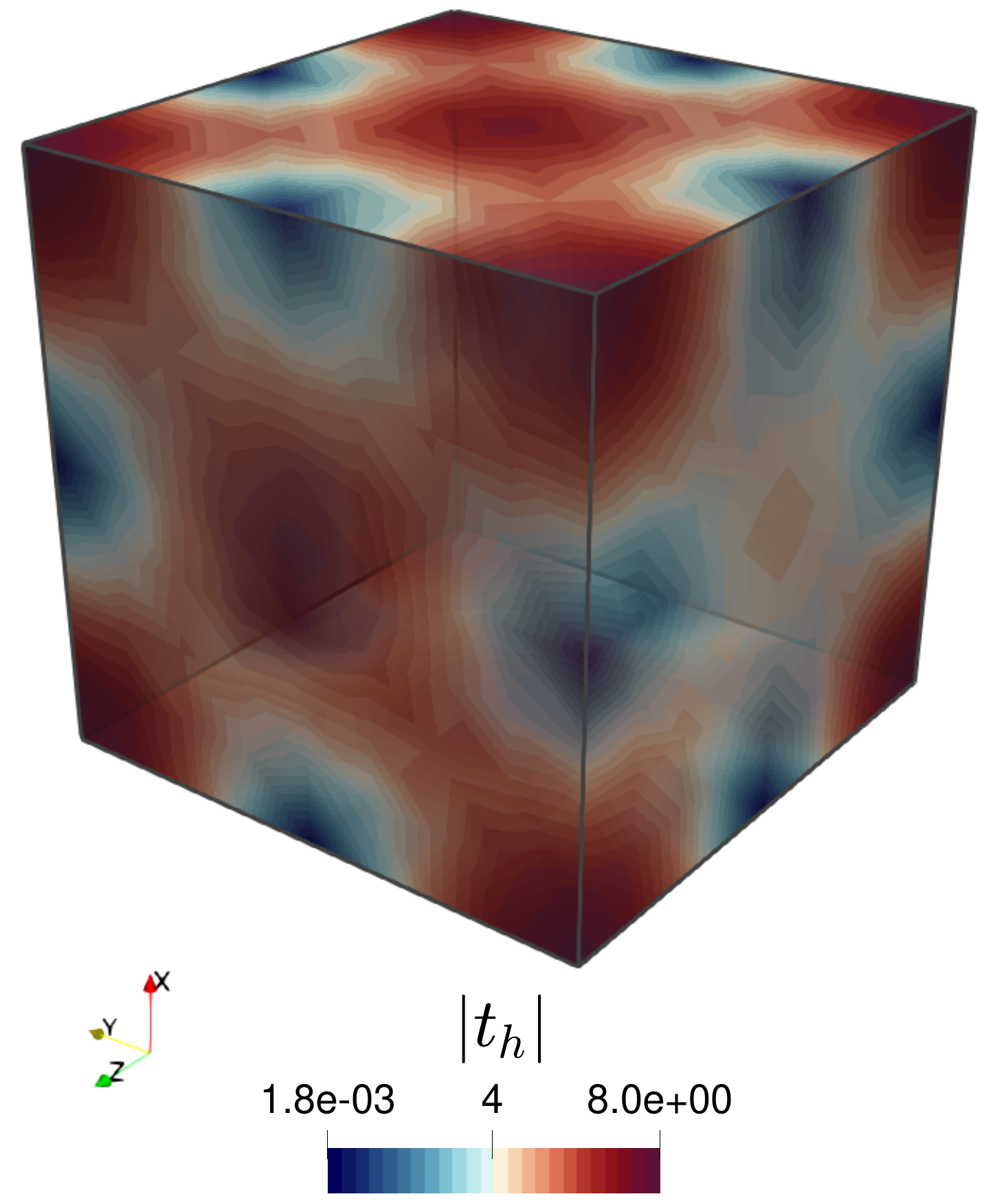}
\includegraphics[width=0.245\textwidth]{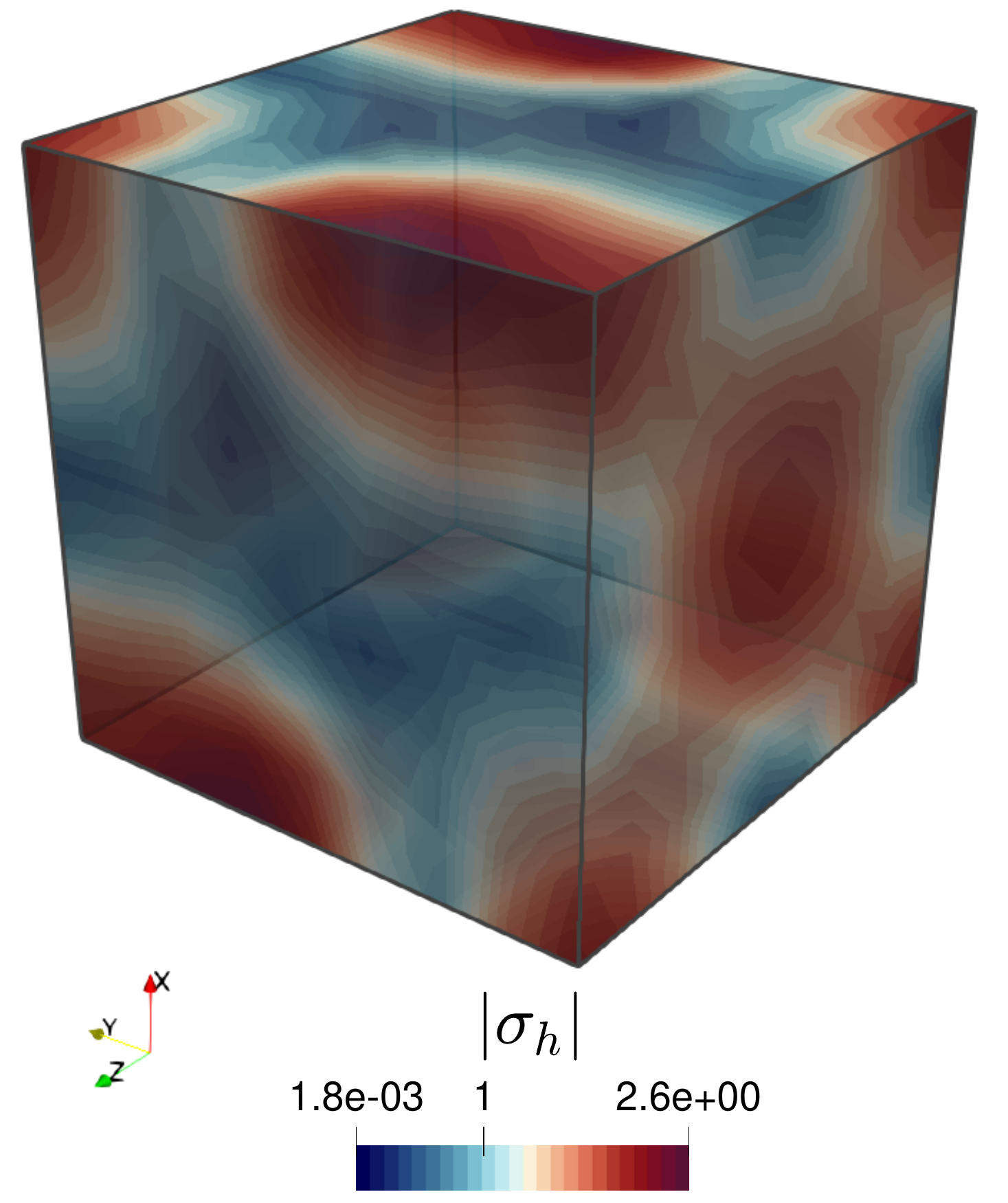}\\
\includegraphics[width=0.244\textwidth]{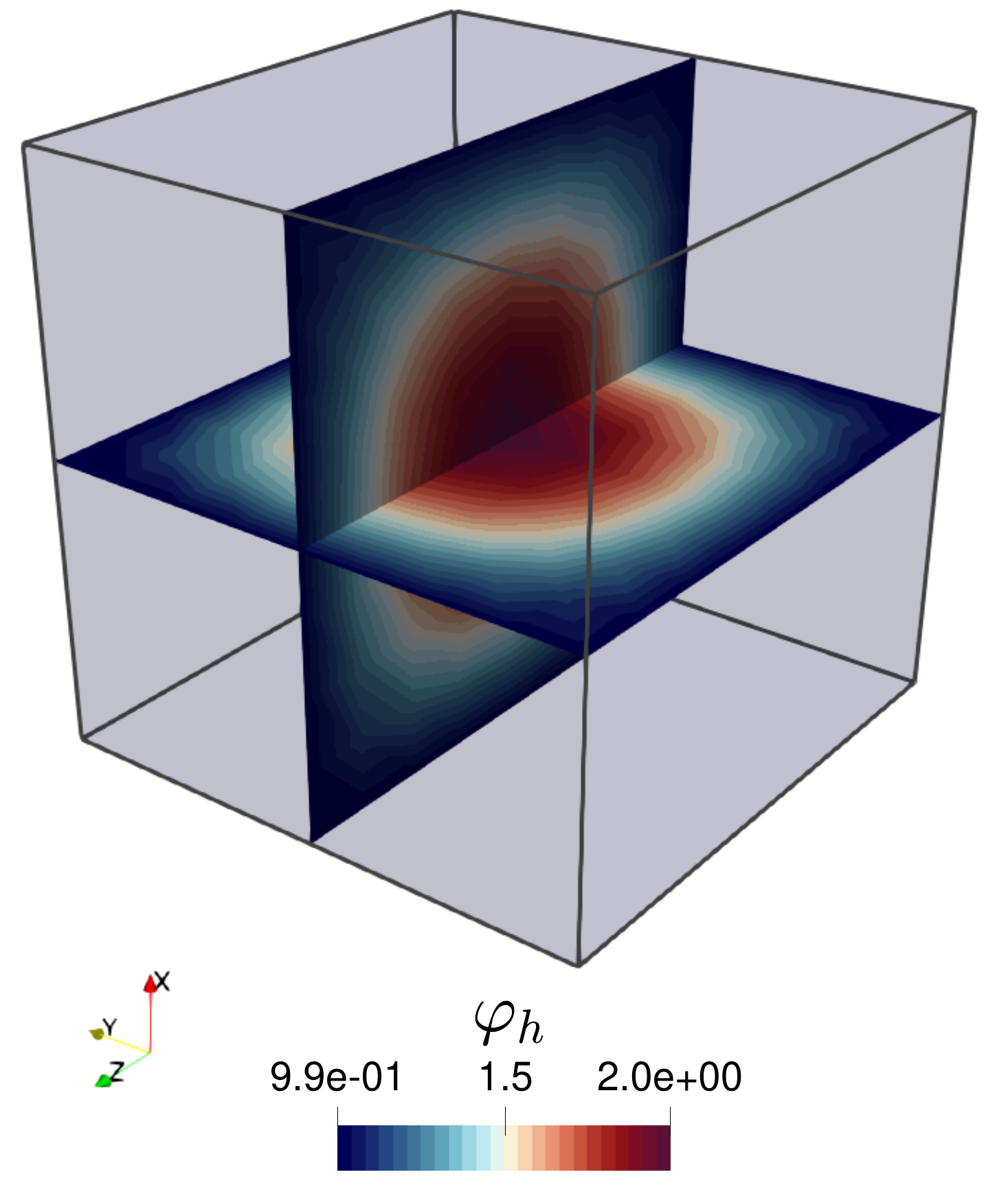}
\includegraphics[width=0.244\textwidth]{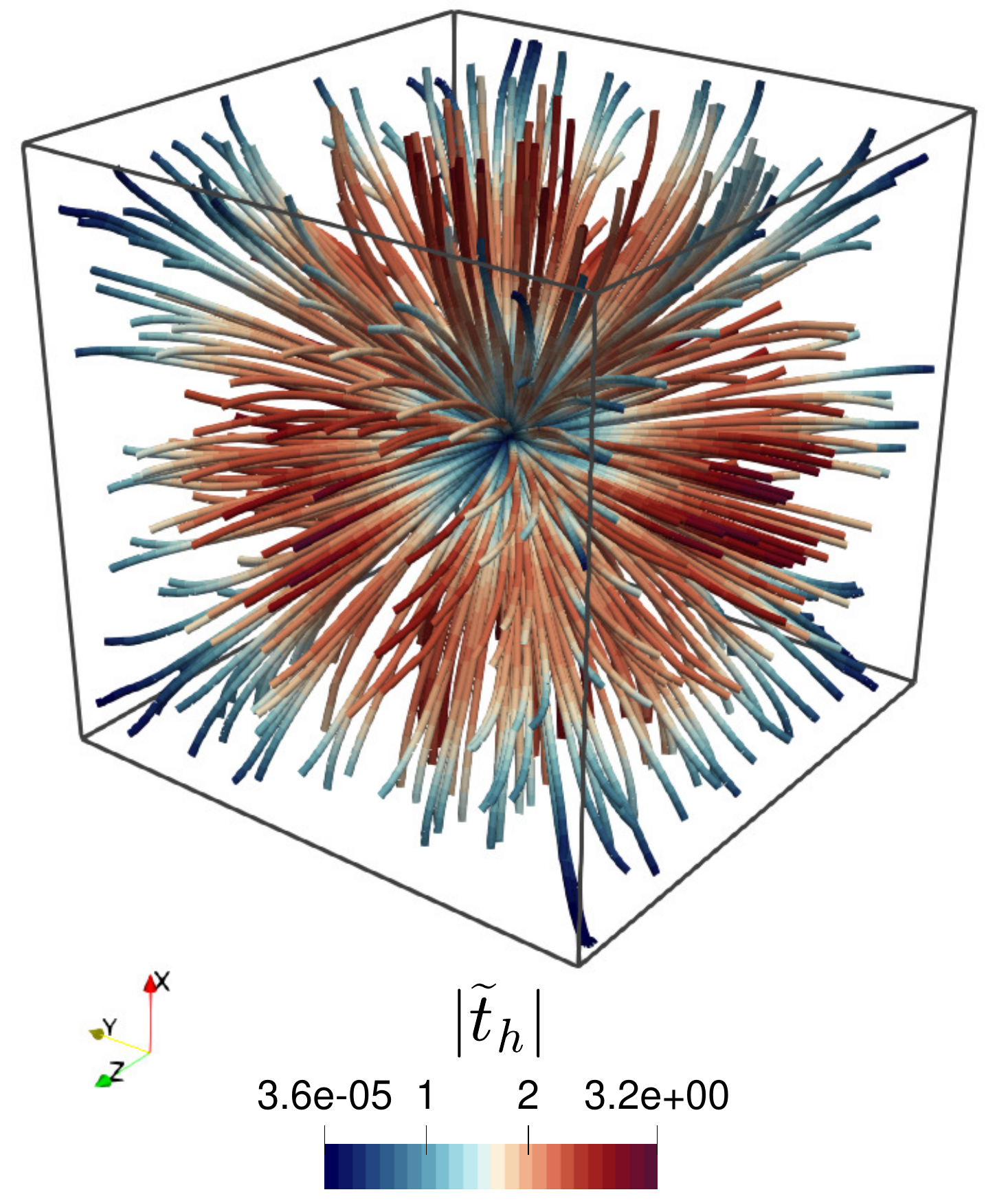}
\includegraphics[width=0.244\textwidth]{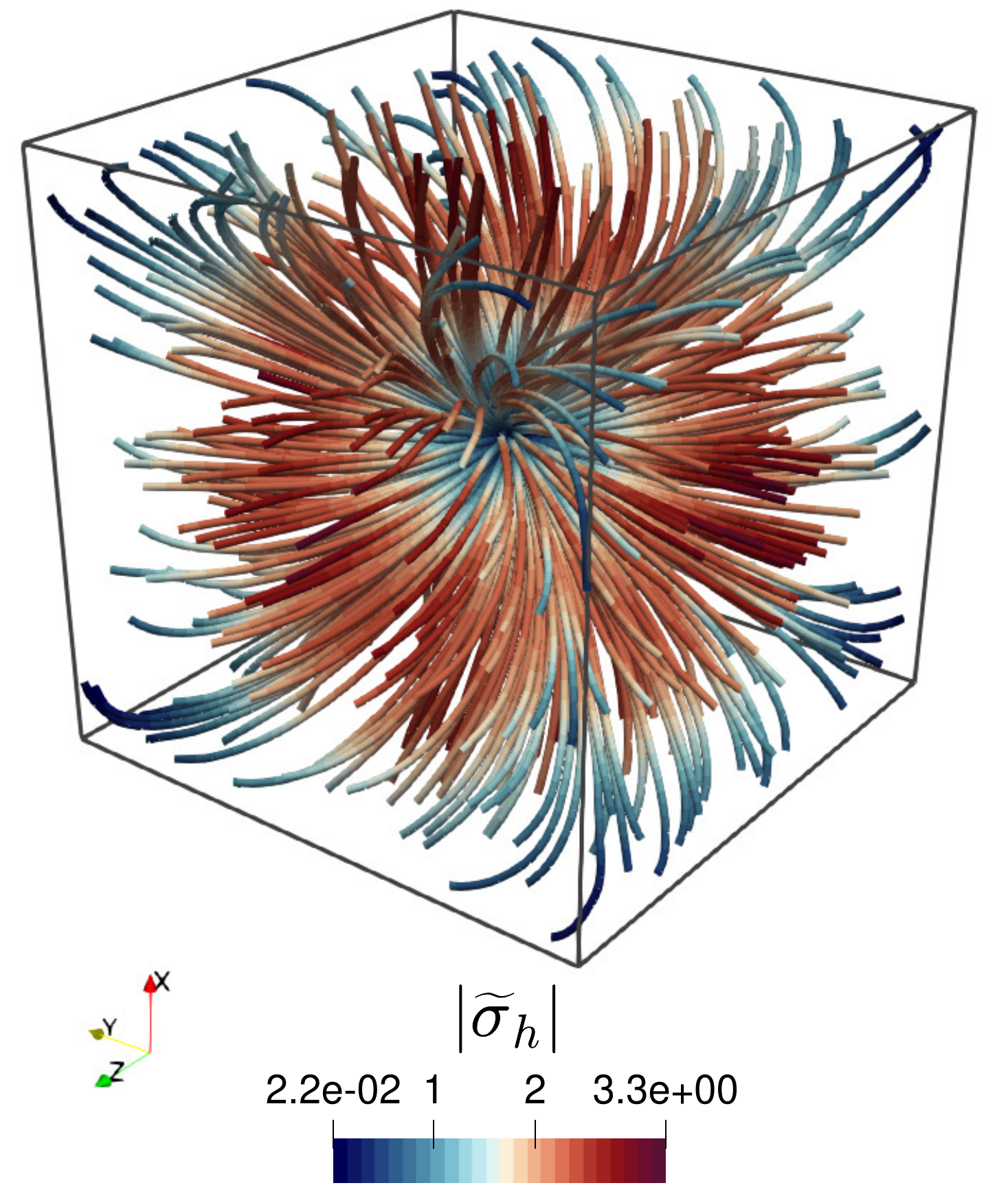}
\includegraphics[width=0.244\textwidth]{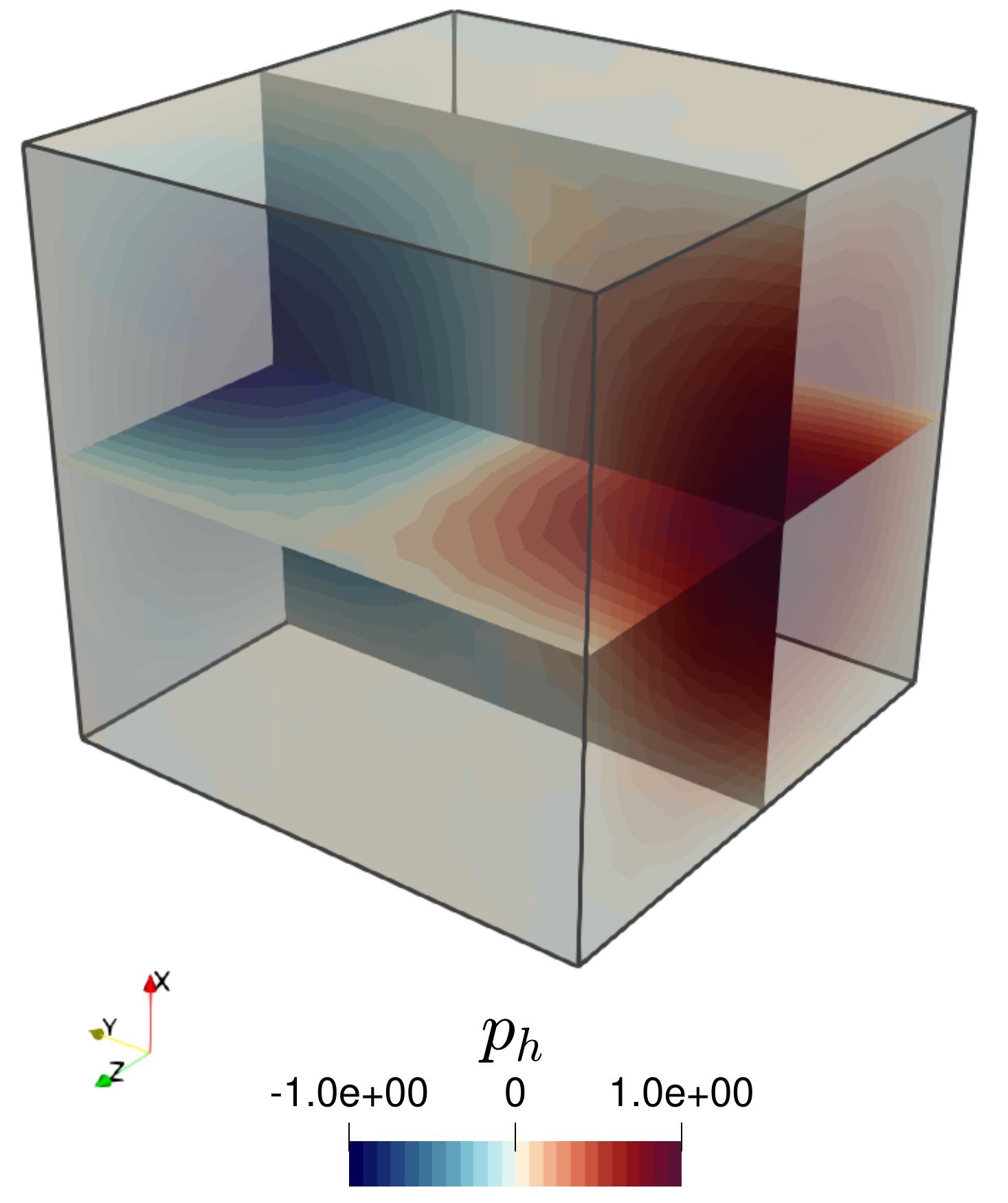}

\vspace{-4mm}
\caption{Example 1. Approximate solutions for the 3D manufactured problem with $\ell=2$: velocity streamlines, strain rate magnitude, pseudostress magnitude, slices of concentration profile, diffusive flux and total flux streamlines,  and slices of postprocessed pressure.}
\label{fig:convPlots3d}
\end{figure}

We further confirm the convergence properties in 3D by examining the approximate solutions of the bioconvection system with the following manufactured solutions defined on the unit cube domain $\Omega = (0,1)^3$:
\begin{gather*}
\bu(x,y,z) = \begin{pmatrix} 
\sin(\pi x)\cos(\pi y)\cos(\pi z)\\
 -2\cos(\pi x)\sin(\pi y)\cos(\pi z)\\
  \cos(\pi x)\cos(\pi y)\sin(\pi z)\end{pmatrix}, \quad 
p(x,y,z) = \sin(\pi x)\cos(\pi y)\sin(\pi z), \\ 
\varphi(x,y,z) = \sin(\pi x)\sin(\pi y)\sin(\pi z)+1.\end{gather*}

We use the polynomial degree $\ell=2$ (we recall that $\ell=1$ is not sufficient for the 3D case) and set the coefficients for this case as $\kappa  = g = 1$, $\gamma = \alpha = 0.5$, $U   =0.01$. The observed rates confirm the optimal approximation properties of the mixed finite element spaces under barycentric refinement. However it seems that the effectivity index decreases with $h$. The number of required nonlinear iterations to converge is 4 in all mesh refinements. We also note from Figure~\ref{fig:convPlots3d} that even for a relatively coarse mesh all fields are very accurate, since the number of degrees of freedom is quite large (more than 3M).

\begin{table}[t!]
\centering
\caption{Example 1. Convergence history for the 3D steady bioconvection test with polynomial degree $\ell=2$.}
\label{tab:conv3Dbioconvection}
\resizebox{\textwidth}{!}{
\rowcolors{2}{white}{lightgray}
\begin{tabular}{rccccccccccccccccc}
\toprule
\rowcolor{white}
$N$ & $h$ & $e(\bu)$ & $r(\bu)$ & $e(\bt)$ & $r(\bt)$ & $e(\bsi)$ & $r(\bsi)$ &
$e(\varphi)$ & $r(\varphi)$ & $e(\widetilde{\bt})$ & $r(\widetilde{\bt})$ & $e(\widetilde{\bsi})$ & $r(\widetilde{\bsi})$ &
$e(p)$ & $r(p)$ & $\mathrm{eff}$ & $\mathrm{it}$\\
\midrule
   6050   & 1.732 & 2.26e--01 & --    & 1.89e+00 & --    & 1.05e+00 & --    & 1.05e--01 & --    & 3.36e--01 & --    & 1.90e+00 & --    & 1.65e--01 & --    & 0.094 & 4\\
  47810   & 1.225 & 4.93e--02 & 4.40  & 5.74e--01 & 3.43  & 2.44e--01 & 4.22  & 1.88e--02 & 4.98  & 6.10e--02 & 4.93  & 2.11e--01 & 6.34  & 3.97e--02 & 4.11  & 0.048 & 4\\
 380162   & 0.612 & 7.81e--03 & 2.66  & 1.50e--01 & 1.94  & 5.38e--02 & 2.18  & 2.91e--03 & 2.69  & 8.79e--03 & 2.79  & 2.64e--02 & 3.00  & 7.97e--03 & 2.32  & 0.020 & 4\\
3032066   & 0.306 & 9.67e--04 & 3.01  & 2.65e--02 & 2.50  & 8.71e--03 & 2.63  & 3.84e--04 & 2.92  & 1.16e--03 & 2.93  & 3.37e--03 & 2.97  & 1.12e--03 & 2.83  & 0.0049 & 4\\
%12128264  & 0.153 & 1.21e--04 & 3.00  & 4.27e--03 & 2.64  & 1.18e--03 & 2.91  & 5.05e--05 & 2.93  & 1.55e--04 & 2.90  & 4.36e--04 & 2.95  & 1.47e--04 & 2.93  & 0.0042 & 4\\
\bottomrule
\end{tabular}}
\end{table}

\subsection*{Example 2: convergence for non-smooth solutions under uniform and adaptive mesh refinement}

We perform convergence tests for the mixed finite element scheme for the bioconvection equations using the non-regular Verf\"urth manufactured solutions on an L-shaped domain
(see, e.g., \cite{carstensen2005}):
\begin{gather*}
    \bu(r,\theta) = r^\lambda \begin{pmatrix}(1+\lambda)\sin(\theta)\psi(\theta) + \cos(\theta) \psi'(\theta)\\ \sin(\theta)\psi'(\theta) - (1+\lambda)\cos(\theta)\psi(\theta)
            \end{pmatrix},   \quad    
    p(r,\theta) = -\nu \frac{r^{\lambda -1 }}{1-\lambda}((1+\lambda)^2\psi'(\theta) + \psi'''(\theta)), \\
    \varphi(r,\theta) = r^{\frac23}\sin(\frac23\theta),
\end{gather*}

in polar coordinates centered at the origin $(r,\theta) \in (0,\infty)\times (0,\frac{3\pi}{2})$, where 
\[ \psi(\theta) = \frac{\sin((1+\lambda)\theta)\cos(\lambda w)}{1+\lambda} - \cos ((1+\lambda) \theta) - \frac{\sin((1-\lambda)\theta)\cos(\lambda w)}{1-\lambda} + \cos ((1-\lambda) \theta).\]
Here $\lambda = \frac{856399}{1572864} \approx 0.5444837$ is the smallest positive solution of $\sin(\lambda w) + \lambda \sin (w) = 0$, and we take $w = \frac{3\pi}{2}$.  Second-order derivatives for velocity (and first order derivatives for pressure and vorticity) are not square integrable, and therefore these solutions do not have higher regularity. Nevertheless, the exact boundary velocity is zero on the reentrant edges (at $\theta = 0$ and $\theta = \frac{3\pi}{2}$) and so the boundary data oscillation can be considered of high order. The parameters are taken adimensional and fixed to 
$\kappa =   g = \gamma = \alpha = U = 1$.

\begin{figure}[t!]
\begin{center}
\includegraphics[width=0.99\textwidth]{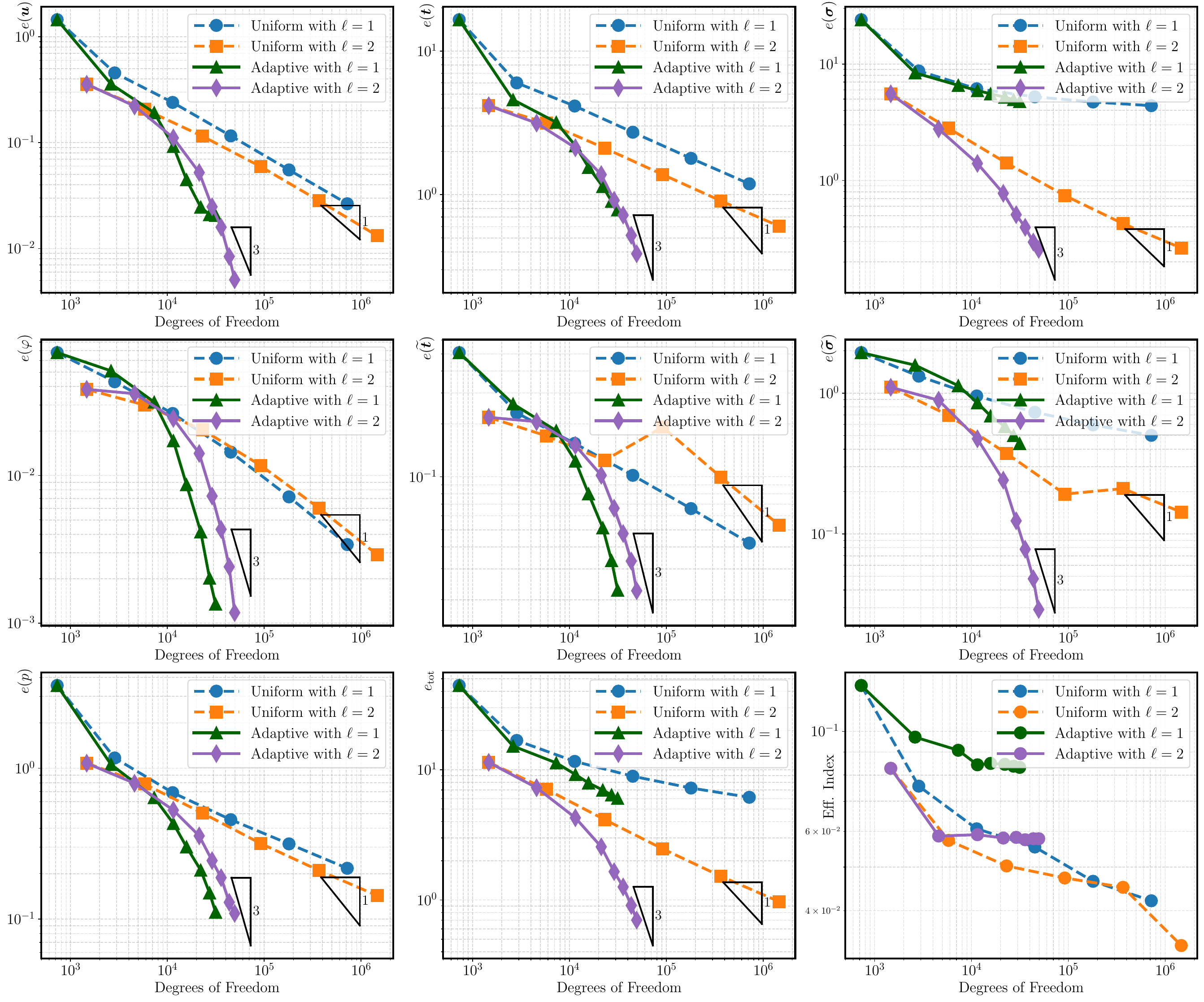}
\end{center}

\vspace{-4mm}
\caption{Example 2. Convergence for each field variable against non-smooth solutions on an L-shaped domain using uniform and adaptive mesh refinement with barycentrically refined meshes. The bottom right plot shows effectivity indexes for all cases. For reference we show triangles indicating linear and cubic slopes.}\label{fig:aposte}
\end{figure}

\begin{figure}[t!]
\centering
\includegraphics[width=0.245\textwidth]{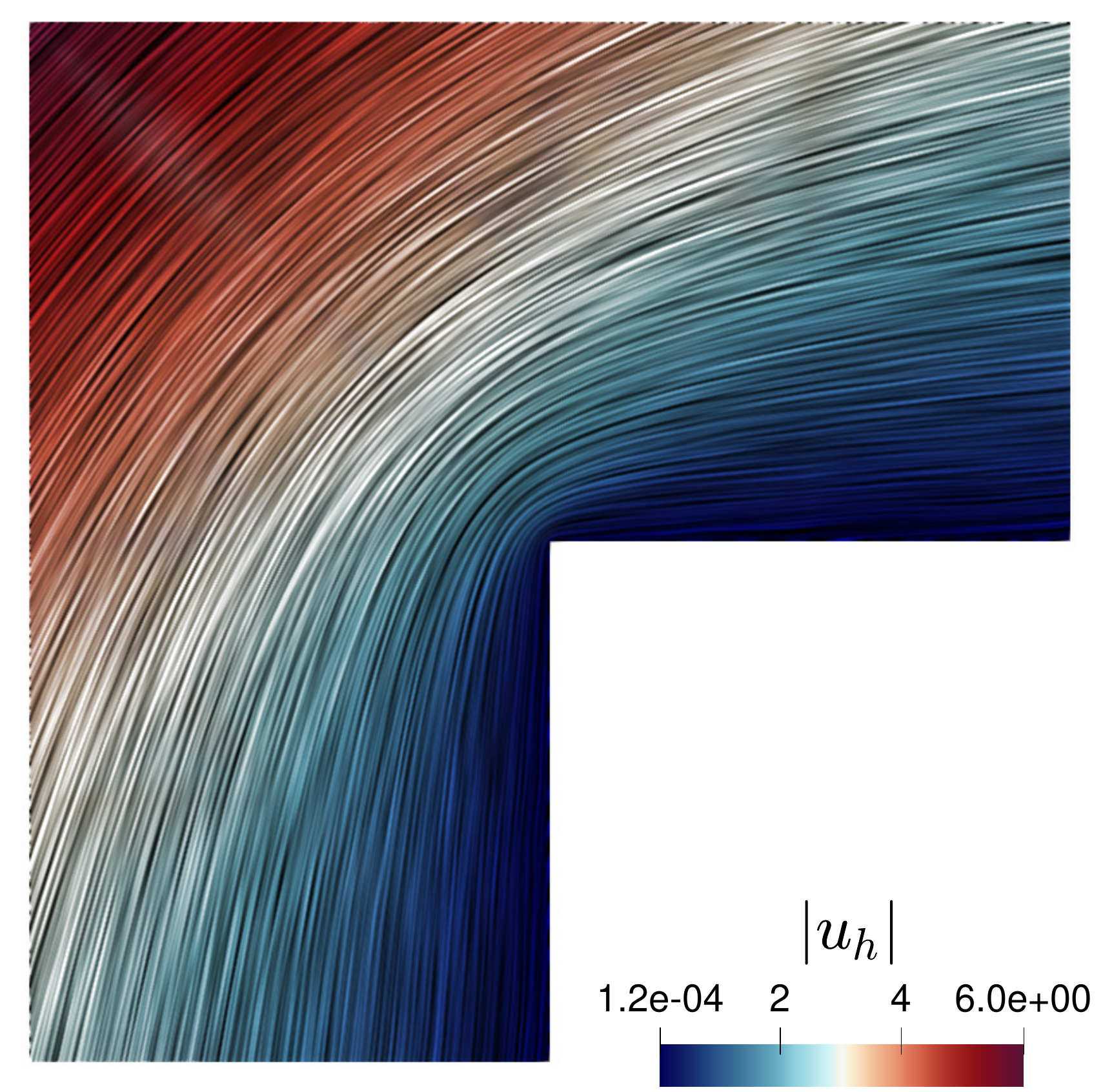}
\includegraphics[width=0.245\textwidth]{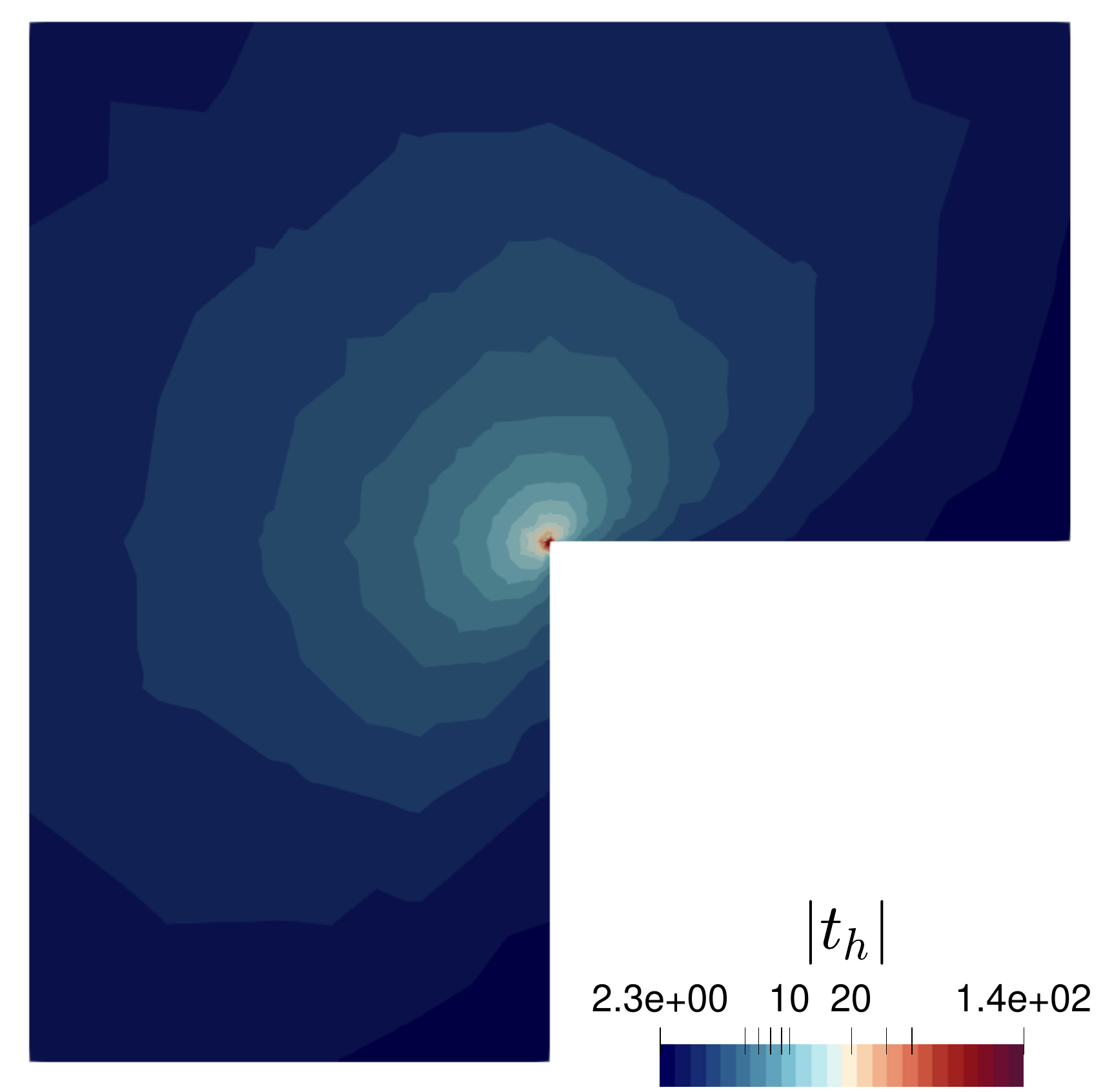}
\includegraphics[width=0.245\textwidth]{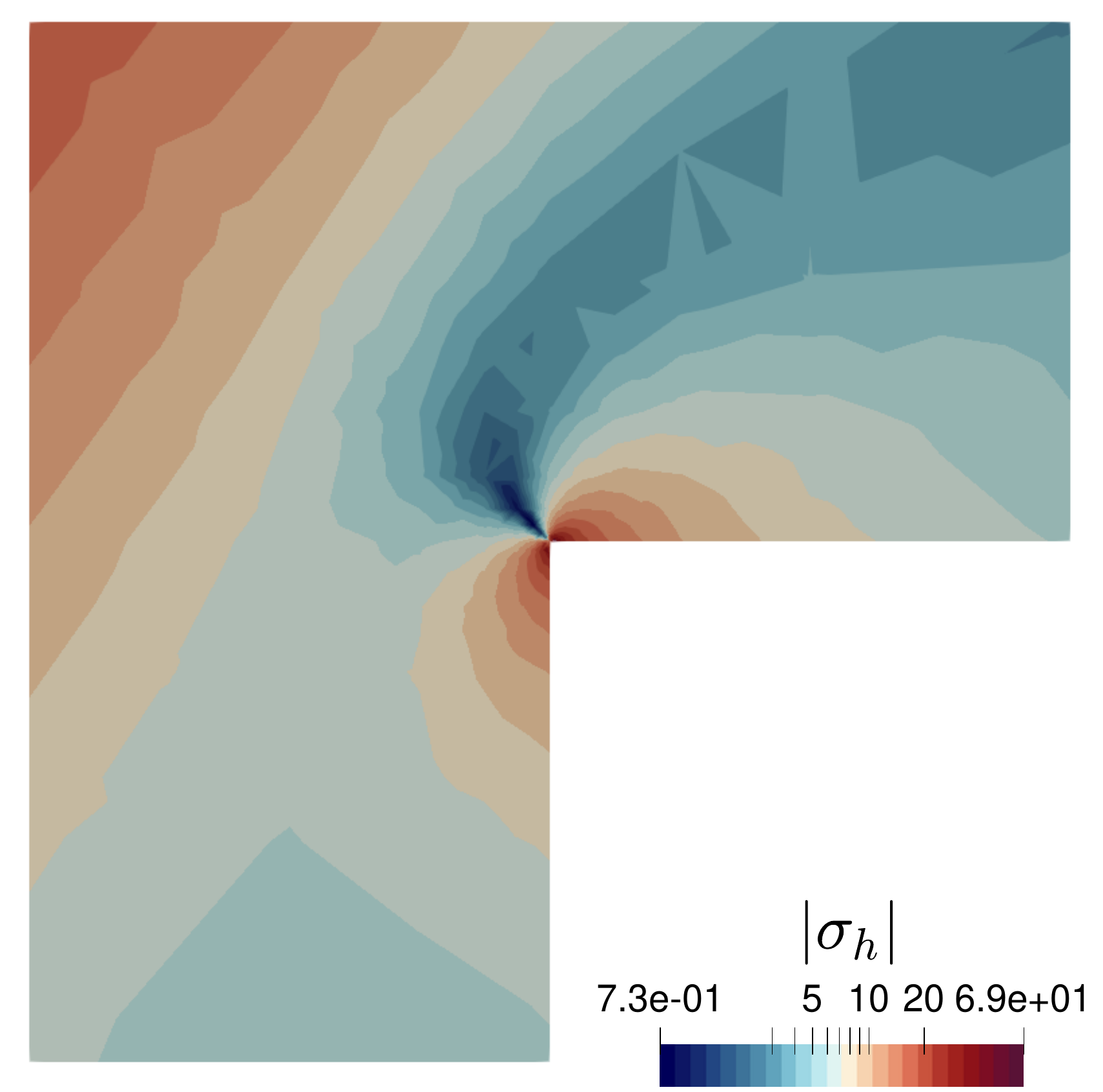}\\
\includegraphics[width=0.245\textwidth]{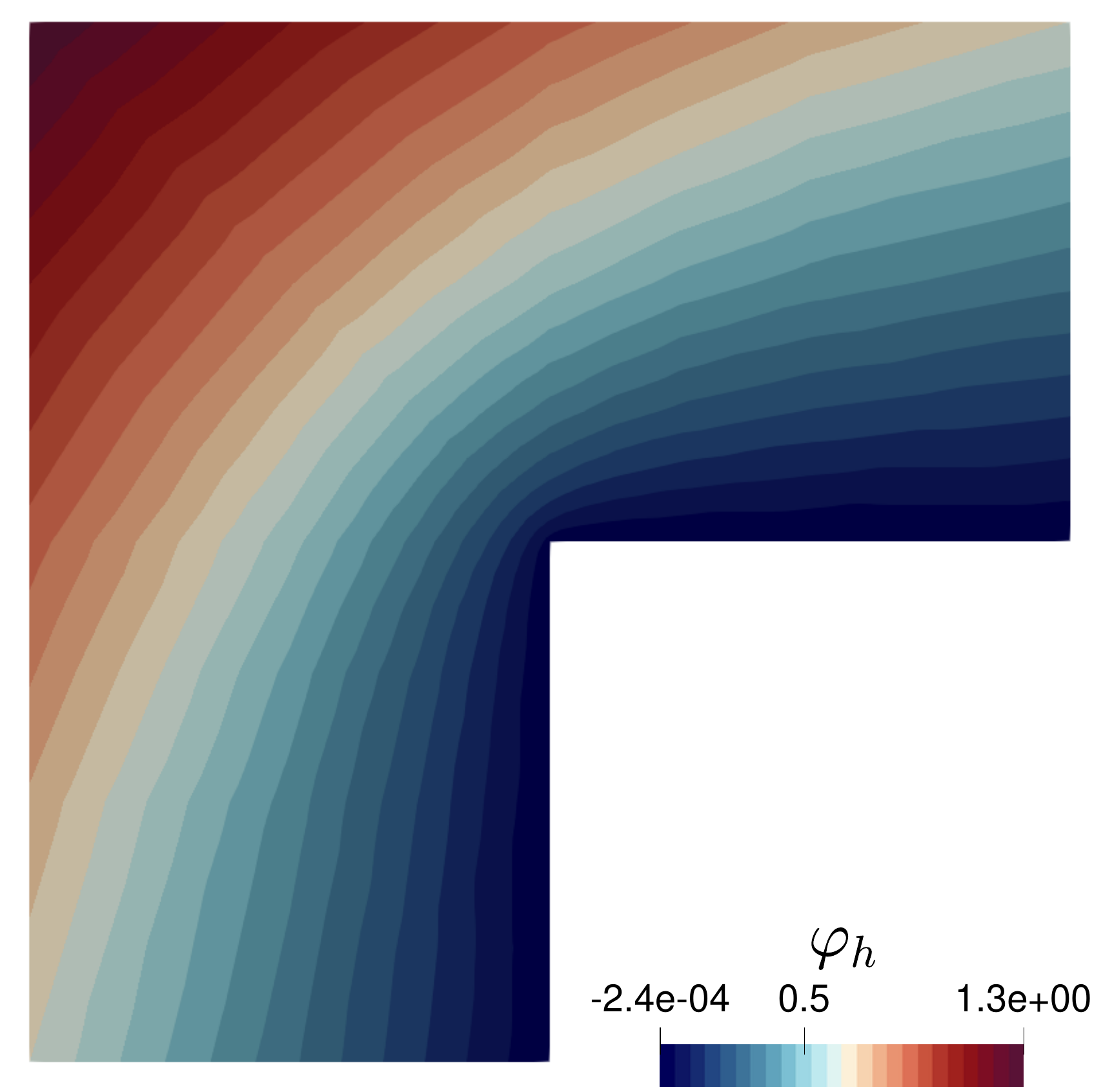}
\includegraphics[width=0.245\textwidth]{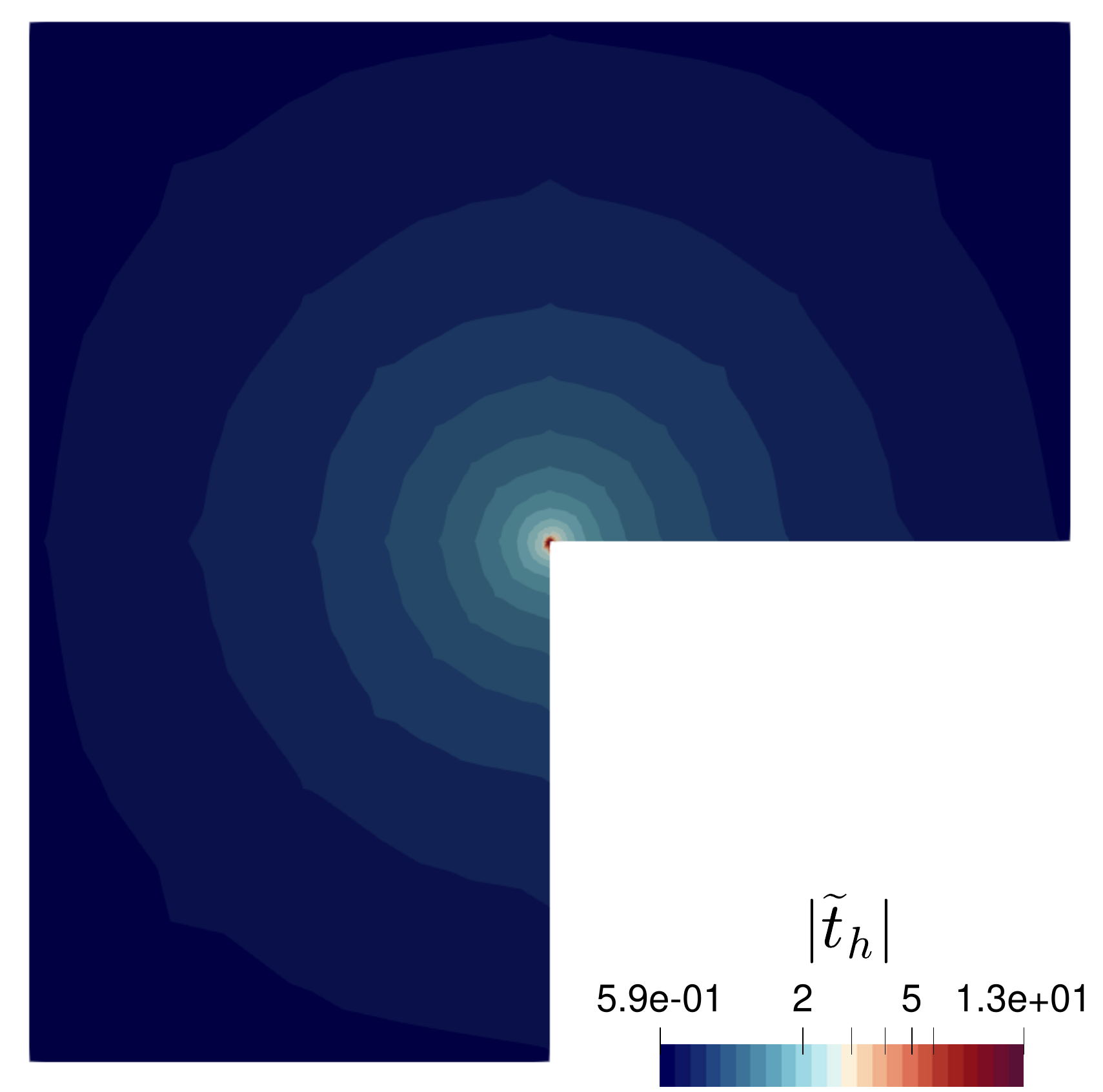}
\includegraphics[width=0.245\textwidth]{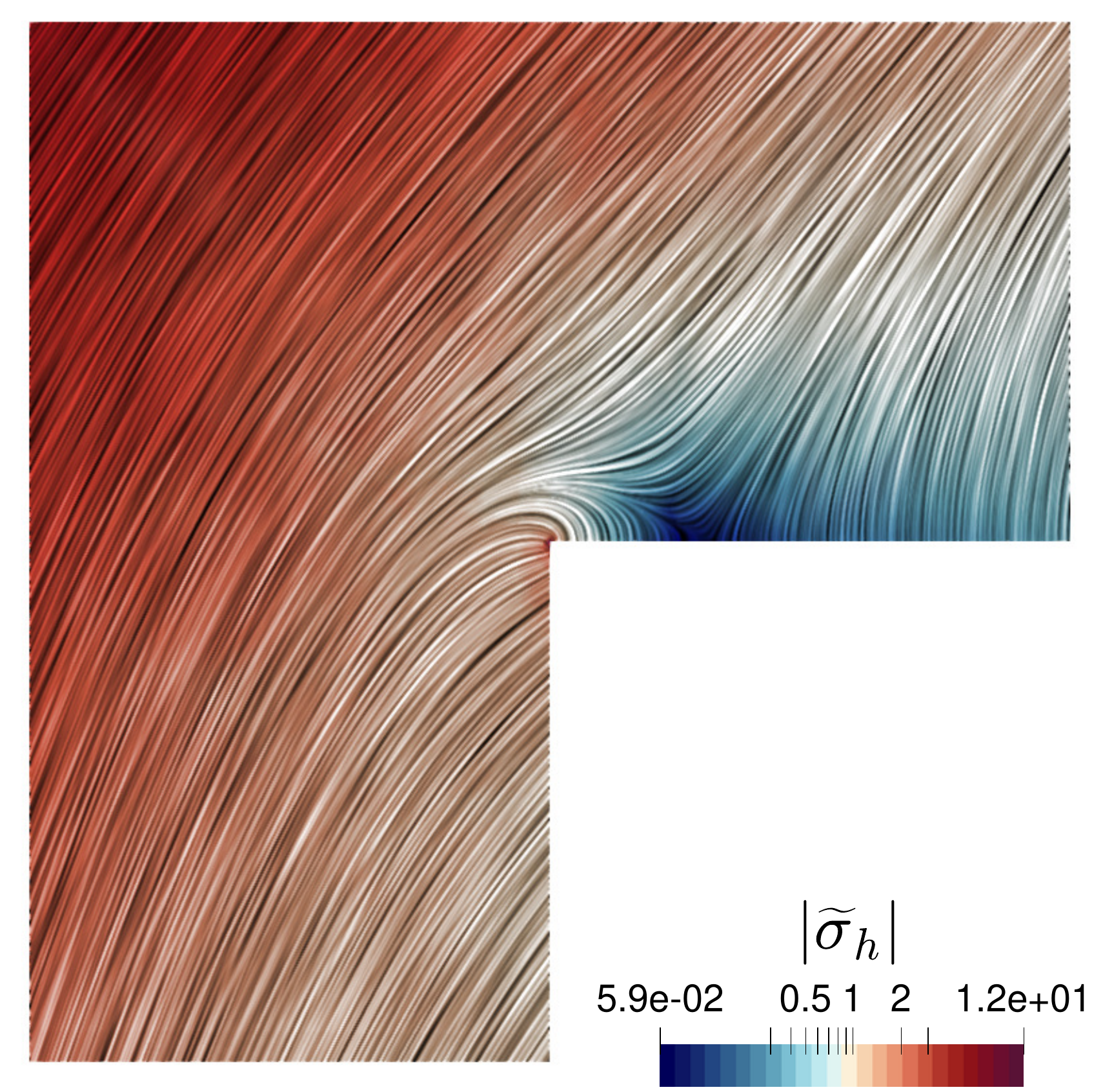}
\includegraphics[width=0.245\textwidth]{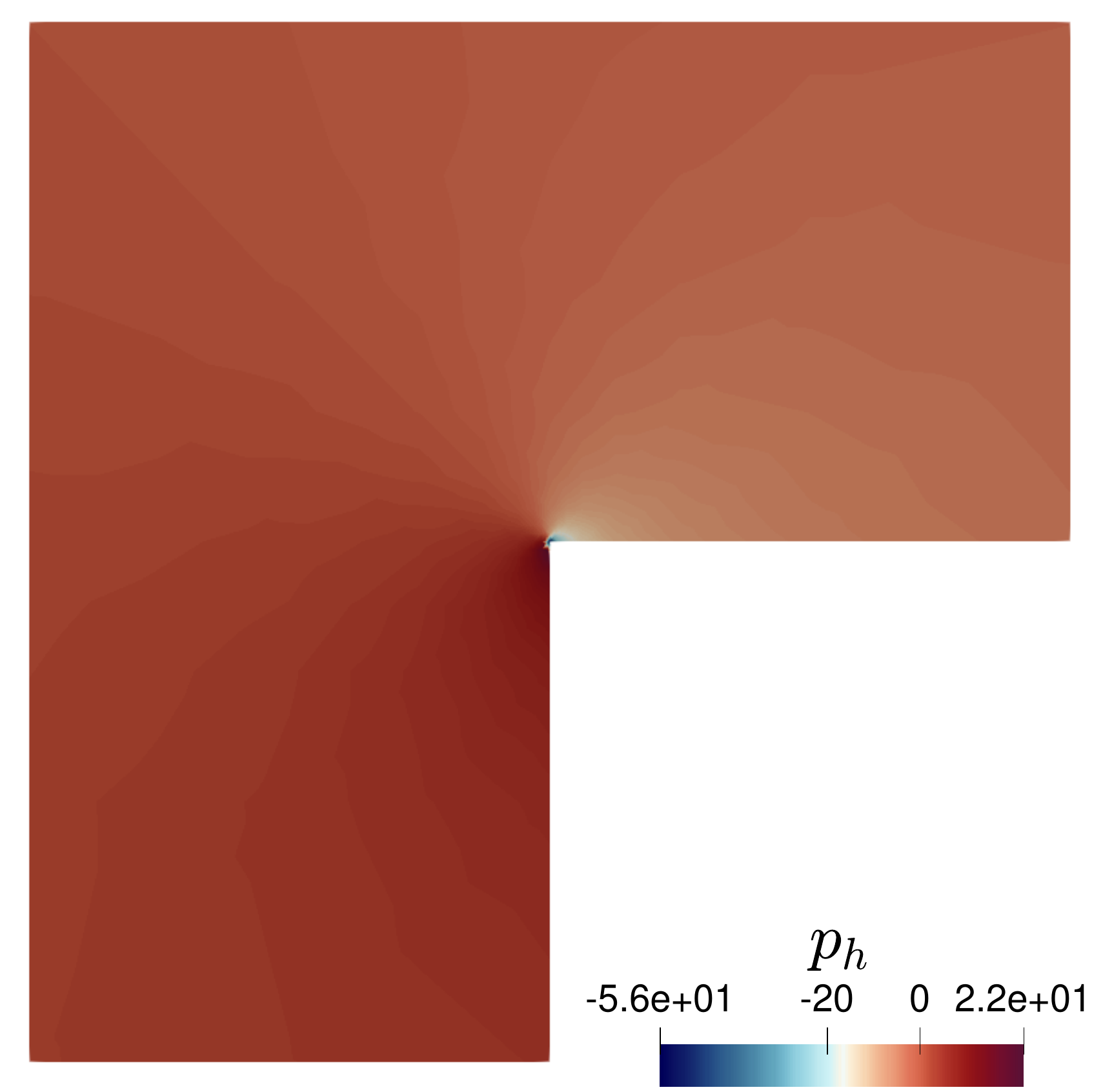}\\
\includegraphics[width=0.244\textwidth]{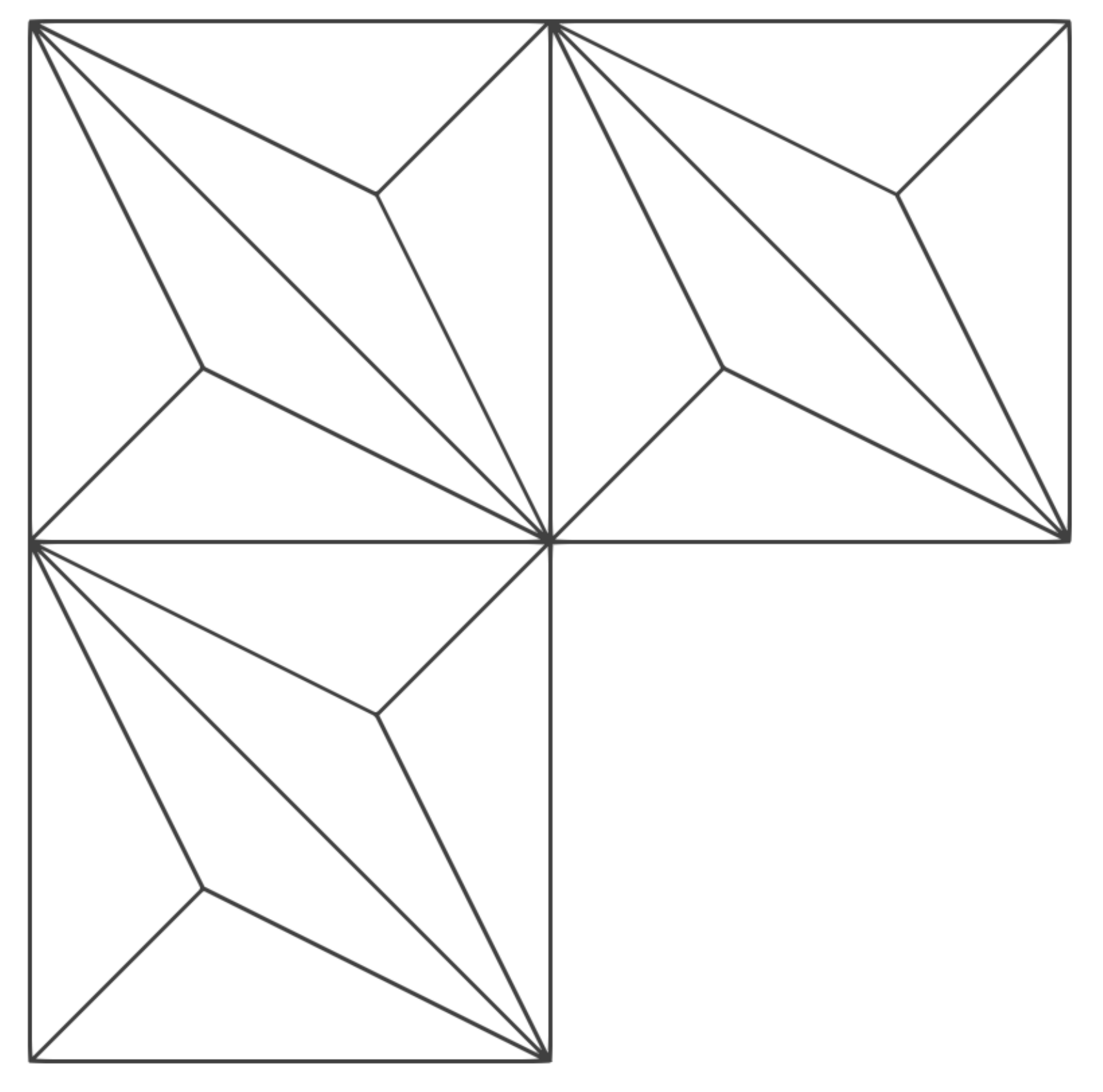}
\includegraphics[width=0.244\textwidth]{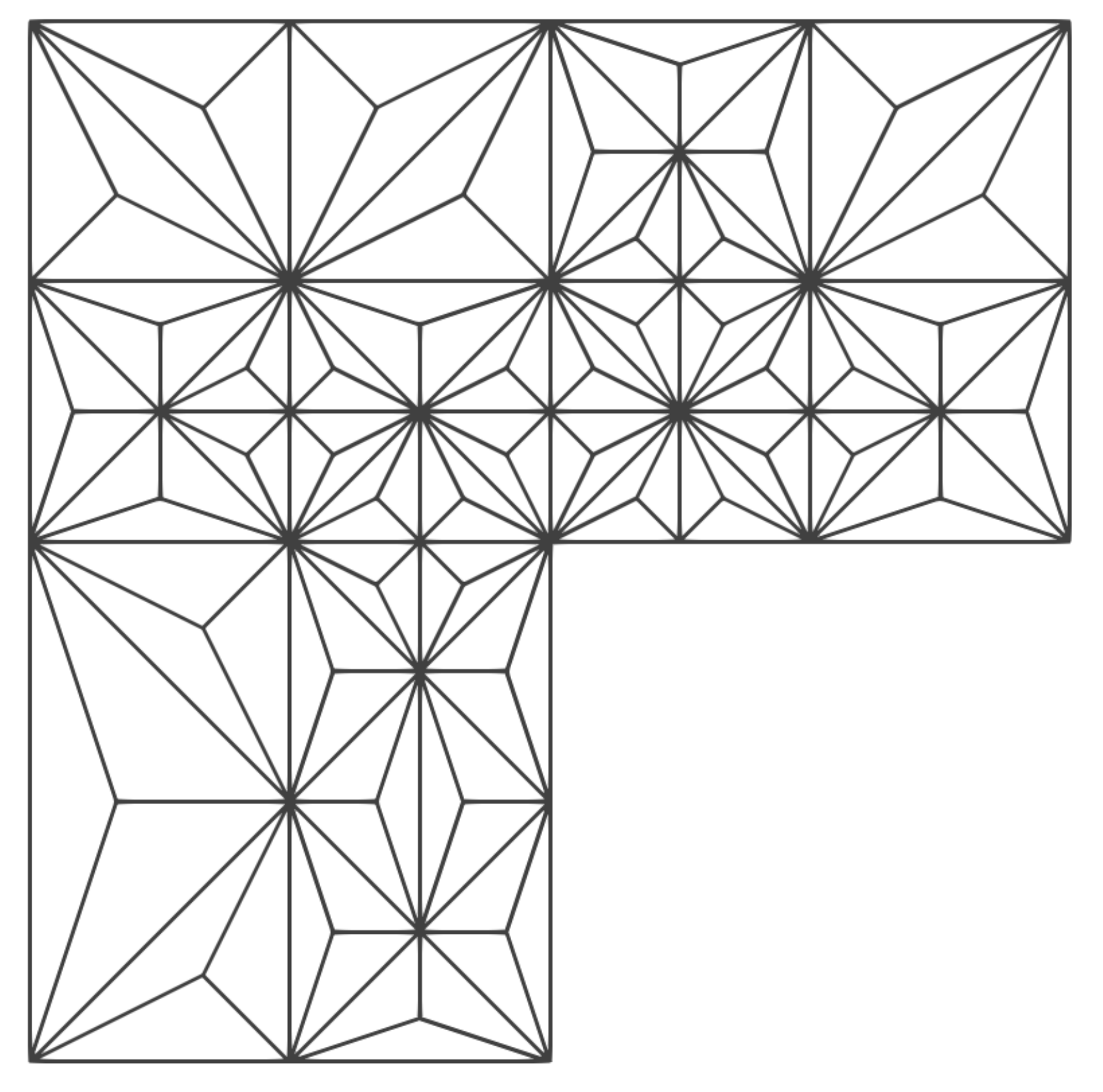}
\includegraphics[width=0.244\textwidth]{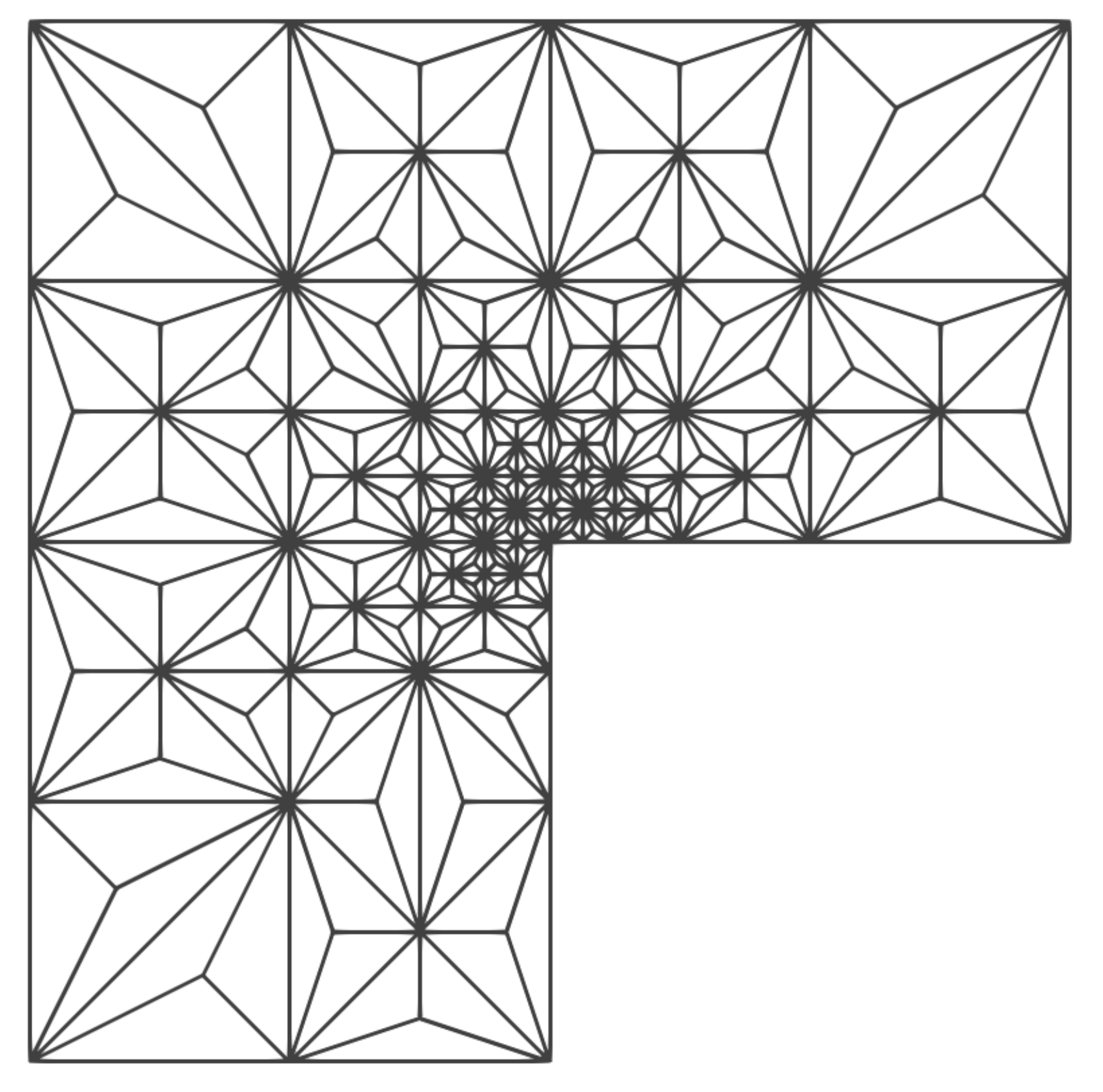}
\includegraphics[width=0.244\textwidth]{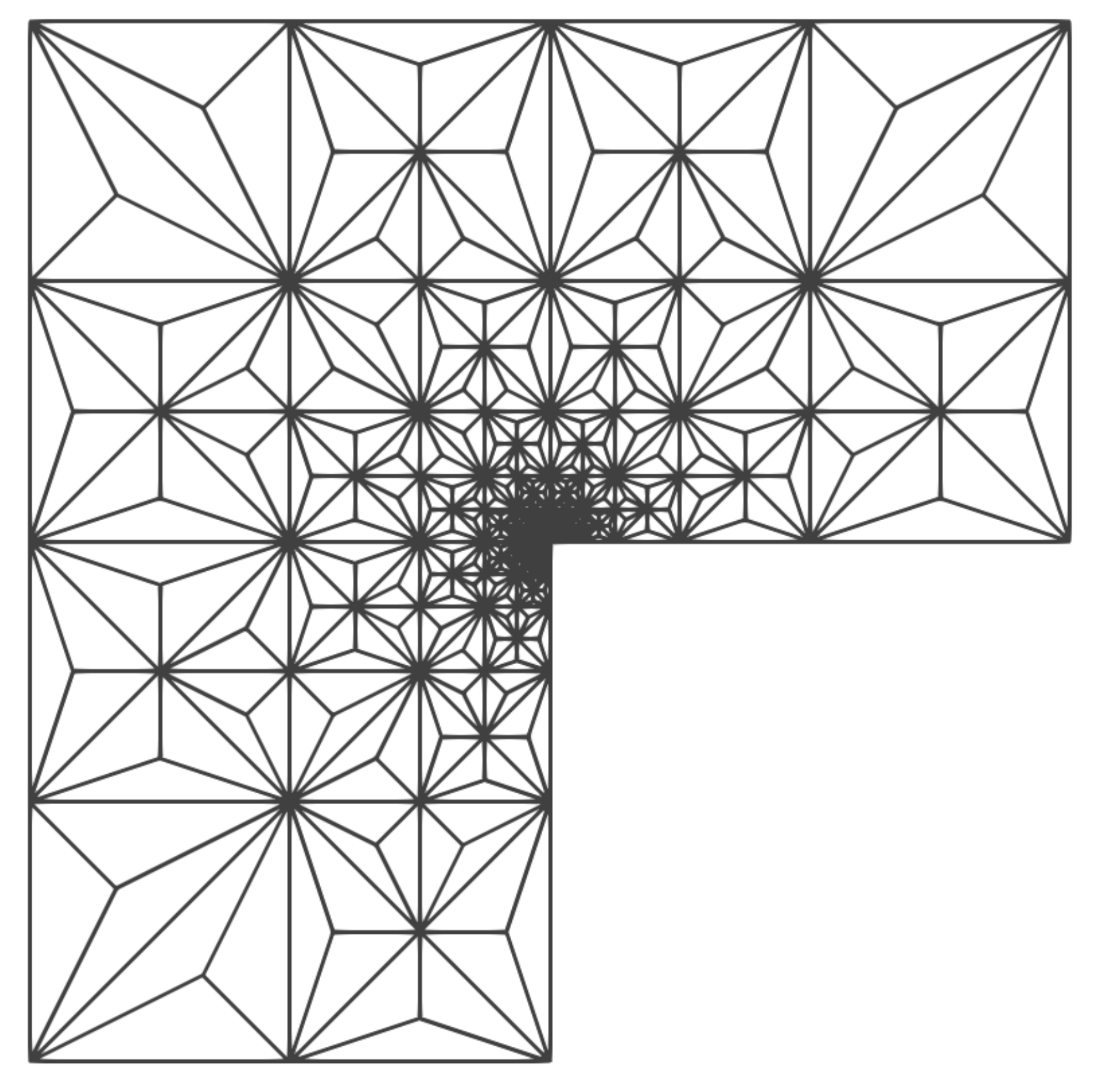}

\vspace{-4mm}
\caption{Example 2. Approximate solutions for the 2D manufactured problem on an L-shaped domain with $\ell=2$ and rendered on a coarse mesh: velocity line integral contours, strain rate magnitude, pseudostress magnitude, slices of concentration profile, diffusive flux and total flux line integral contours,  and  postprocessed pressure. The bottom row shows four adaptively refined meshes.}
\label{fig:convAdapt}
\end{figure}

 In the computation pipeline we follow \cite[Algorithm 1]{GIRS-2022}.  Adaptive refinement employs Dörfler marking with $\theta = 0.5$ and follows the classical solve–estimate–mark–refine cycle, applied to a two-level macro–bary\-centric mesh hierarchy. Finite element spaces are defined on the barycentrically refined mesh. Then we compute the approximate solution.  Local estimators are first computed on the barycentric mesh and projected onto the macro mesh to mark elements for refinement. Barycentric refinement then regenerates the next barycentric mesh for the next finite element spaces. Cells exceeding a fraction of the maximum indicator are refined iteratively until the prescribed tolerance is reached. 

Figure~\ref{fig:aposte} reports convergence histories for polynomial degrees $\ell=1$ and $\ell=2$, including errors in velocity $e(\mathbf{u})$, gradient $e(\mathbf{t})$, stress $e(\boldsymbol{\sigma})$, concentration $e(\varphi)$, auxiliary fluxes $e(\tilde{\mathbf{t}})$ and $e(\tilde{\boldsymbol{\sigma}})$, pressure $e(p)$, and total error $e_\mathrm{tot}$. The results demonstrate suboptimal convergence for all fields (even sub-linear) for the case of uniform refinement while optimal or superconvergent behaviour is restored under adaptive mesh refinement. Effectivity indices remain between 0.035 and 0.4, and we note that for the adaptive refinement case these numbers reach a plateau. Figure~\ref{fig:convAdapt} shows adaptively refined meshes and approximate solutions obtained with the scheme using $\ell = 2$.  Local contributions of $\Theta$ accurately identify regions of high residuals near the reentrant corner, where steep gradients in pressure, velocity, and microorganism concentration occur.

\subsection*{Example 3: adaptive mesh refinement in a square cavity with inclusions}
Next, we consider bioconvective flow in a square domain with two square inclusions. The geometry  follows the configuration used in \cite{izadpanah} for thermo-bioconvection of gyrotactic micro-organisms. The square has base 10\,cm with two inclusions of 2.5\,cm per side. While for the flow we consider no-slip boundary conditions everywhere, the boundary conditions for the concentration equations are different than those analyzed in the paper. We set $\varphi = \alpha$ on the outer left edge and $\varphi = 1 + \alpha$ on the outer right sub-boundary, and impose $\tilde{\bsi}\cdot \bn = 0$ on the outer top and bottom as well as on the boundaries of the inclusions. Then we no longer require to enforce the mean value of the concentration. 
The remaining parameters are as follows: 
\[ \alpha \in \{0,0.01,0.2,0.3\}, \quad \mu(\varphi) = \mu_0 e^{-\varphi - \alpha}, \quad g = 9.8, \quad \gamma = \kappa = 0.1, \quad U = 0.01, \quad \mu_0 = 1.\]
We generate a coarse macro mesh and its corresponding barycentric refinement, and we solve for each value of $\alpha$ and for seven steps of adaptive mesh refinement with the D\"orfler agglomeration coefficient set to $\theta = 0.2$ (for the previous examples we had taken 0.5). For the case of higher $\alpha$ the Newton--Raphson algorithm takes six iterations to reach the prescribed tolerance, irrespective of the mesh refinement level. For this example we are using the polynomial degree $ \ell =1$. We show in Figure~\ref{fig:inclusions} three samples of adapted grids and we also show  approximate solutions (we only include velocity, concentration and flux) for the case of $\alpha = 0.3$. The mesh plots indicate a much more marked refinement near the reentrant corners of the square inclusions as well as near the right edge of the domain -- which is where the high gradient of concentration is.  All fields show very well resolved profiles.

\begin{figure}[t!]
    \centering
 \includegraphics[width=0.325\linewidth]{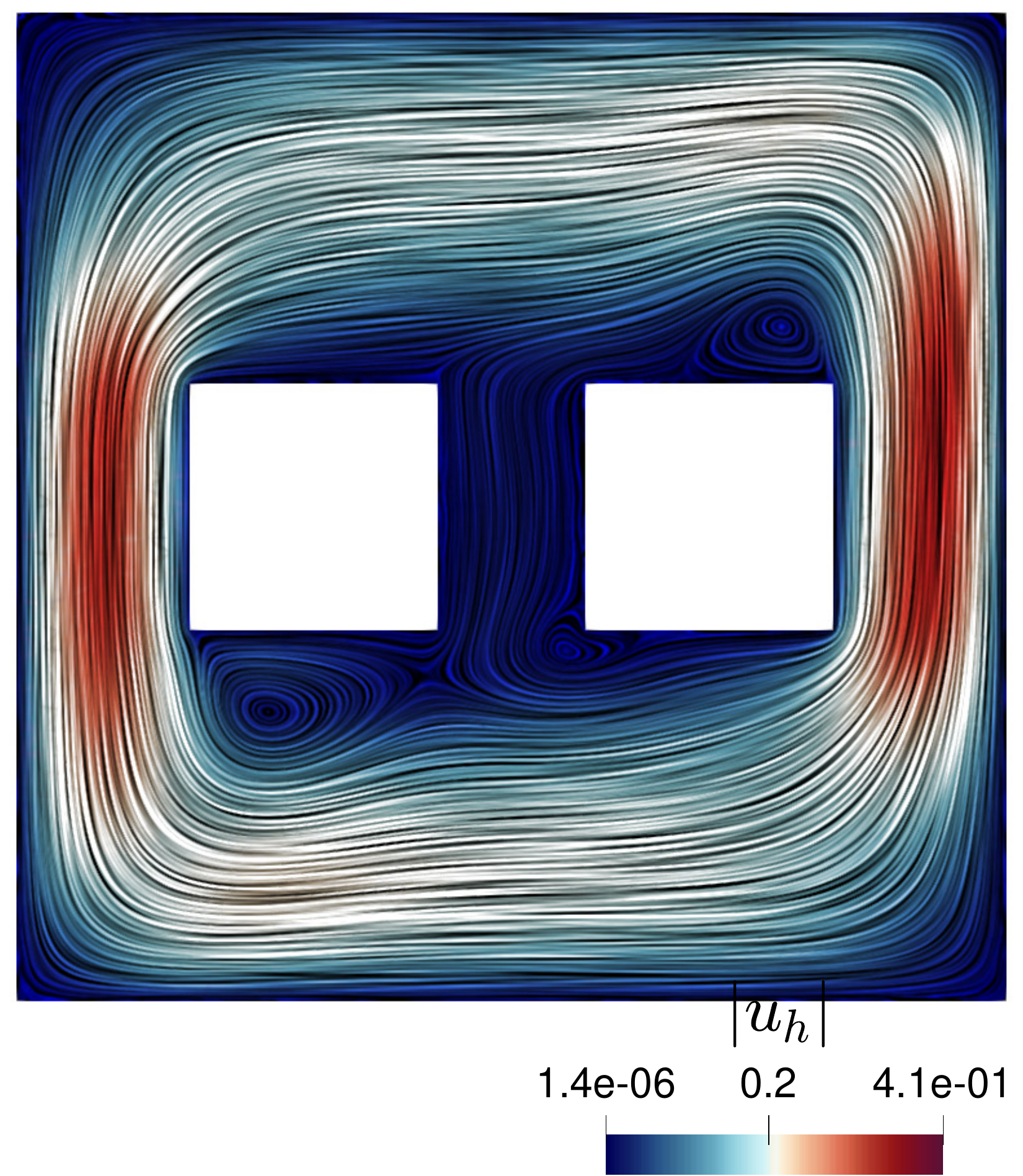}
  \includegraphics[width=0.325\linewidth]{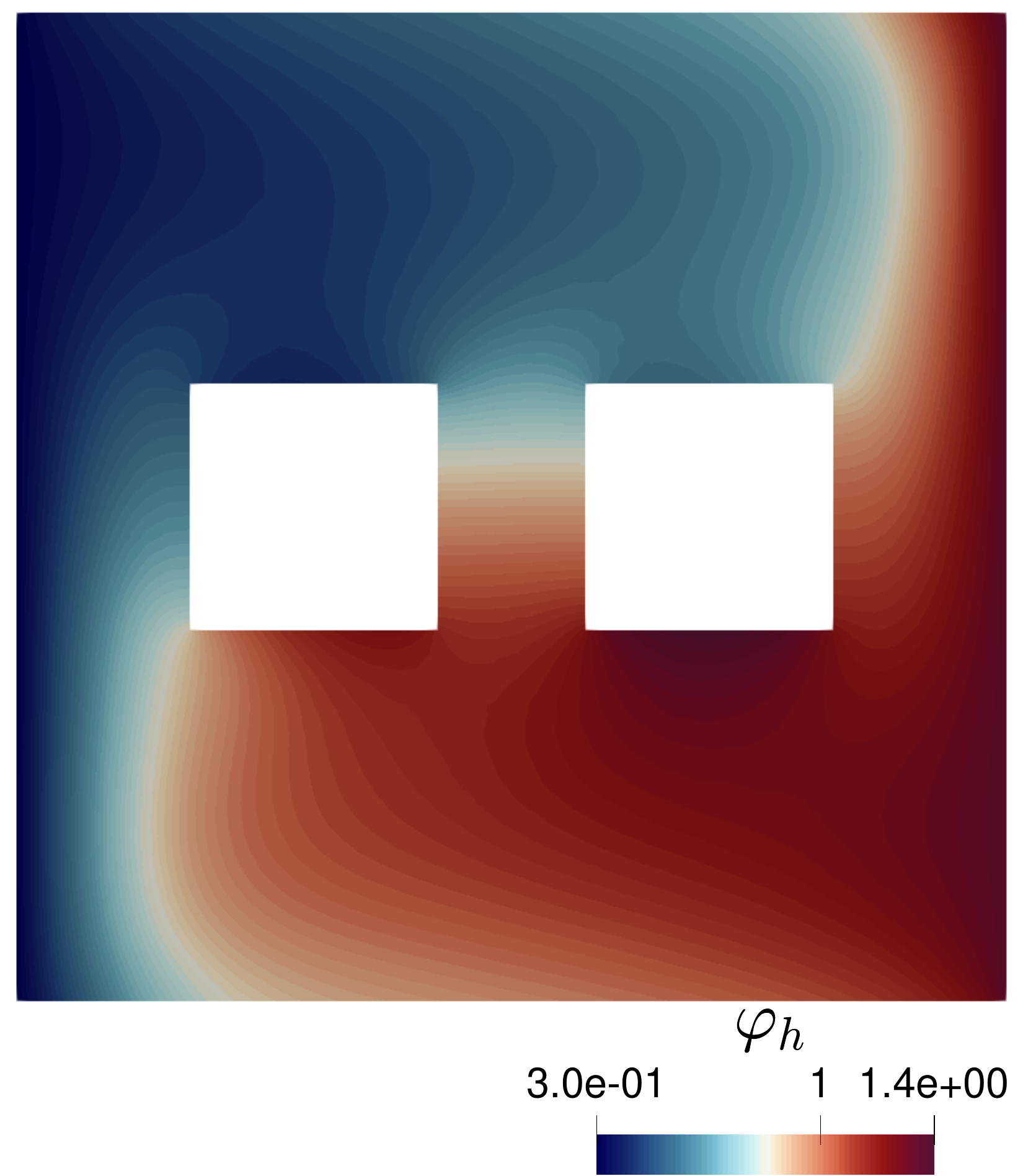}
   \includegraphics[width=0.325\linewidth]{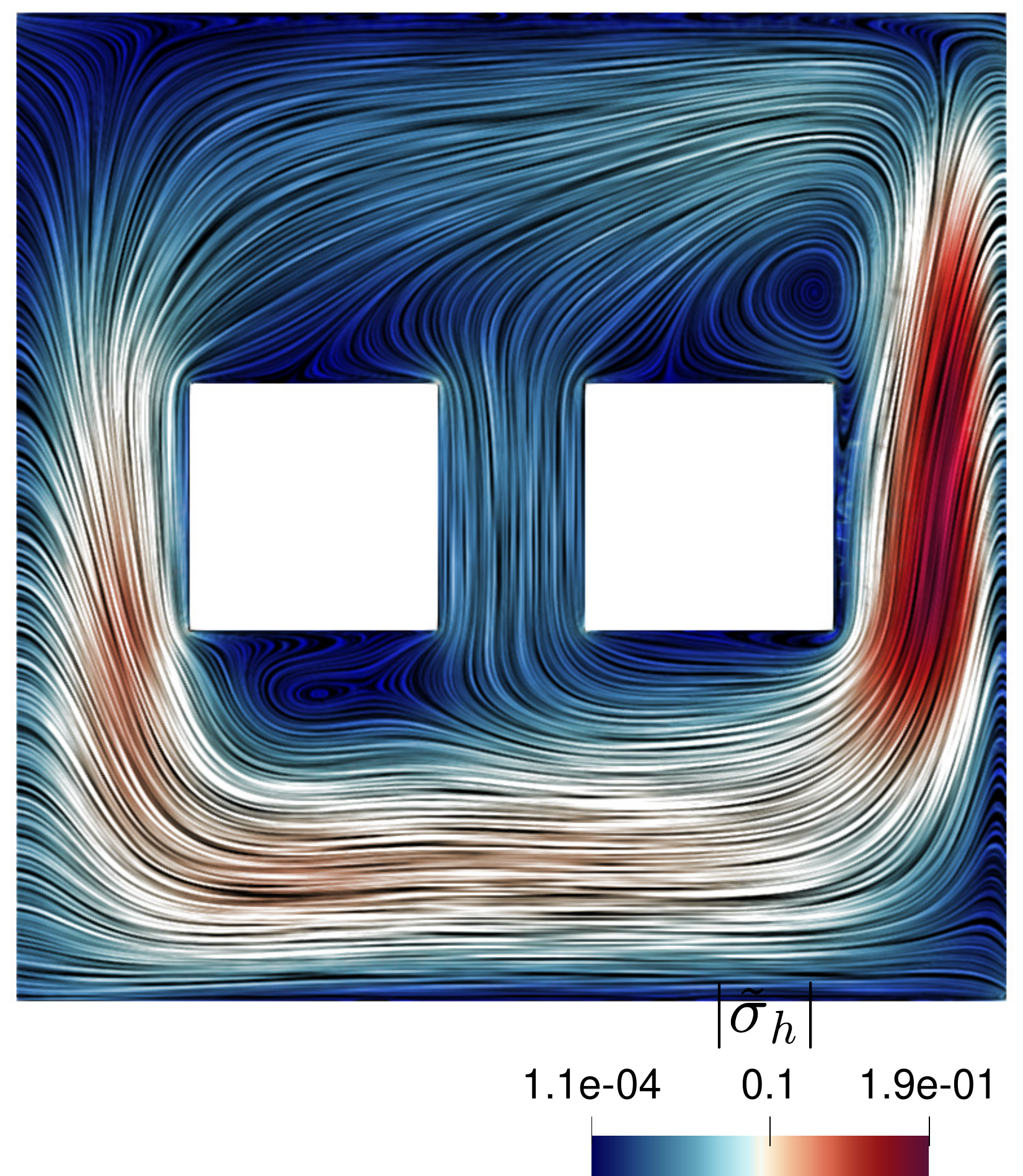}\\
 \includegraphics[width=0.325\linewidth]{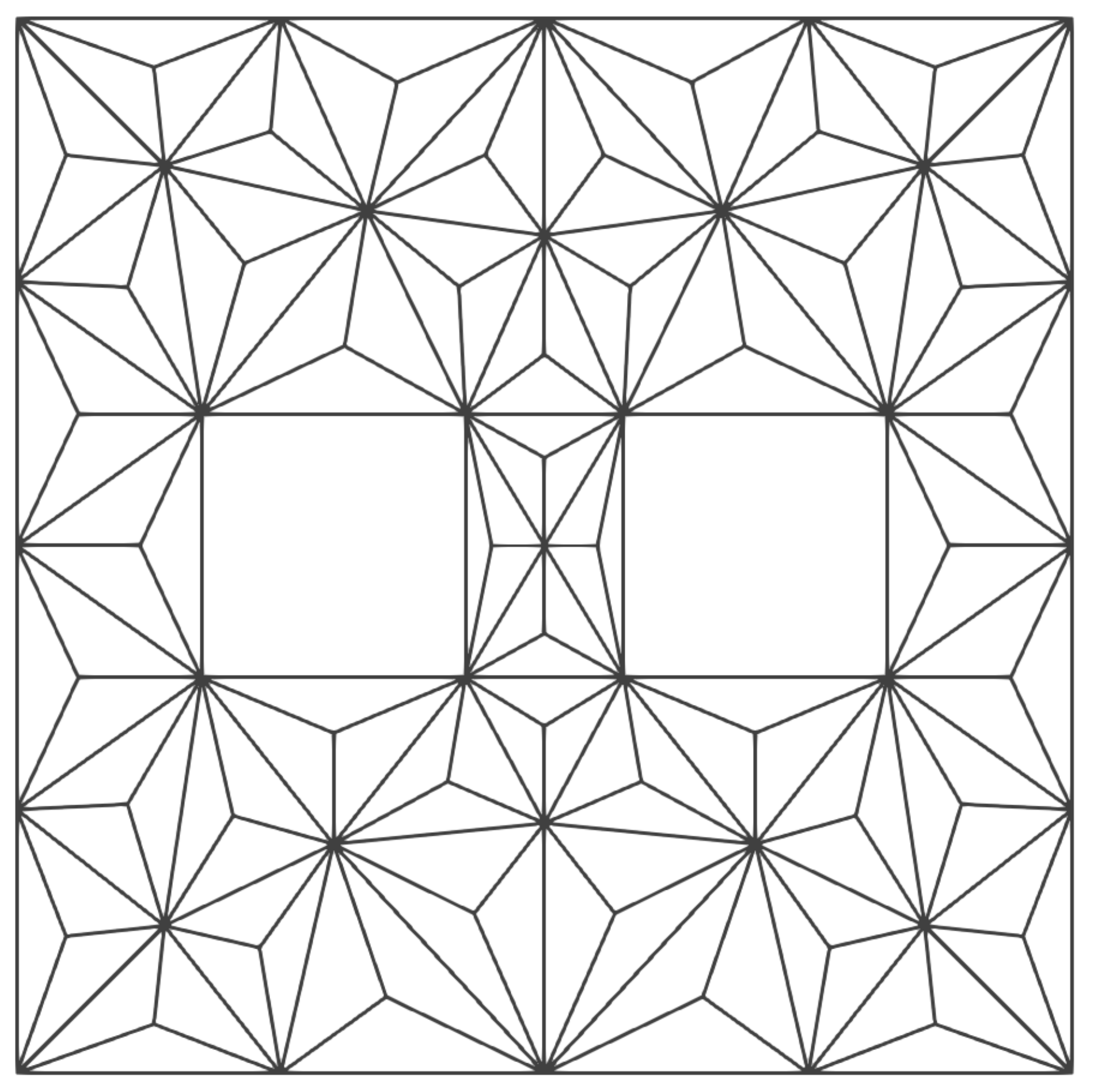}
    \includegraphics[width=0.325\linewidth]{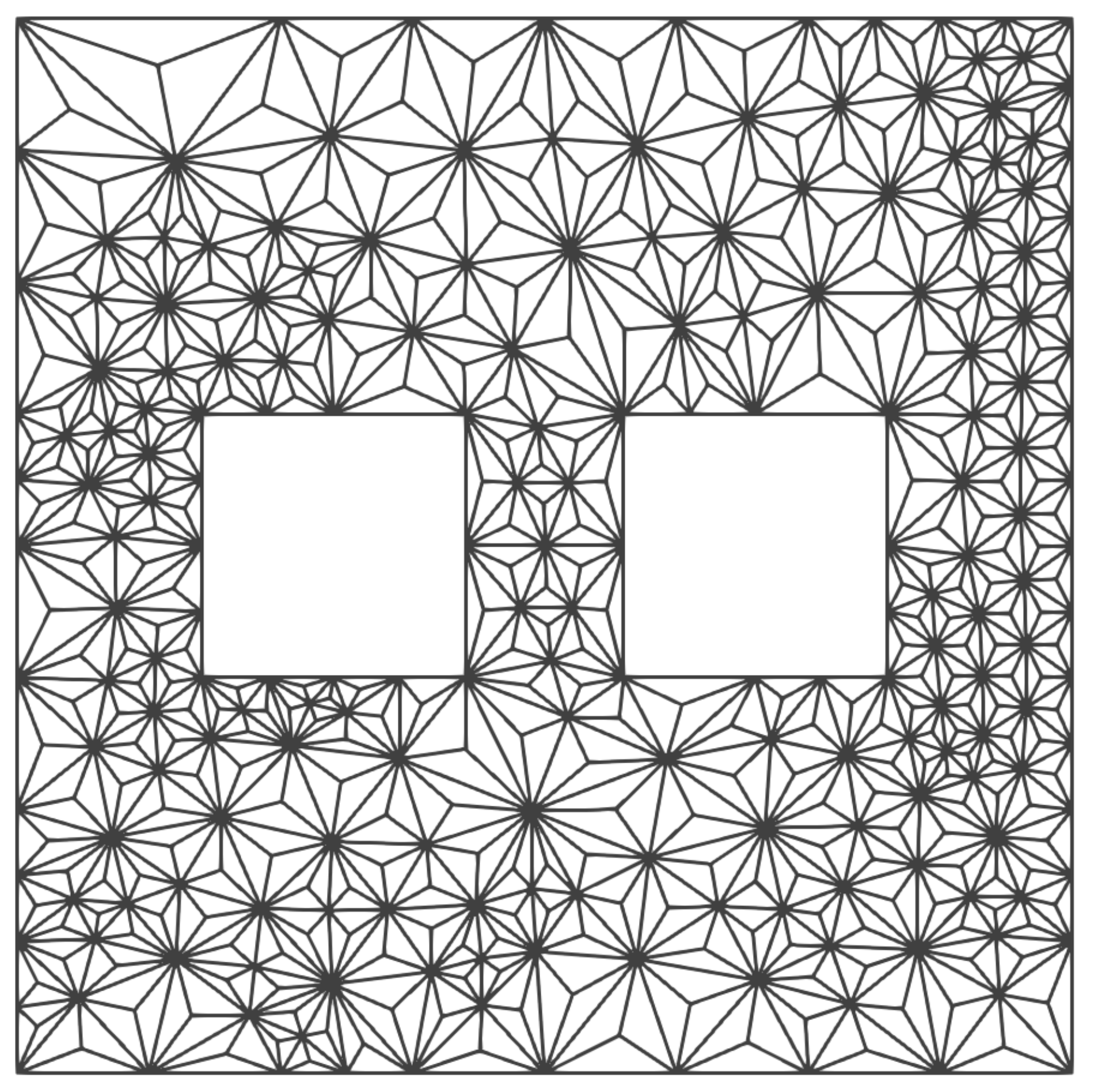}
    \includegraphics[width=0.325\linewidth]{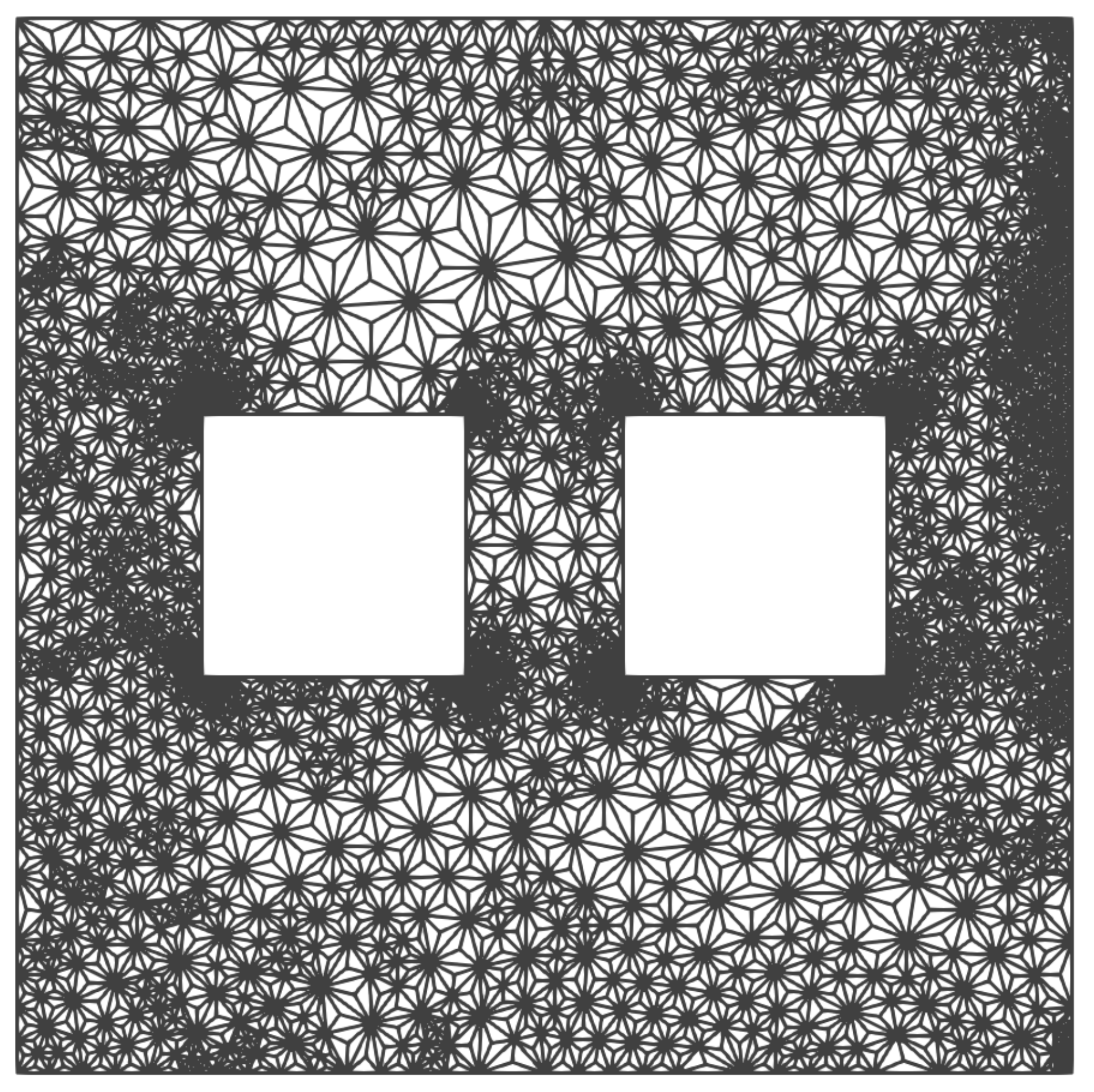}

    \vspace{-4mm}
    \caption{Example 3. Bioconvection of micro-organisms on a cavity with square inclusions and mixed boundary conditions. Velocity line integral contours, concentration profile, and total flux line integral contours for the case $\alpha = 0.3$. The bottom row shows three samples of adaptively refined meshes.}
    \label{fig:inclusions}
\end{figure}

\subsection*{Example 4: application to time-dependent bioconvective flows}
Finally, we consider a physically motivated benchmark that exhibits strong bioconvection coupling. The flow is driven by density differences arising from microorganism concentration gradients. We use the  Einstein/Batchelor motivated model as in \cite{cao-2013}
\[
\mu(\varphi) =
\begin{cases}
\mu_0, & \varphi_r \le 0, \\[6pt]
\mu_0 \bigl(1 + 2.5\,\varphi_r + 5.3\,\varphi_r^2 \bigr), & 0 < \varphi_r \le 0.10, \\[6pt]
\mu_0 \exp\!\left( \dfrac{2.5\,\varphi_r}{1 - 1.4\,\varphi_r} \right), & 0.10 < \varphi_r \le 0.60, \\[6pt]
\mu_0 \exp(9.375), & \varphi_r > 0.60,
\end{cases} \qquad \varphi_r = \varphi/\varphi_{\max},
\]
with \(\mu_0=0.01\)\,[cm$^2$/s] (reference viscosity), and $\varphi_{\max} = 7\cdot10^6$.  These choices follow the modelling and numerical examples in 
%\cite{cao-2013} (see also 
\cite{edwards14} and use the model parameters 
\[
\kappa = 0.01\ \text{[cm$^2$/s]},\quad U = 0.1\ \text{[cm/s]},\quad g=980.665\ \text{[cm$^2$/s]},\quad \gamma = 5\cdot 10^{-5} \ \text{[cm$^2$/cells]}.
%,\quad \alpha\in\{0.01,\,0.20,\,0.30\}.
\]
The computational domain is the rectangle $\Omega = (0,L)\times (0,H)$, with $L = 16$\,cm and $H = 2$\,cm. 
 We use a time-dependent variant of the model problem, including simply a backward Euler discretization of the acceleration and concentration rate terms with a constant time step of $\Delta t = 0.25$\,s. The boundary conditions for velocity are different than those analyzed in the paper: we impose no-slip conditions on the bottom and vertical walls, whereas we set a slip condition $\bu \cdot \bn = 0 $ and a zero shear traction on the top boundary. We run the simulations until the final time $T = 150$\,s. The results of the computations (using again $\ell =1$ in this case) are shown in Figure~\ref{fig:transient}. 
For earlier times the solution is essentially quiescent: \(\bu\approx \boldsymbol{0}\) except near the top boundary and \(\varphi\) varies only slowly in the vertical direction (no pattern). For larger times, we observe that bioconvective patterns develop: concentrated rising plumes or cellular recirculations appear, with strong vertical gradients of concentration and localised shear in velocity. This is in agreement with the simulations reported in \cite[Example 2]{edwards14}.

\begin{figure}[t!]
    \centering
\includegraphics[width=0.495\linewidth]{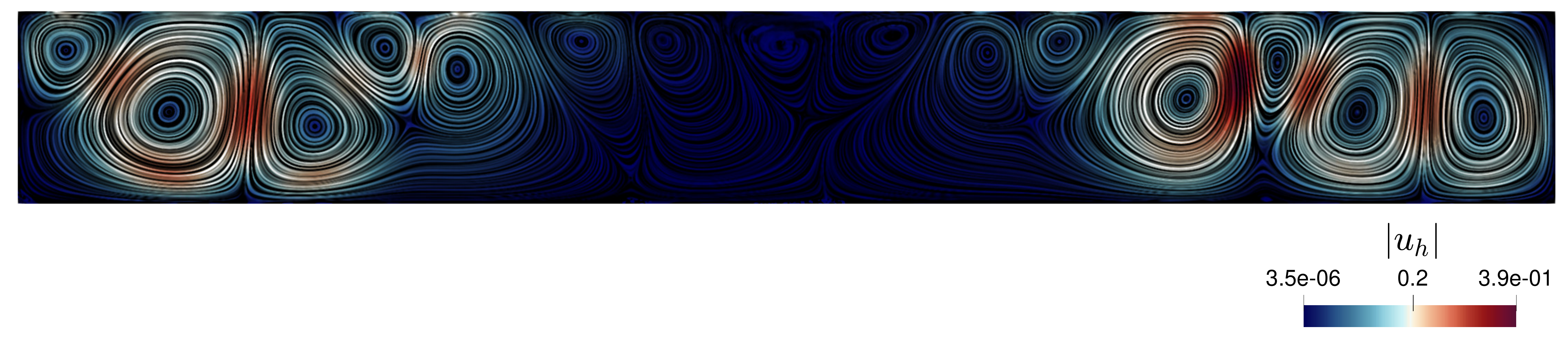}
\includegraphics[width=0.495\linewidth]{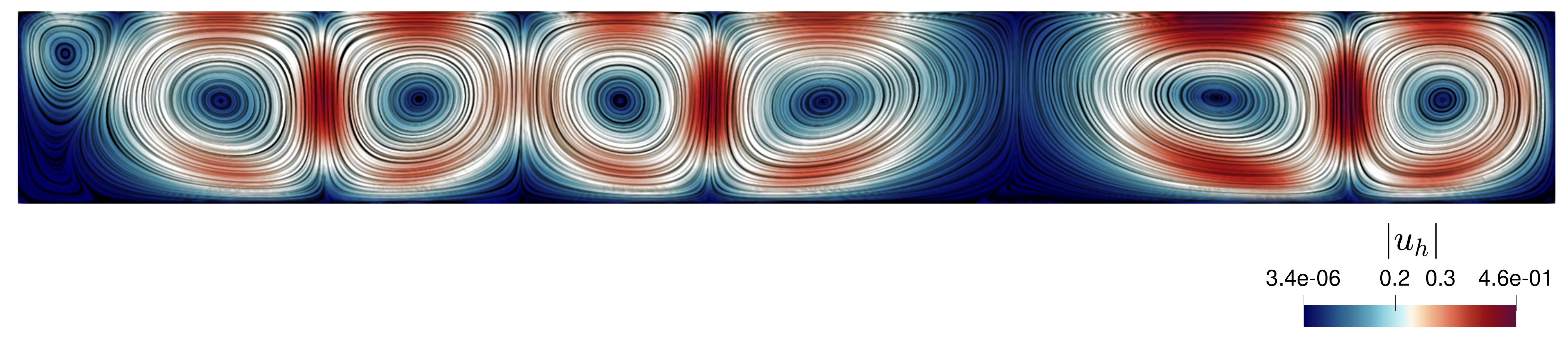}\\
\includegraphics[width=0.495\linewidth]{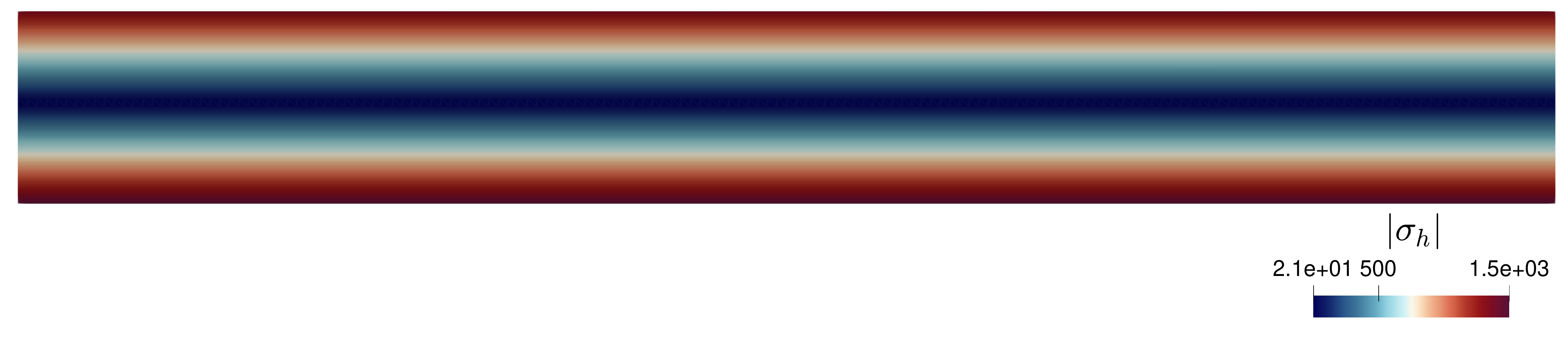}
\includegraphics[width=0.495\linewidth]{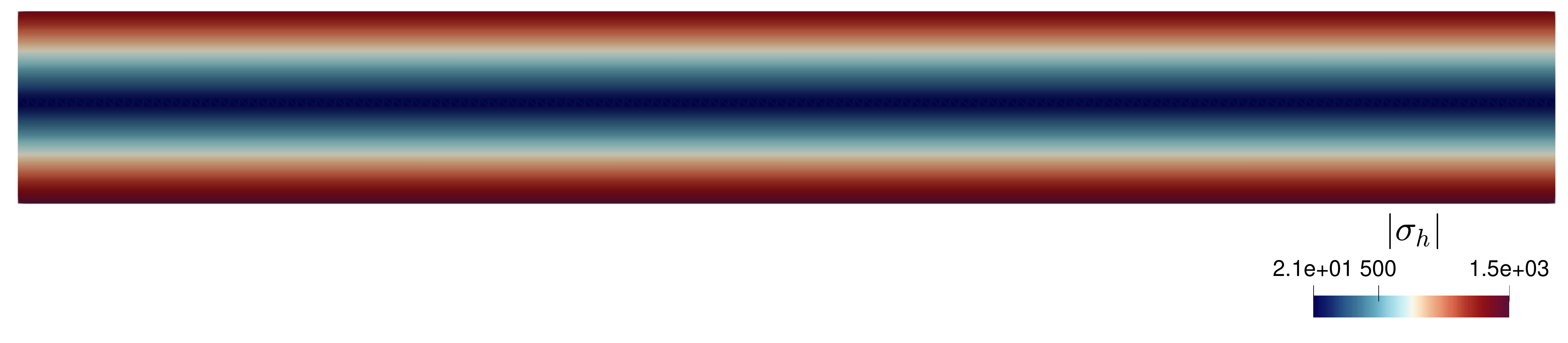}\\
\includegraphics[width=0.495\linewidth]{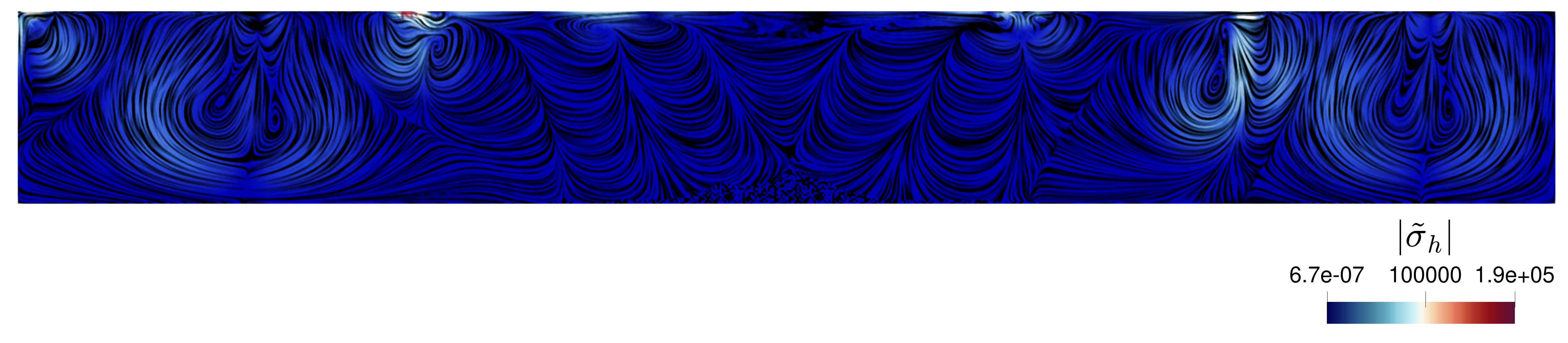}
\includegraphics[width=0.495\linewidth]{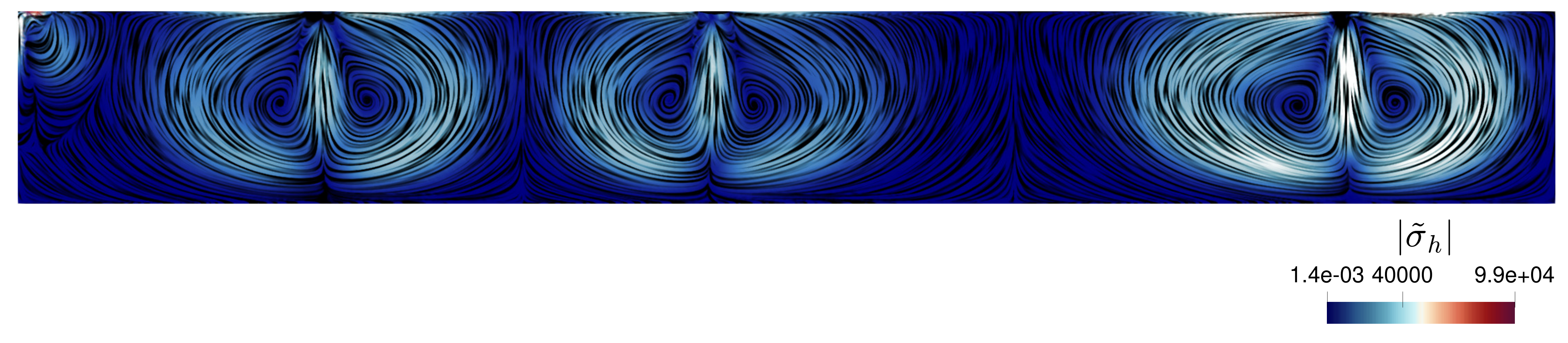}\\
\includegraphics[width=0.495\linewidth]{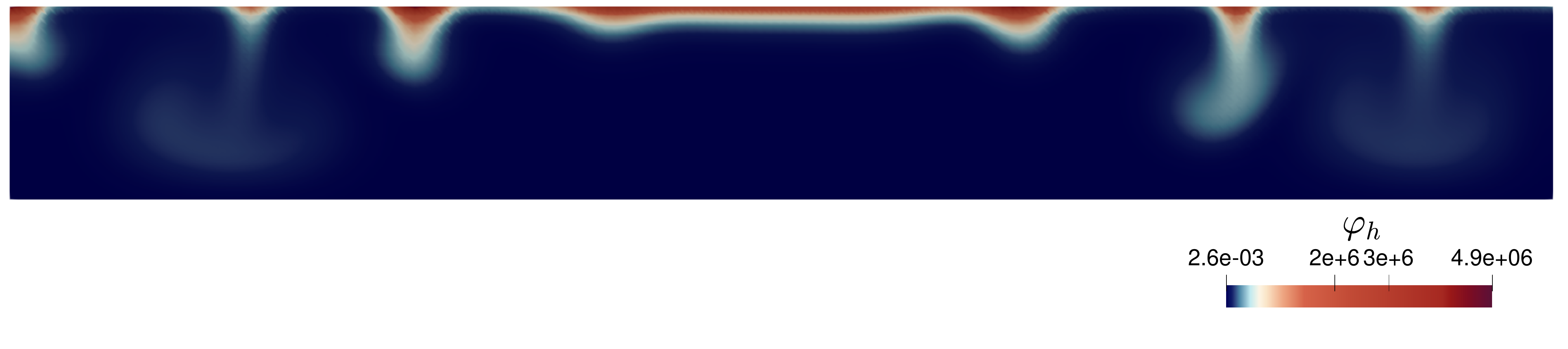}
\includegraphics[width=0.495\linewidth]{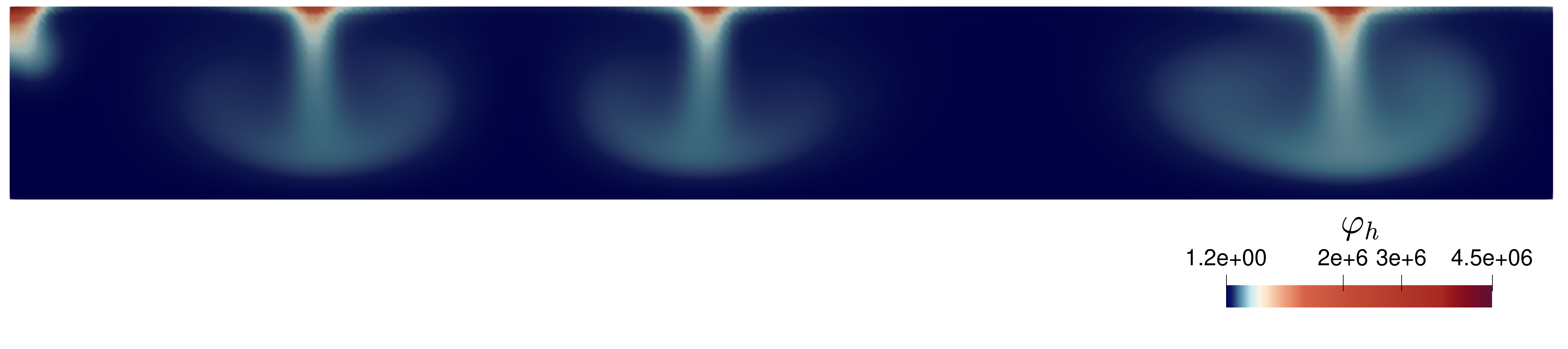}

\vspace{-4mm}
    \caption{Example 4. Snapshots of the numerical solution (showing here only velocity, stress magnitude, flux, and concentration) at $t = 75$\,s (left) and $t = 150$\,s (right).}
    \label{fig:transient}
\end{figure}

%%%%%%%%%%%%%%%%%%%%%%%%%%%%

\end{document}